\definecolor{dullmagenta}{rgb}{0.4,0,0.4}   
 \providecommand{\href}[2]{#2}%
\newcommand{\bfi}{\bfseries\itshape}
\def\thefigure{\thesection.\@arabic\c@figure}
\def\fps@figure{h, t}
\def\thetable{\thesection.\@arabic\c@table}
\def\fps@table{h, t}
\begin{document}

\newtheorem{theorem}{Theorem}[section]
\newtheorem{definition}[theorem]{Definition}
\newtheorem{lemma}[theorem]{Lemma}
\newtheorem{remark}[theorem]{Remark}
\newtheorem{proposition}[theorem]{Proposition}
\newtheorem{corollary}[theorem]{Corollary}
\newtheorem{example}[theorem]{Example}

\def\below#1#2{\mathrel{\mathop{#1}\limits_{#2}}}



\title{Dirac Reduction for Nonholonomic Mechanical Systems and Semidirect Products}
\author{\hspace{-1cm}\normalsize
\begin{tabular}{cc}
Fran\c{c}ois Gay-Balmaz &
Hiroaki Yoshimura
\\ CNRS and Ecole Normale Sup\'erieure de Paris & Applied Mechanics and Aerospace Engineering
\\ Laboratoire de m\'et\'eorologie dynamique  & Waseda University
\\  24 Rue Lhomond 75005 Paris, France & Okubo, Shinjuku, Tokyo 169-8555, Japan \\ francois.gay-balmaz@lmd.ens.fr & yoshimura@waseda.jp\\
\end{tabular}\\\\
}

\date{}
\maketitle

\makeatother

\maketitle
\vspace{-0.1in}
\begin{center}\abstract{\vspace{2mm}This paper develops the theory of Dirac reduction by symmetry for nonholonomic systems on Lie groups with broken symmetry. The reduction is carried out for the Dirac structures, as well as for the associated Lagrange-Dirac and Hamilton-Dirac dynamical systems. This reduction procedure is accompanied by reduction of the associated variational structures on both Lagrangian and Hamiltonian sides. The reduced dynamical systems obtained are called the implicit Euler-Poincar\'e-Suslov equations with advected parameters and the implicit Lie-Poisson-Suslov equations with advected parameters. The theory is illustrated with the help of finite and infinite dimensional examples. It is shown that equations of motion for second order Rivlin-Ericksen fluids can be formulated as an infinite dimensional nonholonomic system in the framework of the present paper.}

\end{center}

\paragraph{keywords:} Dirac structures, reduction by symmetry, nonholonomic systems, variational structures, semidirect products, Rivlin-Ericksen fluids. 
\textit{2010 Mathematics Subject Classification.} 70H30, 70H45, 70H03, 70H05, 37J60.

\tableofcontents

\section{Introduction}\label{Intro}
\paragraph{Dirac structures in mechanics.}
Dirac structures are geometric objects that generalize both Poisson brackets and (pre)symplectic structures on manifolds. They were originally developed by \cite{CoWe1988}; \cite{Cour1990} and \cite{Dorfman1993} and were named after Dirac's theory of constraints (\cite{Dirac1950}).

It turns out that
Dirac structures are the appropriate geometric objects for the formulation of equations of motion of nonholonomic Hamiltonian systems on arbitrary configuration manifolds or, more generally, of implicit Hamiltonian systems appearing as differential-algebraic equations (see, for instance, \cite{VaMa1995} and \cite{BC1997}). More recently, the notion of an implicit Lagrangian system, which may be a Lagrangian analogue of an implicit Hamiltonian system, was developed by \cite{YoMa2006a}, where it was shown that nonholonomic mechanical systems and L-C circuits can be formulated as degenerate Lagrangian systems in this context.

For the case of nonholonomic mechanics, given a constraint distribution $ \Delta _Q$ on the configuration manifold $Q$, there exists an associated Dirac structure $D_{ \Delta _Q}$ on the cotangent bundle $T^*Q$ of $Q$, introduced in \cite{YoMa2006a}. This Dirac structure is not integrable, unless the constraint is holonomic.
Given such an induced Dirac structure and a (possibly degenerate) Lagrangian defined on the tangent bundle $TQ$ of $Q$, a Lagrange-Dirac dynamical system can be defined (\cite{YoMa2006a}), which provides a geometric formulation of the equations of motion for the nonholonomic mechanical systems. The associated equations of motion are the so-called {\it implicit Lagrange-d'Alembert equations}. The Lagrange-Dirac system is naturally associated to a variational structure, called the {\it Lagrange-d'Alembert-Pontryagin principle} whose critical curves are precisely the solutions of the implicit Lagrange-d'Alembert equations.

On the Hamiltonian side, dynamical systems associated to Dirac structures were considered in \cite{Dorfman1987, CoWe1988} and \cite{Cour1990}. Further applications of Dirac structures to dynamical systems including L-C circuits and nonholonomic systems were developed by \cite{VaMa1995} and \cite{ BC1997}. The case of the Dirac structure $D_{ \Delta _Q}$ induced by a nonholonomic constraint $\Delta_{Q} \subset TQ$ was considered in \cite{YoMa2007b} to formulate a Hamilton-Dirac system in nonholonomic mechanics.

In the presence of symmetries, there is a well-developed reduction theory for Dirac structures in mechanics for $G$-invariant systems on Lie groups, both in the unconstrained case and in the constrained (nonintegrable) case (\cite{YoMa2007b}), where the associated Dirac reduction process (called Lie-Dirac reduction) yields the geometric framework for the study of the Euler-Poincar\'e-Suslov and Lie-Poisson-Suslov equations. 
For the more general case of a free and proper Lie group action on an arbitrary configuration manifold, a reduction theory called {\it Dirac cotangent bundle reduction} was developed for the canonical Dirac structure (\cite{YoMa2009}). It induces the {\it implicit Lagrange-Poincar\'e equations} on the Lagrangian side and  the {\it implicit Hamilton-Poincar\'e equations} on the Hamiltonian side. Note that this process of Dirac reduction does not only provide a geometric setting for the reduction of the equations of motion (both on the Lagrangian and Hamiltonian case), but also allows for a  corresponding reduction of the associated variational structures.

Other Dirac reduction processes have been developed by \cite{Cour1990}, \cite{Dorfman1993}, \cite{BS2001} and \cite{BCG2007}.

\paragraph{Lagrangian reduction for holonomic mechanical systems on Lie groups.} From the viewpoint of reduction by symmetries in Lagrangian systems, the simplest situation arises when the configuration manifold coincides with the Lie group of symmetries. This is the well-known situation of the Euler-Poincar\'e reduction whose main examples are rigid bodies and incompressible fluids, and this allows to reformulate the Euler-Lagrange equation on the Lie group as a first-order differential equation on the Lie algebra (see, for instance, \cite{MarRat1999}).

In many relevant situations in physics, however, the symmetry of the system is broken by the presence of a parameter held fixed in the Lagrangian description, i.e., before reduction.

After reduction (i.e., in the convective or spatial description) this parameter becomes time dependent and verifies an advection equation; hence it was named \textit{Euler-Poincar\'e reduction with advected parameters\/} for the associated reduction process. It has been developed in \cite{HMR1998a} and applied there to several examples such as heavy tops, compressible fluids and magnetohydrodynamics. The Hamiltonian formulation of the Euler-Poincar\'e reduction with advected parameters can be related to the earlier derived \textit{Lie-Poisson reduction on semidirect products} of \cite{MaRaWe1984}. It is useful to mention that, on the Lagrangian side, the Euler-Poincar\'e equations with advected parameters \textit{are not} the Euler-Poincar\'e equations on a semidirect product. Such a comment is crucial for the Dirac formulation that we will develop subsequently in the paper.

Physical systems with advected parameters can be also approached by using the general methods of Lagrangian reduction in \cite{CeIbMa1987} and \cite{CeMaRa2001a}.

Recently, several generalizations of Euler-Poincar\'e reduction with advection have  been made in order to treat the case of complex fluids (\cite{GBRa2009}), nonabelian charged fluids (\cite{GBRa2011}), rods and molecular strands (\cite{GBHoRa2009}, \cite{ElGBHoPuRa2009}), or nematic systems with broken symmetry (\cite{GBTr2010}).

\paragraph{Lagrangian reduction for nonholonomic systems.} One of the first paper in which a systematic theory of nonholonomic reduction is developed is \cite{Ko1992}. A Hamiltonian version of this theory was developed in \cite{BaSn1993}, while a Lagrangian version was given in \cite{BKMM1996}. Links between these theories and further developments were
given in \cite{KoMa1997,KoMa1998}. The intrinsic geometric formulation of the Lagrangian reduction theory for nonholonomic
systems was given in \cite{CeMaRa2001b}. An application of this reduction theory to the Euler disk was carried out in \cite{CeDi2007}, from which a geometric integrator was derived in \cite{CaCeDiDD2012}. Reduction for nonholonomic systems was generalized to the setting of Lie algebroids in \cite{CoDLMaMa2009}.

Nonholonomic Lagrangian reduction for systems with broken symmetry on Lie groups has not been developed yet and a part of our present work is dedicated to this task, besides the development of the Hamilton-Pontryagin variational formulation and the associated Dirac reduction. The closest relevant work in this direction is \cite{Sch2002}, in which nonholonomic Euler-Poincar\'e reduction was developed in a particular setting well appropriate for several examples of rigid bodies that roll without slipping. Besides the classical examples of rigid bodies with nonholonomic constraints, the nonholonomic reduction that we develop is also motivated from the work of \cite{GBPu2012,GBPu2014} on the dynamics of elastic strings with rolling contact.

\paragraph{Goal of the paper.}
The main goal of this paper is to develop the reduction theory of Dirac structures for holonomic and nonholonomic systems on Lie groups with broken symmetry. This concerns the reduction of Dirac structures as a geometric object, together with the reduction of the associated Lagrange-Dirac and Hamilton-Dirac dynamical systems, as well as the associated variational structures on both the Lagrangian and Hamiltonian sides.

The reduction process for these systems with advected parameters is carried out by considering a parameter dependent constraint $ \Delta ^{ a _0 }_G$ on a Lie group $G$, where the parameter $a _0 $ belongs to the vector space of advected parameters on which the group is assumed to act by representation. Under the appropriate invariance of both a Lagrangian  and constraints under the isotropy subgroup $G_{ a _0 }$ of $a _0 $, we derive the reduced equations of motion (called {\it implicit Euler-Poincar\'e-Suslov equations with advected parameters}) via the Lagrange-d'Alembert-Pontryagin principle, and with their Hamiltonian version via the Hamilton-d'Alembert principle. This approach is then developed in the context of Dirac geometry, by considering the parameter dependent Dirac structure $D_{\Delta ^{a_0}_G}$ on $T^*G$ associated to the constraint $ \Delta ^{ a _0 }_G$ and its reduction by the isotropy subgroup $G_{ a _0 }$. From the viewpoint of dynamics, this allows us to formulate the reduction of the associated Lagrange-Dirac and Hamilton-Dirac systems and also to show that they provide the appropriate geometric framework for the formulation of the Euler-Poincar\'e-Suslov equations with advected parameters and its Hamiltonian version. In particular, we show that, on the Hamiltonian side, the Dirac reduction can be interpreted as the second step of a Dirac reduction by stages for semidirect products, thereby extending the whole existing holonomic reduction theory for semidirect products to the context of the Dirac reduction for nonholonomic mechanics.

\paragraph{Outline of the paper.} The paper is organized as follows. In Section \ref{DDS}, we introduce the required mathematical ingredients for Dirac structures in mechanics and the associated variational structures. We first recall the Lagrange-d'Alembert-Pontryagin principle and the Hamilton-d'Alembert principle for nonholonomic mechanics, together with their reduced versions on Lie groups. Then, we review the definition of Dirac structures, mention the Dirac structure induced from a constraint set, and recall the definitions of Lagrange-Dirac and Hamilton-Dirac dynamical systems. Thus, we explain the reduction of Dirac structures associated with the invariant constraints on Lie groups (the so-called Lie-Dirac reduction), and the reduction of the corresponding Lagrange-Dirac and Hamilton-Dirac systems (the so-called Euler-Poincar\'e-Dirac and Lie-Poisson-Dirac reductions). In Section \ref{SPT}, we review the theory of (holonomic) Euler-Poincar\'e reduction with advected quantities together with its Hamiltonian analogue. In particular, we recall that on the Hamiltonian side the reduction process can be interpreted as the second stage of the ordinary Lie-Poisson reduction for a semidirect product. In Section \ref{VPAP}, we first present the Hamilton-Pontryagin principle and its reduced version for holonomic systems on Lie groups with advected quantities. Then we extend this principle to the case of nonholonomic systems, thereby obtaining the implicit Euler-Poincar\' e-Suslov equations with advected quantities. We also carry out reduction of the variational structure to obtain the implicit Lie-Poisson-Suslov equations with advected parameters  on the Hamiltonian side. Thus, we treat an important class of constraints appearing in several examples of nonholonomic rigid bodies, when the Lie group configuration space is a semidirect product. In Section \ref{Dirac_red_advect}, we carry out the reduction of Dirac structures associated to a parameter dependent nonholonomic constraint on a Lie group, and the reduction of the corresponding Lagrange-Dirac and Hamilton-Dirac dynamical systems. We show that these reduced systems are equivalent to the implicit Euler-Poincar\'e-Suslov equations with advected parameter and their Hamiltonian version. We also note that, on the Hamiltonian side, the Dirac reduction theory developed so far does only parallel partially the holonomic situation, which can be interpreted as the second stage reduction of a Lie-Poisson reduction for semidirect products. This discrepancy is solved in Section \ref{sec_NH_SDP}, where it is shown that a Dirac reduction by stages for semidirect products can be carried out for nonholonomic systems with advected quantities. The associated variational structures are explained. Finally in Section \ref{Examples} we consider several examples that illustrate our theory, such as the heavy top, the incompressible ideal fluid and MHD, the Chaplygin ball, and the Euler disk. We show that the equations of motion for the second order Rivlin-Ericksen fluids is a nonholonomic system in the framework of the theory developed in the present paper.

\section{Dirac dynamical systems}\label{DDS} 

This section contains an extended review of Dirac dynamical systems and the associated variational structures as required mathematical ingredients in this paper. We first recall the expression of the Lagrange-d'Alembert-Pontryagin principle and the Hamilton-d'Alembert principle, both for the unconstrained and constrained cases. In the special situation in which the configuration manifold is a Lie group, we review the reduced version of these variational structures. We then recall the definition of Dirac structures and the associated dynamical systems, both on the Lagrangian and Hamiltonian sides, together with the associated reduction procedures called {\it Lie-Dirac reduction} on a Lie group configuration space.

\subsection{Lagrange-d'Alembert-Pontryagin principle}

Let $Q$ be a manifold thought of as the configuration space of a mechanical system. We denote by $TQ\rightarrow Q$, $T^*Q \rightarrow Q$, and $TQ \oplus T^{\ast}Q \rightarrow Q$, the tangent bundle, the cotangent bundle, and the Pontryagin bundle, respectively, with local coordinates $(q,v) \in TQ$, $(q,p) \in T^{\ast}Q$, and $(q,v,p)\in TQ \oplus T^{\ast}Q$. Let $ \Delta _Q \subset TQ$ be a smooth constraint distribution on $Q$.  Let $L:TQ \to \mathbb{R}$ be the (possibly degenerate) Lagrangian of the system.
\medskip

The {\bfi Lagrange-d'Alembert-Pontryagin principle} is 
\begin{equation}\label{LagDALP}
\delta \int_{t_1}^{t_2}
 \left\{ L(q(t),v(t)) + \left\langle p(t),  \dot{q}(t)-v(t) \right\rangle \right\} \, dt =0,
\end{equation}
where $v$ has to satisfy the constraint $ v(t) \in \Delta _Q (q(t))$ and for variations $(\delta q(t), \delta v(t), \delta p(t))$ of the curves $(q(t),v(t),p(t))$ such that $ \delta q(t)\in \Delta _Q (q(t) )$ and vanishes at the endpoints. The stationarity conditions yield the equations, in local coordinates,
\begin{equation*}
\begin{split}\label{imp_ELdA_Eqn}
p=\frac{\partial L}{\partial v},  \quad \dot{q}=v\in \Delta _Q (q), \quad   \dot{p}-\frac{\partial L}{\partial q}\in \Delta _Q (q)^\circ,
\end{split}
\end{equation*}
called the {\bfi implicit Lagrange-d'Alembert equations} on $TQ\oplus T^{\ast}Q$; see \cite{YoMa2006b}.

Note that the implicit Lagrange-d'Alembert equations include the Lagrange-d'Alembert equations $ \dot{p}-{\partial L}/{\partial q} \in \Delta_{Q}(q)^{\circ}$, the Legendre transformation  $p={\partial L}/{\partial v}$ and the second--order condition $\dot{q}=v \in \Delta_{Q}(q)$. 

In the unconstrained case $\Delta_{Q}=TQ$, the principle \eqref{LagDALP} is called the {\bfi Hamilton-Pontryagin principle} and it recovers the {\it implicit Euler-Lagrange equations}.

\paragraph{Reduction on Lie groups.} When the Lagrangian is invariant under the tangent lifted action of a Lie group $G$ on the configuration manifold $Q$, one can reduce both the equations and the variational structure. We shall focus on the case when $Q=G$ and $G$ acts by left translation. As a consequence, the group $G$ acts on curves $ (g(t),v(t),p(t))$  in $TG\oplus T^{\ast}G$ by simultaneously left translating on each factor by the left-action and its tangent and cotangent lifts, i.e., the action of $h \in G$ is
\begin{equation*}
h \cdot (g(t),v(t),p(t)):=(hg(t),T_{g(t)} L_h \cdot v(t), T^\ast_{hg(t)} L_{h^{-1}} \cdot p(t))=:(hg(t),h v(t), h p(t)),
\end{equation*} 
where $T_{g(t)}L_{h}:T_{g(t)}G \to T_{hg(t)}G$ is the tangent of the left translation map $L_{h}:G \to G;\, g(t)\mapsto hg(t)$ at the point $g(t)$ and $T^{\ast}_{hg(t)}L_{h^{-1}}:T^{\ast}_{g(t)}G \to T^{\ast}_{hg(t)}G$ is the dual of the map $T_{hg(t)}L_{h^{-1}}:T_{hg(t)}G \to T_{g(t)}G$.

Let us assume that the constraint distribution $\Delta_{G} \subset TG$ on $G$ is left invariant under the group action $g \mapsto hg$, which means that for all $h \in G$, the subspace $\Delta_{G}(g) \subset T_{g}G$ is mapped by the tangent map of the group action to the subspace $\Delta_{G}(hg) \subset T_{hg}G$, that is
\begin{equation}\label{invariance_Delta} 
\Delta _G(hg)=h \Delta _G(g), \quad \text{for all $ h \in G$}. 
\end{equation}
By $G$-invariance, $\Delta _G$ is completely determined by its value at the identity, namely, by the vector subspace $ \mathfrak{g}  ^\Delta := \Delta _G (e)\subset \mathfrak{g}$.

Let $L:TG \rightarrow \mathbb{R}$ be a $G$-invariant Lagrangian and let $\ell:\mathfrak{g} \to \mathbb{R}$  given by $\ell:=L|{\mathfrak{g}}$ be the reduced Lagrangian.

The Lagrange-d'Alembert-Pontryagin action integral is invariant under the action of $G$ since $L$ is $G$--invariant and one easily checks that $\langle p(t), \dot{g}(t)-v(t) \rangle$ is also $G$-invariant.  The {\bfi reduced Lagrange-d'Alembert-Pontryagin principle} for nonholonomic mechanics is given by
\begin{equation}\label{RedLagDAPPrin}
\delta \int _{t_0}^{t_1} \left\{\ell(\eta )+ \left\langle \mu ,\xi - \eta  \right\rangle \right\} dt=0, \quad \delta{\xi}=\frac{\partial \zeta}{\partial t}+[\xi, \zeta], \quad \text{where} \quad  \zeta \in \mathfrak{g}  ^\Delta , \quad \eta \in \mathfrak{g}  ^\Delta,
\end{equation}
which yields the {\bfi implicit Euler-Poincar\'e-Suslov equations} on $(\mathfrak{g} \oplus \mathfrak{g}^{\ast}) \times V^{\ast}$: 
\begin{equation}\label{ImpEulPoinSusEqn}
\mu =  \frac{\delta \ell}{\delta \eta}, \quad \xi = \eta\in\mathfrak{g}^{ \Delta}, \quad \dot{\mu} - \operatorname{ad}_{\xi}^{\;\ast} \mu \in \left( \mathfrak{g}^{ \Delta}\right) ^\circ.
\end{equation}
This approach was developed by \cite{YoMa2007b} and is the implicit analogue of the Euler-Poincar\'e-Suslov theory. We refer to \cite{Ko1988} and \cite{Bloch2003} for the details on the original Suslov problem and its generalization. 
\medskip

\begin{remark}[\textbf{Alternative formulation of the Hamilton-Pontryagin principle}]\label{Hybrid_HP}
{\rm
For the unconstrained case, a similar variational principle to the one given in the above was also considered in \cite{BRMa2009}. It reads
\begin{equation}\label{Triv_HPrinciple}
\delta \int_{ t _1 }^{ t _2 } \{ \ell( \xi )+ \left\langle \mu , g ^{-1} \dot g - \xi \right\rangle \} \,dt=0,
\end{equation}
for arbitrary variations $( \delta \xi(t), \delta \mu (t)) $ of $( \xi (t), \mu (t)) \in \mathfrak{g} \oplus \mathfrak{g}^{\ast} $ and variations $\delta g(t)$ of $g(t) \in G$ vanishing at the endpoints and yields the stationarity conditions
\begin{equation*}\label{TrivImpEP}
\xi = g ^{-1} \dot g, \quad \mu = \frac{\delta \ell}{\delta \xi }, \quad \dot{\mu} - \operatorname{ad}^{\ast}_ \xi \mu =0. 
\end{equation*}
Note that  this variational principle in $G \times \mathfrak{g} \oplus  \mathfrak{g}^{\ast}$ can be understood as the {\it trivialized expression} of the Hamilton-Pontryagin principle on $TG \oplus T^{\ast}G$. As opposed to the \textit{reduced Hamilton-Pontryagin principle} (that is \eqref{RedLagDAPPrin} without constraints), the Hamilton-Pontryagin principle \eqref{Triv_HPrinciple} still contains the unreduced variables $g, \dot g$, and is therefore not the reduced expression of the Hamilton-Pontryagin principle on $TG \oplus T^*G$. However, it may have the advantage of involving only unconstrained variations, for instance, in the case of numerical integrations. Of course, both variational principles yield equivalent equations.
}
\end{remark} 

\subsection{Hamilton-d'Alembert phase space principle}

Let $H: T ^\ast Q \rightarrow \mathbb{R}  $ be a Hamiltonian function defined on the cotangent bundle (phase space) of the configuration manifold $Q$. In presence of  a smooth distribution constraint $ \Delta _Q \subset TQ$, the equations of motions are obtained by the
{\bfi Hamilton-d'Alembert principle in phase space}
\[
\delta \int \left\{ \left\langle p(t) , \dot q (t) \right\rangle -H(q(t) , p(t) ) \right\} dt=0,
\]
where $\dot q(t)  \in \Delta _Q(q(t))$, and for variations $ \delta q(t), \delta p(t)$ such that $ \delta q(t)  \in \Delta _Q(q(t)) $ and vanishes at the endpoints; see \cite{YoMa2006b}.
This principle yields the {\bfi Hamilton-d'Alembert equations} for nonholonomic mechanics:
\[
\dot q = \frac{\partial H}{\partial p}\in \Delta _Q , \quad \frac{\partial H}{\partial q}+\dot p \in \Delta ^\circ_Q.
\]
We refer to \cite{BaSn1993}, \cite{VaMa1995} and \cite{Marle1998}, for the description of geometric formalisms for Hamiltonian systems with nonholonomic constraints.

\paragraph{Reduction on Lie groups.} 
Let us consider the case $Q=G$, with $G$ acting on the left by translations and assume that the given distribution $\Delta_{G}$ on $G$ is left-invariant.

Let $H$ be a $G$-invariant Hamiltonian on $T^{\ast}G$ and let $h: \mathfrak{g}  ^\ast \rightarrow \mathbb{R}$ given by $h:=H|_{\mathfrak{g}^{\ast}}$ be the reduced Hamiltonian. The {\bfi reduced Hamilton-d'Alembert principle} is given by
\begin{equation*}\label{RedHamDALPS}
\delta \int_{t_1}^{t_2} \left \{ \left\langle \mu , \xi  \right\rangle -h(\mu) \right\}dt=0, 
\end{equation*}
for $\xi \in \mathfrak{g}  ^\Delta$ and with the variations of the form 
$$
\delta{\xi}=\frac{\partial \zeta}{\partial t}+[\xi, \zeta]. 
$$
This principle yields the {\bfi implicit Lie-Poisson-Suslov equations} for nonholonomic mechanics:
\begin{equation*}\label{ImpLiePoiSusEqn}
\dot \mu  - \operatorname{ad}^*_\xi \mu \in (\mathfrak{g}  ^\Delta)^\circ \quad \text{and} \quad \frac{\delta h}{\delta \mu } =\xi  \in \mathfrak{g}  ^\Delta.
\end{equation*}

For the unconstrained case, this principle is called the {\bfi Lie-Poisson variational principle} (see \cite{CeMaPeRa2003}) and it yields the {\bfi implicit Lie-Poisson equations}
\[\frac{\delta h}{\delta \mu }= \xi , \quad \dot \mu - \operatorname{ad}^{\ast}_ \xi \mu =0.  
\]

\begin{remark}[\textbf{Alternative formulation of the Lie-Poisson variational principle}]\label{Triv_HamPrinPhSp}
{\rm As before, one can develop the trivialized Hamilton principle in phase space as
\begin{equation*}\label{Triv_HP}
\delta \int_{ t _1 }^{ t _2 } \{ \left\langle \mu , g ^{-1} \dot g  \right\rangle - h(\mu) \} \,dt=0,
\end{equation*}
for arbitrary variations $\delta \mu (t) $ of $\mu (t) \in  \mathfrak{g}^{\ast} $ and variations $\delta g(t)$ of $g(t) \in G$ vanishing at the endpoints. It yields the stationarity conditions
\begin{equation*}
g ^{-1} \dot g=\frac{\delta h}{\delta \xi }, \quad  \dot{\mu} = \operatorname{ad}^{\ast}_{g ^{-1} \dot g} \mu. 
\end{equation*}
}
\end{remark} 

\subsection{Dirac dynamical systems}
\paragraph{Linear Dirac structures.} We first recall the definition of a Dirac structure on a vector space $V$, see \cite{CoWe1988}. Let $V^{\ast}$ be the dual space of $V$, and $\langle\cdot \, , \cdot\rangle$
be the natural paring between $V^{\ast}$ and $V$. Define the
symmetric paring
$\langle \! \langle\cdot,\cdot \rangle \!  \rangle$
on $V \oplus V^{\ast}$ by
\begin{equation*}
\langle \! \langle\, (v,\alpha),
(\bar{v},\bar{\alpha}) \,\rangle \!  \rangle
=\langle \alpha, \bar{v} \rangle
+\langle \bar{\alpha}, v \rangle,
\end{equation*}
for $(v,\alpha), (\bar{v},\bar{\alpha}) \in V \oplus V^{\ast}$.
A {\bfi Dirac structure} on $V$ is a subspace $D \subset V \oplus
V^{\ast}$ such that
$D=D^{\perp}$, where $D^{\perp}$ is the orthogonal
of $D$ relative to the pairing
$\langle \! \langle \cdot,\cdot \rangle \!  \rangle$.

\paragraph{Dirac structures on manifolds.}
Now let $P$ be a manifold and let $TP \oplus T^{\ast}P$ denote the Pontryagin bundle. In this paper, we shall call a  subbundle $ D \subset TP \oplus T^{\ast}P$ a {\bfi Dirac structure on the manifold $P$}, if $D(x)$ is a linear Dirac structure on the vector space $T_{x}P$ at each point $x \in P$.

For example, a given two-form $\Omega$ on $P$ together with a distribution $\Delta$ on $P$ determines the Dirac structure $D_ \Delta $ on $P$ defined at $ x \in P$ by
\begin{equation}\label{DiracManifold}
\begin{split}
D(x)=\{ (v_{x}, \alpha_{x}) \in T_{x}P \times T^{\ast}_{x}P
  \; \mid \; & v_{x} \in \Delta(x), \; \mbox{and} \\ 
  & \alpha_{x}(w_{x})=\Omega_{\Delta}(v_{x},w_{x}) \; \;
\mbox{for all} \; \; w_{x} \in \Delta(x) \},
\end{split}
\end{equation}
where $\Omega_{\Delta}$ is the restriction of $\Omega$ to $\Delta$.

A Dirac structure $D$ is said to be {\it integrable} if  the condition
\begin{equation*}\label{ClosedCond}
\langle \pounds_{X_1} \alpha_2, X_3 \rangle
+\langle \pounds_{X_2} \alpha_3, X_1 \rangle+\langle \pounds_{X_3}
\alpha_1, X_2 \rangle=0
\end{equation*}
is satisfied for all pairs of vector fields and one-forms $(X_1, \alpha_1)$,
$(X_2,\alpha_2)$, $(X_3,\alpha_3)$ that take values in $D$,
where $\pounds_{X}$ denotes the Lie derivative  along the vector
field $X$ on $P$. 

\begin{remark}[\textbf{Courant bracket}]{\rm  Let $\Gamma(TP \oplus T^{\ast}P)$ be the space of local sections of $TP \oplus T^{\ast}P$, endowed with the Courant bracket (\cite{Cour1990}) defined by
\[
\begin{split}
\left[(X_{1},\alpha_{1}),(X_{2},\alpha_{2})\right] &:= \left( \left[X_{1}, X_{2} \right],  \pounds_{X_1} \alpha_{2}- \pounds_{X_2} \alpha_{1} + \mathbf{d}\left< \alpha_{2}, X_{1} \right>\right)\\
&\;=\left( \left[X_{1}, X_{2} \right],  \mathbf{i}_{X_1} \mathbf{d}\alpha_{2}- \mathbf{i}_{X_2} \mathbf{d}\alpha_{1} +\mathbf{d} \left< \alpha_{2}, X_{1} \right>\right).
\end{split}
\]
This bracket is skew-symmetric but fails to satisfy the Jacobi identity. As shown in \cite{Dorfman1993}, a Dirac structure $D \subset TP \oplus T^{\ast}P$ is integrable if and only if it is closed under the Courant bracket,
\[
\left[\Gamma(D), \Gamma(D) \right] \subset \Gamma(D).
\]
In this paper however, we shall not use the Courant bracket structure.}
\end{remark} 

\paragraph{Induced Dirac structures on cotangent bundles.} 
One of the most important and interesting Dirac structures in mechanics is the one induced on the cotangent bundle $T^{\ast}Q$ from kinematic constraints, whether holonomic or nonholonomic, given by a distribution $ \Delta _Q$ on the configuration manifold $Q$. 
We define the lifted distribution on $T^{\ast}Q$ by
\begin{equation*}
\Delta_{T^{\ast}Q}
:=( T\pi_{Q})^{-1} \, (\Delta_{Q}) \subset T(T^{\ast}Q),
\end{equation*}
where $\pi_{Q}:T^{\ast}Q \to Q$ is the canonical projection and 
$T\pi_{Q}:T(T^{\ast}Q) \to TQ$ is its tangent map. Let $\Omega$ be the canonical symplectic form on $T^{\ast}Q$.  
The {\it induced Dirac structure} $D_{\Delta_Q}$ on $T^{\ast}Q$ is the subbundle of $T  T^{\ast}Q \oplus T ^{\ast} T^{\ast}Q$, whose fiber is given for each $p_{q} \in
T^{\ast}Q$ by
\begin{align*}
D_{\Delta_Q}(p_{q})
& =\{ (v_{p_{q}}, \alpha_{p_{q}}) \in T_{p_{q}}T^{\ast}Q \times
T^{\ast}_{p_{q}}T^{\ast}Q \mid v_{p_{q}} \in
\Delta_{T^{\ast}Q}(p_{q}),  \; \mbox{and} \;  \nonumber
\\ & \qquad \qquad
\alpha_{p_{q}}(w_{p_{q}}) = \Omega( p _q) (v_{p_{q}},w_{p_{q}}) \;\; \mbox{for
all} \;\; w_{p_{q}} \in \Delta_{T^{\ast}Q}(p_{q})\}.
\end{align*}
It is known that the induced Dirac structure is integrable if and only if the constraint distribution $ \Delta _Q$ is holonomic, see \cite{VaMa1995}, \cite{DaVdS1998}. Therefore, in this paper we are especially interested in the case where the Dirac structure is not integrable.

\paragraph{Local representation of the Dirac structure.} Choose local coordinates $ q ^i $ so that $Q$ is locally represented by an open set $U \subset
\mathbb{R}^n$. For each $ q \in U$, the constraint distribution $\Delta_Q$  defines a subspace of $ \mathbb{R}  ^n $, denoted $\Delta(q) \subset \mathbb{R}^n$. Now writing the projection map $\pi_Q: T^{\ast}Q\rightarrow Q $
locally as $(q,p) \mapsto q$, its tangent map is locally given by
$T\pi_Q : (q, p, \dot{q}, \dot{p}) \mapsto (q, \dot{q})$. Thus, we
can locally represent the lifted distribution $\Delta_{T^{\ast}Q}$ as
\[
\Delta_{T^{\ast}Q} \cong \left\{ v _{(q,p)} = (q,p, \dot{q}, \dot{p} )
\mid q \in U, \dot{q} \in \Delta (q) \right\}.
\]
Letting points in $T ^{\ast} T ^{\ast} Q $ be locally denoted by
$\alpha_{(q,p)} = (q,p, \alpha, u )$, where $\alpha$ is a covector
and $w $ is a vector, the local expression for the induced Dirac structure 
is given by
\begin{align}\label{local_induced_Dirac} 
D_{\Delta_Q}(z)  
  & =
\left\{
\left( (q,p, \dot{q}, \dot{p}), (q,p, \alpha, u) \right) \mid
\dot{q} \in \Delta (q), \; u = \dot{q},  \; \mbox{and} \;
\alpha +\dot{p} \in \Delta(q)^{\circ}
\right\}.
\end{align}

\paragraph{Canonical diffeomorphisms.} The iterated tangent and cotangent bundles are crucial objects for the understanding of the interrelation between Lagrangian systems and Hamiltonian systems especially in the context of Dirac structures. In particular, there 
are two canonical diffeomorphisms between $T^{\ast}(TQ)$, $T(T^{\ast}Q)$ and 
$T^{\ast}(T^{\ast}Q)$ that were studied by \cite{Tu1977} in the context of the generalized Legendre transform. These canonical diffeomorphisms, together with the various projection maps involved, are illustrated in the Fig. \ref{BundPic}, below.

\medskip

First there is the canonical diffeomorphism $\kappa_{Q}: T(T^{\ast}Q) \to T^{\ast}(TQ)$, locally given by $(q, p, \delta q, \delta p) \mapsto (q, \delta q, \delta p, p)$,
where $(q,p)$ are local coordinates
of $T^{\ast}Q$ and $(q,p,\delta{q},\delta{p})$ are the corresponding
coordinates of $T(T^{\ast}Q)$, while $(q, \delta q, \delta p, p)$ are
the local coordinates of $T^{\ast}(TQ)$ induced by $\kappa_{Q}$. 
Second, there is the canonical diffeomorphism $\Omega^{\flat}:T(T^{\ast}Q) \to T^{\ast}(T^{\ast}Q)$ associated with the canonical symplectic structure $\Omega$, locally given by $(q,p,\delta{q},\delta{p}) \mapsto (q,p,-\delta{p}, \delta{q})$.
Thus, we can define a diffeomorphism $\gamma_{Q}: T^{\ast}(TQ )\to T^{\ast}(T^{\ast}Q)$ by
$\gamma_{Q}:=\Omega^{\flat} \circ \kappa_{Q}^{-1}$,
locally given by $(q,\delta{q},\delta{p},p) \mapsto (q,p,-\delta{p}, \delta{q})$.

\begin{figure}[h]
\begin{center}
\includegraphics[scale=.6]{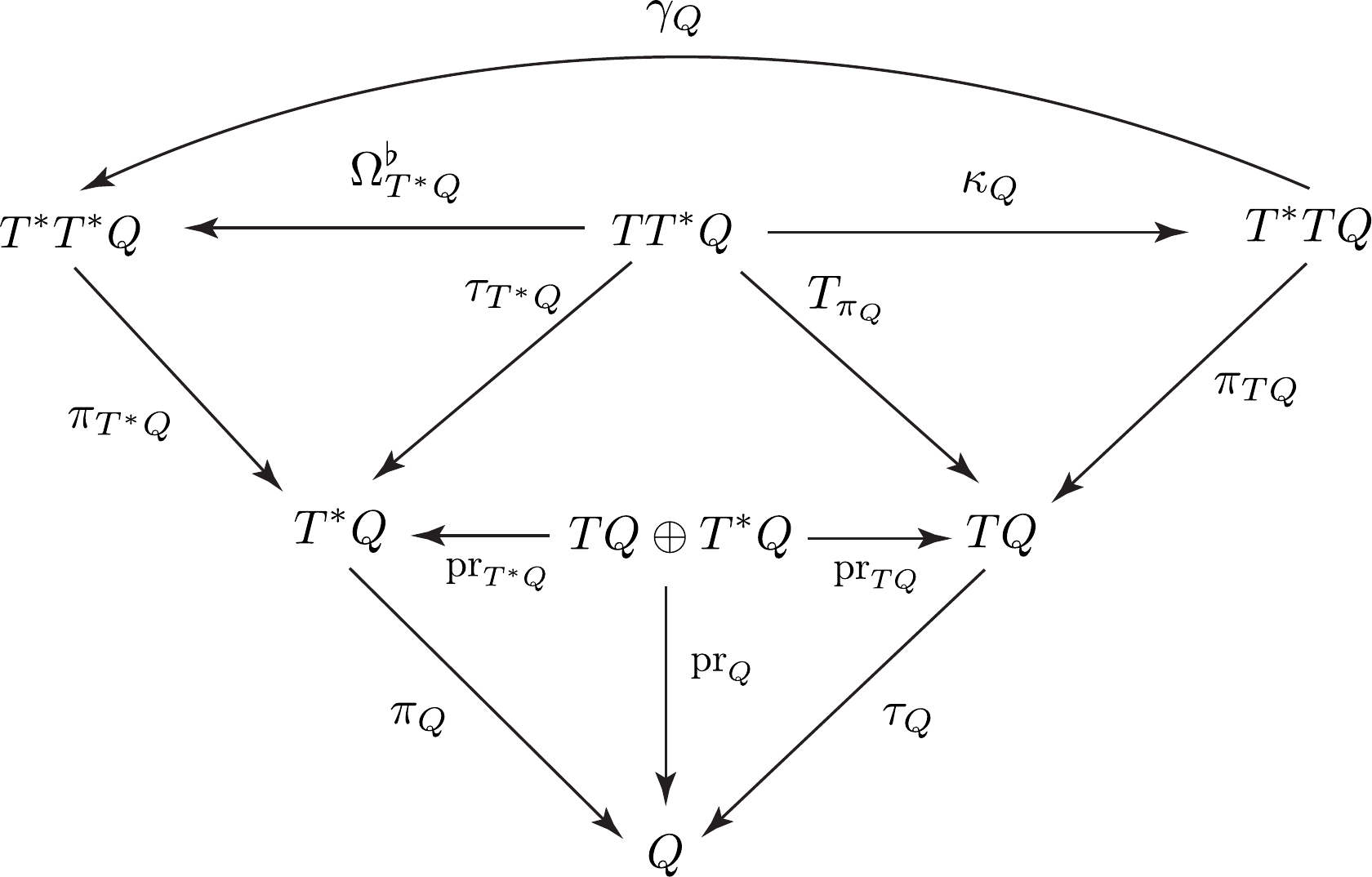}
\caption{A diagram of the canonical diffeomorphisms and bundle projections.}
\label{BundPic}
\end{center}
\end{figure}

\paragraph{Lagrange-Dirac dynamical systems.} Let us quickly review from \cite{YoMa2006a, YoMa2006b} the theory of implicit Lagrangian systems or Lagrange-Dirac systems. 
Let $L:TQ \to \mathbb{R}$ be a Lagrangian, possibly degenerate. The differential $ \mathbf{d} L:TQ \rightarrow T^{\ast}(TQ)$ of $L$ is the one-form on $TQ$ locally given by
\[
\mathbf{d} L(q,v)= \left( q,v, \frac{\partial L}{\partial q}, \frac{\partial L}{\partial v}\right) .  
\]
Using the canonical diffeomorphism $ \gamma_{Q}:T^{\ast}(TQ) \rightarrow T^{\ast}(T^{\ast}Q)$, we define the {\bfi Dirac differential} of $L$ by 
\[
\mathbf{d}_{D} L:= \gamma_{Q} \circ \mathbf{d} L \in \Gamma(T^{\ast}T^{\ast}Q),
\]
which is locally given by
$$
\mathbf{d}_{D} L(q,v)= \left(q,\frac{\partial L}{\partial v}, - \frac{\partial L}{\partial q},  v \right).
$$

\begin{definition}[\bf Lagrange-Dirac dynamical systems]
Let $ \Delta _Q \subset TQ$ be a distribution on $Q$ and consider the induced Dirac structure $D_{\Delta_{Q}}$ on $T^{\ast}Q$. The equations of motion of a {\bfi Lagrange-Dirac dynamical system} (or an implicit Lagrangian system)  $(Q, \Delta_Q, L)$ are given by 
\begin{equation}\label{LDirac_system} 
\left( (q(t),p(t),\dot{q}(t),\dot{p}(t)), \mathbf{d}_{D} L(q(t),v(t))\right)  \in D_{ \Delta _Q }(q(t),p(t)).
\end{equation} 

Any curve $(q(t),v(t),p(t)) \in TQ \oplus T^{\ast}Q,\,t_{1} \le t \le t_{2}$ satisfying \eqref{LDirac_system} is called a {\bfi solution curve} of the Lagrange-Dirac dynamical system.
\end{definition}

It follows from \eqref{local_induced_Dirac}  and \eqref{LDirac_system} that $(q(t),v(t),p(t))$, $t _1 \leq t \leq t _2 $ is a solution curve if and only if it satisfies the implicit Lagrange-d'Alembert equations
\begin{equation*}
p =\frac{\partial L}{\partial v }, \quad \frac{dq}{dt} =v \in \Delta(q), \quad  \frac{dp}{dt} - \frac{\partial L}{\partial q}
\in \Delta (q)^{\circ}.
\end{equation*}

\begin{remark}{\rm Note that the equation $p ={\partial L}/{\partial v }$ arises from the equality of the base points $(q,p)$ and $\left(q,{\partial L}/{\partial v}\right)$ in \eqref{LDirac_system}. }
\end{remark}

\paragraph{Energy conservation for implicit Lagrangian systems.}
Let  $(Q, \Delta_{Q}, L)$ be a  Lagrange-Dirac dynamical system.
Define the energy function $E$ on $TQ \oplus T^{\ast}Q$ by
\[
E(q,v,p)=\langle p, v \rangle -L(q,v).
\]
If $(q(t), v(t),p(t))$ in $TQ \oplus T^{\ast}Q$ is a solution curve of the Lagrange-Dirac system $(Q, \Delta_{Q}, L)$, then the energy $E(q(t),v(t),p(t))$ is constant in time. This is shown as follows:
\[
\frac{d}{dt} E = \left\langle \dot{p}, v \right\rangle
                   + \left\langle p, \dot{v} \right\rangle
                   - \frac{\partial L }{\partial q } \dot{q}
                   -\frac{\partial L }{\partial v } \dot{v}  = \left\langle
       \dot{p} - \frac{\partial L}{\partial q}, v
       \right\rangle,
\]
which vanishes since $\dot{q}=v \in \Delta (q)$ and
since $\dot{p} - {\partial L}/{\partial q} \in \Delta 
(q)^{\circ}$.

\paragraph{Hamilton-Dirac dynamical systems.} 
If the Lagrangian $L$ on $TQ$ is hyperregular, one can define a Hamiltonian $H$ on $T^{\ast}Q$ by Legendre transformation
$$
H:=E \circ \mathbb{F}L^{-1},
$$
where $E(v_{q})=\left<\mathbb{F}L(v_{q}),v_{q}\right>-L(v_{q})$. 

\begin{definition}[\bf Hamilton-Dirac dynamical system]
Given a Hamiltonian $H:T^{\ast}Q \to \mathbb{R}$ and an induced Dirac structure $D_{\Delta_{Q}}$ on $T^{\ast}Q$, the equations of motion of a {\bfi Hamilton-Dirac system} (also called a implicit Hamiltonian system) $(Q, \Delta_{Q}, H)$ are given by
\begin{equation}\label{CondImpHam}
((q(t),p(t),\dot{q}(t),\dot{p}(t)),\mathbf{d}H(q(t),p(t))) \in D_{\Delta_{Q}}(q(t),p(t)).
\end{equation}

Any curve $(q(t),p(t)) \in T^{\ast}Q,\,t_{1} \le t \le t_{2}$ satisfying \eqref{CondImpHam} is called a {\bfi solution curve} of the Hamilton-Dirac dynamical system.
\end{definition}

Using \eqref{local_induced_Dirac}, a curve $(q(t),p(t))$ is a solution curve of the Hamilton-Dirac system $(Q, \Delta_{Q}, H)$ if and only if it verifies the Hamilton-d'Alembert equations
\[
\dot q = \frac{\partial H}{\partial p}\in \Delta , \quad \frac{\partial H}{\partial q}+\dot p \in \Delta ^\circ,
\]
see \cite{YoMa2006b}.
\medskip

This is a special instance of an implicit Hamiltonian system on a Poisson manifold, as developed by \cite{VaMa1995}.

A more general geometric setting for constrained implicit Lagrangian and Hamiltonian systems has been developed in \cite{GrGr2011} via the concept of Dirac algebroids.
 
\subsection{Lie-Dirac reduction}
By {\bfi Lie-Dirac reduction} we mean the reduction of a $G$-invariant Dirac structure on $T^{\ast}G$. This process includes the case of the canonical Dirac structure as well as the case of a Dirac structure induced by a nonholonomic constraint. Theses cases have been developed in \cite{YoMa2007b}. In the present paper we will need two extensions of this theory. First, we will consider the case of Lie-Dirac reduction for an induced Dirac structure, in the presence of an advected parameter. Second we will develop the Lie-Dirac reduction for a more general Dirac structure than the one induced by a nonholonomic constraint.

\paragraph{Invariance and reduction of Dirac structures.}
Before going into details on Lie-Dirac reduction, let us just recall the definition of invariant Dirac structures (see, for instance, \cite{Dorfman1993,LiuWeiXu1998} and \cite{BS2001}). 

Let $P$ be a manifold and $D$ be a Dirac structure on $P$ with a Lie group $G$ acting on $P$. We denote this action by $\Phi:G \times P \to P$ and the action of a group element $h \in G$ on a point $x \in P $ by $h \cdot x= \Phi ( h, x ) = \Phi_{h}(x)$ for $h \in G$ and $x \in P$, so that $\Phi_{h}:P \to P$. Then, a Dirac structure $D \subset TP \oplus T ^{\ast} P $ is $G$--invariant if 
\begin{equation}\label{invariance_Dirac} 
((\Phi_{h})_{\ast}X, (\Phi_{h})_{\ast}\alpha) \in D
\end{equation} 
for all $h \in G$ and $(X, \alpha) \in D$.

Suppose that the action $ \Phi $ is free and proper and consider its natural lift on the Pontryagin bundle $TP \oplus T^*P$. The quotient space $(TP \oplus T^*P)/G= (TP)/G \oplus (T^*P)/G$ is called the {\bfi reduced Pontryagin bundle}.
It is easily checked that the natural lift of the $G$-action to the Pontryagin bundle preserves the
symmetric paring (as well as the Courant bracket).
Similarly as before, a subbundle $D \subset (TP)/G \oplus (T^*P)/G$ is called a {\bfi Dirac subbundle} if for all $x \in P/G$, the vector space $D(x) \subset V \oplus V^\ast $, with $V= ((TP)/G)_x$ is a Dirac structure on $V$.
If we suppose that the Dirac structure $D \subset TP \oplus T^*P$ is $G$-invariant, then the quotient space $D/G$ is easily verified to be a Dirac subbundle of the reduced Pontryagin bundle. The reduction procedure of Dirac structures $D/G  \subset (TP/G) \oplus (T^{\ast}P/G)$ was developed by \cite{YoMa2007b} for the case $P=T^{\ast}G$ and by \cite{YoMa2009} for the case $P=T^{\ast}Q$ in the unconstrained case. The reduced Pontryagin bundle $(TP)/G \oplus (T^*P)/G$ is an example of a Courant algebroid in the general sense of \cite{LiuWeiXu1998}, although the algebroid structure was not explicitly utilized in the reduction procedures of Dirac structures.

\paragraph{Induced Dirac structures on $T^{\ast}G$ and trivialized expressions.} As in \eqref{invariance_Delta}, assume that the constraint distribution $\Delta_{G} \subset TG$ on $G$ is left invariant under the group action.
We denote, as before, by $\Delta_{T^{\ast}G}$ the lifted distribution on $T^{\ast}G$ and by $D_{\Delta_G}$ the induced Dirac structure  on $T^{\ast}G$.

Let $ \bar \lambda :T^*G \rightarrow G \times \mathfrak{g}  ^\ast $ be the left trivialization of the cotangent bundle. The trivialized lifted distribution $\Delta_{G \times \mathfrak{g}^{\ast}}$ on $G \times \mathfrak{g}^\ast $ is defined by $\Delta_{G \times \mathfrak{g}^{\ast}}:=(T\bar{\pi}_{G})^{-1}(\Delta_{G})$, where $\bar{\pi}_{G}: G \times \mathfrak{g}^{\ast} \to G$ is defined such that $\pi_{G}=\bar{\pi}_{G} \circ \bar{\lambda}$. For each $(g, \mu ) \in G \times \mathfrak{g}  ^\ast $, we get
\[
\Delta_{G \times \mathfrak{g}^{\ast}}(g,\mu)=\left\{(v_g ,\rho) \in T_{g}G \times \mathfrak{g}^{\ast} \mid v \in \Delta_G(g) \right\}= \Delta _G(g) \times \{ \mu \} \times \mathfrak{g}  ^\ast .
\]

We define the trivialized Dirac structure on $G \times \mathfrak{g}^{\ast}$ by
\[
\bar{D}_{\Delta_{G}}:=\bar{\lambda}_{\ast}D_{\Delta_{G}}\subset T(G \times\mathfrak{g}  ^\ast ) \oplus T^*(G \times \mathfrak{g}  ^\ast ).
\]
For each $(g,\mu) \in G \times \mathfrak{g}^\ast$, we have
\[
\begin{split}
\bar{D}_{\Delta_{G}}(g,\mu)&=\{ (v_g ,\rho),(\beta_g ,\eta) \in (T_{g}G \times \mathfrak{g}^{\ast}) \times (T_{g}^{\ast}G \times \mathfrak{g}) \mid (v_g ,\rho) \in  \Delta_{G \times \mathfrak{g}^{\ast}}(g,\mu) , \\
\text{and} &\;\; \langle \beta_g , w_g  \rangle +  \langle \sigma, \eta \rangle =\omega(g,\mu)((v_g ,\rho),(w_g ,\sigma)), \;\; \text{for all} \;\;  (w_g ,\sigma) \in \Delta_{G \times \mathfrak{g}^{\ast}}(g,\mu) \},
\end{split}
\]
where $\omega=\bar{\lambda}_{\ast}\Omega$ is the canonical symplectic structure on $G \times \mathfrak{g}^{\ast}$ given, at  each point $(g,\mu) \in G \times \mathfrak{g}^{\ast}$, by
\begin{equation*} 
\label{symGtig}
 \omega(g,\mu)((v_g ,\rho),(w_g ,\sigma)) 
 = \langle \sigma, g ^{-1}v_g  \rangle- \langle \rho, g ^{-1} w_g  \rangle
     + \langle \mu, [g ^{-1}v_g , g ^{-1}w_g ] \rangle,
\end{equation*}
where $(w_g ,\sigma) \in T_{(g,\mu)}(G \times \mathfrak{g}^{\ast}) \cong T_{g}G \times \mathfrak{g}^{\ast}$.

\medskip

\paragraph{Invariance and reduction of the induced Dirac structure.} From the $G$-invariance \eqref{invariance_Delta} of the distribution $ \Delta _G$, we deduce that it is uniquely determined by its value at $e$, which we denote as
\[
\mathfrak{g}  ^ \Delta  := \Delta _G(e)\subset T_e G= \mathfrak{g}  .
\]
Similarly, the distribution $ \Delta _{G \times \mathfrak{g}  ^\ast }\subset T( G \times \mathfrak{g}  ^\ast )$ can be completely determined by its values at $(e, \mu )$, for all $ \mu\in \mathfrak{g}$, given by
\[
\Delta _{G \times \mathfrak{g}  ^\ast }( e , \mu )= \mathfrak{g}  ^ \Delta \times \mathfrak{g}  ^\ast\subset T_{(e, \mu )}(G \times \mathfrak{g}  ^\ast ) \cong \mathfrak{g}  \times \mathfrak{g}  ^\ast .
\]
Let us denote by $\Lambda _h(g, \mu ):=(hg, \mu )$ the left $G$-action on $G \times \mathfrak{g}  ^\ast$ induced, via $\bar \lambda $, by the cotangent lift of left translation by $G$ on $T^*G$. From the invariance of $ \Delta _G$, it follows that the induced Dirac structure $\bar{D}_{\Delta_{G}}$ on $G \times \mathfrak{g}^{\ast}$ is also $G$--invariant, i.e.
\begin{equation*}
((\Lambda_{{h}})_{\ast}X,(\Lambda_{{h}})_{\ast}\alpha) \in \bar{D}_{\Delta_{G}},
\end{equation*}
for all $(X, \alpha) \in \bar{D}_{\Delta_{G}}$ and $ h \in G$, as in \eqref{invariance_Dirac}. In particular, $\bar{D}_{\Delta_{G}}$ can be uniquely determined by its expression at $(e,\mu)$ as
\begin{equation*}
\begin{split}
\bar{D}_{\Delta_{G}}(e,\mu)=&\{ ((\xi,\rho),(\beta ,\eta)) \in W \oplus W^{\ast} \mid  (\xi,\rho) \in  \mathfrak{g}^{\Delta} \oplus \mathfrak{g}^{\ast}, \\
& \; \text{and} \;  \langle \beta , \zeta \rangle +  \langle \sigma, \eta \rangle=\omega(e,\mu)((\xi,\rho),(\zeta,\sigma)) \;\;\text{for all} \;\; (\zeta,\sigma) \in \mathfrak{g}^{\Delta} \oplus \mathfrak{g}^{\ast} \}, \nonumber
\end{split}
\end{equation*}
where $W:= \mathfrak{g}  \times \mathfrak{g}  ^\ast $.

Hence, the reduction of the induced Dirac structure $D_{ \Delta _G}$ can be obtained by taking the quotients by $G$, which is denoted $D_{\Delta_{G}}^{/G}(\mu)$. For each $\mu \in \mathfrak{g}^{\ast}$, we obtain
\begin{equation*}\label{reduced_InducedDirac}
\begin{split}
D_{\Delta_{G}}^{/G}(\mu)=& \{ ((\xi,\rho),(\beta ,\eta)) \in W \oplus W^{\ast} \mid 
(\xi,\rho) \in \mathfrak{g}^{\Delta} \oplus \mathfrak{g}^{\ast}, \\
&\text{and} \;\;  \langle \beta , \zeta \rangle +  \langle \sigma, \eta \rangle =\omega^{/G}(\mu)((\xi,\rho),(\zeta,\sigma)) \;\; \text{for all} \;\; (\zeta,\sigma) \in \mathfrak{g}^{\Delta} \oplus \mathfrak{g}^{\ast}
\},
\end{split}
\end{equation*}
where $\omega^{/G}(\mu)$ is the {\it $\mu$-dependent symplectic structure} on $W$ given by
\begin{equation*}\label{reduced_Omega}
\omega^{/G}(\mu)((\xi,\rho),(\zeta,\sigma))= \left\langle \sigma, \xi \right\rangle
- \left\langle \rho, \zeta \right\rangle + \left\langle \mu, [\xi,\zeta] \right\rangle,
\end{equation*}
where we note that it consists of the canonical two-form as well as the Lie-Poisson structure.

Thus, it follows that for fixed $\mu \in \mathfrak{g}^{\ast}$, the reduced structure $D_{\Delta_{G}}^{/G}(\mu)$ is  given by
\begin{equation}\label{reducedIndDiracAlt}
D_{\Delta_{G}}^{/G}(\mu)= \{ ((\xi,\rho),(\beta ,\eta )) \in W \oplus W^{\ast} \mid  \eta = \xi \in \mathfrak{g}^{\Delta},\;
\beta  + \rho- \mathrm{ad}_{\xi\,}^{\ast}\mu \in (\mathfrak{g}^{\Delta})^{\circ} \}.
\end{equation}
As mentioned earlier, $D_{\Delta_{G}}^{/G}$ is automatically a Dirac structure in the reduced Pontryagin bundle $\left( T(T^*G) \oplus T^*(T^*G) \right) /G\cong \mathfrak{g}  ^\ast \times (W \oplus W ^\ast )\rightarrow \mathfrak{g}  ^\ast $. As to the details, see \cite{YoMa2007b}.


%
\subsection{Euler-Poincar\'e-Dirac reduction}\label{EuPoinSusRed_Section}
In this section, making use of the Lie-Dirac reduction, we will recall the reduction of a Lagrange-Dirac dynamical system $(G,\Delta_{G},L)$, where we assume that the constraint distribution $\Delta_{G} \subset TG$ is left $G$-invariant. In particular, we will show how the reduced Lagrange-Dirac system can be obtained in an {\it implicit version} of the so-called {\it Euler-Poincar\'e-Suslov equations for nonholonomic mechanics} and we also illustrate an example of the Suslov problem of rigid body systems with nonholonomic constraints.

\paragraph{Implicit Lagrangian systems on Lie groups.} Let $G$ be a Lie group, let $D_{\Delta_{G}}$ be the Dirac structure induced from a given distribution $\Delta_{G}$ and let $L$ be a Lagrangian (possibly degenerate) on $TG$. Recall that the equations of motion of a Lagrange-Dirac system $(G,\Delta_{G},L)$ is given by
\[
((g,p,\dot{g},\dot{p}),\mathbf{d}_{D}L(g,v)) \in D_{\Delta_{G}}(g,p).
\]
Recall also that the canonical two-form $\Omega$ on $T^{\ast}G$ is given in coordinates by
\[
\Omega \left( (g,p, u_1, \alpha_1 ), (g,p, u _2, \alpha_2) \right) =
\left\langle \alpha _2, u _1 \right\rangle - \left\langle \alpha _1,
  u _2 \right\rangle,
\]
and the induced Dirac structure may be expressed, in coordinates, by
\begin{align*}
D_{\Delta_G}(g,p) =
\left\{
\left((g,p,\dot{g},\dot{p}),(g,p, \alpha, w) \right) \mid
\dot{g} \in \Delta (g), \;\; w = \dot{g}, \;\; \mbox{and} \;\; \alpha+\dot{p} \in \Delta(g) ^{\circ}
\right\}.
\end{align*}

Writing $\mathbf{d}_DL=(g,p=\partial{L}/\partial{v},-\partial{L}/\partial{g},v)$ and using
the local expressions for the canonical symplectic form and the Dirac differential, the equations of motion $((g,p,\dot{g},\dot{p}),\mathbf{d}_DL(g,p)) \in D_{\Delta_G}(g,p)$ read
\[
\left\langle -\frac{\partial L}{\partial g}, u \right\rangle
+ \left\langle \alpha, v \right\rangle
= \left\langle \alpha, \dot{g} \right\rangle
- \left\langle \dot{p}, u \right\rangle,
\]
for all $u \in \Delta(g)$ and all $\alpha $, where $(u,\alpha)$ are the local representatives of a point in $ T_{(g,p)}T^{\ast}G$. Since this holds for all $u \in \Delta(g)$ and all $\alpha $, it follows 
\begin{equation}\label{ImpLagSys_G}
p = \frac{\partial L}{\partial v }, \quad \dot{g}=v \in \Delta (g), \quad  \dot{p} - \frac{\partial L}{\partial g}
\in \Delta  (g)^{\circ},
\end{equation}
which are the local expressions for the implicit Lagrangian system over $G$.

\paragraph{Left trivialized expressions.}  
Utilizing the left trivializing diffeomorphism 
\[
\lambda:TG \to G \times \mathfrak{g}, \quad  v_{g} \mapsto (g, \eta=g ^{-1} v _g ),
\]
we can trivialize $T ^\ast( TG)$ and $T ^\ast (T ^\ast G)$ as $(G \times \mathfrak{g}) \times (\mathfrak{g}^\ast \oplus \mathfrak{g}^\ast)$ and $(G \times \mathfrak{g}^\ast ) \times (\mathfrak{g}^\ast \oplus \mathfrak{g}) $, respectively. Using these trivializations, the map $ \mathbf{d} L:TG \rightarrow T^*TG$  read
\begin{align*} 
\overline{\mathbf{d} L}: G \times \mathfrak{g} \to (G \times \mathfrak{g}) \times (\mathfrak{g}^{\ast} \oplus \mathfrak{g}^{\ast}), \quad \overline{\mathbf{d} L}(g, \eta ) =\left(g,\eta,g ^{-1} \, \frac{\partial{\bar{L}}}{\partial{g}},\, \frac{\partial{\bar{L}}}{\partial{\eta}} \right),
\end{align*} 
where $\bar L:= L \circ \lambda^{-1}: G \times \mathfrak{g}  \rightarrow  \mathbb{R}$. Further, the map $ \gamma _G: T^*TG \rightarrow T^*T^*G$ is to be
\begin{align*} 
\bar{\gamma}_{G}: (G \times \mathfrak{g}) \times (\mathfrak{g}^\ast \oplus \mathfrak{g}^\ast) \to (G \times \mathfrak{g}^\ast) \times (\mathfrak{g}^\ast \oplus \mathfrak{g}), \quad \bar{\gamma}_{G}(g,\eta,\nu, \mu) = (g,\mu,-\nu,\eta).
\end{align*} 
Then, the trivialization of the Dirac differential $ \mathbf{d} _DL: TG \to T^{\ast}T^{\ast}G$ becomes 
\[
\overline{\mathbf{d}_DL}=\bar{\gamma}_{G} \circ \overline{\mathbf{d}L}: G \times \mathfrak{g}\to (G \times \mathfrak{g}^\ast) \times (\mathfrak{g}^\ast \oplus \mathfrak{g}), \quad 
\overline{\mathbf{d}_DL}(g, \eta )=\left(g,\frac{\partial{\bar{L}}}{\partial{\eta}} ,-g ^{-1} \frac{\partial{\bar{L}}}{\partial{g}},\, \eta \right).
\end{equation*}

Note that $ \bar \gamma _G$ is $G$-invariant and induces the map
\[
(\bar{\gamma}_{G})^{/G}: \mathfrak{g} \times (\mathfrak{g}^\ast \oplus \mathfrak{g}^\ast) \to \mathfrak{g}^\ast \times (\mathfrak{g}^\ast \oplus \mathfrak{g}), \quad (\bar{\gamma}_{G})^{/G}(\eta,\nu, \mu) = (\mu,-\nu,\eta).
\]
If we assume that $L$ is $G$-invariant, with the associated reduced Lagrangian $\ell:\mathfrak{g} \to \mathbb{R}$, then $\overline{ \mathbf{d} L}$ is also $G$-invariant and it induces the quotient map
\[
(\overline{\mathbf{d} L})^{/G}: \mathfrak{g}  \rightarrow  \mathfrak{g} \times (\mathfrak{g}^{\ast} \oplus  \mathfrak{g}^{\ast}),\quad (\overline{\mathbf{d}L})^{/G}( \eta )=\left(\eta,0,\frac{\delta{\ell}}{\delta{\eta}}\right).
\]
This leads to the following definition.

\begin{definition}
The {\bfi reduced Dirac differential} of $\ell: \mathfrak{g}  \rightarrow \mathbb{R}  $ is
\[
\mathbf{d}^{/G}_D\ell:=(\overline{\mathbf{d}_D L})^{/G}=(\bar{\gamma}_{G})^{/G} \circ (\overline{\mathbf{d}L})^{/G}: \mathfrak{g} \rightarrow \mathfrak{g}^{\ast} \times (\mathfrak{g}^{\ast} \oplus \mathfrak{g}),
\]
whose expression is
\begin{equation}\label{diff_L}
\mathbf{d}^{/G}_D\ell(\eta) =\left(\frac{\delta{\ell}}{\delta{\eta}},0,\eta \right).
\end{equation}
\end{definition}

\begin{remark}{\rm Note that the usual differential of the reduced Lagrangian $\ell:\mathfrak{g} \to \mathbb{R}$ is the map
\[
\mathbf{d}\ell: \mathfrak{g} \to \mathfrak{g} \times \mathfrak{g}^{\ast},\quad
\mathbf{d}\ell( \eta )
=\left(\eta, \frac{\delta l}{\delta \eta} \right).
\]
It has therefore a different target space (and, of course, a different expression) from the reduced Dirac differential $\mathbf{d}^{/G}_D\ell:\mathfrak{g} \rightarrow \mathfrak{g}^{\ast} \times (\mathfrak{g}^{\ast} \oplus \mathfrak{g})$.}
\end{remark}

\paragraph{Euler-Poincar\'e-Dirac reduction.} Let us recall the definition of reduction of an implicit Lagrangian system associated with the induced Dirac structure on $T^{\ast}G$.
\begin{definition}[\bf Reduced Lagrange-Dirac dynamical system]\label{EulPoinSusReduction}
Let $(G, \Delta _G,L)$ be a Lagrange-Dirac dynamical system. The equations of motion for the {\bfi reduced Lagrange-Dirac dynamical system} are given by
\begin{equation}\label{condition_reducedImpLagSys_G}
\left( (\mu(t),\xi(t),\dot{\mu}(t)),\mathbf{d}^{/G}_D\ell(\eta(t))\right)  \in D_{\Delta_{G}}^{/G}(\mu(t)).
\end{equation}
In the above, $(\eta(t),\mu(t)) \in \mathfrak{g} \oplus \mathfrak{g}^{\ast}$ is the reduced curve associated to $(g(t), v(t), p(t)) \in TG \oplus T^{\ast}G$ and $(\mu(t),\xi(t),\dot{\mu}(t)) \in \mathfrak{g}^{\ast} \times V$ is the reduced curve associated to $(g(t),p(t),\dot{g}(t),\dot{p}(t))$.
Note also that \eqref{condition_reducedImpLagSys_G} induces the equality of the base points, i.e.  $\mu(t) =  \mathbb{F}{\ell}(\eta(t))$.
\end{definition}

\begin{definition}
Any curve $(\xi(t),\mu(t))$ in $\mathfrak{g} \oplus \mathfrak{g}^{\ast}$ together with $\mu(t) =  \mathbb{F}{\ell}(\eta(t))$ for $\eta(t) \in \mathfrak{g}$ satisfying \eqref{condition_reducedImpLagSys_G} is called a  solution curve of the reduced Lagrange-Dirac dynamical system.
\end{definition}
It follows from equations \eqref{reducedIndDiracAlt}, \eqref{diff_L} and \eqref{condition_reducedImpLagSys_G} that
$(\xi(t),\mu(t))$ is a solution curve of  the reduced Lagrange-Dirac dynamical system if and only if it satisfies
\begin{eqnarray}\label{ImpEPS}
\mu= \frac{\delta {\ell}}{\delta \eta}, \quad
\xi= \eta\in \mathfrak{g}^{\Delta}, \quad
\dot{\mu} - \mathrm{ad}_{\xi}^{\ast}  \mu \in (\mathfrak{g}^{\Delta})^{\circ},
\end{eqnarray}
where $(\mathfrak{g}^{\Delta})^{\circ} \subset \mathfrak{g}^{\ast}$ is the annihilator of the constraint subspace $\mathfrak{g}^{\Delta}$. 
\medskip

The set of equations of motion in equation \eqref{ImpEPS} is the local expression for the reduction of the implicit Lagrange-d'Alembert equations given in equation \eqref{ImpLagSys_G}, which is an implicit analog of  {\it Euler-Poincar\'e-Suslov equations} (see \cite{Bloch2003}). The set of equations of motion in \eqref{ImpEPS} is called {\bfi implicit Euler-Poincar\'e-Suslov equations for nonholonomic mechanics}  (see \cite{YoMa2007b}).
\paragraph{Energy conservation of the reduced Lagrange-Dirac dynamical system.} Let us denote by $(G,\Delta_{G},L)$ a Lagrange-Dirac dynamical system.
Recall that the generalized energy $E$ is defined by $E(g,v,p):=\left(p,v\right>-L(g,v)$, which is $G$-invariant under the action of $G$ since $L$ is $G$--invariant and one easily checks that the momentum function $\langle p(t), v(t) \rangle$ is also $G$-invariant.  Hence one can define the reduced energy $e$ on $\mathfrak{g} \oplus \mathfrak{g}^{\ast}$ by, for $(\eta,\mu) \in \mathfrak{g} \oplus \mathfrak{g}^{\ast}$,
\[
e(\eta,\mu)= \left\langle \mu, \eta \right\rangle -\ell(\eta),
\]
where $\ell=L|\mathfrak{g}$ is the reduced Lagrangian on $\mathfrak{g}$ and $ \left\langle \mu, \eta \right\rangle$ is the reduced momentum function on $\mathfrak{g} \oplus \mathfrak{g}^{\ast}$. We denote by $\mathfrak{g}^{\Delta} \subset \mathfrak{g}$ the reduction of $\Delta_{G} \subset TG$.
Let $(\eta(t),\mu(t))$ be a solution curve of the reduced Lagrange-Dirac dynamical system. Then conservation of energy holds along the solution curve  $(\eta(t),\mu(t))$; that is, the (reduced) energy $e(\eta(t),\mu(t))$ is constant in time, because it follows, by noting that $\mu(t) = (\delta \ell / \delta \eta) (t)$,
\[
\frac{d}{dt}e(\eta,\mu)   = \left\langle \dot{\mu}, \eta \right\rangle
                   + \left\langle \mu, \dot{\eta} \right\rangle
                   -\left\langle \frac{\delta \ell}{\delta \eta}, \dot{\eta} \right\rangle = \left\langle {\mu} - \frac{\delta \ell}{\delta \eta}, \dot{\eta} \right\rangle+
   \left\langle \mathrm{ad}^{\ast}_{\xi}\mu, \eta \right\rangle= \left\langle \mathrm{ad}^{\ast}_{\xi}\mu, \xi \right\rangle,
\]
which vanishes since $\eta= \xi \in\mathfrak{g}^{ \Delta}$ and since $\dot{\mu}- \mathrm{ad}^{\ast}_{\xi}\mu \in (\mathfrak{g}^{ \Delta})^{\circ}$. 

\subsection{Lie-Poisson-Dirac reduction}\label{LiePoisSusRed_Section}
For the case in which a given Lagrangian is regular, or when a Hamiltonian is given, we can develop a Hamiltonian analogue of Euler-Poincar\'e-Dirac reduction. This section gives a reduction procedure for a Hamilton-Dirac dynamical system $(G,\Delta_{G},H)$ associated with an induced Dirac structure on $D_{\Delta_{G}}$,  which we shall call {\it Lie-Poisson-Dirac reduction}. It is also shown that this reduction procedure may be also useful in the analysis of the Suslov problem in nonholonomic mechanics.

\paragraph{Implicit Hamiltonian systems over Lie groups.} Let $G$ be a Lie group, $D_{\Delta_{G}}$ be an induced Dirac structure from a given left invariant distribution $\Delta_{G}$, and $H$ be a $G$-invariant Hamiltonian on $T^{\ast}G$. Recall that equations of motion of  a Hamilton-Dirac dynamical system $(G,\Delta_{G},H)$ are given by
\[
\left( (g(t),p(t),\dot{g}(t),\dot{p}(t)),\mathbf{d}H(g(t),p(t))\right)  \in D_{\Delta_{G}}(g(t),p(t)),
\]
which read, by using the local expressions for the canonical symplectic form and the differential of $H$, namely, $\mathbf{d}H=(g,p,\partial{H}/\partial{g},\partial{H}/\partial{p})$, 
\[
\left\langle \frac{\partial H}{\partial g}, u \right\rangle
+ \left\langle \alpha, \frac{\partial H}{\partial p} \right\rangle
= \left\langle \alpha, \dot{g} \right\rangle
- \left\langle \dot{p}, u \right\rangle,
\]
for all $u \in \Delta(g)$ and all $\alpha $, where $(u,\alpha)$ are the local representatives of a point in $ T_{(g,p)}T^{\ast}G$. It follows that the local expressions for the implicit Hamiltonian system is 
\begin{equation*}
\dot{g}=\frac{\partial H}{\partial p} \in \Delta (g), \quad  \dot{p} + \frac{\partial H}{\partial g}
\in \Delta (g)^{\circ}.
\end{equation*}

\paragraph{Lie-Poisson-Dirac reduction.} Let us develop the reduction procedure for the Hamilton-Dirac dynamical system over a Lie group, which we shall call {\bfi Lie-Poisson-Dirac Reduction}.
The induced Hamiltonian $\bar{H}$ on $G \times \mathfrak{g}^{\ast}$ is $G$--invariant, and it reads 
\[
\bar{H}(g,\mu)=h(\mu),
\]
where $h: \mathfrak{g}^{\ast} \to \mathbb{R}$ is the reduced Hamiltonian as a trivialized expression of $h=H^{/G}=H|{\mathfrak{g}^{\ast}}$. 

\paragraph{Trivialized expressions.}
Employing the trivializing diffeomorphism 
\[
\bar{\lambda}:T^{\ast}G \to G \times \mathfrak{g}^{\ast}, \quad  \alpha _{g} \mapsto (g, \mu=T_{e}^{\ast}L_{g}(\alpha _{g})),
\]
and defining the trivialization $\bar{H}=H \circ \bar{\lambda}^{-1}$, we note that the differential $ \mathbf{d} H: T^*G \rightarrow T^*T^*G$ can be trivialized as the map
\[
\overline{\mathbf{d}H}: G \times \mathfrak{g}^{\ast} \to  (G \times \mathfrak{g}^{\ast}) \times (\mathfrak{g}^{\ast} \oplus \mathfrak{g}),\quad \overline{\mathbf{d}H}( g , \mu )=\left(g,\mu,g ^{-1} \, \frac{\partial{\bar{H}}}{\partial{g}}, \frac{\partial{\bar{H}}}{\partial{\mu}} \right),
\]
Suppose that $H$ is $G$-invariant, with reduced Hamiltonian $h: \mathfrak{g}  ^\ast \rightarrow \mathbb{R}  $. Then the map $\overline{ \mathbf{d} H}$ induces a map $(\mathbf{d}\bar{H})^{/G}$ on the quotient. We define the {\bfi reduction of the differential operator} for $h$ as the map
\begin{equation}\label{red_differential_H} 
\mathbf{d}^{/G}h:=(\overline{\mathbf{d} H})^{/G}: \mathfrak{g}^{\ast} \rightarrow \mathfrak{g}^{\ast} \times (\mathfrak{g}^{\ast} \oplus \mathfrak{g}),\quad \mathbf{d}^{/G}h( \mu )=\left(\mu,0,\frac{\delta{h}}{\delta{\mu}}\right).
\end{equation} 

\begin{definition}[\bf Reduced Hamilton-Dirac dynamical system]\label{LiePoissonSusReduction}
Let $(G,\Delta_{G},H)$ be a Hamilton-Dirac dynamical system. The equations of motion for the {\bfi reduced Hamilton-Dirac dynamical system} may be given by
\begin{equation}\label{condition_reducedImpHamSys_G}
((\mu(t),\xi(t),\dot{\mu}(t)),\mathbf{d}^{/G}h(\mu(t))) \in D_{\Delta_{G}}^{/G}(\mu(t)).
\end{equation}
\end{definition}
\begin{definition}
A curve $(\xi(t),\mu(t))$ in $\mathfrak{g} \oplus \mathfrak{g}^{\ast}$ is a {\bfi solution curve} of the reduced Hamilton-Dirac dynamical system if and only if
it satisfies \eqref{condition_reducedImpHamSys_G}.
\end{definition}

It follows from equations \eqref{reducedIndDiracAlt}, \eqref{red_differential_H} and \eqref{condition_reducedImpHamSys_G} that $(\xi(t),\mu(t)) \in \mathfrak{g} \oplus \mathfrak{g}^{\ast}$ is a solution curve of the reduced Hamilton-Dirac dynamical system if and only if it satisfies
\begin{eqnarray}\label{imp_LPS}
\dot{\mu}- \mathrm{ad}_{\xi}^{\ast}  \mu\in (\mathfrak{g}^{\Delta})^{\circ},\quad \xi= \frac{\delta {h}}{\delta \mu} \in \mathfrak{g}^{\Delta},
\end{eqnarray}
where $(\mathfrak{g}^{\Delta})^{\circ} \subset \mathfrak{g}^{\ast}$ is the annihilator of the constraint subspace $\mathfrak{g}^{\Delta}$.
The set of equations in equation \eqref{imp_LPS} is called {\bfi implicit Lie-Poisson-Suslov equations for nonholonomic mechanics} (see \cite{YoMa2007b}).
%

\section{Semidirect product theory}\label{SPT} 

The theory of Lie-Poisson reduction for semidirect products has been systematically developed in \cite{MaRaWe1984}; see also \cite{MaWeRaScSp1983}, and applied to several examples in mechanics such as the heavy top, compressible fluids, and magnetohydrodynamics. One of the main features of this approach is that it allows one to obtain the noncanonical Hamiltonian (or Poisson) structure of the dynamical system by reduction of the canonical symplectic structure on the cotangent bundle of the configuration Lie group. Such a reduction approach to noncanonical Poisson structures has been developed by \cite{MarWein1983} for incompressible perfect fluid motion, based on the earlier work of \cite{Arnol'd1966}.

The Lagrangian side of the {\it semidirect Lie-Poisson reduction} has been developed in \cite{HMR1998a} in connection with several examples in fluid dynamics. We will refer to this reduction theory as \textit{Euler-Poincar\'e reduction with advected parameters} (which is not the same as Euler-Poincar\'e reduction for semidirect products, see Remark \ref{important_remark} later).

\paragraph{Semidirect products.} Let $G$ be a Lie group acting by {\it left} representation on a vector space $V$. We will denote by $v \mapsto  \rho _g(v)$ or simply by $ v \mapsto gv$ the left representation of $g \in G$ on $v \in V$. The {\it semidirect product} $S= G \,\circledS\, V$ is the Cartesian product $S=G \times V$ whose group multiplication is given by 
\[
(g_{1},v_{1}) (g_{2},v_{2})=(g_{1}g_{2},v_{1}+g_{1}v_{2})=(g_{1}g_{2},v_{1}+\rho_{g_{1}} v_{2}).
\]
The identity element is $(e,0)$, where $e$ is the identity in $G$, and the inverse of an element $(g,v)$ is given by $( g ^{-1} , - g ^{-1} v)$. Let $\mathfrak{g}$ be the Lie algebra of $G$. The induced Lie algebra action of $\mathfrak{g}$ on $V$ is given, for $\xi \in \mathfrak{g}$ and $v \in V$, by
\begin{equation*}\label{inf_action_V}
\xi v:=\rho^{\prime}(\xi)v:=\frac{d}{dt}\biggr|_{t=0}\rho_{\mathrm{exp}(\xi t)}v \in V,
\end{equation*} 
where $\rho^{\prime}: \mathfrak{g} \to \mathrm{End}(V)$ is the induced Lie algebra representation. In this paper, we will employ the concatenation notation $\xi v$ most commonly.

The Lie algebra $ \mathfrak{s}$ is the semidirect product $\mathfrak{s}:=\mathfrak{g} \,\circledS\,  V$ of $\mathfrak{g}$ with $V$ endowed with the Lie bracket
\[
\begin{split}
[(\xi_{1},v_{1}),(\xi_{2},v_{2})]
&=([\xi_{1},\xi_{2}], \rho^{\prime}(\xi_{1})v_{2}-\rho^{\prime}(\xi_{2})v_{1})\\
&=([\xi_{1},\xi_{2}], {\xi_{1}}v_{2}-{\xi_{2}}v_{1})=\mathrm{ad}_{(\xi_{1},v_{1})}(\xi_{2},v_{2}),
\end{split}
\]
where $\xi_{1},\xi_{2} \in \mathfrak{g}$ and $v_{1}, v_{2} \in V$.

\medskip

The adjoint action of $(g,v) \in S$ on $(\xi, u) \in\mathfrak{s}$ is given by
\[
\mathrm{Ad}_{(g,v)}(\xi, u)=\left( \mathrm{Ad}_{g} \xi, \rho_{g} u-\rho^{\prime}(\mathrm{Ad}_{g} \xi)u \right),
\]
and the coadjoint action of $(g,v) \in S$ on $(\mu, a) \in \mathfrak{s}^{\ast}$ reads
\[
\mathrm{Ad}^{\ast}_{(g,v)^{-1}}(\mu, a)=\left( \mathrm{Ad}^{\ast}_{g^{-1}} \mu +(\rho^{\prime}_{v})^{\ast}  \rho ^*_{ g^{-1} }a, \rho ^*_{ g^{-1} } a\right) .
\]
In the above,  $\rho_{v}^{\prime}: \mathfrak{g} \to V$ denotes the linear map defined by $\rho_{v}^{\prime}(\xi):=\rho^{\prime}(\xi)v$, the map $(\rho_{v}^{\prime})^{\ast}: V^{\ast}\to \mathfrak{g}^{\ast}$ denotes its dual map,  defined by
\[
\left< (\rho^{\prime}_{v})^{\ast}(a),\xi \right>=\left< a, \rho^{\prime}_{v}(\xi) \right>=\left< a, \xi v\right>,\;\; \text{for}\;\;  \xi \in \mathfrak{g}  , \;\; v \in V, \;\; a \in V ^\ast ,
\]
and $ \rho ^*_{ g^{-1} }: V^\ast  \rightarrow V^\ast $ is the dual map to the inverse representation $ \rho _{ g ^{-1} }$.  We will denote it simply by concatenation as $ ga:= \rho ^\ast _{ g ^{-1} }a$.

\paragraph{Diamond operator.} Following \cite{HMR1998a}, we will use the {\it diamond} notation  $\diamond: V \times V^{\ast} \to \mathfrak{g}^{\ast}$, defined by $(v,a) \mapsto v \diamond a:= (\rho^{\prime}_{v})^{\ast}(a)$. We thus have
\[
\left<v \diamond a, \xi \right>=\left<a, \xi v \right>=-\left<\xi a, v \right>,\;\; \text{for}\;\;  \xi \in \mathfrak{g}  , \;\; v \in V, \;\; a \in V ^\ast ,
\]
where
\[
\xi a :=\frac{d}{dt}\biggr|_{t=0}\rho_{\mathrm{exp}(-t\xi)}^\ast a =\frac{d}{dt}\biggr|_{t=0} \operatorname{exp}(t \xi ) a \in V^\ast .
\]

\paragraph{Right actions.}
When the Lie group acts on the vector space $V$ by a \textit{right} representation $ v \mapsto \rho _g v=vg$, there are some changes in the formulas. The group multiplication of $S= G \,\circledS\,  V$ is given by
\[
(g_{1},v_{1}) (g_{2},v_{2})=(g_{1}g_{2},v_{2}+\rho_{g_{2}}v_{1})=(g_{1}g_{2},v_{2}+v_{1}g_{2})
\]
and the Lie bracket on $\mathfrak{s}=\mathfrak{g}\,\circledS\,  V$ reads
\[
\begin{split}
\mathrm{ad}_{(\xi_{1},v_{1})}(\xi_{2},v_{2})=[(\xi_{1},v_{1}),(\xi_{2},v_{2})]=
([\xi_{1},\xi_{2}], v_{1}\xi_{2}-v_{2}\xi_{1}),
\end{split}
\]
where $ v \mapsto v \xi $ denotes the induced Lie algebra representation on $V$.
The adjoint action of $(g,v) \in S$ on $( \xi , u) \in \mathfrak{s}$ is given by\[
\operatorname{Ad}_{(g,v)}(\xi,u)=(\operatorname{Ad}_g \xi, (u+v\xi)g^{-1})
\]
while the coadjoint action of $(g,v) \in S$ on $(\mu, a) \in \mathfrak{s}^{\ast}$ is
\[
\operatorname{Ad}^{\ast}_{(g,v)^{-1} }(\mu, a) =(\operatorname{Ad}^{\ast}_{g ^{-1} } \mu+(vg^{-1})\diamond (ag^{-1}),ag^{-1}),
\]
where $a \mapsto ag$ denotes the induced right representation of $G$ on $V ^\ast $.  As before, we have used the diamond notation
\[
\left< v \diamond a, \xi \right>=\left< a, v\xi \right>=-\left< a \xi, v\right>,\;\; \text{for}\;\;  \xi \in \mathfrak{g}  , \;\; v \in V, \;\; a \in V ^\ast.
\]
Note that, following our conventions, the adjoint and coadjoint actions are {\it left} representations.

\subsection{Euler-Poincar\'e reduction with advected parameters}\label{EP_Adv} 

In this section we recall from \cite{HMR1998a} the theory of Euler-Poincar\'e reduction with advected parameters. We use the same notation as above concerning the various Lie group and Lie algebra actions arising in the formulas.

\begin{itemize}
\item  Assume that we have a function $L:TG\times V^{\ast}\rightarrow
\mathbb{R}$ which is \textit{left\/} $G$-invariant under the action 
$(v_h,a) \mapsto (gv_h,ga)$, where $g, h \in G$, 
$v _h\in TG$, and $a \in V ^\ast$.
\item In particular, given $a_0\in V^{\ast}$, we define the Lagrangian
$L_{a _0 }:TG\rightarrow\mathbb{R}$ by $L_{a_0}(v_g):=L(v_g,a_0)$. Then $L_{a_0}$
is left invariant under the lift to $TG$ of the left action of $G_{a _0 }$ on
$G$, where $G_{a_0}: = \{g \in G \mid g a _0 = a _0 \}$ is the isotropy group of $a_0$ with respect to the linear action of $G$ on $V ^\ast $.
\item Define the reduced Lagrangian $\ell:\mathfrak{g}\times V^{\ast}\rightarrow\mathbb{R}$ by $\ell: =  L|_{ \mathfrak{g}\times V ^\ast}$. Left $G$-invariance of $L$ yields the formula
\[
\ell(g^{-1}v_g,g^{-1}a_{0})=L(v_g,a_{0}),
\]
for all $g \in G $, $v _g\in T _gG $, and $a_{0} \in V ^\ast$.
\item For a curve $g(t)\in G$ with $g(0)=e$, let $\xi(t):=g(t)^{-1}\dot{g}(t)\in \mathfrak{g}$ 
and define the curve $a(t)\in V^\ast$ as the unique solution of the 
following linear differential equation with time dependent coefficients
\begin{equation}\label{DEq}
\dot{a}+\xi a=0,
\end{equation}
with initial condition $a_0$.
The solution of \eqref{DEq} can then be written as
\begin{equation*}\label{a-evol}
a(t)=g(t)^{-1}a_0.
\end{equation*}
\end{itemize}

\begin{theorem}\label{EPSD} With the preceding notation, the following are equivalent:
\begin{itemize}
\item[\bf{(i)}] With $a_0\in V^\ast$ held fixed, Hamilton's variational principle
\begin{equation*}\label{Hamilton_principle}
\delta\int_{t_0}^{t_1}L_{a_{0}}(g,\dot{g})\mbox{d}t=0,
\end{equation*}
holds, for variations $\delta g(t)$ of $g(t)$ vanishing at the endpoints.
\item[\bf{(ii)}] The curve $g(t)$ satisfies the Euler-Lagrange equations for $L_{a_0}$ on
$G$.
\item[\bf{(iii)}] The constrained variational principle
\begin{equation*}\label{Euler-Poincare_principle}
\delta\int_{t_0}^{t_1}\ell(\xi,a)\mbox{d}t=0,
\end{equation*}
holds on $\mathfrak{g}\times V^{\ast}$, upon using variations of the form
\[
\delta\xi=\frac{\partial\eta}{\partial t}+[\xi,\eta],\quad \delta
a=-\eta a,
\]
where $\eta(t)\in\mathfrak{g}$ vanishes at the endpoints.
\item[\bf{(iv)}] The Euler-Poincar\'e equations with advected parameter hold on
$\mathfrak{g}\times
V^{\ast}$:
\begin{equation}\label{EP}
\frac{\partial}{\partial t}\frac{\delta
\ell}{\delta\xi}=\operatorname{ad}^{\ast}_\xi\frac{\delta \ell}{\delta\xi}+\frac{\delta
\ell}{\delta a}\diamond a.
\end{equation}
\end{itemize}
\end{theorem}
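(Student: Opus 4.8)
The plan is to establish the chain of equivalences $(i) \Leftrightarrow (ii)$, $(i) \Leftrightarrow (iii)$, and $(iii) \Leftrightarrow (iv)$, which together yield the full cycle. The first equivalence $(i) \Leftrightarrow (ii)$ is immediate and classical: for a fixed parameter $a_0$, the Lagrangian $L_{a_0}:TG \to \mathbb{R}$ is an ordinary Lagrangian, and Hamilton's principle $\delta \int L_{a_0}(g,\dot g)\,dt = 0$ for variations vanishing at the endpoints is, by the fundamental calculus of variations, equivalent to the Euler--Lagrange equations for $L_{a_0}$. No symmetry is used here.

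The core of the theorem is the equivalence $(i) \Leftrightarrow (iii)$, which relates the unreduced variational principle to the reduced one on $\mathfrak{g} \times V^\ast$. First I would use the left $G$-invariance of $L$ to write $L_{a_0}(g,\dot g) = L(\dot g, a_0) = \ell(g^{-1}\dot g, g^{-1}a_0) = \ell(\xi, a)$, where $\xi = g^{-1}\dot g$ and $a(t) = g(t)^{-1}a_0$ solves the advection equation $\dot a + \xi a = 0$ as stated before the theorem. This identification shows the two action integrals coincide, so the value of the functional is unchanged under the change of variables. The essential point, however, is to verify that \emph{arbitrary} endpoint-vanishing variations $\delta g$ of the curve $g(t)$ correspond exactly to the \emph{constrained} variations of $(\xi, a)$ of the form $\delta\xi = \dot\eta + [\xi, \eta]$ and $\delta a = -\eta a$, where $\eta := g^{-1}\delta g$ vanishes at the endpoints. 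I would compute $\delta\xi$ by differentiating $\xi = g^{-1}\dot g$ and using the standard Lie-group identity that the mixed variations satisfy $\delta(g^{-1}\dot g) - \partial_t(g^{-1}\delta g) = [g^{-1}\dot g, g^{-1}\delta g]$, which gives the bracket relation; and $\delta a$ by differentiating $a = g^{-1}a_0$ with respect to the variation parameter, yielding $\delta a = -\eta a$. The correspondence $\eta \leftrightarrow \delta g$ is a bijection on endpoint-vanishing curves (since $g$ is fixed and invertible), so stationarity of one functional over its admissible variations is equivalent to stationarity of the other.

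The final equivalence $(iii) \Leftrightarrow (iv)$ is the reduced computation: I would take the variation of $\int \ell(\xi, a)\,dt$ directly, writing $\delta \int \ell\,dt = \int \left( \langle \tfrac{\delta\ell}{\delta\xi}, \delta\xi\rangle + \langle \tfrac{\delta\ell}{\delta a}, \delta a\rangle \right) dt$, then substitute the constrained forms $\delta\xi = \dot\eta + [\xi, \eta]$ and $\delta a = -\eta a$. Integrating by parts in the $\dot\eta$ term (the boundary term drops since $\eta$ vanishes at the endpoints), and using the definitions of $\operatorname{ad}^\ast$ and of the diamond operator, namely $\langle \tfrac{\delta\ell}{\delta a}, -\eta a\rangle = \langle \tfrac{\delta\ell}{\delta a} \diamond a, \eta\rangle$ from the diamond pairing identity recalled earlier, collects everything into $\int \langle -\partial_t \tfrac{\delta\ell}{\delta\xi} + \operatorname{ad}^\ast_\xi \tfrac{\delta\ell}{\delta\xi} + \tfrac{\delta\ell}{\delta a}\diamond a,\ \eta\rangle\, dt$. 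Since $\eta$ is otherwise arbitrary, the fundamental lemma of the calculus of variations yields precisely the Euler--Poincar\'e equation \eqref{EP}.

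I expect the main obstacle to be the careful justification of the variational correspondence in step two, specifically proving that the constrained variations $\delta\xi = \dot\eta + [\xi,\eta]$ exhaust exactly the variations induced by arbitrary $\delta g$, and in particular that the endpoint conditions match up correctly. The forward direction (a variation of $g$ induces such a $\delta\xi$) is a direct computation, but the reverse direction, namely that every $\eta$ vanishing at the endpoints arises from an admissible $\delta g$, requires constructing a suitable two-parameter family $g(t,s)$ and checking that $\delta a = -\eta a$ is consistent with holding $a_0$ fixed; this is where the interplay between the fixed parameter $a_0$ and its advected image $a(t)$ must be handled with care. The remaining computations, while requiring attention to the sign conventions inherent in the left representation and the definitions of $\operatorname{ad}^\ast$ and $\diamond$, are routine once the variational structure is correctly set up.
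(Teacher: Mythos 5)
Your proposal is correct and follows essentially the same route as the paper: the paper itself defers the proof of this theorem to \cite{HMR1998a}, but the argument you give (reduction of the action integrand by $G$-invariance, the constrained-variation formulas $\delta\xi=\dot\eta+[\xi,\eta]$ and $\delta a=-\eta a$ with $\eta=g^{-1}\delta g$, and the integration by parts combined with the diamond identity) is exactly the standard one, and is reproduced almost verbatim in the paper's own proof of the Hamilton--Pontryagin analogue, Theorem \ref{theorem_HP_semidirect}. The point you flag as delicate, that every endpoint-vanishing $\eta$ is induced by an admissible variation of $g$ consistent with holding $a_0$ fixed, is handled there in the same way you suggest.
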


We refer to \cite{HMR1998a} for the proof and several applications. Note that the $G$-invariant function $L:TG \times V ^\ast \rightarrow \mathbb{R}  $ is not the Lagrangian of system. The Lagrangian $L_{a _0 }$ is obtained by fixing a value $a _0 \in V ^\ast $, i.e., $L_{a_0}(v _g ):=L(v _g , a _0 )$ and is only $G_{a _0 }$-invariant.

\subsection{Lie-Poisson reduction on semidirect products}\label{sec_LPSDP} 

We now recall the theory of Lie-Poisson reduction for semidirect product, following \cite{MaRaWe1984}. The setup is given as follows:

\begin{itemize}
\item Assume that we have a function $H: T ^\ast G \times V ^\ast \rightarrow
\mathbb{R}$ which is \textit{left\/} invariant under the action 
$(\alpha_h,a)\mapsto(g\alpha_h ,ga)$, for all $g,h \in G $, 
$\alpha_h\in T ^\ast G $, and $a \in V ^\ast$.
\item In particular, given $a_0\in V^{\ast}$, we define the Hamiltonian $H_{a_0}:T^{\ast}G\rightarrow\mathbb{R}$ by
\[
H_{a_0}(\alpha_g):=H(\alpha_g,a_0).
\]
Then $H_{a_0}$ is left invariant under the cotangent lift to $T^{\ast}G$ of the left action of $G_{a _0 }$ on $G$.
\item Define the reduced Hamiltonian $h:\mathfrak{g}^{\ast}\times V^{\ast}\rightarrow\mathbb{R}$ by $h: = H|_{ \mathfrak{g}^\ast \times V ^\ast}$. Left $G$-invariance of $H$ yields the formula
\[
h(g^{-1}\alpha_g,g^{-1}a_{0})=H(\alpha_g,a_{0}),
\]
for all $g \in G $, $\alpha_g\in T_g ^\ast G$, and $a_{0} \in V ^\ast$.
\end{itemize}

\begin{theorem}\label{LPSD}
For the curves $\alpha(t)\in T^{\ast}_{g(t)}G$ and
$\mu(t):=T^{\ast}L_{g(t)}(\alpha(t))\in\mathfrak{g}^{\ast}$, the following are
equivalent:
\begin{itemize}
\item[\bf{(i)}] The curve $\alpha(t)$ satisfies Hamilton's equations for
$H_{a_0}$ on $T^{\ast}G$.
\item[\bf{(ii)}] The Lie-Poisson equation holds on $\mathfrak{s}^{\ast}$:
\[
\frac{\partial}{\partial t}(\mu,a)=\operatorname{ad}^{\ast}_{\left(\frac{\delta
h}{\delta\mu},\frac{\delta h}{\delta a}\right)}(\mu,a)
=\left(\operatorname{ad}^{\ast}_{\frac {\delta h}{ \delta \mu}}\mu-\frac {\delta
h}{ \delta a}\diamond
a,-\frac {\delta h}{ \delta \mu}a\right),\quad a(0)=a_0,
\]
where $\mathfrak{s}$ is the semidirect product Lie algebra
$\mathfrak{s}=\mathfrak{g}\,\circledS\, V$. The associated Poisson bracket is
the Lie-Poisson bracket on the semidirect product Lie algebra $\mathfrak{s}^{\ast}$,
that is,
\[
\{f,g\}(\mu,a)=-\left\langle\mu,\left[\frac{\delta
f}{\delta\mu},\frac{\delta
g}{\delta\mu}\right]\right\rangle-\left\langle a,\frac{\delta f}{\delta
\mu }\frac{\delta g}{\delta a}-\frac{\delta g}{\delta \mu }\frac{\delta
f}{\delta a}\right\rangle.
\]
\end{itemize}
Like on the Lagrangian side, the evolution of the advected parameter is given by
$a(t)=g(t)^{-1}a_0$.
\end{theorem}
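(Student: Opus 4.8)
The plan is to establish the equivalence (i)~$\Leftrightarrow$~(ii) by a direct computation in the left trivialization $T^{\ast}G \cong G \times \mathfrak{g}^{\ast}$, using the $G$-invariance of $H$ together with the advection relation $a(t)=g(t)^{-1}a_0$. Setting $\mu = T^{\ast}L_{g}(\alpha)$ and $a = g^{-1}a_0$, left $G$-invariance gives $\bar{H}_{a_0}(g,\mu) := H_{a_0}(\alpha_g) = h(\mu, g^{-1}a_0) = h(\mu,a)$, so that the only explicit dependence of the trivialized Hamiltonian on the group variable $g$ is through $a$. First I would recall the left-trivialized form of Hamilton's equations on $T^{\ast}G$: for a Hamiltonian $\bar{H}(g,\mu)$ they read $g^{-1}\dot g = \partial\bar{H}/\partial\mu$ together with $\dot\mu = \operatorname{ad}^{\ast}_{\partial\bar{H}/\partial\mu}\mu - T^{\ast}_{e}L_{g}(\partial\bar{H}/\partial g)$, where the last term is absent in the fully $G$-invariant case and the equations collapse to the ordinary Lie-Poisson equation.

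The key step is to evaluate the obstruction term $T^{\ast}_{e}L_{g}(\partial\bar{H}_{a_0}/\partial g)$. Because $\bar{H}_{a_0}$ depends on $g$ only through $a = g^{-1}a_0$, an infinitesimal variation $g^{-1}\delta g = \eta \in \mathfrak{g}$ produces $\delta a = -\eta a$, so that
\[
\left\langle T^{\ast}_{e}L_{g}\frac{\partial\bar{H}_{a_0}}{\partial g},\, \eta\right\rangle = -\left\langle \eta a,\, \frac{\delta h}{\delta a}\right\rangle = \left\langle \frac{\delta h}{\delta a}\diamond a,\, \eta\right\rangle ,
\]
where the last equality is the diamond identity $\langle v\diamond a,\xi\rangle = -\langle \xi a, v\rangle$ recorded in Section~\ref{SPT}. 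Substituting this and writing $\xi := g^{-1}\dot g = \delta h/\delta\mu$ turns the trivialized equations into $\dot\mu = \operatorname{ad}^{\ast}_{\delta h/\delta\mu}\mu - \frac{\delta h}{\delta a}\diamond a$, which is the first component of~(ii); the second component follows immediately by differentiating $a(t)=g(t)^{-1}a_0$, giving $\dot a = -\xi a = -\frac{\delta h}{\delta\mu}a$, i.e.\ the advection equation~\eqref{DEq}. Comparing with the explicit coadjoint formula for $\mathfrak{s}=\mathfrak{g}\,\circledS\,V$ derived in Section~\ref{SPT} then identifies the right-hand side with $\operatorname{ad}^{\ast}_{(\delta h/\delta\mu,\,\delta h/\delta a)}(\mu,a)$ and recognizes the underlying bracket as the semidirect-product Lie-Poisson bracket.

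For the converse (ii)~$\Rightarrow$~(i), given a solution $(\mu(t),a(t))$ with $a(0)=a_0$ I would reconstruct $g(t)$ by integrating $g^{-1}\dot g = \delta h/\delta\mu$ from $g(0)=e$, set $\alpha(t) = T^{\ast}L_{g(t)^{-1}}\mu(t)$, and then verify $a(t) = g(t)^{-1}a_0$: both curves satisfy the same linear equation~\eqref{DEq} with the same initial value, so uniqueness forces them to coincide, after which the computation of the previous paragraph runs in reverse. The step I expect to require the most care is the sign bookkeeping in the diamond term---one must track precisely how the infinitesimal $V^{\ast}$-action enters the $g$-derivative of $\bar{H}_{a_0}$ so as to obtain $-\frac{\delta h}{\delta a}\diamond a$ rather than its negative. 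Conceptually, the entire statement is nothing but ordinary Lie-Poisson reduction for the group $S = G\,\circledS\,V$; the reason it is not an immediate corollary is that $H_{a_0}$ is only $G_{a_0}$-invariant on $T^{\ast}G$, so the full symmetry is recovered only after enlarging the configuration space to $S$, which is precisely the semidirect-product viewpoint behind Theorem~\ref{LPSD}.
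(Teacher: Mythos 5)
Your argument is correct, but it follows a different route from the paper. The paper does not prove Theorem \ref{LPSD} by computation at all: it cites \cite{MaRaWe1984} and, in Remark \ref{Link_RBS}, justifies the result conceptually as the second stage of Poisson (resp.\ symplectic) reduction by stages for an $S$-invariant Hamiltonian $\bar H$ on $T^{\ast}S$, $S=G\,\circledS\,V$, reduced first by the normal subgroup $V$ and then by $G$. You instead give a direct verification in the left trivialization $T^{\ast}G\cong G\times\mathfrak{g}^{\ast}$: the only nontrivial input is the evaluation of the obstruction term $T^{\ast}_{e}L_{g}\bigl(\partial\bar H_{a_0}/\partial g\bigr)=\frac{\delta h}{\delta a}\diamond a$, and your sign bookkeeping there is right --- it agrees with the identity $\langle v\diamond a,\xi\rangle=-\langle\xi a,v\rangle$ and with the paper's own computation of $\overline{\mathbf{d}H_{a_0}}^{/G_{a_0}}$ in Section \ref{subsec_preliminary_comments}, which yields exactly the term $\frac{\delta h}{\delta a}\diamond a$ appearing in \eqref{ImpLiePoiSusAd}. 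The reconstruction argument for (ii)$\Rightarrow$(i), with $a(t)$ and $g(t)^{-1}a_0$ identified by uniqueness for the linear advection equation, is also sound. What each approach buys: the reduction-by-stages argument explains \emph{why} the semidirect-product Lie--Poisson bracket must appear and generalizes immediately (it is the template the paper later mimics for the Dirac reduction by stages in Section \ref{sec_NH_SDP}), whereas your computation is self-contained, makes the provenance of the $\frac{\delta h}{\delta a}\diamond a$ term completely explicit, and is the version of the argument that actually gets reused in the constrained setting of Sections \ref{Hamiltonian_side} and \ref{Dirac_red_advect}, where the stages picture alone would not suffice because the lifted distribution breaks full $S$-invariance of the dynamics on $T^{\ast}G$.
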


\paragraph{Legendre transform.}
The link with Euler-Poincar\'e reduction for advected parameters recalled in \S\ref{EP_Adv} is the following. Let $L_{ a _0}:TG \rightarrow \mathbb{R}  $ be a given Lagrangian coming from a $G$-invariant function $L: TG \times V ^\ast  \rightarrow \mathbb{R}  $, suppose that the Legendre transformation $\mathbb{F}L_{a_0}$ is
invertible and form the corresponding Hamiltonian
$H_{a_0}=E_{a_0}\circ\mathbb{F}L_{a_0}^{-1}$, where $E_{a_0}$ is the energy of
$L _{a_0}$. Then the function $H: T^\ast G \times V ^\ast
\rightarrow \mathbb{R}$ so defined is $G$-invariant and one can apply
Theorem \ref{LPSD}. At the reduced level, the Hamiltonian $h:\mathfrak{g}^{\ast}\times V^{\ast}\rightarrow\mathbb{R}$ is given in terms of $ \ell$ by
\[
h(\mu,a):=\langle\mu,\xi\rangle-\ell(\xi,a),\quad\mu=\frac{\delta \ell}{\delta\xi}.
\]
Since
\[
\frac{\delta h}{\delta\mu}=\xi\quad\text{and}\quad\frac{\delta h}{\delta
a}=-\frac{\delta \ell}{\delta a},
\]
we see that the Lie-Poisson equations for $h$ on $\mathfrak{s}^\ast $ are
equivalent to the Euler-Poincar\'e equations with advected quantities \eqref{EP} for $\ell$ together with the
advection equation $\dot{a}+a\xi=0$.

\begin{remark}[\textbf{Links with reduction by stages}]\label{Link_RBS}{\rm It is surprising, at first glance, that at the reduced level, on the Hamiltonian side, we recover the ordinary Lie-Poisson equations for the semidirect product $S=G \,\circledS\, V$, whereas we started with a $G$-invariant Hamiltonian $H: T^{\ast}G \times V^{\ast} \rightarrow \mathbb{R}  $ and not a $S$-invariant Hamiltonian on $T^{\ast}S$. This can be easily justified by the process of \textit{Poisson reduction by stages}, \cite{MaMiOrPeRa2007}.
%
%
Indeed, the $G$-action on $T ^\ast G \times V ^\ast$ can be seen as the action induced by 
the cotangent lift of left translation on $S$, given by
\begin{align*}\label{action_on_T*S}
T^{\ast}L_{{(g,v)} ^{-1} }(\alpha_h,(u,a)):=\left(g\alpha_h ,(v+gu,ga)\right).
\end{align*}
Thus, we can think of the Hamiltonian $H:T^{\ast}G\times V^{\ast}\to\mathbb{R}$
as being the first stage Poisson reduction of a $S$-invariant Hamiltonian
$\bar{H}:T^{\ast}S\to\mathbb{R}$ by the normal subgroup $\{e\} \times V \subset S$, since
$(T ^\ast S)/(\{e\} \times V)  \cong T ^\ast G \times V^\ast$. Then the second stage of the Poisson reduction applied to $H:T^{\ast}G \times V ^\ast \rightarrow \mathbb{R}$ yields the reduced Hamiltonian $h$ and the reduced Hamilton equations of motion on $ \mathfrak{g}  ^\ast \times V ^\ast $. By the Poisson reduction by stages theorem, we know that the two-stages reduction coincides with the one step reduction of $\bar H$ by $S$ and, therefore, the reduced equations of motion on $ \mathfrak{g}  ^\ast \times V ^\ast $ must recover the Lie-Poisson equations on the semidirect product $ \mathfrak{s}$.

Note that the Hamiltonian $H _{ a _0 }$ does not appear in the process of Poisson reduction by stages. In order to obtain it naturally from a reduction approach, it is necessary to use \textit{symplectic reduction by stages}. Consider the semidirect product
Lie group $S=G\,\circledS\,V$ acting by right translation on its cotangent
bundle $T^{\ast}S$. An equivariant momentum map relative to the canonical symplectic
form is given by
\[
\mathbf{J}_L(\alpha_f,(u,a))=T^{\ast}R_{(f,u)}(\alpha_f,(u,a))=(T^{\ast}_eR_f(\alpha_f)+u\diamond
a,a).
\]
Since $V$ is a closed normal subgroup of $S$, it also acts on $T^{\ast}S$ and has a
momentum map $\mathbf{J}_V : T^{\ast}S\to V^{\ast}$ given by
\[
\mathbf{J}_V(\alpha_f,(u,a))=a.
\]
Reducing $T^{\ast}S$ by $V$ at the value $a_0 $ we get the first stage reduced space
$(T^{\ast}S)_{a_0 }=\mathbf{J}_V^{-1}(a_0 )/V$. One observes that this space is
symplectically diffeomorphic to the canonical symplectic manifold
$(T^{\ast}G,\Omega)$, and that the $S$-invariant Hamiltonian $\bar H$ on $T^{\ast}S$ induces exactly the Hamiltonian $H_{ a _0 }$ on $T^{\ast}G \simeq (T^{\ast}S)_{a_0 }$, as desired. For completeness, we now comment on the second stage symplectic reduction.
The isotropy subgroup $G_{a_0 }$, consisting of
elements of $G$ that leave the point $a_0 $ fixed, acts freely and properly on the first stage symplectic reduced space 
$(T^{\ast}S)_{a_0 }$ and admits an equivariant momentum map $\mathbf{J}_{a_0 } :
(T^{\ast}S)_{a_0 }\to\mathfrak{g}^{\ast}_{a_0 }$ induced from $ \mathbf{J}  _L$, where $\mathfrak{g}_{a_0 }$ is the Lie algebra of
$G_{a_0 }$. Reducing $(T^{\ast}S)_{a_0 }$ at the point $\mu_{a_0 }:=\mu|\mathfrak{g}_{a_0 }$, we get the
second symplectic reduced space
$\left((T^{\ast}S)_{a_0 }\right)_{\mu_{a_0 }}=\mathbf{J}_a^{-1}(\mu_{a_0 })/(G_{a_0 })_{\mu_{a_0 }}$. From the Reduction by Stages Theorem (\cite{MaMiOrPeRa2007}), the second symplectic reduced space
$\left((T^{\ast}S)_{a_0}\right)_{\mu_{a_0 }}$  is symplectically diffeomorphic to the symplectic reduced
space $(T^{\ast}S)_{(\mu,a_0 )}=\mathbf{J}_R^{-1}(\mu,a)/G_{(\mu,a_0 )}$ obtained by symplectic reduction by $S$ at the point $(\mu,a_0 )\in\mathfrak{s}^{\ast}$. The latter is symplectically diffeomorphic to the coadjoint orbit
$\left(\mathcal{O}_{(\mu,a_0 )},\omega_{(\mu,a_0 )}\right)$ of $S$ endowed with its orbit
symplectic form.}
\end{remark}

\begin{remark}[\textbf{Cautionary remark}]\label{important_remark}{\rm  It is important to mention here that the Hamiltonian reduction mentioned above is literally an ordinary Lie-Poisson reduction in the special case when the Lie group is a semidirect product. This is not the case for the Lagrangian side, since it does not coincide with the Euler-Poincar\'e reduction in the special case when the Lie group is a semidirect product. This is why we prefer to call it Euler-Poincar\'e reduction with advected parameters.
This comment is important for the present paper for two reasons. First, we will see that a crucial example of nonholonomic system arises when the Lie group itself is a semidirect product in addition to have advected quantities. So there are two semidirect products arising in this example and, on the Lagrangian side, they have completely different roles; second, we will see that the Dirac reduction processes involved in the Lagrangian and Hamiltonian side are completely different.}
\end{remark}


\section{Variational framework}\label{VPAP} 

The goal of this section is to derive the implicit Euler-Poincar\'e-Suslov equations with advected parameters together with the associated variational structures. We first consider the unconstrained case via the Hamilton-Pontryagin principle, and then explore the nonholonomic case given by a parameter dependent constraint $ \Delta _G ^{a _0 } \subset TG$, by using the Lagrange-d'Alembert-Pontryagin principle. Second we study the case of a nonholonomic system in which a configuration Lie group is a semidirect product and with special classes of constraints arising in several examples. In particular, it is shown that the results of \cite{Sch2002} can be incorporated into the classes of nonholonomic constraints. Finally, we also explore the Hamiltonian side to develop the corresponding implicit Lie-Poisson-Suslov equations with advected parameters, which will be shown to be different from the implicit Lie-Poisson-Suslov equations on semidirect products. We also mention how the theory can be extended to the case in which the parameters are acted on by more general actions, such as affine actions.

\subsection{The unconstrained case}\label{HP_unconstrained} 

Let us consider the Hamilton-Pontryagin principle and its associated reduced version for the case of a parameter dependent Lagrangian $L_{ a _0 }(g, \dot g)=L(g, \dot g, a _0 ):TG \times V^{\ast} \to \mathbb{R}$. Using the same notations and assumptions made in the beginning of \S\ref{EP_Adv}, we state the following theorem.

\begin{theorem} \label{theorem_HP_semidirect}
With the above notations, the following statements are equivalent.
\begin{itemize}
\item[\bf{(i)}]
With $a_{0}$ fixed, the Hamilton-Pontryagin principle
\begin{equation}\label{HP-ActInt}
\delta \int_{t_1}^{t_2}
\left\{ L_{a_{0}}(g(t),v(t)) + \left\langle p(t),  \dot{g}(t)-v(t) \right\rangle \right\} \, dt =0
\end{equation}
holds for variations $(\delta{g}(t), \delta{v}(t),\delta{p}(t))$ of $(g(t),v(t),p(t))$ in $TG \oplus T^{\ast}G$ such that $\delta g$ vanishes at the endpoints.
\item[\bf{(ii)}]
The curve $g(t), \;t \in [t_{1},t_{2}]$ satisfies the implicit Euler-Lagrange equations for $L_{a_{0}}$ on $TG \oplus T^{\ast}G$:
\begin{equation}\label{ImpELEq}
p=\frac{\partial L_{a_{0}}}{\partial v},  \quad \dot{g}=v, \quad   \dot{p}=\frac{\partial L_{a_{0}}}{\partial g}.
\end{equation}
\item[\bf{(iii)}]
The reduced Hamilton-Pontryagin principle
\begin{equation}\label{ConstHP-ActInt}
\delta \int_{t_1}^{t_2} \left\{ \ell(\eta(t), a(t))+ 
\left\langle \mu(t), \xi(t) - \eta(t) \right\rangle \right\} ~dt=0,
\end{equation}
holds under arbitrary variations of $\eta=g^{-1}v ,  \mu=g^{-1}p$ and variations of the form
\[
\delta{\xi}=\frac{\partial \zeta}{\partial t}+[\xi, \zeta], \qquad \delta{a}=-\zeta a,
\]
where $\xi=g^{-1}\dot{g}$ and $\zeta=g^{-1}\delta{g} \in \mathfrak{g}$ vanishes at the endpoints.

\item[\bf{(iv)}]
The implicit Euler-Poincar\'e equations hold on $(\mathfrak{g} \oplus \mathfrak{g}^{\ast}) \times V^{\ast}$:
\begin{equation}\label{implicit_EP}
\mu =  \frac{\delta \ell}{\delta \eta}, \quad \xi = \eta, \quad \dot{\mu} =  \operatorname{ad}_{\xi}^{\;\ast} \mu + \frac{\delta\ell}{\delta{a}} \diamond a.
\end{equation} 
\end{itemize}
\end{theorem}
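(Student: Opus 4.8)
The plan is to establish the four equivalences by proving the cyclic chain $(i)\Leftrightarrow(ii)$, $(ii)\Leftrightarrow(iii)$ or $(iii)\Leftrightarrow(iv)$, in a way that keeps the unreduced and reduced computations cleanly separated. The essential structural point is that this theorem is the \emph{unconstrained} ($\Delta_G=TG$) analogue of the reduced Lagrange-d'Alembert-Pontryagin principle \eqref{RedLagDAPPrin}, but now with the symmetry \emph{broken} by the fixed parameter $a_0$; so the bookkeeping of the advected parameter $a(t)=g(t)^{-1}a_0$ and its variation $\delta a=-\zeta a$ is what distinguishes this from the material already recalled in \S\ref{DDS}.

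First I would prove $(i)\Leftrightarrow(ii)$ directly on $TG\oplus T^*G$. This is purely local and parallels the derivation of the implicit Lagrange-d'Alembert equations from \eqref{LagDALP} in the unconstrained case: taking the free variations $\delta v$, $\delta p$, and $\delta g$ (vanishing at the endpoints) in \eqref{HP-ActInt} and integrating the $\langle p,\dot g\rangle$ term by parts yields, respectively, the Legendre relation $p=\partial L_{a_0}/\partial v$, the second-order condition $\dot g=v$, and the Euler-Lagrange equation $\dot p=\partial L_{a_0}/\partial g$, i.e. exactly \eqref{ImpELEq}. No symmetry is used here, so this step is routine.

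Next I would establish $(i)\Leftrightarrow(iii)$ by showing that the two action integrals agree and that the admissible variations correspond under left trivialization. Using $G_{a_0}$-invariance of $L_{a_0}$ together with the invariance formula $\ell(g^{-1}v_g,g^{-1}a_0)=L(v_g,a_0)$ from \S\ref{EP_Adv}, and writing $\eta=g^{-1}v$, $\mu=g^{-1}p$, $\xi=g^{-1}\dot g$, one checks that the integrand of \eqref{HP-ActInt} equals that of \eqref{ConstHP-ActInt} pointwise; the $G$-invariance of the pairing term $\langle p,\dot g-v\rangle$ is exactly the observation already made before \eqref{RedLagDAPPrin}. The key bookkeeping is the correspondence of variations: setting $\zeta=g^{-1}\delta g$, the standard computation gives $\delta\xi=\dot\zeta+[\xi,\zeta]$, and since $a(t)=g(t)^{-1}a_0$ one differentiates to obtain $\delta a=-\zeta a$. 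Conversely, free variations $\delta\eta,\delta\mu$ correspond to free $\delta v,\delta p$. Hence stationarity of one integral is equivalent to stationarity of the other.

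Finally $(iii)\Leftrightarrow(iv)$ is the reduced stationarity computation. Taking the variation of \eqref{ConstHP-ActInt}, the free variations $\delta\mu$ and $\delta\eta$ immediately give $\xi=\eta$ and $\mu=\delta\ell/\delta\eta$. The substantive part is the $\delta\xi$, $\delta a$ contribution: inserting $\delta\xi=\dot\zeta+[\xi,\zeta]$ and $\delta a=-\zeta a$, integrating by parts in $t$ (with $\zeta$ vanishing at the endpoints), and using the definition of the diamond operator $\langle\delta\ell/\delta a\diamond a,\zeta\rangle=\langle\delta\ell/\delta a,-\zeta a\rangle$ together with $\langle\mu,[\xi,\zeta]\rangle=\langle\operatorname{ad}^*_\xi\mu,\zeta\rangle$, one collects the coefficient of the arbitrary $\zeta$ and obtains $\dot\mu=\operatorname{ad}^*_\xi\mu+\delta\ell/\delta a\diamond a$, which is \eqref{implicit_EP}. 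I expect the main obstacle to be the careful handling of the advection term: one must verify that $\delta a=-\zeta a$ really follows from the constraint $a=g^{-1}a_0$ (so that $a$ is not an independent variable but is slaved to $g$), and that its contribution $\langle\delta\ell/\delta a,\delta a\rangle$ is precisely what produces the $\diamond$ term that breaks the pure Euler-Poincaré form—this is exactly the feature, emphasized in Remark~\ref{important_remark}, that prevents \eqref{implicit_EP} from being the Euler-Poincaré equation on a semidirect product.
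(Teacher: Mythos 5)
Your proposal is correct and follows essentially the same route as the paper's own proof: the same three pairwise equivalences $(i)\Leftrightarrow(ii)$, $(iii)\Leftrightarrow(iv)$, and $(i)\Leftrightarrow(iii)$, established by the same local integration-by-parts computation, the same reduced variational computation with $\delta\xi=\dot\zeta+[\xi,\zeta]$ and $\delta a=-\zeta a$ producing the diamond term, and the same invariance argument identifying the two action integrals and their admissible variations. Your emphasis on $\delta a=-\zeta a$ being forced by $a=g^{-1}a_0$ (and its converse) is exactly the closing observation of the paper's proof.
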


\begin{proof}
The equivalence of $\bf{(i)}$ and $\bf{(ii)}$ is obvious from the following computations.
It follows from the Hamilton-Pontryagin principle that
\begin{equation*}
\begin{split}
&\delta \int_{t_1}^{t_2}  \left\{ L_{a_{0}}(g(t),v(t)) +\left\langle p(t),  \dot{g}(t)-v(t) \right\rangle
\right\} dt\\ 
&= \int_{t_1}^{t_2} \left\{
\left(-\dot{p}+\frac{\partial L_{a_{0}}}{\partial g}  \right) \delta{g} +\left( -p+\frac{\partial L_{a_{0}}}{\partial v}\right)
\delta{v}+\left(\dot{g}-v  \right) \delta{p} 
\right\} dt +p\,\delta{g}\bigg|_{t_1}^{t_2}=0,
\end{split}
\end{equation*}
holds for all $\delta{g},\delta{v}$ and $\delta{p}$. Keeping the endpoints $g(t_{1})$ and $g(t_{2})$ of $g(t)$ fixed, we obtain the implicit Euler--Lagrange equations on $TG \oplus T^{\ast}G$ for the Lagrangian $L_{a_{0}}: TG \to \mathbb{R}$ as
\begin{equation*}
\begin{split}
p=\frac{\partial L_{a_{0}}}{\partial v},  \quad \dot{g}=v, \quad   \dot{p}=\frac{\partial L_{a_{0}}}{\partial g}.
\end{split}
\end{equation*}

Next, let us see the equivalence of $\bf{(iii)}$ and $\bf{(vi)}$ as follows. The variation of the action integral is given by
\begin{align*} \label{varation_f}
\delta & \int_{t_1}^{t_2} \left\{ \ell(\eta, a)+ 
\left\langle \mu, \xi - \eta \right\rangle \right\} ~dt\\
&=\int_{t_1}^{t_2} \left\{ \left\langle \frac{\delta \ell}{\delta \eta},  \delta \eta\right\rangle +\left\langle \delta a, \frac{\delta \ell}{\delta a} \right\rangle  
+\left\langle \delta{\mu}, \xi-\eta\right\rangle + \left\langle \mu, \delta \xi - \delta \eta\right\rangle  \right\}~dt  \nonumber \\ 
& =   \int_{t_1}^{t_2} \left\{ \left\langle \frac{\delta\ell}{\delta \eta} -\mu, \delta \eta\right\rangle -\left\langle \zeta a, \frac{\delta \ell}{\delta a}\right\rangle 
+ \left\langle \delta{\mu}, \xi-\eta \right\rangle  + \left\langle  \mu, \dot{\zeta} + \mbox{ad}_{\xi}  \zeta\right\rangle  \right\} ~dt \nonumber \\
&=  \int_{t_1}^{t_2} \left\{ \left<\frac{\delta \ell}{\delta \eta} -\mu, \delta \eta \right>
+ \left\langle \delta{\mu}, \xi-\eta \right\rangle + \left\langle - \dot{\mu} + \mbox{ad}_{\xi}^{\ast}  \mu + \frac{\delta \ell}{\delta a} \diamond a,  \zeta\right\rangle  \right\} ~dt  + \mu \cdot \zeta\bigg|_{t_1}^{t_2},
\end{align*}
where the variation of $\xi$ is given by $\delta \xi=\dot{\zeta}+ [\xi,\zeta]$, where $\zeta=g^{-1}  \delta g$, so that $\zeta $ is an arbitrary curve in $\mathfrak{g}$ satisfying $\zeta(t_{1})=\zeta(t_{2})=0$. Then, the variation of the action integral vanishes for any $\delta{\eta} \in \mathfrak{g}$, $\zeta \in \mathfrak{g}$ and $\delta{\mu} \in \mathfrak{g}^{\ast}$ if and only if
\begin{equation}\label{Imp_Euler_PoincareEqn}
\begin{split}
\mu =  \frac{\delta \ell}{\delta \eta}, \quad \xi & = \eta, \quad \dot{\mu} =  \mbox{ad}_{\xi}^{*} \mu + \frac{\delta \ell}{\delta a} \diamond a.
\end{split}
\end{equation}

Now, the equivalence of $\bf{(i)}$ and $\bf{(iii)}$ is clear, since we first note that the $G$-invariance of $L:TG \times V^{\ast} \to \mathbb{R}$ and $\left\langle p_{g},  \dot{g}-v \right\rangle$ together with the definition of $a(t)=g^{-1}(t)a_{0}$ imply that the integrands in 
equations \eqref{HP-ActInt} are \eqref{ConstHP-ActInt} equal. All variations $\delta{g}(t) \in TG$ vanishing at the endpoints induce variations of $\delta \xi(t) \in \mathfrak{g}$ of $\xi(t)$ of the form $\delta\xi=\dot{\zeta} + [\xi, \zeta]$ with $\zeta(t) \in \mathfrak{g}$ vanishing at the endpoints. The variations of $a(t)=g^{-1}(t)a_{0}$ is given by
\[
\delta{a}(t)=\delta{g}(t)^{-1}a_{0}=-g(t)^{-1}\delta{g}(t)g(t)^{-1}a_{0}=-\zeta(t)a(t).
\]
Conversely, if the variation of $a(t)$ is defined by $\delta{a}(t)=-\zeta(t) a(t)$, then the variation of $g(t)a(t)=a_{0}$ vanishes, which is consistent with the fact that $ a _0 $ is held fixed in $L_{a_{0}}$.
\end{proof}

\medskip

We shall call the set of equations \eqref{Imp_Euler_PoincareEqn} {\bfi implicit Euler-Poincar\'e equations with advected parameters}  on $(\mathfrak{g} \oplus \mathfrak{g}^{\ast}) \times V^{\ast}$. They are the reduced formulation of the implicit Euler-Lagrange equations  \eqref{ImpELEq} on $TG \oplus T^{\ast}G$ associated to the parameter dependent Lagrangian $L_{a_{0}} : TG \to \mathbb{R}$.

\medskip

\begin{remark}[\textbf{Alternative formulation of the Hamilton-Pontryagin principle}]\label{Hybrid_HP_advected} {\rm Similarly with Remark \ref{Hybrid_HP}, we can consider the following variational principle similar to ${\bf (iii)}$ above, namely
\[
\delta \int_{ t _1 }^{ t _2 } \{ \ell( \xi,a )+ \left\langle \mu , g ^{-1} \dot g - \xi \right\rangle+ \left\langle v, g ^{-1} a _0 - a \right\rangle  \} \,dt=0,
\]
for arbitrary variations $ \delta \xi(t) $, $ \delta \mu (t) $, $ \delta v(t)$, $ \delta a(t)$ of $ \xi (t) $, $ \mu (t) $, $v(t)$, $a(t) $ and variations $\delta g(t)$ of $g(t)$ vanishing at the endpoints. It yields the stationarity conditions
\[
\xi = g ^{-1} \dot g, \quad a= g ^{-1} a _0 , \quad \mu = \frac{\delta \ell}{\delta \xi },\quad v= \frac{\delta \ell}{\delta a},  \quad \dot{\mu} =  \mbox{ad}_{\xi}^{*} \mu + v\diamond a,
\]
which are equivalent to \eqref{implicit_EP}. 
}
\end{remark} 

\subsection{The case of systems with nonholonomic constraints}\label{HP_constrained} 
%
%
As before, we assume that $L:TG \times V ^\ast \rightarrow \mathbb{R}  $ is a $G$-invariant Lagrangian such that $L_{a_0}(g, \dot g):=L(g, \dot g, a_0)$, where $a_0$ is a parameter that can be arbitrarily chosen in $V^{\ast}$.

\paragraph{Invariance assumption on nonholonomic constraints.}In addition, we assume that the dynamics given by the Lagrangian $L_{a_0}$ is constrained by a distribution $\Delta_G^{ a _0 }\subset TG$ that depends on $a _0 $. Suppose that the distribution has the following invariance property:
\begin{equation}\label{invariance_assumption}
\Delta _G (hg, ha_0)=h \Delta _G (g, a _0), \quad \text{for all $h\in G$},
\end{equation} 
where we employed the notation $\Delta _G^{a_0}(g)= \Delta _G (g, a_0)\subset T_gG$. By the invariance assumption, the family of distributions $ \Delta _G^{ a _0 }$ is completely determined by the family
\begin{equation}\label{def_g_delta}
\mathfrak{g}  ^{ \Delta }(a):= \Delta _G (e,a)\subset \mathfrak{g} 
\end{equation} 
of vector subspaces of $ \mathfrak{g}  $, parametrized by $a=g^{-1}a_{0}\in V^{\ast}$. As will be shown later, the $G$-invariance assumption \eqref{invariance_assumption} is verified in many important examples, such as Chaplygin's ball and Euler's disk.

\paragraph{Reduction theorem.} We now formulate the generalization of Theorem \ref{theorem_HP_semidirect} to the case with nonholonomic constraints.

\begin{theorem}\label{thm_HP_constrained} With the above notations, the following statements are equivalent.
\begin{itemize}
\item[\bf{(i)}]
With $a_{0}$ fixed, the Lagrange-d'Alembert-Pontryagin principle
\begin{equation*}\label{HP-ActInt_constrained}
\delta \int_{t_1}^{t_2}
\left\{ L_{a_{0}}(g(t),v(t)) + \left\langle p(t),  \dot{g}(t)-v(t) \right\rangle \right\} \, dt =0
\end{equation*}
holds for variations $(\delta{g}(t), \delta{v}(t),\delta{p}(t))$ of $(g(t),v(t),p(t))$ in $TG \oplus T^{\ast}G$, where $ \delta g$ vanishes at the endpoints, and with $ \delta g\in \Delta _G^{a_0}(g) $ and with the constraint $ v\in \Delta _G^{a_0 }(g)$ .
\item[\bf{(ii)}]
The curve $g(t)$ satisfies the implicit Lagrange-d'Alembert equations for $L_{a_{0}}$ on $TG \oplus T^{\ast}G$
\begin{equation}\label{ImpELEq_constrained}
p=\frac{\partial L_{a_{0}}}{\partial v},  \quad \dot{g}=v\in \Delta ^{a_0}_G(g), \quad   \dot{p}-\frac{\partial L_{a_{0}}}{\partial g}\in  \Delta ^{a_0}_{G}(g) ^\circ.
\end{equation}
\item[\bf{(iii)}]
The reduced Lagrange-d'Alembert-Pontryagin principle
\begin{equation}\label{ConstHP-ActInt_constrained}
\delta \int_{t_1}^{t_2} \left\{ \ell(\eta(t), a(t))+ 
\left\langle \mu(t), \xi(t) - \eta(t) \right\rangle \right\} ~dt=0
\end{equation}
holds, under arbitrary variations of $ \eta , \mu $ and variations of the form
\[
\delta{\xi}=\frac{\partial \zeta}{\partial t}+[\xi, \zeta], \qquad \delta{a}=-\zeta a,
\]
where $\zeta \in \mathfrak{g}^{ \Delta}(a)$ vanishes at the endpoints and $ \eta $ verifies the constraint $ \eta \in \mathfrak{g}^{ \Delta}(a)$.

\item[\bf{(iv)}]
The implicit Euler-Poincar\'e-Suslov equations with advected parameters hold on $(\mathfrak{g} \oplus \mathfrak{g}^{\ast}) \times V^{\ast}$:
\begin{equation}\label{implicit_EP_nonholonomic}
\mu =  \frac{\delta \ell}{\delta \eta}, \quad \xi = \eta\in\mathfrak{g}^{ \Delta}(a) , \quad \dot{\mu} - \operatorname{ad}_{\xi}^{\;\ast} \mu - \frac{\delta\ell}{\delta{a}} \diamond a\in \left( \mathfrak{g}^{ \Delta}(a)\right) ^\circ.
\end{equation} 
\end{itemize}
\end{theorem}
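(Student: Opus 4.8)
The plan is to establish the four equivalences by closely paralleling the proof of Theorem \ref{theorem_HP_semidirect}, inserting the constraint $\mathfrak{g}^{\Delta}(a)$ at the appropriate places. First I would prove $\mathbf{(i)} \Leftrightarrow \mathbf{(ii)}$ exactly as in the unconstrained case: compute the variation of the Lagrange-d'Alembert-Pontryagin action integral, integrate by parts in the $\delta g$ term, and read off the stationarity conditions. The only difference from Theorem \ref{theorem_HP_semidirect} is that the variations $\delta g$ and the velocity $v$ are now restricted to lie in $\Delta_G^{a_0}(g)$. Consequently the coefficient of $\delta g$, namely $-\dot p + \partial L_{a_0}/\partial g$, need not vanish but only annihilate $\Delta_G^{a_0}(g)$, yielding $\dot p - \partial L_{a_0}/\partial g \in \Delta_G^{a_0}(g)^\circ$, while the constraint on $v$ gives $\dot g = v \in \Delta_G^{a_0}(g)$; the Legendre relation $p = \partial L_{a_0}/\partial v$ comes from the free variation $\delta v$.

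Next I would prove $\mathbf{(iii)} \Leftrightarrow \mathbf{(iv)}$, reusing verbatim the variational computation already displayed in the proof of Theorem \ref{theorem_HP_semidirect}. That computation gives
\[
\delta \int_{t_1}^{t_2} \{\ell(\eta,a) + \langle \mu, \xi - \eta\rangle\}\,dt = \int_{t_1}^{t_2}\left\{ \left\langle \tfrac{\delta\ell}{\delta\eta} - \mu, \delta\eta\right\rangle + \langle\delta\mu, \xi-\eta\rangle + \left\langle -\dot\mu + \operatorname{ad}_\xi^\ast\mu + \tfrac{\delta\ell}{\delta a}\diamond a, \zeta\right\rangle\right\}dt,
\]
after dropping the boundary term since $\zeta$ vanishes at the endpoints. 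The distinction from the unconstrained case is that $\delta\eta$ and $\delta\mu$ remain free, but $\zeta$ is now constrained to $\mathfrak{g}^{\Delta}(a)$ and $\eta$ is required to lie in $\mathfrak{g}^{\Delta}(a)$. Free variation of $\delta\eta$ forces $\mu = \delta\ell/\delta\eta$; free variation of $\delta\mu$ forces $\xi = \eta$, which now sits in $\mathfrak{g}^{\Delta}(a)$; and since $\zeta$ ranges only over $\mathfrak{g}^{\Delta}(a)$, the last term forces $\dot\mu - \operatorname{ad}_\xi^\ast\mu - \tfrac{\delta\ell}{\delta a}\diamond a \in (\mathfrak{g}^{\Delta}(a))^\circ$, which is precisely \eqref{implicit_EP_nonholonomic}.

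The heart of the argument, and the step I expect to be the main obstacle, is the equivalence $\mathbf{(i)} \Leftrightarrow \mathbf{(iii)}$, where one must verify that the reduction map correctly transports the \emph{constrained} variations. As in Theorem \ref{theorem_HP_semidirect}, the integrands agree under $\eta = g^{-1}v$, $\mu = g^{-1}p$, $\xi = g^{-1}\dot g$, $a = g^{-1}a_0$, using $G$-invariance of both $L$ and the pairing $\langle p, \dot g - v\rangle$, and the advection relation $\delta a = -\zeta a$ follows from differentiating $a = g^{-1}a_0$ with $\zeta = g^{-1}\delta g$. The new point requiring care is the constraint translation: by the invariance assumption \eqref{invariance_assumption}, one has $g^{-1}\Delta_G^{a_0}(g) = \Delta_G(e, g^{-1}a_0) = \mathfrak{g}^{\Delta}(a)$, so the condition $\delta g \in \Delta_G^{a_0}(g)$ is equivalent to $\zeta = g^{-1}\delta g \in \mathfrak{g}^{\Delta}(a)$, and likewise $v \in \Delta_G^{a_0}(g)$ is equivalent to $\eta \in \mathfrak{g}^{\Delta}(a)$. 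I would verify this correspondence explicitly using \eqref{invariance_assumption} and \eqref{def_g_delta}, since it is the one genuinely new ingredient beyond the unconstrained theorem; the remaining endpoint and invariance bookkeeping transfers without change.
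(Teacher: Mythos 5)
Your proposal is correct and follows essentially the same route as the paper, which proves this theorem by repeating the argument of Theorem \ref{theorem_HP_semidirect} while inserting the constraints $\delta g, v \in \Delta_G^{a_0}(g)$ and $\zeta, \eta \in \mathfrak{g}^{\Delta}(a)$. The one point you single out for explicit verification, namely that the invariance assumption \eqref{invariance_assumption} gives $g^{-1}\Delta_G^{a_0}(g) = \mathfrak{g}^{\Delta}(a)$ so that the unreduced and reduced constraints correspond, is indeed the only genuinely new ingredient, and it is exactly the identity the paper itself relies on (cf.\ \eqref{condition_D_new}).
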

\begin{proof}
These equivalences follow exactly in the same way as in the preceding theorem, but by taking account of the constraints $ \delta g\in \Delta _G^{ a _0 } (g)$, $v\in  \Delta _G^{ a _0 } (g)$ and $\zeta \in \mathfrak{g}^{ \Delta}(a)$  and $ \eta \in \mathfrak{g}^{ \Delta}(a)$.
\end{proof}

Note that the implicit Euler-Poincar\'e-Suslov equations with advected parameters on $(\mathfrak{g} \oplus \mathfrak{g}^{\ast}) \times V^{\ast}$ in \eqref{implicit_EP_nonholonomic} are the reduced formulation of the implicit Lagrange-d'Alembert equations  \eqref{ImpELEq_constrained} on $TG \oplus T^{\ast}G$ associated to the parameter dependent Lagrangian $L_{a_{0}} : TG \to \mathbb{R}$ and the nonholonomic constraint $ \Delta _G ^{ a _0 }$.
\medskip

\begin{remark}\label{Hybrid_HP_advected_constraints}{\rm The alternative variational structure mentioned in Remark \ref{Hybrid_HP_advected} can be easily generalized to the case with constraints by imposing $\dot g \in \Delta _G^{a _0 } (g) $ and $\delta  g \in \Delta _G^{a _0 }(g)$,
i.e. $ g ^{-1} \dot g, g ^{-1} \delta  g \in \mathfrak{g}  ^\Delta (a)$, in which case we get the stationarity conditions
\[
\xi = g ^{-1} \dot g\in \mathfrak{g}  ^\Delta (a), \quad a= g ^{-1} a _0 , \quad \mu = \frac{\delta \ell}{\delta \xi },\quad v= \frac{\delta \ell}{\delta a},  \quad \dot{\mu} - \mbox{ad}_{\xi}^{*} \mu -v\diamond a\in (\mathfrak{g}  ^\Delta (a))^\circ. 
\]
}
\end{remark} 
\begin{remark}\rm
Note that when there is no advected quantities, we have a $G$-invariant Lagrangian $L: TG \rightarrow \mathbb{R}$ and the constraint is simply given by $\Delta _G \subset TG$. It follows from the invariance assumption \eqref{invariance_assumption} that the distribution $ \Delta _G$ is $G$-invariant. In this case, \eqref{ConstHP-ActInt_constrained} and \eqref{implicit_EP_nonholonomic} in Theorem \ref{thm_HP_constrained} recover \eqref{RedLagDAPPrin} and \eqref{ImpEulPoinSusEqn}. 
\end{remark}

\subsection{Rolling ball type constraints on semidirect products}\label{subsec_the_sdp_case}

A situation frequently encountered in examples is the case where $G$ itself is a semidirect product of a Lie group $K$ and the vector space $V$; i.e., the Lie group $G$ in the above theory is given by $G= K \,\circledS\, V$. The Lie algebra of $G$ is the semidirect product $\mathfrak{g}=\mathfrak{k} \,\circledS\,
 V$.
 In this particular situation, the implicit Euler-Poincar\'e-Suslov equations with advected parameters \eqref{implicit_EP_nonholonomic} acquire a more specific formulation well-suited for the study of the rigid bodies with nonholonomic constraints as in \cite{Sch2002}.

In this case, the Lagrangian is $L:TG \times V^{\ast} \rightarrow \mathbb{R}$; $L=L(k, \dot k, x, \dot x, a_0)$ and the constraint is
\[
\Delta _{G}^{a_0}\subset T{G}, \quad \Delta _{G}^{a_0}(k,x)= \Delta _{G}(k,x,a_0)\in T_{(k,x)}G.
\]
Since the group multiplication on $G$ reads $(h,z)(k,x)=(hk,z+hx)$, the invariance property reads
\begin{equation}\label{inv_prop_G}
\Delta _G\left( hk,z+hx, h a_0\right) =TL_{(h,z)}\left(  \Delta _G(k,x,a_0)\right).
\end{equation} 
The family of distributions $ \Delta _G^{ a _0 }\subset TG$ is completely determined by the family of vector subspaces of $ \mathfrak{g} $,  parametrized by $a\in V^{\ast}$ as
\begin{equation}\label{g_Delta_a} 
\mathfrak{g}  ^{ \Delta }(a):= \Delta _G (e,0,a)\subset \mathfrak{g}. 
\end{equation}  

\paragraph{Particular class of constraints.} Following \cite{Sch2002}, we will consider three particular cases of such constraints, going from a basic situation to more general constraints:
\begin{itemize}
\item[\bf (I)]
{\bf The basic case:} The parameter dependent distribution is given by
\begin{equation}\label{case_I}
\Delta _G \left( k,x,a _0 \right) =\left \{( k, x,  \dot k,\dot x)\in T_{(k,x)}G\mid  \dot x= \dot k \left( k^{-1} a _0\right) ^\sharp \right\},
\end{equation} 
where $ \sharp: V^{\ast} \rightarrow V$ is the sharp map associated to a given inner product on $V$, and where $\dot k \left( k^{-1} a _0\right) ^\sharp$ denotes the induced action of $\dot k \in TK$ on $ \left( k^{-1} a_0\right) ^\sharp\in V$.
\item[\bf (II)]
{\bf The intermediate case:}  The parameter dependent distribution is given by
\begin{equation}\label{case_II}
\Delta _G \left( k,x,a _0 \right) =\left \{( k, x, \dot k, \dot x)\in T_{(k,x)}G \mid  \dot x= \dot k \phi (k^{-1} a _0) \right\},
\end{equation} 
where $ \phi  :V^{\ast} \rightarrow V$ is an arbitrary smooth function, and $\dot k\phi (a) $ denotes the induced action of $ \dot k \in TK$ on $\phi(a)\in V$.
\item[\bf (III)]
{\bf The general case:}  The parameter dependent distribution is given by
\begin{equation}\label{case_III}
\Delta _G \left( k,x,a _0 \right) =\left \{( k, x, \dot k, \dot x)\in T_{(k,x)}G \mid  \dot x= A(k,x,a _0)\cdot \dot k \right\},
\end{equation} 
where $A(k,x,a_0):T_kK \rightarrow V$ is linear and satisfies the invariance property
\[
A(hk,z+hx,h a_0)\cdot h \dot k=h \left( A(k,x,a_0)\cdot \dot k\right) , \quad \text{for all} \quad (h,z)\in G. 
\]
\end{itemize}

One observes that the invariance property \eqref{inv_prop_G} is verified in each cases, and that one recovers \eqref{case_I} and \eqref{case_II} from \eqref{case_III}, by taking $A(k,x,a _0 ) \cdot \dot k:= \dot k\left(k^{-1} a_0\right) ^\sharp$ and $A(k,x,a _0 ) \cdot \dot k:= \dot k \phi(k^{-1}  a_0)$, respectively. 
The vector subspaces $ \mathfrak{g} ^ \Delta (a)$ defined in \eqref{g_Delta_a} are, respectively, given by
\begin{align}
{\bf (I) \;\,\quad} \mathfrak{g} ^ \Delta (a)&=\{( \xi  , X)\in \mathfrak{g} \mid X= \xi a^\sharp\},\notag \\
{\bf (II)\; \quad} \mathfrak{g} ^ \Delta (a)&=\{( \xi  , X)\in \mathfrak{g} \mid X= \xi \phi  (a)\}, \notag \\
{\bf (III) \quad} \mathfrak{g} ^ \Delta (a)&=\{( \xi  , X)\in \mathfrak{g} \mid X= \alpha(a) \cdot \xi \},\label{case_III_red}
\end{align} 
where we denoted $\alpha(a):= A(e,0,a)$.
Note that our convention slightly differs from the one used in \cite{Sch2002} since he employs $G= K \,\circledS\, V$ and $a \in V $, whereas we use $G= K \,\circledS\, V$ and $ a \in V ^\ast $.

\paragraph{Reduction theorem.} Since the invariance assumption is verified, we can now apply Theorem \ref{thm_HP_constrained} to this special case when the Lie group $G$ is the semidirect product $G= K \,\circledS\, V$. It yields the Lagrange-d'Alembert-Pontryagin principle for these special classes of constraints. We will apply this Theorem to several examples in Section \ref{Examples} later. 
We denote by $(k,x,v,w)$ and $(k,x,p,\pi )$ the variables in $TG$ and $T^{\ast}G$, respectively, and we indicate the reduced variables by
\begin{align*} 
( \xi  , X)&:= (k,x) ^{-1} (k,x, \dot k, \dot x)= ( k^{-1} \dot k, k^{-1} \dot x)\in \mathfrak{g},\\
(\eta , Y)&:= (k,x) ^{-1} (k,x,v,w)= ( k^{-1} v, k^{-1} w)\in \mathfrak{g} ,\\
( \zeta  , Z)&:= (k,x) ^{-1} (k,x, \delta k, \delta x)= ( k^{-1} \delta k, k^{-1} \delta x)\in \mathfrak{g}\\
( \mu , b)&:= (k,x) ^{-1} (k,x,p,\pi )=( k^{-1} p, k^{-1} \pi )\in \mathfrak{g}^\ast.
\end{align*} 

\begin{theorem}\label{SDP_Schneider}  With the above notations, the following statements are equivalent.
\begin{itemize}
\item[\bf{(i)}]
With $a_{0}$ fixed, the Lagrange-d'Alembert-Pontryagin principle
\begin{equation}\label{HP-ActInt_constrained_SDP}
\delta \int_{t_1}^{t_2}
\left\{ L_{a_{0}}(k(t), x(t), v(t), w(t)) + \left\langle p(t),  \dot{k}(t)-v(t) \right\rangle+\left\langle \pi(t),  \dot{x}(t)-w(t) \right\rangle \right\} \, dt =0
\end{equation}
holds for variations $(\delta{k}, \delta x,\delta{v},\delta w,\delta{p}, \delta \pi)$ of  $(k,x,v,w,p, \pi )$ in $TG\oplus T^{\ast}G$, where $\delta k, \delta x$ vanish at endpoints, and with $(\delta k, \delta x)\in \Delta _G^{a_0}(k,x) $ and with the constraint $(v,w)\in \Delta _G^{a_0 }(k,x)$.
\item[\bf{(ii)}]
The curve $(k(t), x (t) ) \in G$ satisfies the implicit Euler-Lagrange equations for $L_{a_{0}}$ on $TG \oplus T^{\ast}G$:
\begin{equation*}\label{ImpELEq_constrained_SDP}
\begin{split}
&(\dot{k}, \dot x)=(v,w)\in \Delta ^{a_0}_G(k,x), \quad \left( \dot{p}-\frac{\partial L_{a_{0}}}{\partial k},\dot{ \pi }-\frac{\partial L_{a_{0}}}{\partial x} \right) \in  \Delta ^{a_0}_{G}(k,x)  ^\circ,\\
&p=\frac{\partial L_{a_{0}}}{\partial v},  \;\; \pi=\frac{\partial L_{a_{0}}}{\partial w}.
\end{split}
\end{equation*}

\item[\bf{(iii)}]
The reduced Lagrange-d'Alembert-Pontryagin principle
\begin{equation}\label{ConstHP-ActInt_constrained_SDP}
\delta \int_{t_1}^{t_2} \left\{ \ell(\eta(t), Y(t) , a(t))+ 
\left\langle \mu(t), \xi(t) - \eta(t) \right\rangle+\left\langle b(t), X(t) - Y(t) \right\rangle \right\} ~dt=0,
\end{equation}
holds on $(\mathfrak{g} \oplus \mathfrak{g}^{\ast}) \times V^{\ast}$, under arbitrary variations of $ (\eta , Y), ( \mu ,b)$ and variations of the form
\[
\delta{\xi}=\frac{\partial \zeta}{\partial t}+[\xi, \zeta], \quad \delta X=\frac{\partial Z}{\partial t}+ \xi Z- \zeta X,\quad \delta{a}=-\zeta a,
\]
where $(\zeta,Z) \in \mathfrak{g}^{ \Delta}(a)$ vanishes at the endpoints and $ (\eta,Y) $ verifies the constraint $(\eta ,Y)\in \mathfrak{g}^{ \Delta}(a)$.

\item[\bf{(iv)}]
The implicit Euler-Poincar\'e-Suslov equations with advected parameters hold on $(\mathfrak{g} \oplus \mathfrak{g}^{\ast}) \times V^{\ast}$:
\begin{equation}\label{implicit_EP_constrained_SDP}
\left\{
\begin{array}{l}
\vspace{0.2cm}\displaystyle\mu =  \frac{\delta \ell}{\delta \eta},\quad  b= \frac{\delta \ell}{\delta Y}, \quad (\xi,X) =( \eta,Y)\in\mathfrak{g}^{ \Delta}(a),\\
\displaystyle\left( \dot{\mu} - \operatorname{ad}_{\xi}^{\;\ast} \mu+X \diamond b - \frac{\delta\ell}{\delta{a}} \diamond a, \dot b + \xi b\right) \in \left( \mathfrak{g}^{ \Delta}(a)\right) ^\circ.
\end{array}
\right.
\end{equation} 
\end{itemize}
\end{theorem}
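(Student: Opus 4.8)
The plan is to obtain Theorem \ref{SDP_Schneider} as a direct specialization of Theorem \ref{thm_HP_constrained} to the configuration group $G = K \circledS V$: the invariance hypothesis \eqref{invariance_assumption} reduces exactly to \eqref{inv_prop_G}, which is satisfied in each of the cases (I)--(III), so all four equivalences hold at once. The only substantive work is to rewrite the abstract objects appearing in Theorem \ref{thm_HP_constrained}---the Lie bracket, the constrained variation, the coadjoint operator $\operatorname{ad}^*$, and the diamond term $\frac{\delta\ell}{\delta a}\diamond a$---in explicit component form on $\mathfrak{g} = \mathfrak{k}\circledS V$ and $\mathfrak{g}^* = \mathfrak{k}^*\times V^*$, using the reduced variables $(\xi,X),(\eta,Y),(\zeta,Z)\in\mathfrak{g}$ and $(\mu,b)\in\mathfrak{g}^*$ introduced just before the statement. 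The statements (i) and (ii) in the split $(k,x)$-variables then follow verbatim from Theorem \ref{thm_HP_constrained}(i)--(ii).

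First I would compute the constrained variation. Inserting the semidirect-product bracket $[(\xi,X),(\zeta,Z)] = ([\xi,\zeta],\,\xi Z - \zeta X)$ into the general formula $\delta(\xi,X) = \partial_t(\zeta,Z) + [(\xi,X),(\zeta,Z)]$ of Theorem \ref{thm_HP_constrained}(iii) splits into $\delta\xi = \dot\zeta + [\xi,\zeta]$ and $\delta X = \dot Z + \xi Z - \zeta X$, exactly the variations prescribed in (iii). The advected variation $\delta a = -\zeta a$ carries over unchanged because $G$ acts on the parameter space $V^*$ only through its $K$-factor, as is visible from the third argument $ha_0$ in \eqref{inv_prop_G}. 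This last observation is the point to handle with care: it forces the diamond term to be purely of $\mathfrak{k}^*$-type, namely $\frac{\delta\ell}{\delta a}\diamond a = \left(\frac{\delta\ell}{\delta a}\diamond a,\,0\right)\in\mathfrak{k}^*\times V^*$, since the $\mathfrak{g}$-action on $a$ has trivial $V$-component.

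Next I would compute $\operatorname{ad}^*$ on the semidirect product. Pairing $\operatorname{ad}^*_{(\xi,X)}(\mu,b)$ against a test element $(\zeta,Z)$ and using $\langle(\mu,b),[(\xi,X),(\zeta,Z)]\rangle = \langle\mu,[\xi,\zeta]\rangle + \langle b,\xi Z - \zeta X\rangle$, I collect the $\zeta$- and $Z$-terms via the diamond identity $\langle X\diamond b,\zeta\rangle = \langle b,\zeta X\rangle$ and via $\langle b,\xi Z\rangle = -\langle\xi b,Z\rangle$, obtaining
\[
\operatorname{ad}^*_{(\xi,X)}(\mu,b) = \left(\operatorname{ad}^*_\xi\mu - X\diamond b,\; -\xi b\right).
\]
Fixing the signs of the $X\diamond b$ and $\xi b$ contributions is the main bookkeeping obstacle, and is where the two distinct diamond operators---one arising from $\operatorname{ad}^*$ with $X,b$ in the semidirect factors, the other from the advected term with $a\in V^*$---must be kept carefully apart.

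Finally I would assemble the equations. The relation $\mu = \delta\ell/\delta\eta$ of Theorem \ref{thm_HP_constrained}(iv) splits into $\mu = \delta\ell/\delta\eta$ and $b = \delta\ell/\delta Y$, and the condition $\xi=\eta\in\mathfrak{g}^\Delta(a)$ becomes $(\xi,X)=(\eta,Y)\in\mathfrak{g}^\Delta(a)$. Substituting the $\operatorname{ad}^*$ formula and the purely $\mathfrak{k}^*$-valued diamond term into the membership condition $\dot\mu - \operatorname{ad}^*_\xi\mu - \frac{\delta\ell}{\delta a}\diamond a\in(\mathfrak{g}^\Delta(a))^\circ$ from \eqref{implicit_EP_nonholonomic} and reading off components, the $\mathfrak{k}^*$-part yields $\dot\mu - \operatorname{ad}^*_\xi\mu + X\diamond b - \frac{\delta\ell}{\delta a}\diamond a$ and the $V^*$-part yields $\dot b + \xi b$, both constrained to lie in $(\mathfrak{g}^\Delta(a))^\circ$. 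This is precisely \eqref{implicit_EP_constrained_SDP}, which completes the identification and hence the proof.
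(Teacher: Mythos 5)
Your proposal is correct and matches the paper's own route: the paper proves Theorem \ref{SDP_Schneider} precisely by invoking Theorem \ref{thm_HP_constrained} for $G=K\,\circledS\,V$ under the invariance property \eqref{inv_prop_G}, and the component formulas you derive (the split bracket and variations, $\operatorname{ad}^*_{(\xi,X)}(\mu,b)=(\operatorname{ad}^*_\xi\mu-X\diamond b,\,-\xi b)$, and the purely $\mathfrak{k}^*$-valued advection term) are exactly the identifications the paper uses implicitly and then exploits in the subsequent treatment of cases (I)--(III). Your care in distinguishing the diamond arising from the semidirect-product structure of $\mathfrak{g}$ from the one arising from the advected parameter is precisely the point the authors flag in Remark \ref{important_remark}.
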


Note that \eqref{implicit_EP_constrained_SDP} yields the motion equation
\[
\left( \partial _t \frac{\delta \ell}{\delta \xi }  - \operatorname{ad}_{\xi}^{\;\ast} \frac{\delta \ell}{\delta \xi } +X \diamond \frac{\delta \ell}{\delta X} - \frac{\delta\ell}{\delta{a}} \diamond a, \partial _t \frac{\delta \ell}{\delta X} + \xi \frac{\delta \ell}{\delta X}\right) \in \left( \mathfrak{g}^{ \Delta}(a)\right) ^\circ, \quad (\xi,X)\in\mathfrak{g}^{ \Delta}(a).
\]

\paragraph{Application to the particular cases.} As we shall see, equation \eqref{implicit_EP_constrained_SDP} can be made more explicit when the constraint is given by the particular cases mentioned above.
We shall thus reformulate here the statement $\bf{(iv)}$ in the particular cases when the constraints are given by \eqref{case_I}--\eqref{case_III}.
\begin{itemize}
\item[\bf (I)]
{\bf The basic case:\;}
By applying \eqref{ConstHP-ActInt_constrained_SDP}, we find
\[
\int_{t _1 } ^{t _2 } \left \{ \left\langle \dot{\mu} - \operatorname{ad}_{\xi}^{\;\ast} \mu+X \diamond b - \frac{\delta\ell}{\delta{a}} \diamond a, \zeta \right\rangle+\left\langle \dot b + \xi b, Z \right\rangle  \right \} dt.
\]
Using that the constraint $( \zeta , Z)\in \mathfrak{g}^ \Delta (a)$ reads $Z= \zeta a ^\sharp$, we get the equation
\[
\dot{\mu} - \operatorname{ad}_{\xi}^{\;\ast} \mu+X \diamond b - \frac{\delta\ell}{\delta{a}}\diamond  a+ a^\sharp \diamond \left(   \dot b + \xi b\right) =0,
\]
which can be rearranged as
\[
\partial _t \left( \mu + a^\sharp \diamond b \right) - \operatorname{ad}_{\xi}^{\;\ast} \left( \mu + a^\sharp \diamond b \right)=\frac{\delta\ell}{\delta{a}} \diamond a+ \partial _t a^\sharp \diamond  b, 
\]
where we used the formula $ \operatorname{ad}^{\ast}_ \xi ( v \diamond a)= - \xi v \diamond a- v \diamond \xi a $, for all $v\in V$, $a\in V^{\ast}$, $ \xi \in \mathfrak{k}$, and the constraint $X=Y= \eta a^\sharp = \xi a^\sharp$. Thus the implicit Euler-Poincar\'e-Suslov equations with advected parameter \eqref{implicit_EP_constrained_SDP} read 
\[
\left\{
\begin{array}{l}
\vspace{0.2cm}\displaystyle\mu =  \frac{\delta \ell}{\delta \eta},\quad  b= \frac{\delta \ell}{\delta Y}, \quad (\xi,X) =( \eta,Y), \quad Y= \eta a^\sharp,\\
\displaystyle\partial _t \left( \mu + a^\sharp \diamond b \right) - \operatorname{ad}_{\xi}^{\;\ast} \left( \mu + a^\sharp \diamond b \right)=\frac{\delta\ell}{\delta{a}} \diamond a+ \partial _t a^\sharp \diamond  b.
\end{array}
\right.
\]

This yields the motion equation
\[
\partial _t \left( \frac{\delta \ell}{\delta \xi } + a^\sharp \diamond \frac{\delta \ell}{\delta X } \right) - \operatorname{ad}_{\xi}^{\;\ast} \left( \frac{\delta \ell}{\delta \xi } + a^\sharp \diamond \frac{\delta \ell}{\delta X } \right)=\frac{\delta\ell}{\delta{a}} \diamond a+ \partial _t a^\sharp \diamond  \frac{\delta \ell}{\delta X }, \quad X= \xi a^\sharp.
\]

\item[\bf (II)]
{\bf The intermediate case:\;}The same computation as above yields the implicit Euler-Poincar\'e-Suslov equations with advected parameters
\begin{equation}\label{case_II_equations} 
\left\{
\begin{array}{l}
\vspace{0.2cm}\displaystyle\mu =  \frac{\delta \ell}{\delta \eta},\quad  b= \frac{\delta \ell}{\delta Y}, \quad (\xi,X) =( \eta,Y), \quad Y= \eta \phi(a),\\
\displaystyle\partial _t \left( \mu + \phi(a) \diamond b \right) - \operatorname{ad}_{\xi}^{\;\ast} \left( \mu + \phi(a) \diamond b \right)=\frac{\delta\ell}{\delta{a}} \diamond a+ \partial _t \phi(a) \diamond  b
\end{array}
\right.
\end{equation} 
and the motion equation
\[
\partial _t \left( \frac{\delta \ell}{\delta \xi } + \phi (a)  \diamond \frac{\delta \ell}{\delta X } \right) - \operatorname{ad}_{\xi}^{\;\ast} \left( \frac{\delta \ell}{\delta \xi } + \phi (a)  \diamond \frac{\delta \ell}{\delta X } \right)=\frac{\delta\ell}{\delta{a}} \diamond a+ \partial _t \phi (a)  \diamond  \frac{\delta \ell}{\delta X }, \quad X= \xi \phi (a) .
\]

\item[\bf (III)]
{\bf The general case:\;}Introducing the notation for the reduced map $\alpha(a):= A(e,0,a): \mathfrak{k}  \rightarrow V$ and applying the reduced Lagrange-d'Alembert-Pontryagin principle \eqref{ConstHP-ActInt_constrained_SDP},  we obtain
\[
\partial _t \mu  - \operatorname{ad}_{\xi}^{\;\ast} \mu+X \diamond b - \frac{\delta\ell}{\delta{a}}\diamond  a+ \alpha(a)^{\ast} \cdot  \left(\partial _t b + \xi b\right) =0,
\] 
which can be rearranged as
\begin{align*} 
\partial _t \left( \mu +\alpha(a)^{\ast}\cdot b \right) - \operatorname{ad}_{\xi}^{\;\ast} \left( \mu + \alpha(a)^{\ast} \cdot b \right)=&\frac{\delta\ell}{\delta{a}} \diamond a+ (\partial _t \alpha(a)^{\ast})\cdot  b\\
&- \left( \alpha(a) \cdot \xi \right) \diamond b - \alpha(a)^{\ast} \cdot \xi b- \operatorname{ad}^{\ast}_ \xi \left(\alpha(a)^{\ast}\cdot b \right).
\end{align*}
Thus, we obtain the implicit Euler-Poincar\'e-Suslov equation with parameters
\[
\left\{
\begin{array}{l}
\vspace{0.2cm}\displaystyle\mu =  \frac{\delta \ell}{\delta \eta},\quad  b= \frac{\delta \ell}{\delta Y}, \quad (\xi,X) =( \eta,Y), \quad Y=\alpha(a) \cdot \eta, \\
\vspace{0.2cm}\displaystyle\partial _t \left( \mu +\alpha(a)^{\ast} \cdot b \right) - \operatorname{ad}_{\xi}^{\;\ast} \left( \mu + \alpha(a)^{\ast} \cdot b \right)\\
\qquad \qquad \qquad \displaystyle=\frac{\delta\ell}{\delta{a}} \diamond a+ (\partial _t \alpha(a)^{\ast} )\cdot  b- \left( \alpha(a) \cdot \xi \right) \diamond b - \alpha(a)^{\ast} \cdot \xi b- \operatorname{ad}^{\ast}_ \xi \left( \alpha(a)^{\ast} \cdot b \right)
\end{array}
\right.
\]
and the motion equation
\begin{align*} 
&\partial _t \left( \frac{\delta \ell}{\delta \xi } + \alpha(a)^{\ast} \cdot\frac{\delta \ell}{\delta X } \right) - \operatorname{ad}_{\xi}^{\;\ast} \left( \frac{\delta \ell}{\delta \xi } +\alpha(a)^{\ast} \cdot\frac{\delta \ell}{\delta X } \right)\\
& \qquad =\frac{\delta\ell}{\delta{a}} \diamond a+ \left((\partial _t \alpha(a)^{\ast} )\cdot-  \left(\alpha(a) \cdot \xi \right) \diamond  - \alpha(a)^{\ast} \cdot \xi - \operatorname{ad}^{\ast}_ \xi \alpha(a)^{\ast} \cdot \right)\frac{\delta \ell}{\delta X},
\end{align*} 
where $X= \alpha(a)\cdot \xi$.  Note that the last three terms cancel when $\alpha(a) \cdot \xi = \xi \phi (a)$, for any smooth function $ \phi : V^{\ast} \rightarrow V$.
\end{itemize}
These equations consistently recover the ones derived in \cite{Sch2002}; see especially equations (17), (21), (23), and (25).

\begin{remark}[\textbf{Alternative formulation of the Lagrange-d'Alembert-Pontryagin principle}]\label{Hybrid_HP_advected_constraints_sdp}{\rm In the context of Theorem \ref{SDP_Schneider}, the variational structure in Remark \ref{Hybrid_HP_advected_constraints} may be given by
\begin{equation}\label{alternative_Schneider} 
\delta \int_{ t _1 }^{ t _2 } \{ \ell( \xi,X,a )+ \left\langle \mu , k^{-1} \dot k - \xi \right\rangle+ \left\langle b, k^{-1} \dot x-X \right\rangle + \left\langle v, k^{-1} a _0 - a \right\rangle  \} \,dt=0,
\end{equation} 
with $( \dot k, \dot x) \in \Delta ^{ a _0 }_G(k,x)$,
for arbitrary variations $ \delta \xi(t) $, $ \delta X(t)$, $ \delta \mu (t) $, $ \delta b(t)$, $ \delta v(t)$, $ \delta a(t)$ of $ \xi (t) $, $X(t)$, $ \mu (t) $, $b(t)$, $v(t)$, $a(t) $ and variations $\delta k(t)$, $ \delta x(t)$ of $k(t)$, $x(t)$ vanishing at the endpoints, together with the constraint
\[
( \delta k, \delta  x) \in \Delta^{ a _0 }_G(k,x).
\]
It yields the equations of motion
\[
(\xi ,X)=( k^{-1} \dot k,  k^{-1} \dot x) \in \mathfrak{g} ^ \Delta (a), \quad a= k^{-1} a _0 , \quad \mu = \frac{\delta \ell}{\delta \xi },\quad b= \frac{\delta \ell}{\delta X}, \quad v= \frac{\delta \ell}{\delta a},
\]
\[
\left( \dot{\mu} - \operatorname{ad}_{\xi}^{\;\ast} \mu+X \diamond b - v  \diamond a, \dot b + \xi b\right) \in \left( \mathfrak{g}^{ \Delta}(a)\right) ^\circ. 
\]
In the particular case when the constraint subspace $ \mathfrak{g} ^ \Delta (a)$ is of the form \eqref{case_III_red}, this alternative formulation \eqref{alternative_Schneider} recovers the one developed in \cite[\S12.2]{Ho2008}.}
\end{remark}

\subsection{Generalization to arbitrary advected quantities}\label{generalization}

As was mentioned in \S\ref{Intro}, it has been recently observed that several mechanical systems, such as complex fluids, geometrically exact rods, as well as symmetry breaking phenomena in condensed matter physics require a more general setting for advected quantities (\cite{GBRa2009}, \cite{GBTr2010}, \cite{GBHoRa2009}, \cite{ElGBHoPuRa2009}, \cite{GBPu2012,GBPu2014}), both in the context of constrained and unconstrained systems.

In this case, one has to consider advected parameters in an arbitrary manifold $Q$, on which a Lie group $G$ acts. We denote by $ \Phi :G \times Q \rightarrow Q$, $(g,q) \mapsto \Phi _g(q)$ this action, assumed to be a left action.
Let $L:TG \times Q \rightarrow \mathbb{R}  $ be a $G$-invariant function, and consider the Lagrangian $L_{q_0}:TG \rightarrow \mathbb{R}  $, defined by $L_{q_0}(g, \dot g):= L(g, \dot g, q _0 )$, obtained by fixing the value $ q_0 $ of the parameter. As above, we assume that there is a constraint distribution
\[
\Delta _G^{q_0}\subset TG, \quad \Delta _G^{q_0}(g)= \Delta _G(g,q_0)\in T_{g}G
\]
depending on the parameter $q_0\in Q$, and we assume the invariance property
\[
\Delta _G (hg, h q_0 )= h \Delta _G (g, q _0 ), \quad \text{for all $h \in G$}. 
\]

One observes that Theorem \ref{thm_HP_constrained} generalizes easily to this case, therefore we shall not present it in full details. For example, after reduction the parameter in $Q$ is given by $q= \Phi _{ g ^{-1} }( q_0 )$ and thus verifies the (generalized) advection equation $ \dot q+ \xi _Q (q)=0$, where $ \xi _Q $ denotes the infinitesimal generator of the $G$ action. In this situation, the nonholonomic Euler-Poincar\'e equations with advected parameters \eqref{implicit_EP_nonholonomic} generalize to
\begin{equation*}\label{implicit_EP_constrained_gen}
\mu =  \frac{\delta \ell}{\delta \eta}, \quad \xi = \eta\in\mathfrak{g}^{ \Delta}(q) , \quad \dot{\mu} - \operatorname{ad}_{\xi}^{\;\ast} \mu + \mathbf{J} \left( \frac{\delta\ell}{\delta{q}}\right) \in \left( \mathfrak{g}^{ \Delta}(q)\right) ^\circ,
\end{equation*} 
where
\[
\mathbf{J}:T^{\ast}Q \rightarrow \mathfrak{g}  ^\ast , \quad \left\langle \mathbf{J} ( \alpha _q ) , \xi \right\rangle = \left\langle \alpha _q , \xi _Q ( q) \right\rangle 
\]
is the {\it cotangent bundle momentum map} associated to the action of $G$ on $Q$. These equations are the implicit and nonholonomic version of the Euler-Poincar\'e equations for symmetry breaking studied in \cite{GBTr2010}.

An important particular example is the case of an affine action of $G$ on $Q=V^{\ast}$, given by $ a \mapsto  ga+c(g)$, see \cite{GBRa2009}, where $g \mapsto ga$ denotes a left representation and $g \mapsto c(g)$ is a cocycle. This setting is useful for complex fluids and rod dynamics. In this case $ \mathbf{J} (a,v)=- a \diamond v+ \mathbf{d} c ^T(v)$ so that one obtains the {\bfi implicit affine Euler-Poincar\'e-Suslov equations}:
\begin{equation*}\label{implicit_affine_EP_constrained}
\mu =  \frac{\delta \ell}{\delta \eta}, \quad \xi = \eta\in\mathfrak{g}^{ \Delta}(a) , \quad \dot{\mu} - \operatorname{ad}_{\xi}^{\;\ast} \mu  - a \diamond \frac{\delta \ell}{\delta a} + \mathbf{d} c ^T\left( \frac{\delta \ell}{\delta a}\right)   \in \left( \mathfrak{g}^{ \Delta}(a)\right) ^\circ.
\end{equation*}


\subsection{Variational framework on the Hamiltonian side}\label{Hamiltonian_side}

Recall from \S\ref{sec_LPSDP} that, on the Hamiltonian side, we consider a $G$-invariant Hamiltonian $H: T ^\ast G \times V ^\ast \rightarrow \mathbb{R}  $ and that, fixing the value of the parameter $ a _0 \in V ^\ast $, we get the Hamiltonian $H_{ a _0}: T ^\ast G \rightarrow \mathbb{R}  $ of the mechanical system. In some situations, the Hamiltonian $H_{ a _0 }$ is obtained by Legendre transformation of the Lagrangian $L_{ a _0 }: TG \rightarrow \mathbb{R}$ supposed to be hyperregular. However, we will not make such an assumption, since our theory is independent of the existence of such a Lagrangian.

Recall that by $G$-invariance $H$ induces a reduced Hamiltonian $h: \mathfrak{g}  ^\ast \times V ^\ast \rightarrow \mathbb{R}$, and that $H$ can be seen as the reduced expression of a $S$-invariant Hamiltonian $\bar H: T ^\ast S \rightarrow \mathbb{R}  $. The reduction processes $\bar H \rightarrow H \rightarrow h$ can be understood as a Poisson reduction by stages.

\medskip

With $a_{0}$ fixed, the Hamilton equations for a mechanical systems with nonholonomic constraint $ \Delta ^{ a _0 } _G \subset TG$ and Hamiltonian $H_{ a _0 }$ can be obtained by the \textit{Hamilton-d'Alembert principle}
\begin{equation}\label{PS_principleHa0}
\delta \int_{ t _1 }^{ t _2 } \left\{ \left\langle p, \dot g \right\rangle -H_{ a _0 }(g,p)\right\} dt=0,\quad \dot g \in \Delta ^{ a _0 }_G(g),
\end{equation} 
for variations $ \delta g (t) $ of $g(t)$ vanishing at the endpoints and such that $\delta g\in \Delta _G^{a_0}$, and for arbitrary variations $ \delta p (t) $ of $p(t)$. We thus obtain the implicit Hamilton-d'Alembert equations
\[
\dot g= \frac{\partial H_{a_0}}{\partial p}\in \Delta^{a _0 } _G(g), \quad \dot p+ \frac{\partial H_{a_0}}{\partial g}\in \Delta _G^{a _0 } (g)^\circ.  
\]
Using the $G$-invariance of $H$, the reduction of \eqref{PS_principleHa0} induces the {\bfi reduced Hamilton-d'Alembert principle}
\begin{equation}\label{Lie_Poisson_VP}
\delta \int_{t_1}^{t_2} \left \{ \left\langle \mu , \xi  \right\rangle -h(\mu , a) \right\}dt=0,\quad \xi \in \mathfrak{g}  ^ \Delta (a)
\end{equation}
with $ \mu = g ^{-1} p$, $ \xi = g ^{-1}\dot g$, and $ a = g ^{-1} a _0 $. We thus obtain that the variation of $ \mu $ is arbitrary, whereas the variations of $ \xi $ and $a$ are computed to be $\delta \xi  = \partial _t \zeta +[ \xi , \zeta ]$ and $\delta a=- \zeta a$, where $ \zeta = g ^{-1} \delta g$ is an arbitrary curve vanishing at the endpoints and such that $ \zeta \in \mathfrak{g}  ^ \Delta (a)$.

From the reduced Hamilton-d'Alembert principle \eqref{Lie_Poisson_VP}, we get the {\bfi implicit Lie-Poisson-Suslov equations with advected parameters}:
\[
\xi  =  \frac{\delta h}{\delta \mu }\in \mathfrak{g}  ^ \Delta (a), \quad \dot{\mu} - \operatorname{ad}_{\xi}^{\;\ast} \mu +\frac{\delta h}{\delta a}  \diamond a\in ( \mathfrak{g}  ^ \Delta (a)) ^\circ.
\]

We can therefore formulate the following theorem, which provides the Hamiltonian counterpart of Theorem \ref{theorem_HP_semidirect} and \ref{thm_HP_constrained}.

\begin{theorem}\label{HPS_principle_first}  Let $H:T^*G \times V ^\ast \rightarrow \mathbb{R}  $ be a $G$-invariant Hamiltonian and let $ \Delta ^{a _0 }_G \subset TG$ be a family of distribution verifying the $G$-invariance assumption \eqref{invariance_assumption}. Fix a parameter $ a _0 \in V ^\ast $, consider a curve $(g(t),p(t)) \in T^*G$, $t \in [t_{1},t_{2}]$, and define the curves $ \mu (t)= g (t) ^{-1} p (t) $ and $a (t) = g(t) ^{-1} a _0 $. Then the following are equivalent.
\begin{itemize}
\item[\bf{(i)}]
With $a_{0}$ fixed, the Hamilton-d'Alembert principle in phase space
\[
\delta \int_{ t _1 }^{ t _2 } \left\{ \left\langle p, \dot g \right\rangle -H_{ a _0 }(g,p)\right\} dt=0,\quad \dot g \in \Delta ^{ a _0 }_G(g)
\]
holds, for variations $ \delta g (t) $ of $g(t)$ vanishing at the endpoints and such that $\delta g\in \Delta _G^{a_0}$, and for arbitrary variations $ \delta p (t) $ of $p(t)$.
\item[\bf{(ii)}]
The curve $(g(t),p(t) )\in T^*G$, $t \in [t_{1},t_{2}]$ satisfies the implicit Hamilton-d'Alembert equations:
\[
\dot g= \frac{\partial H_{a_0}}{\partial p}\in \Delta ^{a _0 }_G(g), \quad \dot p+ \frac{\partial H_{a_0}}{\partial g}\in \Delta _G^{a _0 } (g)^\circ.  
\]
\item[\bf{(iii)}] The reduced Hamilton-d'Alembert principle with advected quantities
\begin{equation}\label{Lie_Poisson_VPSDP_first} 
\delta \int_{t_1}^{t_2} \left \{ \left\langle \mu , \xi  \right\rangle -h(\mu , a) \right\}dt=0,\quad \xi \in \mathfrak{g}  ^ \Delta (a)
\end{equation} 
holds, for arbitrary variations $ \delta \mu (t) $ and variations $ \delta \xi (t) $ and $ \delta a(t) $ of the form $\delta \xi  = \partial _t \zeta +[ \xi , \zeta ]$ and $\delta a=- \zeta a$, where $ \zeta \in \mathfrak{g}  ^ \Delta (a)$ and vanishes at the endpoints.
\item[\bf{(iv)}] The implicit Lie-Poisson-Suslov equations with advected parameters
\begin{equation}\label{implicit_LP_SDP} 
\xi  =  \frac{\delta h}{\delta \mu }\in \mathfrak{g}  ^ \Delta (a), \quad \dot{\mu} - \operatorname{ad}_{\xi}^{\;\ast} \mu +\frac{\delta h}{\delta a}  \diamond a\in ( \mathfrak{g}  ^ \Delta (a)) ^\circ
\end{equation} 
hold.
\end{itemize}
\end{theorem}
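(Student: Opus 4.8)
The plan is to prove the four equivalences following the same three-step pattern used for the Lagrangian Theorems~\ref{theorem_HP_semidirect} and~\ref{thm_HP_constrained}, namely by establishing (i)$\Leftrightarrow$(ii), (iii)$\Leftrightarrow$(iv), and (i)$\Leftrightarrow$(iii) separately. Since much of the needed computation already appears in the discussion of \S\ref{Hamiltonian_side} preceding the statement, the proof is largely a matter of assembling those pieces and justifying the correspondence of constrained variations under reduction.

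For (i)$\Leftrightarrow$(ii) I would vary $g$ and $p$ in the phase-space action and integrate by parts the term $\langle p,\delta\dot g\rangle$, obtaining
\[
\int_{t_1}^{t_2}\left\{\left\langle\delta p,\dot g-\frac{\partial H_{a_0}}{\partial p}\right\rangle-\left\langle\dot p+\frac{\partial H_{a_0}}{\partial g},\delta g\right\rangle\right\}dt+\left[\langle p,\delta g\rangle\right]_{t_1}^{t_2}=0.
\]
The boundary term vanishes because $\delta g$ vanishes at the endpoints. As $\delta p$ is arbitrary, the first bracket forces $\dot g=\partial H_{a_0}/\partial p$, which the imposed constraint places in $\Delta_G^{a_0}(g)$; as $\delta g$ ranges over $\Delta_G^{a_0}(g)$, the second bracket yields $\dot p+\partial H_{a_0}/\partial g\in\Delta_G^{a_0}(g)^\circ$, which is exactly (ii).

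For (iii)$\Leftrightarrow$(iv) I would vary the reduced action, substituting $\delta\xi=\dot\zeta+[\xi,\zeta]$ and $\delta a=-\zeta a$. The only nonroutine step is the advected term: using the diamond identity $\langle\delta h/\delta a,\zeta a\rangle=-\langle(\delta h/\delta a)\diamond a,\zeta\rangle$ from \S\ref{SPT}, together with $\langle\mu,\operatorname{ad}_\xi\zeta\rangle=\langle\operatorname{ad}_\xi^\ast\mu,\zeta\rangle$ and an integration by parts on $\langle\mu,\dot\zeta\rangle$ (whose boundary term drops since $\zeta$ vanishes at the endpoints), the variation reduces to $\int_{t_1}^{t_2}\{\langle\xi-\delta h/\delta\mu,\delta\mu\rangle+\langle-\dot\mu+\operatorname{ad}_\xi^\ast\mu-(\delta h/\delta a)\diamond a,\zeta\rangle\}\,dt$. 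Arbitrariness of $\delta\mu$ gives $\xi=\delta h/\delta\mu$, constrained to $\mathfrak{g}^\Delta(a)$, while arbitrariness of $\zeta\in\mathfrak{g}^\Delta(a)$ gives $\dot\mu-\operatorname{ad}_\xi^\ast\mu+(\delta h/\delta a)\diamond a\in(\mathfrak{g}^\Delta(a))^\circ$, i.e.\ (iv).

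Finally, for (i)$\Leftrightarrow$(iii) I would invoke left $G$-invariance. Under the left trivialization $\mu=g^{-1}p$, $\xi=g^{-1}\dot g$, $a=g^{-1}a_0$, invariance of $H$ gives $H_{a_0}(g,p)=h(\mu,a)$ and the canonical pairing is preserved, $\langle p,\dot g\rangle=\langle\mu,\xi\rangle$, so the two integrands agree; the invariance assumption \eqref{invariance_assumption} identifies $\dot g\in\Delta_G^{a_0}(g)$ with $\xi\in\mathfrak{g}^\Delta(a)$, and a variation $\delta g$ vanishing at the endpoints with $\delta g\in\Delta_G^{a_0}(g)$ corresponds to $\zeta=g^{-1}\delta g$ vanishing at the endpoints with $\zeta\in\mathfrak{g}^\Delta(a)$, inducing $\delta\xi=\dot\zeta+[\xi,\zeta]$ and, by differentiating $a=g^{-1}a_0$, $\delta a=-\zeta a$. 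I expect the main subtlety to be precisely this last correspondence: since $\mathfrak{g}^\Delta(a)$ is parameter dependent and $a(t)$ itself evolves, one must check that the matching of constrained variations is consistent with the advection $a=g^{-1}a_0$, which is guaranteed by \eqref{invariance_assumption} exactly as on the Lagrangian side.
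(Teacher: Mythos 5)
Your proposal is correct and follows essentially the same route as the paper: the authors prove this theorem by the derivations immediately preceding its statement in \S\ref{Hamiltonian_side} (direct variation of the phase-space action for (i)$\Leftrightarrow$(ii), left trivialization and the invariance assumption \eqref{invariance_assumption} for the passage to the reduced principle, and the reduced variation with the diamond identity for (iii)$\Leftrightarrow$(iv)), explicitly mirroring the proofs of Theorems \ref{theorem_HP_semidirect} and \ref{thm_HP_constrained}. Your handling of the constrained variations and of the a priori relation $a=g^{-1}a_0$ matches the paper's treatment.
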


\begin{remark}[\textbf{On the advection equation}]{\rm Note that, similarly with Theorem \ref{theorem_HP_semidirect} and \ref{thm_HP_constrained}, the relation $ a (t) = g (t) ^{-1} a _0 $ is assumed as an hypothesis for the preceding theorem. Equivalently, the advection equation $\partial _t a + \xi a =0$, with initial condition $a (t _1 ) = a _0 $, is assumed to hold, for $ \xi(t)  = g ^{-1}(t)  \dot g (t) $. The Dirac formulation of this reduction process will be formulated in \S\ref{subsec_preliminary_comments}.

We will later consider a version of the previous theorem that includes the advection equation as a consequence of the variational structure in phase space, without having to assume it as an hypothesis. In this case, one has to formulate the phase space principle on $T^*S$ for the $S$-invariant Hamiltonian $\bar H: T^*S \rightarrow \mathbb{R}  $ and for an appropriate distribution constraint on $T^*S$. This principle, together with the Dirac formulation, will be the subject of \S\ref{sec_NH_SDP}.}
\end{remark} 

\begin{remark}[\textbf{Cautionary remark}]\label{not_suslovSDP} {\rm  It is important to observe that while the equations \eqref{implicit_LP_SDP} together with the advection equation $ \partial _t a + \xi a =0$ are a nonholonomic version of the (ordinary) Lie-Poisson equations on the semidirect product $ \mathfrak{s} = \mathfrak{g}  \,\circledS\, V$, they are \textit{not} the Lie-Poisson-Suslov equations on the semidirect product  $ \mathfrak{s} = \mathfrak{g}  \,\circledS\, V$. This is why we use the terminology \textit{Lie-Poisson-Suslov equations with advected parameters} for \eqref{implicit_LP_SDP}. The Lie-Poisson-Suslov equations on the semidirect product $ \mathfrak{s} $ have to be associated to a $S$-invariant constraint $ \Delta _S \subset TS$, yielding the constraint $ \mathfrak{s} ^ \Delta \subset \mathfrak{s} $ at the identity, and are thus given by
\[
\left( \frac{\delta h}{\delta \mu },\frac{\delta h}{\delta a }\right)  \in\mathfrak{s}^{ \Delta} , \quad \left( \dot{\mu} - \operatorname{ad}_{\frac{\delta h}{\delta \mu }}^{\;\ast} \mu +\frac{\delta h}{\delta{a}} \diamond a, \dot a + a \frac{\delta h}{\delta \mu } \right) \in (\mathfrak{s}^{ \Delta} )^\circ.
\]
In general these equations are different from \eqref{implicit_LP_SDP}  since they include also constraints on $\frac{\delta h}{\delta a}$ and do not allow $a$-dependence in the constraints.

This comment will be illustrated in \S\ref{sec_NH_SDP} later, on the Hamiltonian side, by the fact that the Dirac structure on $T^{\ast}S$ that one has to start with is not induced from a distribution $ \Delta  _S $ on $S$.
}
\end{remark} 

\begin{remark}[\textbf{The case of arbitrary advected quantities}]{\rm One can easily adapt Theorem \ref{HPS_principle_first} to the general setting mentioned in \S\ref{generalization}. The corresponding nonholonomic Lie-Poisson equations are
\[
\xi  =  \frac{\delta h}{\delta \mu }\in \mathfrak{g}  ^ \Delta (q), \quad \dot{\mu} - \operatorname{ad}_{\xi}^{\;\ast} \mu + \mathbf{J} \left( \frac{\delta\ell}{\delta{q}}\right) \in \left( \mathfrak{g}^{ \Delta}(q)\right) ^\circ.
\]}
\end{remark}

\begin{remark}[\textbf{The case of rolling ball type constraint on semidirect products}]\label{sdp_case_Ham}{\rm One can easily write Theorem \ref{HPS_principle_first} in the special case when the Lie group $G$ is itself a semidirect product $G=K \,\circledS\,
 V$, as described in \S\ref{subsec_the_sdp_case}. For example, in this situation the Hamilton-d'Alembert principle reads
\[
\delta \int_{ t _1 }^{ t _2 } \left\{ \langle p, \dot k \rangle+ \left\langle \pi , \dot x \right\rangle  -H_{ a _0 }(k,p, x , \pi )\right\} dt=0,\quad (\dot k, \dot x) \in \Delta ^{ a _0 }_G(k,x),
\]
for variations $ \delta k(t) , \delta x (t) $ of $k(t), x (t) $ vanishing at the endpoints and such that $(\delta k, \delta x)\in \Delta _G^{a_0}$, and for arbitrary variations $ \delta p (t) , \delta \pi (t) $ of $p(t), \pi (t) $.

The reduced Hamilton-d'Alembert principle with advected quantities reads
\begin{equation*}\label{Lie_Poisson_VPSDP_first_SDP0} 
\delta \int_{t_1}^{t_2} \left \{ \left\langle \mu , \xi  \right\rangle + \left\langle b, X \right\rangle -h(\mu ,b, a) \right\}dt=0,\quad (\xi,X) \in \mathfrak{g}  ^ \Delta (a),
\end{equation*} 
for arbitrary variations $ \delta \mu (t) , \delta b (t) $ and variations $ \delta \xi (t) $, $ \delta X(t) $ and $ \delta a(t) $ of the form $\delta{\xi}=\partial _t \zeta+[\xi, \zeta]$, $\delta X=\partial _t Z+ \xi Z- \zeta X$ and $\delta a=- \zeta a$, where $( \zeta,Z) \in \mathfrak{g}^ \Delta (a)$ and vanishes at the endpoints. One gets the implicit Lie-Poisson-Suslov equations with advected parameters:
\begin{equation*}\label{implicit_LP_SDP_SDP} 
(\xi,X)  =  \left( \frac{\delta h}{\delta \mu }, \frac{\delta h}{\delta b } \right) \in \mathfrak{g}^ \Delta (a), \;\; \left(\partial _t \mu  - \operatorname{ad}_{\xi}^{\;\ast} \mu + X \diamond b+\frac{\delta h}{\delta a}  \diamond a, \partial _t b + \xi b \right) \in ( \mathfrak{g}^ \Delta (a)) ^\circ.
\end{equation*}}
\end{remark}
 
\section{Lie-Dirac reduction with advected quantities}\label{Dirac_red_advect}
In this section, we shall investigate the reduction of an induced Dirac structure on
the cotangent bundle, for the case in which the configuration manifold is given by a Lie
group and with a parameter dependent constraint distribution given by $ \Delta _G ^{a_0}\subset TG$, with the invariance condition given in \eqref{invariance_assumption}: 
\[
\Delta _G (hg, ha_{0})=h \Delta _G (g,a_{0}).
\]
Then, we shall  formulate the reduction of the corresponding Lagrange-Dirac and Hamilton-Dirac dynamical systems, and finally show that the reduced formulations provide the Euler-Poincar\'e-Suslov and Lie-Poisson-Suslov equations with advected parameters for nonholonomic mechanics.

\subsection{Lie-Dirac reduction of the induced Dirac structures}

Recall that given a parameter dependent constraint distribution $ \Delta _G ^{a_0}\subset TG$, the associated Dirac structure $D_{ \Delta ^{a_0}_G}\subset T(T^{\ast}G) \oplus T^{\ast}(T^{\ast}G)$ is given by
\begin{align*} 
D_{ \Delta ^{a_0}_G}(p_g)&=\left\{(v_{p_g}, \alpha _{p _g })\in T_{p_g}(T^{\ast}G) \oplus T^{\ast}_{p_g}(T^{\ast}G)\mid v_{p_g}\in \Delta ^{a_0}_{T^{\ast}G}( p_g),\right.\\
&\quad \left. \phantom{T^{\ast}_{p_g}(T^{\ast}G)}\left\langle \alpha _{p _g }, w _{p _g } \right\rangle = \Omega (p_g) \left( v _{p _g }, w_{p _g } \right),\;\;\text{for all}\;\; w_{p_g}\in  \Delta ^{a_0}_{T^{\ast}G}( p_g)\right\},
\end{align*}
where
\[
\Delta ^{a_0}_{T^{\ast}G}( p_g):= \left( T_{p_g} \pi \right) ^{-1} \left( \Delta ^{a_0}_G(g) \right) \subset T_{p_g}T^{\ast}G.
\]

\paragraph{Trivialization.} In order to implement the reduction process, we shall first trivialize the expression of $D_{ \Delta ^{ a _0 }_G}$. The trivialized Dirac structure, denoted $\bar D_{ \Delta ^{a_0}_G}\subset T(G \times \mathfrak{g}  ^\ast )\oplus T^{\ast}( G \times \mathfrak{g}  ^\ast )$ reads
\begin{align*} 
&\bar D_{ \Delta ^{a_0}_G}(g, \mu )=\left\{\left .\left( (v_g, \rho ),(\beta _g, \eta  ) \right)\in (T_g G \times \mathfrak{g}  ^\ast) \oplus (T^{\ast}_gG \times \mathfrak{g}  ^\ast)\,\right|\, v_g\in \Delta ^{a_0}_{G}(g),\right.\\
&\left. \phantom{T^{\ast}_{p_g}(T^{\ast}G)}\left\langle \beta _g , w _g  \right\rangle + \left\langle \eta , \sigma \right\rangle = \omega (g, \mu ) \left((v_g, (\mu , \rho )),(w_g, ( \mu , \sigma )  \right), \;\;\text{for all}\;\;w_g\in  \Delta ^{a_0}_G (g) ,\; \sigma \in \mathfrak{g}  ^\ast \right\},
\end{align*}
where $\omega  \in \Omega ^2( G \times \mathfrak{g}  ^\ast )$ is the trivialized canonical symplectic form, given by 
\[
\omega  (g, \mu ) \left((v_g, \rho ),(w_g, \sigma )  \right)= \left\langle \sigma , g ^{-1} v _g \right\rangle -\left\langle \rho , g ^{-1} w _g \right\rangle + \left\langle \mu , [ g ^{-1} v _g , g ^{-1} w _g ] \right\rangle .
\]
Therefore, we have the equivalence
\begin{equation}\label{condition_D} 
\begin{array}{c} 
\left(( v_g ,\rho ),(\beta _g,  \eta  ) \right)\in \bar D_{ \Delta ^{a_0}_G}(g, \mu )\\
\Longleftrightarrow\\
g ^{-1} \beta _g + \rho - \operatorname{ad}^{\ast}_{g ^{-1} v _g } \mu \in g ^{-1} \Delta _{G}^{a_0}(g)^\circ \quad \text{and} \quad \eta = g ^{-1} v _g \in g ^{-1} \Delta _{G}^{a_0}(g).\\
\end{array} 
\end{equation}
By the invariance property \eqref{invariance_assumption} of $ \Delta _G^{a_0}$ and the definition \eqref{def_g_delta} of $\mathfrak{g}  ^\Delta(a)$, we have
\[
g ^{-1} \Delta _{G}^{a_0}(g)= g ^{-1} \Delta _G(g, a _0 )= \Delta _G(e, g ^{-1} a_0)= \mathfrak{g}  ^\Delta (a).
\]
So we can rewrite \eqref{condition_D} as
\begin{equation}\label{condition_D_new}
g ^{-1} \beta _g + \rho - \operatorname{ad}^{\ast}_{g ^{-1} v _g } \mu \in (\mathfrak{g}  ^\Delta (a))^\circ \quad \text{and} \quad \eta = g ^{-1} v _g \in \mathfrak{g}  ^\Delta (a), \quad a:= g ^{-1} a _0 .
\end{equation}

\paragraph{Reduction.} We shall now use that both $D_{ \Delta ^{a_0}_G}$ and $\bar D_{ \Delta ^{a_0}_G}$ are $G_{a_0}$-invariant. Note that we have the diffeomorphism
\[
\left( T(T^{\ast}G)\oplus T^{\ast}(T^{\ast}G) \right) /G_{a_0}\simeq \left( T(G \times \mathfrak{g}  ^\ast )\oplus T^{\ast}( G \times \mathfrak{g}  ^\ast )\right) /G_{a_0}\simeq \left( G/G_{a_0} \times \mathfrak{g}  ^\ast\right)  \times W \oplus W^{\ast},
\]
where $W:= \mathfrak{g}  \times  \mathfrak{g}  ^\ast $. Identifying the quotient $G/G_{a_0}$ with $ \operatorname{Orb}(a_0)\subset V^{\ast}$ of $a_0$, via the orbit map
\begin{equation}\label{Orbit_map_a0} 
gG_{a_0}=[g]_{G_{a_{0}}}\in G/G_{a_0} \mapsto g a_0\in \operatorname{Orb}(a_0),
\end{equation} 
we obtain the reduced Dirac structure $D^{/G_{a_0}}_{\Delta_G^{a_0}}:=\bar D_{ \Delta ^{a_0}_G}/G_{a_0} \subset \left( \operatorname{Orb}(a_0)\times \mathfrak{g}  ^\ast\right)  \times W \oplus W^{\ast}$ on the reduced Pontryagin bundle.
Using the expression of $\bar D_{ \Delta ^{a_0}_G}/G_{a_0}$ given earlier, we obtain the following result.

\begin{proposition} 
The reduced Dirac structure $D^{/G_{a_0}}_{\Delta_G^{a_0}}$ associated to the induced Dirac structure $ D_{ \Delta ^{ a _0 }}\subset T(T^*G )\oplus T^*(T^*G)$ is given at $(a,\mu )\in \operatorname{Orb}(a_0)  \times \mathfrak{g}^{\ast}$ by
\begin{align*} 
D^{/G_{a_0}}_{\Delta_G^{a_0}}(a, \mu)&=\left\{ \left( (a, \mu, \xi , \rho ),(a, \mu, \beta , \eta ) \right) \in ( \operatorname{Orb}(a_0) \times \mathfrak{g}^{\ast})\times W\oplus W^{\ast} \mid \xi \in \mathfrak{g}  ^{ \Delta }(a),\right .\\
& \qquad \qquad \left . \left\langle \beta , \zeta  \right\rangle + \left\langle \eta , \sigma \right\rangle = \left\langle \sigma , \xi  \right\rangle - \left\langle \rho , \zeta  \right\rangle + \left\langle \mu , [\xi ,\zeta ] \right\rangle , \;\forall \; \zeta \in \mathfrak{g}  ^{ \Delta }(a), \sigma \in \mathfrak{g}  ^\ast \right\}.
\end{align*}
Thus we have the equivalence
\begin{equation}\label{condition_red_dirac} 
\begin{array}{c} 
\left( ( \mu , a, \xi , \rho ),( \mu ,a, \beta , \eta ) \right) \in D^{/G_{a_0}}_{\Delta_G^{a_0}}\\
\Longleftrightarrow\\
\beta + \rho - \operatorname{ad}^{\ast}_\xi \mu \in (\mathfrak{g}  ^\Delta (a))^\circ \;\; \text{and} \;\; \eta = \xi  \in \mathfrak{g}  ^\Delta (a).
\end{array} 
\end{equation} 
\end{proposition}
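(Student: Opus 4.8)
The plan is to obtain $D^{/G_{a_0}}_{\Delta_G^{a_0}}$ as the quotient by $G_{a_0}$ of the trivialized Dirac structure $\bar D_{\Delta^{a_0}_G}$, reading off its fibres directly from the trivialized description together with the equivalence \eqref{condition_D_new}. First I would record that $\Delta^{a_0}_G$ is $G_{a_0}$-invariant: for $h\in G_{a_0}$ we have $ha_0=a_0$, so the invariance assumption \eqref{invariance_assumption} specializes to $\Delta^{a_0}_G(hg)=h\,\Delta^{a_0}_G(g)$. Since the canonical symplectic form $\Omega$, hence its trivialization $\omega$, is fully $G$-invariant, it follows that $D_{\Delta^{a_0}_G}$ and $\bar D_{\Delta^{a_0}_G}$ are $G_{a_0}$-invariant, exactly as in the Lie-Dirac reduction of \S2.4 but now with the smaller group. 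This guarantees that $\bar D_{\Delta^{a_0}_G}/G_{a_0}$ is again a Dirac subbundle of the reduced Pontryagin bundle.

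Next I would set up the quotient identification. The $G_{a_0}$-action is the restriction of the lift of $\Lambda_h(g,\mu)=(hg,\mu)$ to the trivialized Pontryagin bundle; in left-trivialized fibre coordinates, writing $\xi=g^{-1}v_g$ on $T(G\times\mathfrak{g}^\ast)$ and $\beta=g^{-1}\beta_g$ on $T^\ast(G\times\mathfrak{g}^\ast)$ (with $\rho,\eta,\mu$ unchanged), one checks that the action fixes every fibre coordinate and acts only by left translation on the $G$-factor. Hence the base $G\times\mathfrak{g}^\ast$ reduces to $(G/G_{a_0})\times\mathfrak{g}^\ast$, the fibres $W\oplus W^\ast$ are untouched, and $\left(T(T^\ast G)\oplus T^\ast(T^\ast G)\right)/G_{a_0}\cong(\operatorname{Orb}(a_0)\times\mathfrak{g}^\ast)\times W\oplus W^\ast$. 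The crucial point is that the invariant $a=g^{-1}a_0$ appearing in \eqref{condition_D_new} is $G_{a_0}$-invariant (since $h^{-1}a_0=a_0$) and descends to the base coordinate, identified with $\operatorname{Orb}(a_0)$ via the orbit map \eqref{Orbit_map_a0}. Because $\omega$ is $G$-invariant, its trivialized expression depends on the fibre data only through $g^{-1}v_g=\xi$, $g^{-1}w_g=\zeta$ and $\mu$, so it descends to the $\mu$-dependent form $\langle\sigma,\xi\rangle-\langle\rho,\zeta\rangle+\langle\mu,[\xi,\zeta]\rangle$ with no extra $a$-dependence; all dependence on the parameter enters solely through the reduced constraint subspace $\mathfrak{g}^\Delta(a)$ of \eqref{def_g_delta}. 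Substituting these invariants into the defining condition of $\bar D_{\Delta^{a_0}_G}$ yields exactly the stated fibre description of $D^{/G_{a_0}}_{\Delta_G^{a_0}}(a,\mu)$.

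Finally I would derive the equivalence \eqref{condition_red_dirac}, which is the reduced analogue of the passage to \eqref{reducedIndDiracAlt}. Using $\langle\mu,[\xi,\zeta]\rangle=\langle\operatorname{ad}^\ast_\xi\mu,\zeta\rangle$, the defining pairing of $D^{/G_{a_0}}_{\Delta_G^{a_0}}(a,\mu)$ rearranges to $\langle\sigma,\eta-\xi\rangle+\langle\beta+\rho-\operatorname{ad}^\ast_\xi\mu,\zeta\rangle=0$ for all $\sigma\in\mathfrak{g}^\ast$ and all $\zeta\in\mathfrak{g}^\Delta(a)$; the arbitrariness of $\sigma$ forces $\eta=\xi$, and the arbitrariness of $\zeta\in\mathfrak{g}^\Delta(a)$ forces $\beta+\rho-\operatorname{ad}^\ast_\xi\mu\in(\mathfrak{g}^\Delta(a))^\circ$, which together with $\xi\in\mathfrak{g}^\Delta(a)$ is precisely \eqref{condition_red_dirac}. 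I expect the only genuinely delicate point to be the bookkeeping of the quotient: one must keep track of the fact that the constraint is merely $G_{a_0}$-invariant while $\omega$ is $G$-invariant, so that the reduced base is the whole orbit $\operatorname{Orb}(a_0)$ rather than a point and the parameter survives only inside $\mathfrak{g}^\Delta(a)$. Everything else is the same algebra already carried out for the $G$-invariant induced Dirac structure in \S2.4.
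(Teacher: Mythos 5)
Your proposal is correct and follows essentially the same route as the paper: trivialize $D_{\Delta^{a_0}_G}$, use the invariance assumption to rewrite the conditions in terms of $\mathfrak{g}^{\Delta}(a)$ as in \eqref{condition_D_new}, quotient by $G_{a_0}$ via the orbit map \eqref{Orbit_map_a0}, and then rearrange the defining pairing (using $\langle\mu,[\xi,\zeta]\rangle=\langle\operatorname{ad}^{\ast}_{\xi}\mu,\zeta\rangle$ and the arbitrariness of $\sigma$ and of $\zeta\in\mathfrak{g}^{\Delta}(a)$) to obtain \eqref{condition_red_dirac}. The bookkeeping point you flag --- that only the constraint breaks the symmetry down to $G_{a_0}$ while $\omega$ remains fully $G$-invariant, so the parameter survives only inside $\mathfrak{g}^{\Delta}(a)$ --- is exactly what the paper relies on.
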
 
\begin{figure}[h]
\begin{center}
\includegraphics[scale=.63]{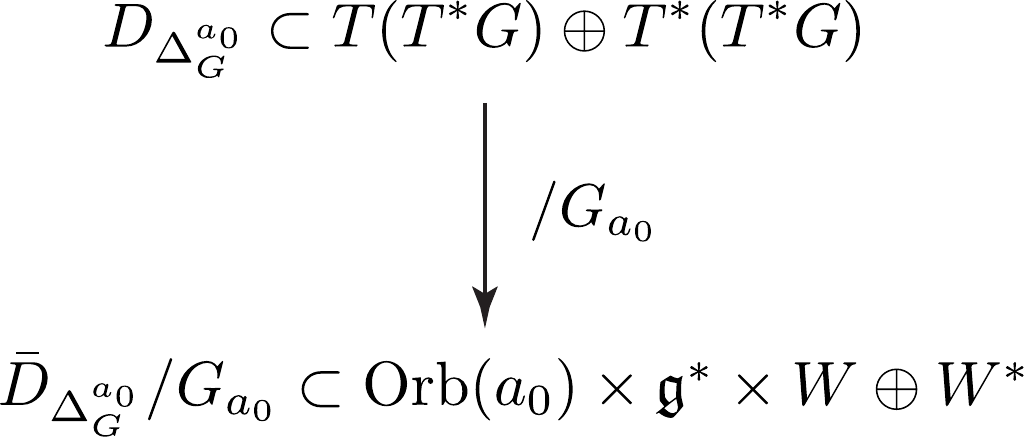}
\caption{Lie-Dirac Reduction by $G_{ a _0 }$}
\label{DiracReduction}
\end{center}
\end{figure}

\subsection{Euler-Poincar\'e-Dirac reduction with advected parameters}

Knowing the expression of the reduced Dirac structure, we shall now consider the reduction of the Lagrange-Dirac system associated to $L_{ a _0 }$ and $ \Delta _G ^{ a _0 }$. Recall from \eqref{LDirac_system} that the equations of motion of a Lagrange-Dirac system $(G, \Delta ^{ a _0 }_G, L_{ a _0 })$ read
\begin{equation*}\label{LDirac_system_recall} 
\left(( g,p,\dot g, \dot p), \mathbf{d}_DL_{a_0}(g,v) \right) \in D_{ \Delta ^{a_0}_G}(g,p),
\end{equation*} 
for a curve $(g(t), v(t), p(t)) \in TG \oplus T^*G$. Recall also that the Dirac differential of $L_{ a _0 }$ reads
\[
\mathbf{d}_DL_{ a _0 }(g,v)= \left( g, \frac{\partial L_{ a _0 }}{\partial v} , - \frac{\partial L_{ a _0 }}{\partial g}, v\right).
\]
A curve is a solution of the Lagrange-Dirac system if and only if it verifies the implicit Euler-Lagrange-d'Alembert equations
\begin{equation}\label{implicit_EL_Dirac}
p= \frac{\partial L_{a_0}}{\partial v}, \quad  \dot g=v\in \Delta ^{a_0}_G(g), \quad \dot p- \frac{\partial L_{a_0}}{\partial g}  \in \Delta _G(g)^\circ.
\end{equation} 

\paragraph{Trivialization.}
In order to implement the reduction process, we shall first write the trivialized version of the Lagrange-Dirac system $(G, \Delta ^{ a _0 }_G, L_{ a _0 })$. In doing this, we have to remember that $L_{ a _0 }$ is not $G$-invariant but only $G_{ a _0 }$-invariant. Let us consider the induced Lagrangian $\bar L_{a_0}:G \times \mathfrak{g}  \rightarrow \mathbb{R}  $, defined by
\[
L_{a_0}(g, v)= \bar L_{a_0}(g, g ^{-1} v).
\]
The trivialized expression of the Dirac differential reads
\[
\overline{\mathbf{d}_D L_{a_0}}(g, \eta )= \left( g, \frac{\partial \bar L_{a_0}}{\partial \eta }, - g ^{-1} \frac{\partial \bar L_{a_0}}{\partial g} , \eta \right) \in G \times \mathfrak{g}  ^\ast \times W^\ast.
\]
Using the expression \eqref{condition_D_new} of the trivialized Dirac structure, one obtains that the trivialized implicit Lagrangian system, given by
\begin{equation}\label{implicit_Lagr_syst_trivialized}
\left(( g, \mu , \xi , \dot \mu ), \overline{\mathbf{d}_DL_{a_0}}(g, \eta ) \right) \in \bar D_{ \Delta _G^{a_0}}(g, \mu )
\end{equation} 
with $\mu = g ^{-1} p$, $\xi = g ^{-1} \dot g$, yields the equations
\[
\left\{
\begin{array}{l}
\vspace{0.2cm}\displaystyle\dot \mu - \operatorname{ad}^{\ast}_ \xi \mu - g ^{-1} \frac{\partial \bar L_{a_0}}{\partial g} \in (\mathfrak{g}  ^\Delta(a))^\circ, \quad \mu = \frac{\partial \bar L_{a_0}}{\partial \eta }\\
\eta = \xi \in \mathfrak{g}  ^\Delta(a).
\end{array}
\right.
\]
This system is the trivialized version of the implicit Lagrange-d'Alembert equations \eqref{implicit_EL_Dirac}.

\paragraph{Reduction.} We shall now reduce the system \eqref{implicit_Lagr_syst_trivialized} by using the $G_{a  _0 }$-invariance. Using the orbit map \eqref{Orbit_map_a0}, we get the system
\[
\left((a, \mu , \xi , \dot \mu ), \overline{\mathbf{d}_DL}^{/G_{a_0}}_{a_0}(a, \eta ) \right) \in D^{/G_{a_0}}_{ \Delta _G^{a_0}}(a, \mu),
\]
where $(a, \mu , \xi , \dot \mu ) \in  \operatorname{Orb}(a_0) \times \mathfrak{g}  ^\ast \times W$ and $\overline{\mathbf{d}_D L_{a_{0}}}^{/G_{a_0}}(a, \eta )\in \operatorname{Orb}(a_0) \times \mathfrak{g}  ^\ast \times W^{\ast}$ is given by
\[
\overline{\mathbf{d}_D L_{a_{0}}}^{/G_{a_0}}(a, \eta)= \left(a, \frac{\partial \bar L_{a_0}}{\partial \eta }, - g ^{-1} \frac{\partial \bar L_{a_0}}{\partial g} , \eta \right) \in \operatorname{Orb}(a_0)  \times \mathfrak{g}  ^\ast \times W^{\ast},
\]
where we note that the third component is well defined, that is, does not depend on $g$ such that $g ^{-1} a_0=a$. We now observe that the reduced Lagrangian $\ell$ is related to $L_{a_0}$ and $\bar L_{a_0}$ by
\[
\bar L_{a_0}\left( g, g ^{-1} v\right) =L_{a_0}\left( g, v\right) = \ell\left( g^{-1} a_0, g ^{-1}v\right).
\]
We thus have
\[
\frac{\partial \bar L_{a_0}}{\partial \eta }=\frac{\delta \ell}{\delta \eta }, \quad  \frac{\partial \bar L_{a_0}}{\partial g }=g \left( \frac{\delta \ell}{\delta a}\diamond a \right),
\]
where $a= g ^{-1} a _0 $.
The second equality is proved as follows
\[
\left\langle \frac{\partial \bar L_{a_0}}{\partial g}, \delta g \right\rangle = \left\langle \frac{\delta \ell}{\delta a}, - g ^{-1} \delta g g ^{-1} a _0 \right\rangle = - \left\langle \frac{\delta \ell}{\delta a}, g ^{-1} \delta g a \right\rangle = \left\langle \frac{\delta \ell}{\delta a} \diamond a, g ^{-1} \delta g \right\rangle.  
\]
We therefore obtain the following definition for the reduced Dirac differential for systems on Lie group with advected quantities.

\begin{definition} Given $a _0 \in V ^\ast $, the $G_{a _0 }$-{\bfi reduced Dirac differential} of $\ell: \operatorname{Orb}(a_ 0 )  \times  \mathfrak{g} \rightarrow \mathbb{R}  $ is
\[
\mathbf{d} _D^{/ G_{a _0 }}\ell: \operatorname{Orb}(a_0 )  \times \mathfrak{g} \rightarrow  \operatorname{Orb}(a_0 ) \times \mathfrak{g}  ^\ast  \times W^\ast, \quad \mathbf{d} _D^{/ G_{a _0 }}\ell(a, \eta)= \left( a ,\frac{\delta \ell}{\delta \eta },  - \frac{\delta \ell}{\delta a} \diamond a, \eta \right).
\]
The reduced Lagrange-Dirac system associated to $(G, \Delta ^{ a _0 }_G, L_{ a _0 })$ is
\begin{equation}\label{implicit_Lagr_syst_reduced}
\left((a, \mu , \xi , \dot \mu ), \mathbf{d} _D^{/ G_{a _0 }}\ell( a, \eta ) \right) \in D^{/G_{a_0}}_{ \Delta _G^{a_0}}(a, \mu).
\end{equation} 
\end{definition}

Using \eqref{condition_red_dirac}, we get the following result.

\begin{proposition} Assume that the advection equation $\dot a+ \xi a =0$ is verified. The curve $t \mapsto (a(t) , \mu(t) , \xi(t)  )$ is a solution curve of the reduced Lagrange-Dirac system \eqref{implicit_Lagr_syst_reduced} 
if and only if it verifies the {\bfi implicit Euler-Poincar\'e-Suslov equations with advected parameters}:
\begin{equation*}\label{ImpEulPoinSusAd} 
-  \frac{\delta \ell}{\delta a}\diamond a + \dot \mu  - \operatorname{ad}^{\ast}_\xi \mu \in (\mathfrak{g}  ^\Delta (a))^\circ, \quad \eta = \xi  \in \mathfrak{g}  ^\Delta (a), \quad \text{and} \quad \mu =\frac{\delta \ell}{\delta \eta}.
\end{equation*} 
\end{proposition}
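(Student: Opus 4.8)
The plan is to unwind the membership condition in the reduced Dirac structure by invoking the explicit characterization in \eqref{condition_red_dirac}, and then substitute the components of the reduced Dirac differential supplied by the preceding definition.

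First I would read off the data. The left member of \eqref{implicit_Lagr_syst_reduced} is the curve $(a,\mu,\xi,\dot\mu)\in\operatorname{Orb}(a_0)\times\mathfrak{g}^\ast\times W$, so in the notation of \eqref{condition_red_dirac} the $W$-component is $(\xi,\rho)=(\xi,\dot\mu)$; that is, $\rho=\dot\mu$. The right member is
\[
\mathbf{d}_D^{/G_{a_0}}\ell(a,\eta)=\left(a,\frac{\delta\ell}{\delta\eta},-\frac{\delta\ell}{\delta a}\diamond a,\eta\right),
\]
so the $W^\ast$-component is $(\beta,\eta)=\bigl(-\tfrac{\delta\ell}{\delta a}\diamond a,\,\eta\bigr)$; that is, $\beta=-\tfrac{\delta\ell}{\delta a}\diamond a$. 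Crucially, \eqref{implicit_Lagr_syst_reduced} requires the base points to agree, which forces the equality of the $\mathfrak{g}^\ast$-components $\mu=\tfrac{\delta\ell}{\delta\eta}$; this is exactly the Legendre-type relation and is the first of the three asserted equations.

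Next I would feed $(\xi,\rho)=(\xi,\dot\mu)$ and $(\beta,\eta)=(-\tfrac{\delta\ell}{\delta a}\diamond a,\eta)$ into the equivalence \eqref{condition_red_dirac}. Its second clause reads $\eta=\xi\in\mathfrak{g}^\Delta(a)$, yielding directly the middle equation. Its first clause reads $\beta+\rho-\operatorname{ad}^\ast_\xi\mu\in(\mathfrak{g}^\Delta(a))^\circ$; substituting $\beta=-\tfrac{\delta\ell}{\delta a}\diamond a$ and $\rho=\dot\mu$ gives
\[
-\frac{\delta\ell}{\delta a}\diamond a+\dot\mu-\operatorname{ad}^\ast_\xi\mu\in(\mathfrak{g}^\Delta(a))^\circ,
\]
which is the third equation. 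Collecting the three relations reproduces the implicit Euler-Poincar\'e-Suslov equations with advected parameters exactly as stated, and conversely any curve satisfying these three relations plainly meets both clauses of \eqref{condition_red_dirac}, so the equivalence holds in both directions.

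I do not anticipate a genuine obstacle here: the proposition is a direct read-off once the two triples are matched against \eqref{condition_red_dirac}. The only point requiring a word of care is the role of the advection hypothesis $\dot a+\xi a=0$. This assumption does not enter the algebraic matching above; rather it guarantees that the curve $a(t)=g(t)^{-1}a_0$ stays in $\operatorname{Orb}(a_0)$ and is consistent with the left trivialization $a=g^{-1}a_0$ used to derive the reduced Dirac differential, so that the three pointwise relations are the correct reduction of the unreduced implicit Lagrange-d'Alembert equations \eqref{implicit_EL_Dirac}. I would state this explicitly to confirm that the advection equation closes the system and that $\tfrac{\delta\ell}{\delta a}$ is evaluated along the advected parameter.
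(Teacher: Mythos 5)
Your proof is correct and follows exactly the route the paper takes: the paper derives the proposition by directly applying the characterization \eqref{condition_red_dirac} of the reduced Dirac structure to the pair $\bigl((a,\mu,\xi,\dot\mu),\,\mathbf{d}_D^{/G_{a_0}}\ell(a,\eta)\bigr)$, with the base-point matching giving $\mu=\delta\ell/\delta\eta$, just as you do. Your closing observation that the advection equation plays no role in the algebraic matching but is assumed a priori from $a=g^{-1}a_0$ is also precisely the point the paper makes in its subsequent remark on the advection equation.
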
 

This shows that the implicit Euler-Poincar\'e-Suslov equations with advected parameters, obtained earlier in \eqref{implicit_EP_nonholonomic} via the reduced Lagrange-d'Alembert-Pontryagin principle, can be naturally formulated  in the context of Dirac reduction.

\begin{remark}[\textbf{On the advection equation}]{\rm 
Note that the advection equation $\dot a + a \xi =0$ is not included in the reduced Lagrange-Dirac system. It is given a priori from the definition $a= g ^{-1} a _0 $. This is consistent with both the process of Euler-Poincar\'e reduction with advected parameters of \cite{HMR1998a} (see \S\ref{EP_Adv}) and with the reduction of the Hamilton-Pontryagin principle with advected parameters that we developed above (see \S\ref{HP_unconstrained} and \S\ref{HP_constrained}).}
\end{remark}

\subsection{Preliminary comments on the Hamilton-Dirac reduction}\label{subsec_preliminary_comments}

In this section we make some relevant comments concerning the choice of an appropriate Dirac reduction approach on the Hamiltonian side. 

Consider the Hamilton-Dirac system
\begin{equation}\label{HD_syst} 
\left( (g,p, \dot g, \dot p), \mathbf{d} H_{a_0}(g,p)\right) \in D_{ \Delta ^{a_0}_G}(g,p),
\end{equation} 
associated to the Hamiltonian $H_{ a _0 } : T^{\ast}G\rightarrow \mathbb{R}  $ and the constraint $ \Delta _{G}^{ a _0 }$.
Recall that the coordinate representation yields the Hamilton-d'Alembert equations
\begin{equation}\label{equ_motion_HdA}
 \dot g \in \Delta _G^{ a _0 }(g) , \quad \dot p + \frac{\partial H_{ a _0 }}{\partial g}\in \Delta _G^{ a _0 }(g) ^\circ, \quad  \frac{\partial H_{a_0}}{\partial p}=\dot g.
\end{equation} 

The trivialization of \eqref{HD_syst} yields the Hamilton-Dirac system in the form
\[
\left(( g,\mu , \xi , \dot \mu ), \overline{\mathbf{d}  H_{a_0}}(g, \mu )\right) \in \bar D_{ \Delta ^{a_0}_G}(g,\mu ),
\]
where $\overline{\mathbf{d}  H_{a_0}}\in G \times \mathfrak{g}  ^\ast \times \mathfrak{g}^\ast   \times \mathfrak{g}$ is the trivialized expression of $ \mathbf{d} H_{ a _0 } \in T^{\ast}T^{\ast}G$, given by
\[
\overline{\mathbf{d} H_{ a _0 }}(g, \mu )= \left( g, \mu , g ^{-1} \frac{\partial \bar H_{ a _0 }}{\partial g}, \frac{\partial \bar H_{ a _0 }}{\partial \mu }  \right),
\]
where $H_{a_0}(g, p)= \bar H_{a_0}(g, g ^{-1} p)$.

A reduction of this system yields
\[
\left((a, \mu , \xi , \dot \mu ), \overline{\mathbf{d}  H_{a_0}}^{/G_{ a _0 }}(a, \mu )\right) \in \bar D^{/G_{ a _0 }}_{ \Delta ^{a_0}_G}(g,\mu ),
\]
where $(a, \mu , \xi , \dot \mu ) \in  \operatorname{Orb}(a_0) \times \mathfrak{g}  ^\ast \times \mathfrak{g}  \times \mathfrak{g}^\ast $, and the reduced differential $\overline{\mathbf{d}  H_{a_0}}^{/G_{ a _0 }}(a, \mu )\in \operatorname{Orb}(a_0) \times \mathfrak{g}  ^\ast \times \mathfrak{g}  ^\ast \times \mathfrak{g}$ is given, in terms of the reduced Hamiltonian $h$, by
\begin{align*} 
\overline{\mathbf{d}  H_{a_0}}^{/G_{ a _0 }}(a, \mu )&= \left(a, \mu , g ^{-1} \frac{\partial \bar H_{a_0}}{\partial g }, \frac{\partial \bar H_{a_0}}{\partial \mu } \right) \\
&= \left( a, \mu , \frac{\delta h}{\delta a}\diamond a, \frac{\delta h}{\delta \mu }  \right) \in \operatorname{Orb}(a_0)  \times \mathfrak{g}  ^\ast \times \mathfrak{g}  ^\ast \times \mathfrak{g}.
\end{align*}
We therefore give the following definition for the reduced differential for systems on Lie groups with advected quantities.

\begin{definition} Given $a _0 \in V ^\ast $, the $G_{a _0 }$-{\bfi reduced Dirac differential} of $h:\operatorname{Orb}(a_0 ) \times \mathfrak{g}^\ast   \rightarrow \mathbb{R}  $ is
\[
\mathbf{d} ^{/ G_{a _0 }}h: \operatorname{Orb}(a_0 ) \times \mathfrak{g}^\ast   \rightarrow  \operatorname{Orb}(a_0 ) \times \mathfrak{g}^\ast  \times W^\ast, \quad \mathbf{d}^{/ G_{a _0 }}h(a, \mu )= \left( a, \mu , \frac{\delta h}{\delta a}\diamond a, \frac{\delta h}{\delta \mu }  \right).
\]
The reduced Hamilton-Dirac system associated to $(G, \Delta ^{ a _0 }_G, H_{ a _0 })$ is
\begin{equation}\label{implicit_Ham_syst_reduced}
\left((a, \mu , \xi , \dot \mu ), \mathbf{d}^{/ G_{a _0 }}h(a, \mu ) \right) \in D^{/G_{a_0}}_{ \Delta _G^{a_0}}( a, \mu),
\end{equation} 
\end{definition}
Using \eqref{condition_red_dirac} we obtain the following proposition.

\begin{proposition}  Assume that the advection equation $\dot a+ \xi a =0$ is verified. The curve $t \mapsto (a(t) , \mu(t) , \xi(t)  )$ is a solution curve of the reduced Hamilton-Dirac system \eqref{implicit_Ham_syst_reduced} 
if and only is it verifies the {\bfi implicit Lie-Poisson-Suslov equations with advected parameters}
\begin{equation}\label{ImpLiePoiSusAd} 
\frac{\delta h}{\delta a}\diamond a + \dot \mu  - \operatorname{ad}^{\ast}_\xi \mu \in (\mathfrak{g}  ^\Delta (a))^\circ, \quad \frac{\delta h}{\delta \mu }= \xi \in \mathfrak{g}  ^\Delta (a). 
\end{equation} 
\end{proposition}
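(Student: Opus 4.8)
The plan is to prove this equivalence by a direct substitution into the characterization \eqref{condition_red_dirac} of the reduced Dirac structure, exactly paralleling the argument already used on the Lagrangian side for the implicit Euler-Poincar\'e-Suslov equations. The statement is, in essence, an algebraic (pointwise-in-time) consequence of the explicit formula for the $G_{a_0}$-reduced Dirac differential $\mathbf{d}^{/G_{a_0}}h$, once the membership condition \eqref{condition_red_dirac} is at hand; no genuinely new analytic input is required.

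First I would unwind the defining inclusion \eqref{implicit_Ham_syst_reduced},
\[
\left((a, \mu , \xi , \dot \mu ), \mathbf{d}^{/ G_{a _0 }}h(a, \mu ) \right) \in D^{/G_{a_0}}_{ \Delta _G^{a_0}}( a, \mu),
\]
into the notation describing $D^{/G_{a_0}}_{\Delta_G^{a_0}}(a,\mu)$, whose elements are pairs $((a,\mu,\xi,\rho),(a,\mu,\beta,\eta)) \in (\operatorname{Orb}(a_0)\times\mathfrak{g}^\ast)\times W \oplus W^\ast$. The reduced vector part is $(\xi,\rho)=(\xi,\dot\mu)\in W=\mathfrak{g}\times\mathfrak{g}^\ast$, so that $\rho=\dot\mu$, while the reduced covector part is the $W^\ast$-component of $\mathbf{d}^{/G_{a_0}}h(a,\mu)$, namely $(\beta,\eta)=\bigl(\frac{\delta h}{\delta a}\diamond a,\, \frac{\delta h}{\delta \mu}\bigr)\in\mathfrak{g}^\ast\times\mathfrak{g}$. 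The matching of base points is automatic, since both sides sit over $(a,\mu)$.

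Next I would insert these identifications into the equivalence \eqref{condition_red_dirac}. The condition $\eta = \xi \in \mathfrak{g}^{\Delta}(a)$ becomes $\frac{\delta h}{\delta \mu} = \xi \in \mathfrak{g}^{\Delta}(a)$, which is the second relation in \eqref{ImpLiePoiSusAd}; the condition $\beta + \rho - \operatorname{ad}^{\ast}_\xi \mu \in (\mathfrak{g}^{\Delta}(a))^\circ$ becomes $\frac{\delta h}{\delta a}\diamond a + \dot\mu - \operatorname{ad}^{\ast}_\xi \mu \in (\mathfrak{g}^{\Delta}(a))^\circ$, which is the first relation. Because \eqref{condition_red_dirac} is itself phrased as an ``if and only if,'' this single substitution delivers both directions of the claimed equivalence simultaneously.

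Finally I would record the role of the advection equation. The reduced Dirac structure lives over the base $\operatorname{Orb}(a_0)\times\mathfrak{g}^\ast$ and constrains only the fiber data $(\xi,\dot\mu)$ and $(\beta,\eta)$; it says nothing about how the base coordinate $a$ evolves, so $\dot a = -\xi a$ is not generated by the inclusion \eqref{implicit_Ham_syst_reduced} but is imposed as the standing hypothesis coming from $a=g^{-1}a_0$, consistently with the Lagrangian counterpart and the subsequent remark. I do not expect any real obstacle: the entire content is the correct pairing of the four slots $(\xi,\rho,\beta,\eta)$ with the computed components of $\mathbf{d}^{/G_{a_0}}h$, after which \eqref{condition_red_dirac} yields the result at once. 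The only point demanding care is the bookkeeping that $\rho$ corresponds to $\dot\mu$ and that the covector slots $\beta,\eta$ are read off from $\mathbf{d}^{/G_{a_0}}h$ in the correct order.
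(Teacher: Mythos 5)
Your proposal is correct and follows exactly the route the paper takes: the paper derives the proposition directly from the characterization \eqref{condition_red_dirac} by substituting $(\xi,\rho)=(\xi,\dot\mu)$ and reading off $(\beta,\eta)=\bigl(\frac{\delta h}{\delta a}\diamond a,\frac{\delta h}{\delta\mu}\bigr)$ from the formula for $\mathbf{d}^{/G_{a_0}}h$. Your closing remark on the advection equation being a standing hypothesis rather than a consequence of the inclusion also matches the paper's subsequent discussion.
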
 

\begin{figure}[h]
\begin{center}
\includegraphics[scale=.7]{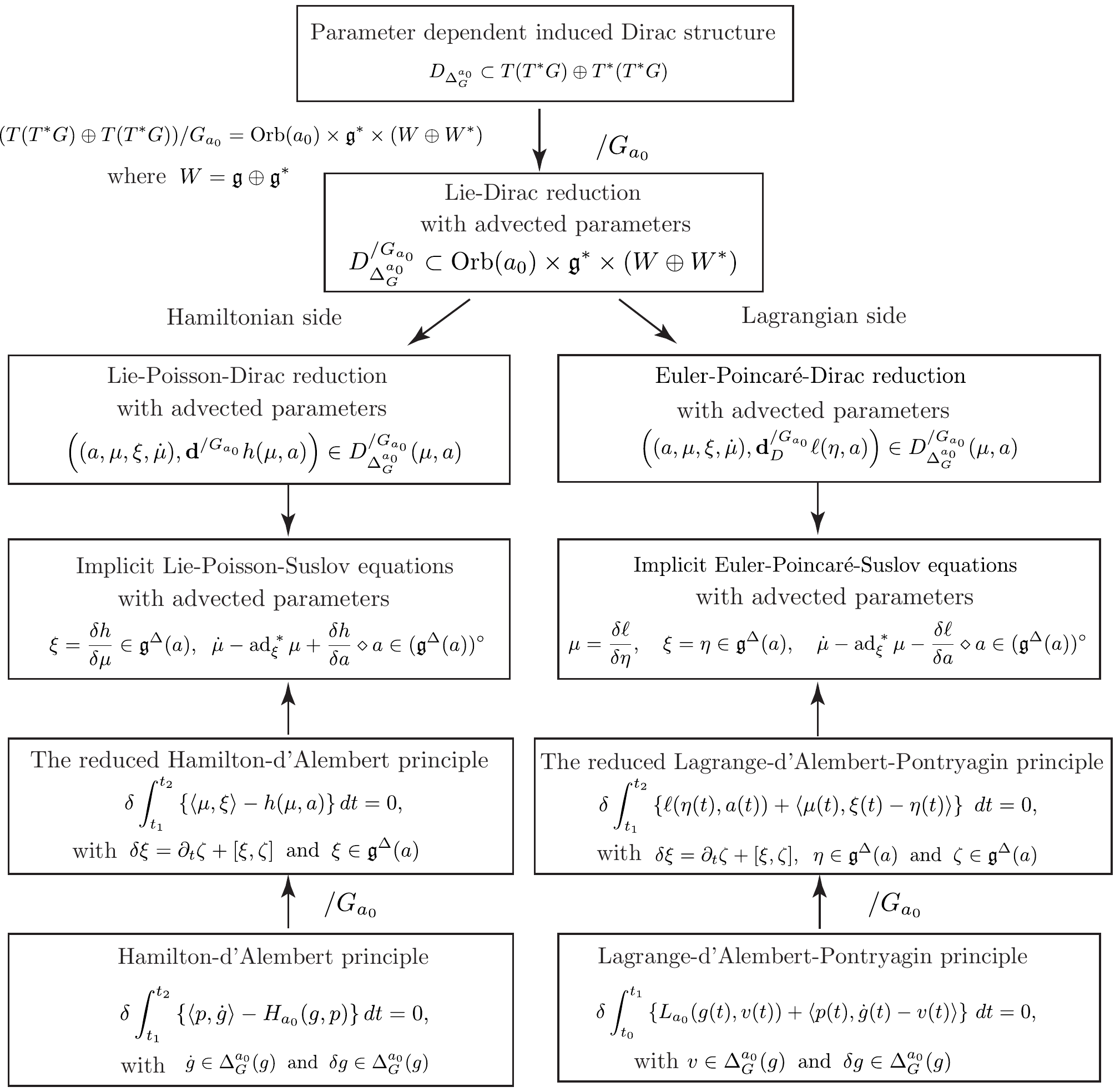}
\caption{Summary of Lie-Dirac reduction with advected parameters}
\label{Diagram_summary}
\end{center}
\end{figure}

Recall that the equations \eqref{ImpLiePoiSusAd} were obtained in \eqref{implicit_LP_SDP} via the reduced Hamilton-d'Alembert principle with advected quantities. They are here obtained via Dirac reduction.

Recall also that that this system is equivalent to its Lagrangian counterpart \eqref{implicit_EP_nonholonomic}, when the Hamiltonian is associated to the Lagrangian $\ell$ assumed to be hyperregular. This reduction process is however not fully satisfactory on the Hamiltonian side, since one has to assume that the advection equation $\dot a + a \xi =0$ holds a priori and one does not obtain it naturally from the reduction approach. However, in order to develop a consistent Dirac reduction approach on the Hamiltonian side, we have to recover, as a particular case, the ordinary Lie-Poisson reduction for semidirect product, that yields directly the advection equation $\dot a + a \frac{\delta h}{\delta \mu }=0$ without assuming it a priori.

At first glance, one could think that it is then enough to apply the Lie-Dirac reduction approach developed in \cite{YoMa2007b} for induced invariant Dirac structures on Lie groups, to the particular case when the Lie group is the semidirect product $S= G \,\circledS\, V$. This is however not the case, as we will mention in details below. In particular, such approach would yield the Lie-Poisson-Suslov equations associated with the semidirect product $ \mathfrak{s}= \mathfrak{g}  \,\circledS\, V$, which are not the equations \eqref{implicit_LP_constrained} we aim to obtain, as we already noted in Remark \ref{not_suslovSDP}.

\begin{remark}[\textbf{The case of the rolling ball type constraint}]\label{rolling_ball_remark} {\rm The Dirac reduction approach developed in the present Section evidently applies to the rolling ball type constraints described in \S\ref{subsec_the_sdp_case}. One starts with the Dirac structure $ D_{ \Delta ^{a _0 }_{G}} \subset T(T^*G) \oplus T^*(T^*G)$, where $G= K \,\circledS\, V$, and 
obtains by reduction the Dirac structure $D^{/G_{ a _0 }}_{ \Delta _G^{a _0 }}\subset \operatorname{Orb}( a _0 ) \times \mathfrak{g}^\ast \times W \oplus W ^\ast  $, $W = \mathfrak{g}  \times \mathfrak{g}  ^\ast $. We have the equivalence
\begin{equation*}\label{condition_red_dirac_rollingtype} 
\begin{array}{c} 
\left( ( a,\mu ,b,  \xi,X , \rho , \sigma ),( a,\mu ,b, \beta , \gamma , \eta, Y ) \right) \in D^{/G_{a_0}}_{\Delta_G^{a_0}}\\
\Longleftrightarrow\\
\left( \beta + \rho - \operatorname{ad}^{\ast}_\xi \mu+ X \diamond b, \gamma + \sigma - \xi b \right)  \in (\mathfrak{g}  ^\Delta (a))^\circ \quad \text{and} \quad (\eta, Y) = (\xi, X)  \in \mathfrak{g}  ^\Delta (a).
\end{array} 
\end{equation*}
The reduced Lagrange-Dirac system reads
\[
\left( ( a,\mu ,b, \xi , X, \dot { \mu },\dot b),\mathbf{d} _D ^{/ G_{ a _0 }}\ell ( a, \eta , Y)\right) \in D^{/G_{a _0 }}_{ \Delta _G^{a _0 }}( a,\mu ,b).
\]
}
\end{remark}

\section{Lie-Dirac reduction for nonholonomic systems on semi\-direct products}\label{sec_NH_SDP} 
As we already mentioned above, a satisfactory Dirac reduction approach on the Hamiltonian side should extend the properties of the Lie-Poisson reduction for semidirect products. First, the reduced Dirac structure should be obtained, similarly with the Poisson structure, from a Dirac structure on $T^*S$, where $S=G \,\circledS\, V$, and not from a Dirac structure on $T^*G$ as in the previous section (such approach was however satisfactory for the Lagrangian side). Second, the advection equation for the variable $a$ should be incorporated directly in the reduced geometric object, as in the Poisson case, and not assumed a priori as above (and on the Lagrangian side). In particular, a Dirac reduction by stages should be available in this context, as will be shown later.

Also, contrary to the usual setting in nonholonomic mechanics, one cannot start with the Dirac structure $D_{ \Delta _S}$ on $T^*S$ induced from a given $S$-invariant distribution $ \Delta  _S \subset TS$ via the lifted distribution $\Delta_{T^{\ast}S}=( T\pi_{S})^{-1} \, (\Delta_{S}) \subset T(T^{\ast}S) $ since this would yield the Lie-Poisson-Suslov equations on $\mathfrak{s} ^\ast $ which are not the desired equations in our context. We shall need the Dirac structure associated to another $S$-invariant distribution $ \Delta _{T^*S}\subset T(T^*S)$ constructed from the parameter dependent $G$-invariant distribution $ \Delta _G(g,a)\subset TG$.

In order to develop this approach systematically, we present below the Lie-Dirac reduction for $S$-invariant Dirac structures on the cotangent bundle of a Lie group $T^*S$, induced by  an arbitrary distribution $ \Delta _{T^*S} \subset T(T^*S)$ on $T^*S$.

\subsection{Lie-Dirac reduction: a more general case}\label{Lie_Dirac_gen}

In this section, we use the notation $S$ for the Lie group, since later in \S\ref{subsec_LDRBS}, we will apply this reduction to the special case of a semidirect product $S= G \,\circledS \, V$. The results of this section apply to an arbitrary Lie group $S$, not necessarily given by a semidirect product.

Let $ \Delta _{T^*S}\subset T(T^*S)$ be an arbitrary distribution on $T^*S$ (not necessarily induced by a distribution on $S$) and consider the Dirac structure $D_{ \Delta_{T^*}}$ associated with $ \Delta _{T^*S}\subset T(T^*S)$ and the canonical symplectic form, i.e., for each $p_{s} \in T^{\ast}S$,
\begin{align*} 
D_{ \Delta_{T^*S}}(p_s)&=\left\{(v_{p_s}, \alpha _{p _s })\in T_{p_s}(T^*S) \oplus T^*_{p_s}(T^*S)\mid v_{p_s}\in \Delta _{T^*S}( p_s), \right.\\
&\left.\hspace{3cm} \left\langle \alpha _{p_s }, w _{p_s } \right\rangle = \Omega (p_s) \left( v _{p_s }, w_{p _s } \right), \;\; \text{for all}\; \; w_{p_s}\in  \Delta_{T^*S}( p_s)\right\}.
\end{align*}

\paragraph{Trivialization.} Let us denote by $ \Delta _{S \times \mathfrak{s}  ^\ast }$, the trivialized expression of the distribution $ \Delta _{T^*S}$. We have $\Delta _{S \times \mathfrak{s}  ^\ast }(s, \mu )\in T_{( s, \mu )}(S \times \mathfrak{s}  ^\ast )$, for all $( s, \mu ) \in S \times \mathfrak{s}  ^\ast $.
The trivialized expression $\bar D_{\Delta _{S \times \mathfrak{s}  ^\ast }}\subset T(S \times \mathfrak{s}  ^\ast )\oplus T^*( S \times \mathfrak{s}  ^\ast )$ of the Dirac structure $D_{ \Delta_{T^*S}}$ reads
\begin{align*} 
\bar D_{\Delta _{S \times \mathfrak{s}  ^\ast }}(s, \mu )&=\left\{\left.\left((v_s, \rho ),(\beta _s, \eta  ) \right)\in (T_sS \times \mathfrak{s}  ^\ast) \oplus (T^*_sS \times \mathfrak{s} )\,\right|\, (v_s, \rho )\in \Delta _{S \times \mathfrak{s}  ^\ast },\right.\\
&\;\;\left .\left\langle \beta _g , w _g  \right\rangle + \left\langle \eta , \sigma \right\rangle = \omega (s, \mu ) \left((v_s,\rho ),(w_s,  \sigma )  \right),\;\; \text{for all}\; \; (w_s, \sigma )\in  \Delta _{S \times \mathfrak{s}  ^\ast } ,\; \sigma \in \mathfrak{s}  ^\ast \right\},
\end{align*}
where $\omega  \in \Omega ^2( S \times \mathfrak{s}  ^\ast )$ is the trivialized canonical symplectic form. We thus have the equivalence
\[ 
\begin{array}{c} 
\left( (v_s, \rho ),(\beta _s , \eta  ) \right)\in \bar D_{ \Delta _{S \times \mathfrak{s}  ^\ast }}(s, \mu )\\
\Longleftrightarrow\\
\left( -s\rho +s\operatorname{ad}^*_{s ^{-1} v _s } \mu - \beta _s, s ^{-1} v_s- \eta \right) \in \Delta _{S \times \mathfrak{s}  ^\ast }(s, \mu )^\circ \quad \text{and} \quad (v_s, \rho)\in \Delta _{S \times \mathfrak{s}  ^\ast }(s, \mu ).
\end{array} 
\]

\paragraph{Reduction.} We suppose that $ \Delta _{T^*S}$ (and hence $\Delta _{S \times \mathfrak{s}  ^\ast }$) is $S$-invariant. Thus, the distribution is completely determined by $\Delta _{S \times \mathfrak{s}  ^\ast }(e, \mu )\in T_{(e, \mu )}( S \times \mathfrak{s} ^\ast )\cong \mathfrak{s}  \times \mathfrak{s}  ^\ast $ and we have $\Delta _{S \times \mathfrak{s}  ^\ast }(s, \mu )= s \Delta _{S \times \mathfrak{s}  ^\ast }(e, \mu ) $. 

Defining the reduced Dirac structure as usual by $D^{/S}_{{\Delta} _{S \times \mathfrak{s}  ^\ast }}:= \bar D_{\Delta _{S \times \mathfrak{s}  ^\ast }}/S$, we get
\begin{align*} 
D^{/S}_{{\Delta} _{S \times \mathfrak{s}  ^\ast }}( \mu )&=\left\{ \left( ( \mu , \xi , \rho ),( \mu, \beta , \eta ) \right) \in \mathfrak{s}^\ast \times( W\oplus W^* )\mid ( \xi , \rho )\in \Delta _{S \times \mathfrak{s}  ^\ast }(e, \mu ),\right .\\
& \qquad  \left . \left\langle \beta , \zeta  \right\rangle + \left\langle \eta , \sigma \right\rangle = \left\langle \sigma , \xi  \right\rangle - \left\langle \rho , \zeta  \right\rangle + \left\langle \mu , [\xi ,\zeta ] \right\rangle,\;\; \text{for all}\; \; ( \zeta , \sigma )\in \Delta _{S \times \mathfrak{s}  ^\ast }(e, \mu ) \right\},
\end{align*} 
where $W= \mathfrak{s}  \times \mathfrak{s}  ^\ast $.
We thus have the equivalence
\begin{equation}\label{condition_red_dirac_gen} 
\begin{array}{c} 
\left( ( \mu , \xi , \rho ),( \mu ,\beta , \eta ) \right) \in D^{/S}_{{\Delta} _{S \times \mathfrak{s}  ^\ast }}( \mu )\\
\Longleftrightarrow\\
\left( - \rho + \operatorname{ad}^*_\xi \mu - \beta , \xi  - \eta \right) \in \left({\Delta} _{S \times \mathfrak{s}  ^\ast }(e, \mu )\right) ^\circ \quad \text{and} \quad (\xi , \rho )\in \Delta _{G \times \mathfrak{s}  ^\ast }(e, \mu ).
\end{array} 
\end{equation}

\paragraph{Equations of motions.} Let $H: T^*S \rightarrow \mathbb{R}  $ be a Hamiltonian function. The Hamilton-Dirac system
\[
\left( (s,p,\dot s, \dot p), \mathbf{d} H(s,p) \right) \in D_{\Delta _{T^*S}}(s,p)
\]
yields locally the conditions
\begin{equation}\label{equ_motion} 
( \dot s, \dot p)\in \Delta _{T^*S}(s,p), \quad \left( \frac{\partial H}{\partial s}+\dot p, \frac{\partial H}{\partial p}- \dot s  \right) \in  \Delta _{T^*S}(s,p) ^\circ.
\end{equation} 
Assuming that $H$ is $S$-invariant and proceeding as in \S\ref{LiePoisSusRed_Section}, we get the reduced Hamilton-Dirac system
\[
(( \mu , \xi , \dot \mu ), ( \mathbf{d} ^{/S}h( \mu ))\in D^{/S}_{{\Delta} _{S \times \mathfrak{s}  ^\ast }}( \mu ), 
\]
where $\mathbf{d}^{/S}h:=(\overline{\mathbf{d}H})^{/S}: \mathfrak{s}^{\ast} \rightarrow \mathfrak{s}^{\ast} \times (\mathfrak{s}^{\ast} \oplus \mathfrak{s})$ is given by $\mathbf{d}^{/S}h( \mu )=\left(\mu,0,\frac{\delta{h}}{\delta{\mu}}\right)$. We thus get the equations
\begin{equation}\label{LP_Suslov_gen} 
\left( - \dot \mu  + \operatorname{ad}^*_\xi \mu , \xi  - \frac{\delta h}{\delta \mu }  \right) \in {\Delta} _{S \times \mathfrak{s}  ^\ast }(e, \mu ) ^\circ \quad \text{and} \quad ( \xi , \dot \mu  )\in \Delta _{S \times \mathfrak{s}  ^\ast }(e, \mu ).
\end{equation}

\begin{remark}[\textbf{Recovering the case of the lifted distribution}]
{\rm In the particular case when $ \Delta _{T^*S}$ is the lifted distribution associated to a given $S$-invariant distribution $ \Delta _S\subset TS$, that is, $ \Delta _{T^*S}= \left( T  \pi \right) ^{-1} ( \Delta _S)$, we have ${\Delta} _{S \times \mathfrak{s}  ^\ast }= \mathfrak{s}  ^\Delta \times \mathfrak{s}  ^\ast $ and thus $({\Delta} _{S \times \mathfrak{s}  ^\ast })^\circ= \left( \mathfrak{s}  ^\Delta \right) ^\circ \times \{0\}$, so that \eqref{condition_red_dirac_gen} consistently recovers
\[
\beta + \rho - \operatorname{ad}^*_\xi \mu \in (\mathfrak{s}  ^\Delta)^\circ, \quad  \eta = \xi  \in \mathfrak{s}  ^\Delta
\]
and hence \eqref{LP_Suslov_gen} recovers the implicit Lie-Poisson-Suslov equations \eqref{imp_LPS}:
\[
\dot \mu  - \operatorname{ad}^*_\xi \mu \in (\mathfrak{s}  ^\Delta)^\circ, \quad \frac{\delta h}{\delta \mu } =\xi  \in \mathfrak{s}  ^\Delta.
\]
}
\end{remark}

\subsection{Lie-Dirac reduction by stages on semidirect products}\label{subsec_LDRBS}

We now apply the Lie-Dirac reduction described in \S\ref{Lie_Dirac_gen} to the particular case when the Lie group $S$ is the semidirect product $S= G \,\circledS\, V$. We then show that by choosing an appropriate $S$-invariant distribution $ \Delta _{T^*S}\subset T(T^*S)$ constructed from the $G$-invariant and parameter dependent distribution $ \Delta _G(g,a)\subset TG$, we recover an implicit version of the nonholonomic Lie-Poisson equations that naturally includes the advection equation, without having to formulate it as an hypothesis. In order to illustrate the analogy with the usual Poisson reduction for semidirect products (see Theorem \ref{LPSD} and Remark \ref{Link_RBS}) our approach will be done using a reduction by stages. We first reduce by the normal subgroup $V$ of $S$, and then by $G=S/V$.

\paragraph{The case of an arbitrary invariant distribution.} In order to find the appropriate $S$-invariant distribution needed on $ T^*S$, we first consider an arbitrary $S$-invariant distribution $ \Delta _{T^*S}\subset  T(T^*S)$ and its associated Dirac structure, for $(g,p,u,a_0 ) \in T^{*}G \times T^{\ast}V \cong T^{\ast}S$, 
\[
D_{\Delta _{T^*S}}(g,p,u,a_0 )\subset T_{(g,p,u,a_0 )}T^*S\oplus T^*_{(g,p,u,a_0 )}T^*S.
\]
Note that here $ a _0 $ denotes an arbitrary element in $ V ^\ast $ and is not fixed.
Given a Hamiltonian $H: T^*S \rightarrow \mathbb{R}  $, we know from \eqref{equ_motion} that the associated Hamilton-Dirac system
\[
\left( (g,p,\dot g,\dot p, u , a _0,\dot u, \dot a _0  ), \mathbf{d} H(g,p,u , a _0 ) \right) \in D_{\Delta _{T^*S}}(g,p,u , a _0 ),
\]
yields locally the conditions
\begin{equation}\label{equ_motion_SDP} 
( \dot g, \dot p, \dot u, \dot a _0 )\in \Delta _{T^*S}, \quad \left( \frac{\partial H}{\partial g}+\dot p, \frac{\partial H}{\partial u}+ \dot a _0  , \frac{\partial H}{\partial p}- \dot g, \frac{\partial H}{\partial a _0 }- \dot u  \right) \in \left(  \Delta _{T^*S}\right) ^\circ.
\end{equation} 

As before, let us denote by $\bar{D}_{ \Delta _{S \times \mathfrak{s} ^\ast}}$
the trivialized Dirac structure induced on $S \times \mathfrak{s}^\ast $. From \eqref{LP_Suslov_gen}, we know that a reduction by $S$ yields the Dirac system
\begin{equation}\label{Dirac_eqn_S} 
\left(( \mu , a, \xi , v, \dot \mu , \dot a), \mathbf{d} ^{/S}h( \mu , a) \right) \in D^{/S}_{\Delta _{S \times \mathfrak{s}  ^\ast }}( \mu ,a)
\end{equation} 
with the associated equations
\begin{equation*}\label{LP_Suslov_SDP_gen}
\left( -( \dot \mu , \dot a)+ \operatorname{ad}^*_{( \xi , v)}( \mu , a), ( \xi , v)- \left( \frac{\delta h}{\delta \mu } , \frac{\delta h}{\delta a} \right)  \right) \in \Delta _{S \times \mathfrak{s} ^\ast }(e,0, \mu , a)^\circ
\end{equation*} 
and
\begin{equation*}
( \mu , a, \xi , v, \dot \mu , \dot a)\in \Delta _{S \times \mathfrak{s} ^\ast }(e,0, \mu , a).
\end{equation*} 
\paragraph{Definition of the distribution on $T^*S$.} 
In order to recover from  \eqref{Dirac_eqn_S}  the implicit Lie-Poisson-Suslov equations \eqref{implicit_LP_SDP} together with the advection equation $ \partial _t a + \xi a=0$, we need to choose the $S$-invariant distribution $ \Delta _{T^*S}\subset T(T^*S)$ in such a way that the equality
\begin{equation}\label{Formula_reduced_DeltaS} 
\Delta _{S \times \mathfrak{s} ^\ast }(e,0, \mu , a)= \mathfrak{g}  ^\Delta (a) \times T_0V \times T_{( \mu , a)}\mathfrak{s} ^\ast \subset T_{(e,0, \mu , a)}(S \times \mathfrak{s} ^\ast )
\end{equation} 
holds. By $S$-invariance, this means that, at $(g,u, \mu , a)\in S \times \mathfrak{s} ^\ast $, we have
\begin{align*} 
\Delta _{S \times \mathfrak{s} ^\ast }(g,u, \mu , a)&= (g,u) \left( \mathfrak{g}  ^\Delta (a) \times T _0 V \times T_{( \mu , a)}\mathfrak{s} ^\ast\right) \\
&= \Delta _G(g, ga) \times T _u V \times T_{( \mu , a)}\mathfrak{s} ^\ast \subset T_{(g, u, \mu , a)}(S \times \mathfrak{s} ^\ast ).
\end{align*} 
Using the expression for the trivialization $T^*S \rightarrow S \times \mathfrak{s} ^\ast $ given by, 
\begin{align*} 
&(g,p,u, a _0 )\in T^*S\\
& \quad \mapsto (g,u, \mu , a)=\left( (g,u), (g,u) ^{-1} (g,p,u, a _0 ) \right)= \left( (g,u), g ^{-1} p , g ^{-1} a_0  \right)  \in S \times \mathfrak{s} ^\ast,
\end{align*}
we obtain that $ \Delta _{T^*S}$ has to be defined as follows.

\begin{definition}\label{Definition_delta_star}  The distribution $ \Delta _{T^*S}\subset T(T^*S)$ on $T^*S$ associated to a given constraint distribution $ \Delta _G^{ a _0 }\subset TG$ on $G$ is defined as
\begin{equation}\label{Def_Delta_TstarS} 
\Delta _{T^*S}:= (T \pi _S) ^{-1} (\Delta ^{a _0 }_G \times TV)=\left( (T \pi _G ) ^{-1}  \Delta _G^{a_0}\right)  \times T(T^*V).
\end{equation} 
\end{definition} 

In more details, we have
\begin{equation*}\begin{aligned} 
\Delta _{T^*S}(g,p,u,a_0 )&= \left( T_{(g,p,u, a _0 )} \pi _S\right) ^{-1} \left( \Delta _G^{a_0}(g) \times T _u V \right)\\
&= \left( \left( T_{(g,p)} \pi_G \right) ^{-1}\left( \Delta _G^{a_0}(g)\right) \right)  \times T_{(u, a _0 )}T^*V \subset T_{(g,p,u,a_0 )} T^*S,
\end{aligned}
\end{equation*}  
where it is important to note that the vector space $\Delta _{T^*S}(g,p,u,a_0 )$ depends on $a_{0} \in V^{\ast}$.
In this case, the implicit Hamiltonian system \eqref{equ_motion_SDP} reads
\begin{equation*}\label{equ_motion_SDP_particular} 
 \dot g \in \Delta _G^{ a _0 }(g) , \quad \frac{\partial H}{\partial g}+\dot p\in \Delta _G^{ a _0 }(g) ^\circ, \quad \frac{\partial H}{\partial u}+ \dot a _0 =0 ,\quad  \frac{\partial H}{\partial p}- \dot g=0, \quad \frac{\partial H}{\partial a _0 }- \dot u  =0.
\end{equation*} 

\paragraph{Dirac reduction by stages: first stage.} The first stage reduction consists in reducing the Dirac structure by the normal subgroup $V$ of $S$. We note that the subgroup action of $w \in V$ on $S$ and $T^*S$ reads
\[
w(g,u)=(g, w+u), \quad\text{and}\quad  w (g, p, u, a_0)= (g,p,w+u,a_0).
\]
It follows that the Hamiltonian $H:T^*S \rightarrow \mathbb{R}  $, $H=H(g,p,u,a_0)$ is $V$-invariant if and only if it does not depend on the variable $u$. Similarly, $ \Delta _{T^*S}$ is $V$-invariant if and only if the vector space $ \Delta _{T^*S}(g,p,u,a _0 )$ does not depend on the variable $u$. For such a Hamiltonian, we have
\[
\mathbf{d} H:T^*S \rightarrow T(T^*S), \quad \mathbf{d} H(g,p,u,a_0)
=\left(g,p,\frac{\partial H}{\partial g} , \frac{\partial H}{\partial p} ,u,a_0,0, \frac{\partial H}{\partial a _0 } \right).
\]
Using the equalities
\begin{align*} 
T^*S/V=T^*G \times V^*, \quad &T(T^*S)/V= T(T^*G) \times V^* \times V  \times V ^\ast\\
&T^*(T^*S)/V= T^*(T^*G) \times V^* \times V^*  \times V,
\end{align*} 
we obtain the reduced derivative $\mathbf{d} ^{/V} H: T^*G \times V^* \rightarrow T^*(T^*G) \times V^* \times V^*  \times V$
\[
\mathbf{d} ^{/V} H(g,p,a _0 )=\left(g,p,\frac{\partial H}{\partial g} , \frac{\partial H}{\partial p} ,a_0,0, \frac{\partial H}{\partial a _0 } \right).
\]
Note that we have the equality
\[
\left( T(T^*S) \oplus T^*(T^*S) \right) /V= T( T^*G) \times V^* \times V \times V^* \oplus T^*(T^*G) \times V^* \times V^* \times V
\]
as vector bundles over $T^*G \times V^*$.

Since both $ \Delta _{T^*S}$  and the canonical symplectic form on $T^*S$ do not depend on $u$, the fiber $D_{ \Delta _{T^*S}}(g,p,u, a _0 )$ of the associated Dirac structure $D_{ \Delta _{T^*S}}$ does not depend on $u$, either. Therefore, since the $V$-action does not affect the vector fibers above $T^*S$, the reduced Dirac structure $D^{/V}_{ \Delta _{T^*S}}:= \left( D_{ \Delta _{T^*S}} \right) /V$, with
\begin{equation}\label{Dirac_eqn_V} 
D^{/V}_{ \Delta _{T^*S}}(g,p,a_0)\subset T_{(p,g)}( T^*G) \times \{a _0 \} \times V \times V^* \oplus T^*_{(g,p)}(T^*G) \times \{a _0 \} \times V^* \times V,
\end{equation} 
has the same expression as the Dirac structure $D_{ \Delta _{T^*S}}$.

The reduced Hamilton-Dirac system reads
\[
\left( ( g,p,\dot g, \dot p, a _0 ,\dot u,\dot a _0), \mathbf{d} ^{/V}H (g,p,a _0 ) \right) \in D^{/V}_{ \Delta _{T^*S}}(g,p, a _0 )
\]
and the associated equations are
\begin{equation*}\label{equ_motion_SDP_redV} 
( \dot g, \dot p, \dot u, \dot a _0 )\in \Delta _{T^*S}^{/V}(g,p, a _0 ), \quad \left( \frac{\partial H}{\partial g}+\dot p, \dot a _0  , \frac{\partial H}{\partial p}- \dot g, \frac{\partial H}{\partial a _0 }- \dot u  \right) \in  \Delta^{/V} _{T^*S}(g,p, a _0) ^\circ ,
\end{equation*}
where $(g,p, a _0 )\in T^* G \times V^*$, and we denote by $ \Delta _{T^*S}^{/V}(g,p, a _0 )$ the quotient of $ \Delta _{T^*S}(g,p, u, a _0 )$, in which $ \Delta _{T^*S}$ does not depend on the variable $u \in V$.

For the particular case in which the distribution $ \Delta _{T^*S}\subset T(T^*S)$ is induced by the distribution $ \Delta _{G}^{ a _0 }\subset TG$ via \eqref{Def_Delta_TstarS}, the equations are
\begin{equation*}\label{equ_motion_SDP_particular_redV} 
 \dot g \in \Delta _G^{ a _0 }(g) , \quad \frac{\partial H}{\partial g}+\dot p\in \Delta _G^{ a _0 }(g) ^\circ, \quad  \dot a _0 =0 ,\quad  \frac{\partial H}{\partial p}- \dot g=0, \quad \frac{\partial H}{\partial a _0 }- \dot u  =0.
\end{equation*} 
Since $\dot a_0=0$, and $H$ does not depend on $u$, the last equation decouples from the others, and the first four equations are equivalent to the implicit Hamilton-d'Alembert equations \eqref{equ_motion_HdA} for $H_{a_0}$.

\paragraph{Dirac reduction by stages: second stage.} The expression of the reduced Dirac structure $D^{/S}_{ \Delta _{T^*S}}$ can be either obtained by reducing the Dirac structure $D^{/V}_{ \Delta _{T^*S}}$ by the group $G$, or by reducing the Dirac structure $D_{ \Delta _{T^*S}}$ by the group $S$. This follows from the fact that $V$ is a normal subgroup of $S=G\,\circledS \,V$. We shall explain the reduction of $D_{ \Delta _{T^*S}}$ by $S$ in the following.

By applying the general result obtained in \eqref{condition_red_dirac_gen} for $S$ to the semidirect product $S= G \,\circledS\, V$, we obtain the following description of the reduced Dirac structure $D^{/S}_{ \Delta _{T^*S}}\subset \mathfrak{s} ^\ast \times (W \oplus W ^\ast )$, with $W= \mathfrak{s} \times \mathfrak{s} ^\ast $:
\begin{equation}\label{condition_red_dirac_S} 
\begin{array}{c} 
\left( ( \mu , a, \xi , w, \rho  , b),( \mu , a, \beta  , c, \eta  , v)\right) \in D^{/S}_{{\Delta} _{S \times \mathfrak{s}  ^\ast }}( \mu,a )\\
\Longleftrightarrow\\
\left(- (\rho,b) + \operatorname{ad}^*_{( \xi ,w)}(\mu,a) - (\beta,c) , ( \xi ,w)-( \eta ,v)  \right) \in {\Delta} _{S \times \mathfrak{s}  ^\ast }(e, \mu )^\circ\\
\text{and} \quad ( \xi ,w, \rho ,b )\in \Delta _{S\times \mathfrak{s}^\ast }(e,0, \mu ,a),\\
\Longleftrightarrow\\
-\rho+ \operatorname{ad}^*_ \xi\mu- w \diamond a - \beta \in (\mathfrak{g}  ^ \Delta (a))^\circ ,\;\; b +  \xi a + c =0, \;\;  \xi = \eta ,\;\;  w=v, \quad  \xi \in \mathfrak{g}  ^ \Delta (a),
\end{array} 
\end{equation} 
where in the second equivalence we used $\Delta _{S \times \mathfrak{s} ^\ast }(e,0,\mu , a)=\mathfrak{g}  ^\Delta (a) \times T _0 V \times T_{( \mu , a)}\mathfrak{s} ^\ast$, see \eqref{Formula_reduced_DeltaS}, and $\Delta _{S \times \mathfrak{s} ^\ast }(e,0,\mu , a)^\circ =(\mathfrak{g}  ^\Delta (a))^ \circ \times \{0\}\times  \{0\}$, as well as the expression of the operator $ \operatorname{ad}^*$ for the semidirect product. 

From \eqref{condition_red_dirac_S} it follows that the solution curves of the reduced Hamilton-Dirac system 
\[
\left(( \mu , a, \xi , w, \dot \mu , \dot a), \mathbf{d} ^{/S}h( \mu , a) \right) \in D^{/S}_{\Delta _{S \times \mathfrak{s}  ^\ast }}( \mu ,a)
\]
verify the equations
\[
\dot \mu - \operatorname{ad}^*_ \xi \mu + w \diamond a \in ( \mathfrak{g}  ^ \Delta (a))^\circ, \;\; \dot a + \xi a=0,\;\; \xi = \frac{\delta h}{\delta \mu }, \;\; w= \frac{\delta h}{\delta a},\;\; \xi \in \mathfrak{g}  ^ \Delta (a).
\]
These are exactly the desired equations, namely, the implicit Lie-Poisson version of the Euler-Poincar\'e-Suslov equations \eqref{implicit_EP_nonholonomic} (compare with \eqref{ImpLiePoiSusAd}) which contains in addition the advection equation for the advected parameter $a$.

As we already mentioned, these equations cannot be obtained by reduction of the $S$-invariant Dirac structure in $T(T^*S)\oplus T^*(T^*S)$ induced by a distribution $ \Delta _S \subset TS$. Indeed, we had to use the Dirac structure induced by the distribution $ \Delta _{T^*S}\subset T(T^*S)$ given in \eqref{Def_Delta_TstarS}, which is not of the lifted from $\left( T \pi  \right) ^{-1} ( \Delta _S)$.

The results obtained in this section are briefly summarized in the following theorem.

\begin{theorem} Let $ \Delta_G^{ a _0 }\subset TG$, $ a _0 \in V ^\ast $ be a family of constraint distributions on $G$ such that 
$ \Delta _G(hg, ha)= h \Delta _G(g,a)$, for all $h \in G$. Consider the semidirect product $S= G \,\circledS\, V$ and let $ \Delta _{T^*S}$ be the distribution on $T^*S$ associated with $ \Delta _G^{ a _0 }$, as defined in \eqref{Def_Delta_TstarS}. Let $H:T^*S \rightarrow \mathbb{R}  $ be a $S$-invariant Hamiltonian and let $h: \mathfrak{s} ^\ast \rightarrow \mathbb{R}  $ be the associated reduced Hamiltonian. Then we have the following results.
\begin{itemize}
\item[\bf{(i)}] The distribution $ \Delta _{T^*S}$ and the associated Dirac structure $D_{ \Delta _{T^*S}}$ on $T^*S$ are $S$-invariant.
\item[\bf{(ii)}] The first stage reduction yields the reduced Dirac structure $D^{/V}_{ \Delta _{T^*S}}$ given in \eqref{Dirac_eqn_V} on the first stage reduced Pontryagin bundle $(T(T^*S )\oplus T^*(T^*S))/V$.
\item[\bf{(iii)}] The solution curves of the reduced Hamilton-Dirac system associated to $D^{/V}_{ \Delta _{T^*S}}$ verify the  implicit Hamilton-d'Alembert equations
\[
\dot g \in \Delta _G^{ a _0 }(g) , \quad \frac{\partial H}{\partial g}+\dot p\in \Delta _G^{ a _0 }(g) ^\circ, \quad  \dot a _0 =0 ,\quad  \frac{\partial H}{\partial p}- \dot g=0, \quad \frac{\partial H}{\partial a _0 }- \dot u  =0.
\]
\item[\bf{(iv)}] The second stage reduction, namely the reduction of $D^{/V}_{ \Delta _{T^*S}}$ by $G$,  yields the reduced Dirac structure $D^{/S}_{\Delta _{T^*S }}$ given in \eqref{condition_red_dirac_S} on the second stage reduced Pontryagin bundle $(T(T^*S) \oplus  T^*(T^*S))/V/G=(T(T^*S )\oplus  T^*(T^*S))/S$. This follows from the fact that $V$ is a normal subgroup of $S=G \,\circledS\, V$.
\item[\bf{(v)}] The solution curves of the reduced Hamilton-Dirac system associated to $D^{/S}_{ \Delta _{T^*S}}$ verify the implicit Lie-Poisson-Suslov equations with advected parameters together with the advection equation
\[
\dot \mu - \operatorname{ad}^*_ \xi \mu + w \diamond a\in \left( \mathfrak{g}  ^ \Delta (a) \right) ^\circ, \quad \dot a + \xi a =0, \quad \xi = \frac{\delta h}{\delta \mu }, \quad w= \frac{\delta h}{\delta a} , \quad 
\xi \in \mathfrak{g}  ^\Delta(a).
\]
\end{itemize} 
\end{theorem}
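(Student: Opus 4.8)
The plan is to assemble the five claims from the constructions already carried out in this section, treating \textbf{(i)} as the one genuine verification and reading the remaining items directly off the explicit reduced Dirac structures. For \textbf{(i)}, since $D_{\Delta_{T^*S}}$ is built from the distribution $\Delta_{T^*S}$ and the canonical symplectic form $\Omega$ on $T^*S$ via the pointwise prescription of \eqref{DiracManifold}, and since the cotangent lift of left translation on $S$ preserves $\Omega$, it is enough to check that $\Delta_{T^*S}$ is invariant under the cotangent-lifted $S$-action on $T^*S$; the invariance \eqref{invariance_Dirac} of $D_{\Delta_{T^*S}}$ then follows automatically. Using the defining expression \eqref{Def_Delta_TstarS}, we have $\Delta_{T^*S}(g,p,u,a_0)=((T\pi_G)^{-1}\Delta_G^{a_0}(g))\times T_{(u,a_0)}T^*V$. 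The lifted action of $(h,z)\in S$ carries the base point $(g,u)$ to $(hg,z+hu)$ and the $T^*V$-fibre coordinate $a_0$ to $ha_0$; the key point is that this coupled transformation of $(g,a_0)$ is exactly compensated by the invariance assumption \eqref{invariance_assumption}, namely $\Delta_G(hg,ha_0)=h\Delta_G(g,a_0)$, while the $T(T^*V)$ factor is manifestly carried to itself. This gives invariance of $\Delta_{T^*S}$, hence of $D_{\Delta_{T^*S}}$.

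For \textbf{(ii)} and \textbf{(iii)} I would carry out the first-stage reduction by the normal subgroup $V$. The $V$-action translates only the base coordinate $u$, and both $\Delta_{T^*S}(g,p,u,a_0)$ and $\Omega$ are independent of $u$, so $D_{\Delta_{T^*S}}$ descends to the bundle over $T^*G\times V^*$ with unchanged fibrewise expression, which is precisely \eqref{Dirac_eqn_V}; this proves \textbf{(ii)}. For \textbf{(iii)} I would substitute the reduced differential $\mathbf{d}^{/V}H$, whose $u$-component vanishes by $V$-invariance of $H$, into the membership condition for $D^{/V}_{\Delta_{T^*S}}$ and read off the motion via the local form \eqref{equ_motion}, using that the annihilator of $\Delta_{T^*S}$ splits along the product $((T\pi_G)^{-1}\Delta_G^{a_0})\times T(T^*V)$. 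The $T(T^*V)$ factor forces $\dot a_0=0$ and $\partial H/\partial a_0-\dot u=0$, while the $G$-factor reproduces the implicit Hamilton-d'Alembert equations for $H_{a_0}$.

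For \textbf{(iv)} and \textbf{(v)} I would perform the second-stage reduction. Because $V$ is normal in $S=G\,\circledS\,V$, reducing $D^{/V}_{\Delta_{T^*S}}$ by $G=S/V$ coincides with reducing $D_{\Delta_{T^*S}}$ directly by $S$, so I would specialize the general reduction formula \eqref{condition_red_dirac_gen} to the semidirect product. Its input is the reduced distribution at the identity, which by $S$-invariance and \eqref{Formula_reduced_DeltaS} equals $\mathfrak{g}^\Delta(a)\times T_0V\times T_{(\mu,a)}\mathfrak{s}^*$, with annihilator $(\mathfrak{g}^\Delta(a))^\circ\times\{0\}\times\{0\}$; inserting this together with the explicit $\operatorname{ad}^*$ on $\mathfrak{s}=\mathfrak{g}\,\circledS\,V$ yields \eqref{condition_red_dirac_S}, which establishes \textbf{(iv)}. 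Then \textbf{(v)} follows by feeding $\mathbf{d}^{/S}h(\mu,a)=(\mu,a,0,0,\delta h/\delta\mu,\delta h/\delta a)$ into \eqref{condition_red_dirac_S} and matching components, producing $\xi=\delta h/\delta\mu$, $w=\delta h/\delta a$, the relation $\dot\mu-\operatorname{ad}^*_\xi\mu+w\diamond a\in(\mathfrak{g}^\Delta(a))^\circ$, and the advection equation $\dot a+\xi a=0$ arising from the $T_0V$-slot.

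I expect the main obstacle to lie in the bookkeeping for \textbf{(i)} and \textbf{(iv)}. In \textbf{(i)} one must keep in mind that $a_0$ is here a phase-space coordinate, namely the $T^*V$-fibre variable, and not a fixed parameter, so that invariance of the whole family $\Delta_G^{a_0}$ is exactly what renders the single distribution $\Delta_{T^*S}$ genuinely $S$-invariant. In \textbf{(iv)} the delicate feature is that $\Delta_{T^*S}$ is not the lift $(T\pi_S)^{-1}(\Delta_S)$ of any distribution $\Delta_S\subset TS$; consequently its reduced annihilator retains an $a$-dependence through $\mathfrak{g}^\Delta(a)$ while the $V$-directions contribute the advection constraint rather than a constraint on $\delta h/\delta a$. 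It is precisely this asymmetry, together with the semidirect-product form of $\operatorname{ad}^*$, that must be handled with care to recover the Lie-Poisson-Suslov equations and the advection equation simultaneously, and to confirm that the result differs from the Lie-Poisson-Suslov equations of an $S$-invariant lifted distribution noted in Remark \ref{not_suslovSDP}.
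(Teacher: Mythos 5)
Your proposal is correct and follows essentially the same route as the paper: item \textbf{(i)} is checked directly from the invariance assumption \eqref{invariance_assumption} together with invariance of the canonical symplectic form, the first stage uses the $u$-independence of $\Delta_{T^*S}$, $\Omega$ and $H$ so that the Dirac structure descends with unchanged fibrewise expression, and the second stage specializes the general reduction formula \eqref{condition_red_dirac_gen} of \S\ref{Lie_Dirac_gen} using \eqref{Formula_reduced_DeltaS}, its annihilator $(\mathfrak{g}^\Delta(a))^\circ\times\{0\}\times\{0\}$, and the semidirect-product $\operatorname{ad}^*$, before feeding $\mathbf{d}^{/S}h$ into \eqref{condition_red_dirac_S}. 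Your closing remarks on the two delicate points (that $a_0$ is a $T^*V$-fibre coordinate rather than a fixed parameter, and that $\Delta_{T^*S}$ is not of the lifted form $(T\pi_S)^{-1}(\Delta_S)$) are exactly the features the paper emphasizes.
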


\begin{figure}[h]
\begin{center}
\includegraphics[scale=.5]{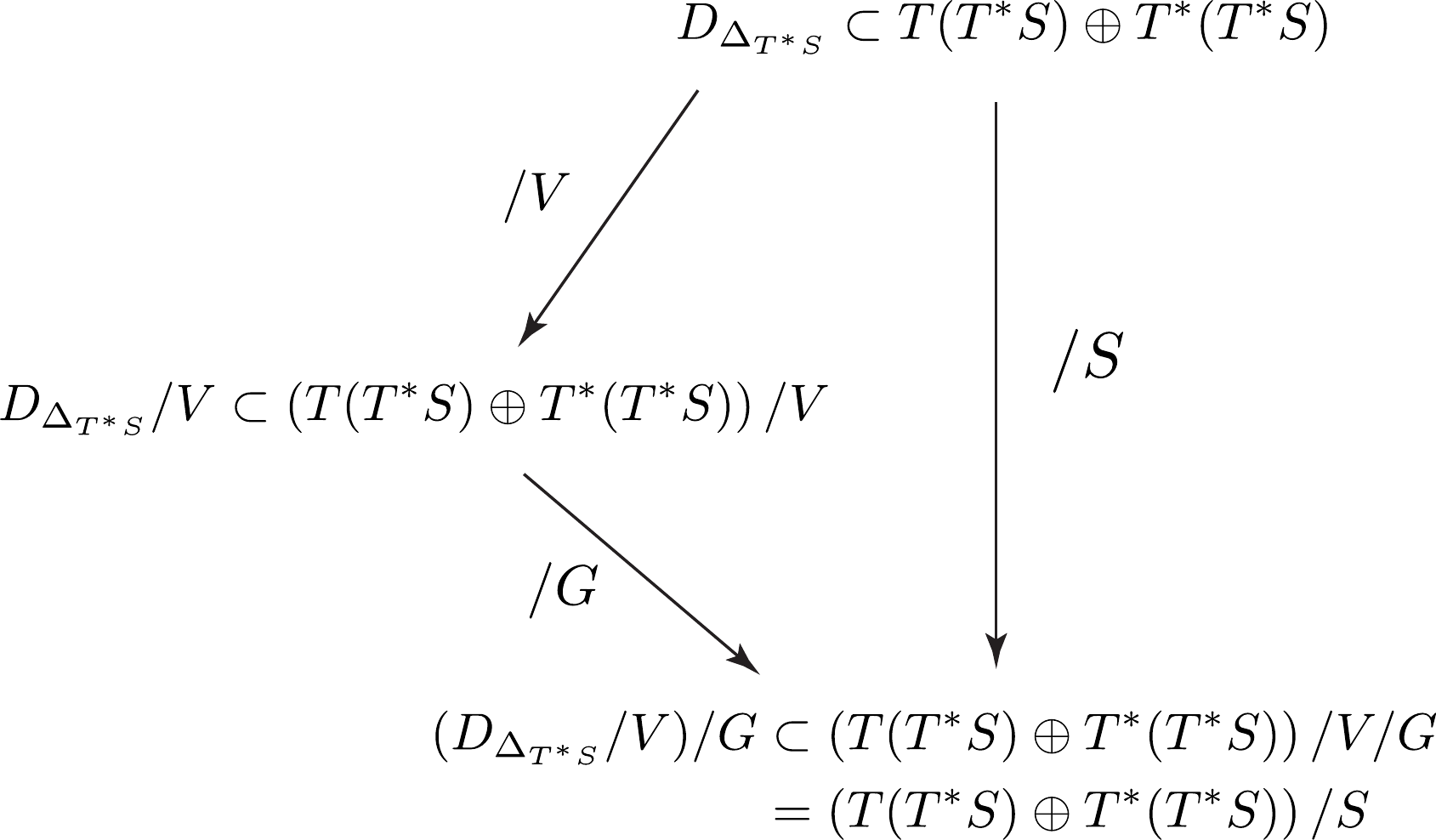}
\caption{Dirac reduction by stages for semidirect products}
\label{DiracBYStages}
\end{center}
\end{figure}

\subsection{Relation with the Lagrange-Dirac side}

The aim of this section is to relate the Dirac structure $ D_{\Delta _{T^*S}}\subset T(T^*S)\oplus T^*(T^*S)$ defined above for the Hamiltonian side and the family of Dirac structures with parameter $D_{ \Delta ^{a _0 }}\subset T(T^*G)\oplus T^*(T^*G)$ used in the Lagrangian side and in \S\ref{subsec_preliminary_comments}, by using exclusively a Dirac reduction point of view. 

\medskip

Recall that in the standard case of Hamiltonian and Lagrangian dynamics, which corresponds to the choice of the canonical Dirac structure, one passes from the canonical Hamiltonian description associated to $\bar H$ on $T^*S$ to the canonical Hamilton description associated to $H_{a _0 }$ on $T^*G$ by a symplectic reduction by the subgroup $V$ at the particular value $a _0 \in V ^\ast $ of the momentum map $ \mathbf{J}_V  : T^*S \rightarrow V ^\ast $. From the Dirac point of view, this means that one has to reduce the canonical Dirac structure on $T^*S$ in order to get the canonical Dirac structure on $T^*G$. The reduction approach that has to be used here to relate the noncanonical Dirac structures $D_{ \Delta _{T^*S}}$ and $D_{ \Delta ^{a _0 }_G}$ is therefore the one that parallels symplectic reduction, as developed in \cite{BS2001}.

\medskip

Roughly speaking the reduction process goes as follows. Given a Dirac structure $D$ on a manifold $M$ and a free and proper Lie group action of $G$ on $M$, one first assumes that  the action admits an equivariant momentum map relative to $D$, that is, there exists a $G$-equivariant map $ \mathbf{J} : M \rightarrow \mathfrak{g}  ^\ast $ such that $( \xi _M , \mathbf{d} \mathbf{J} _ \xi ) \in D$, for all $\xi \in \mathfrak{g}  $.  The process goes in two steps.

First, if $ \mu \in \mathfrak{g}  ^\ast $ is a regular value of $ \mathbf{J} $ then $M_{\mu}:= \mathbf{J}^{-1} (\mu) \subset M$ is a submanifold. If the dimension of the vector subspaces $D(m) \cap (T_m {M_{\mu}} \times T^*_m M|_{M_{\mu}})\subset T_m M_{\mu} \times T^*_m M|_{M_{\mu}}$ is independent of $m\in M_{\mu}$, then these vector spaces naturally induce a Dirac structure $D_{M_{\mu}} \subset TM_{\mu} \oplus T^*M_{\mu}$.

Second, one observes that the Dirac structure $D_{M_{\mu}}$ is $G _\mu $-invariant, where $G_ \mu =\{g \in G \mid \operatorname{Ad}^*_ g \mu = \mu \}$ is the coadjoint isotropy subgroup  of $ \mu $. Under appropriate conditions (we do not detail them because they are trivially satisfied in our example, and refer to \cite{BS2001}), one obtains a reduced Dirac structure $D _\mu \subset T(M_{\mu}/G _\mu ) \oplus  T^\ast (M_{\mu}/G _\mu )$ on $M_{\mu}/G _\mu = \mathbf{J} ^{-1} (\mu) /G _\mu $ described in terms of its local sections by 
\begin{equation}\label{2nd_red} 
\begin{aligned} 
(\mathfrak{D}_ \mu )_{loc}:&=\{(X, \alpha ) \in \mathfrak{X}  _{loc}(M_{\mu}/G _\mu ) \times \Omega ^1 _{loc}(M_{\mu}/G_ \mu )\mid \exists \,(Y, \beta  )\in \mathfrak{D}  _{loc}, \\
& \qquad \qquad \qquad \qquad \qquad \qquad \;\text{such that}\; T \pi \circ Y = X \circ \pi , \; \pi ^\ast \alpha = \beta  \},
\end{aligned} 
\end{equation}
see \cite{BS2001}.

\medskip

We shall now apply this reduction approach to the Dirac structure associated to
\[
\Delta _{T^*S}(g,p,u,a_0 )= \left( T_{(g,p,u, a _0 )} \pi \right) ^{-1} \left( \Delta _G^{a_0}(g) \times T _u V \right) = \left( T_{(g,p)} \pi _G \right) ^{-1} ( \Delta ^{ a _0 }(g)) \times T_{(u, a _0 )} T^*V
\]
on $T^*S$. Note that in general, if $ \mathbf{J} : P \rightarrow \mathfrak{g}  ^\ast $ is a momentum map for a symplectic action of $G$ on the symplectic vector space $(P, \omega )$, and if one considers the Dirac structure $D$ associated with $ \omega $ and a distribution $ \Delta \subset TP$ as in \eqref{DiracManifold}, then $ \mathbf{J} $ is also a Dirac momentum map, provided we have $ \xi _P (x) \in \Delta (x)$, for all $x \in P$ and $ \xi \in \mathfrak{g}  $. In our case, this hypothesis is verified since the $V$ action on $T^*S$ reads $(g,p,u, a _0 ) \mapsto (g,p,u +w, a _0 )$, so the infinitesimal generator $w_V$ at $(g,p,u, a _0 )$ is given by $(0,0,w, 0 )$ which belongs to $\Delta _{T^*M_{\mu}}(g,p,u,a_0 )$.
Choosing a momentum value $ a _0 \in V ^\ast $, we have $ \mathbf{J} _V ^{-1} ( a _0 )= T^*G \times V \times \{ a _0 \}\simeq T^*G \times V$.

According to the process recalled above, the Dirac structure obtained on $T^*G \times V$ is given by
\begin{align} \label{def_restricted} 
&\left( D_{ \Delta _{T^*S}} \right) _{T^*G \times V}(g,p,u)\nonumber\\
& \quad = D_{\Delta _{T^*S}}(g,p,u, a _0 ) \cap \left( T_{(g,p,u)} (T^*G \times V) \times T^*_{(g,p,u)}(T^*(G \times V)|_{T^*G \times V}) \right),
\end{align}
where we recall that $ a _0 \in V ^\ast $ is fixed and we note that the dimension condition is verified. First we observe that 
\[
\left( (g,p, u, a _0, \dot g, \dot p, \dot u, \dot a _0 ), (g,p, u, a _0, \alpha , v, b, w) \right)  \in D_{ \Delta _{T^*S}}(g,p,u, a _0 )
\]
if and only if $(g,p,u, a _0 , \dot g, \dot p, \dot u, \dot a _0 ) \in \Delta _{T^*S}$ and $(g,p, u, a _0, \alpha +\dot p, v-\dot g, b+ \dot a _0 , w-\dot u) \in \Delta _{T^*S} ^\circ$, which is equivalent to $(g, \dot g) \in \Delta ^{ a _0 }(g)$, $ \alpha +\dot p \in \Delta ^{a _0}(g) ^\circ$, $v=\dot g$, $b+\dot a _0 =0$, $w=\dot u$. Formula \eqref{def_restricted} means that in addition we have  $\dot a _0 =0$ and $w=0$. So we deduce that 
\begin{align}\label{first_red}  
&\left( D_{ \Delta _{T^*S}} \right) _{T^*G \times V}(g,p,u)\nonumber\\
&=\{(g,p,u,\dot g, \dot p, \dot u),(g,p,u, \alpha ,v, b) \in T_{(g,p,u)}(T^*G \times V) \oplus T^\ast _{(g,p,u)}(T^*G \times V)\mid\\
&\qquad \qquad  \qquad \qquad \qquad \qquad  \qquad \qquad \qquad \qquad  ((g,p,\dot g, \dot p),(g,p, \alpha ,v))\in D_{ \Delta ^{a _0 }_G}\}.\nonumber
\end{align} 

To carry out the second step, we recall that the isotropy group is $V_{ a _0 }=V$ and that $(T^*G \times V)/V= T^*G$. Then, a direct application of formula \eqref{2nd_red} to the Dirac structure \eqref{first_red} shows that the reduced Dirac structure is given by $D_{ \Delta ^{a _0 }_G}\subset T(T^*G) \oplus T^*(T^*G)$. We thus have obtained the following result.

\begin{theorem} The Dirac structure $D_{ \Delta _G ^{a _0 }} \subset T(T^*G) \oplus T^*(T^*G)$ associated to the nonholonomic constraint $ \Delta _G^{a _0 }\subset TG$ is obtained from the $S$-invariant Dirac structure $D_{ \Delta _{T^*S}} \subset T(T^*S) \oplus T^*(T^*S)$ defined in \eqref{Def_Delta_TstarS} by the Dirac reduction process of \cite{BS2001} with respect to the momentum value $ a _0 \in V ^\ast $.
\end{theorem}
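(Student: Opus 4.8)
The plan is to apply the two-step Dirac reduction procedure of \cite{BS2001}, summarized around \eqref{2nd_red} above, to the manifold $M=T^*S$ equipped with the noncanonical Dirac structure $D_{\Delta_{T^*S}}$ of \eqref{Def_Delta_TstarS} and the action of the closed normal subgroup $V\subset S=G\,\circledS\,V$, reducing at the momentum value $a_0\in V^\ast$. The reason for choosing this symplectic-type reduction (restriction to a level set followed by a quotient), rather than the Poisson-type Lie-Dirac reduction of \S\ref{Lie_Dirac_gen}, is that we wish to parallel the passage, in the canonical case, from $\bar H$ on $T^*S$ to $H_{a_0}$ on $T^*G\simeq(T^*S)_{a_0}$, which is precisely a symplectic reduction by $V$ at $a_0$.

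First I would verify that $\mathbf{J}_V:T^*S\to V^\ast$, $(g,p,u,a_0)\mapsto a_0$, is a Dirac momentum map for $D_{\Delta_{T^*S}}$, i.e.\ that $(w_{T^*S},\mathbf{d}\langle\mathbf{J}_V,w\rangle)\in D_{\Delta_{T^*S}}$ for every $w\in V$. Since the $V$-action only translates the fiber variable $u$, the infinitesimal generator $w_{T^*S}$ at $(g,p,u,a_0)$ equals $(0,0,w,0)$, which lies in $\Delta_{T^*S}$ precisely because the definition \eqref{Def_Delta_TstarS} carries the full factor $T(T^*V)$; this is the key structural feature making the construction work, and it is exactly the criterion $\xi_P(x)\in\Delta(x)$ recalled in the text. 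The symplectic part of the condition is then automatic because $\mathbf{J}_V$ is the canonical momentum map of the lifted $V$-action.

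Next comes the first reduction step. Choosing the regular value $a_0$, the level set is $\mathbf{J}_V^{-1}(a_0)=T^*G\times V\times\{a_0\}\simeq T^*G\times V$, and I would compute the restricted Dirac structure $(D_{\Delta_{T^*S}})_{T^*G\times V}$ via \eqref{def_restricted}. Spelling out membership in $D_{\Delta_{T^*S}}$ through the local conditions $(\dot g,\dot p,\dot u,\dot a_0)\in\Delta_{T^*S}$ and the associated annihilator relations, and then imposing tangency to the level set together with the restricted cotangent condition of \eqref{def_restricted} (which forces $\dot a_0=0$ and $w=0$), collapses everything to the single requirement that the $T^*G$-components satisfy $((g,p,\dot g,\dot p),(g,p,\alpha,v))\in D_{\Delta_G^{a_0}}$, with the $V$-fiber carried along trivially; this is exactly \eqref{first_red}. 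One must check here that the dimension of the intersecting subspaces is constant along $\mathbf{J}_V^{-1}(a_0)$, but this is immediate from the product, fiberwise-uniform form of $\Delta_{T^*S}$.

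Finally, for the second step the relevant isotropy group is $V_{a_0}=V$, since $V$ is abelian and acts trivially on $V^\ast$, so that $(T^*G\times V)/V=T^*G$. Applying \eqref{2nd_red} to the structure \eqref{first_red}, local sections descend along the projection $T^*G\times V\to T^*G$ and the $V$-fiber data is quotiented out, leaving precisely $D_{\Delta_G^{a_0}}\subset T(T^*G)\oplus T^*(T^*G)$, which is the claim. I expect the main obstacle to be conceptual rather than computational: one must recognize that the correct framework is the symplectic-type Dirac reduction of \cite{BS2001} applied to the \emph{non-lifted} distribution \eqref{Def_Delta_TstarS}, and confirm that $\mathbf{J}_V$ remains a Dirac momentum map for this noncanonical structure; once this is in place, the two restriction/quotient computations decouple cleanly and are routine.
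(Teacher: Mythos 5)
Your proposal follows essentially the same route as the paper's own proof: you verify that $\mathbf{J}_V$ remains a Dirac momentum map for the noncanonical structure $D_{\Delta_{T^*S}}$ via the infinitesimal generator criterion, restrict to $\mathbf{J}_V^{-1}(a_0)\simeq T^*G\times V$ to obtain exactly the structure \eqref{first_red}, and then quotient by $V_{a_0}=V$ using \eqref{2nd_red} to land on $D_{\Delta_G^{a_0}}$. The argument is correct and matches the paper's computation step for step, including the observation that \eqref{def_restricted} forces $\dot a_0=0$ and $w=0$.
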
 

The relation between the various Dirac reductions involved is illustrated in the following diagram.
\begin{figure}[h]
\begin{center}
\includegraphics[scale=.75]{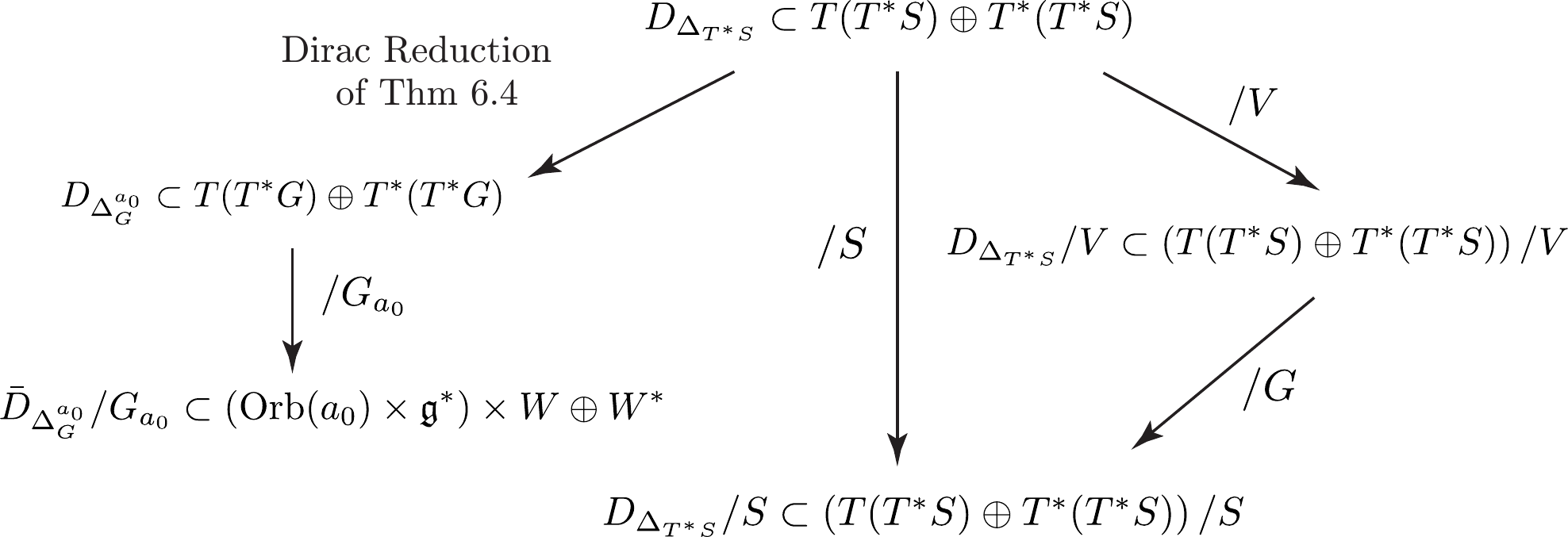}
\caption{Relation between Dirac reductions}
\label{DiracBYStages}
\end{center}
\end{figure}

\subsection{The Hamilton-d'Alembert principle in phase space}\label{PSVP_TstarS} 

We shall now develop the Hamilton-d'Alembert principle in phase space naturally associated to the approach presented in \S\ref{subsec_LDRBS}. It extends the Hamilton-d'Alembert principle developed in \S\ref{Hamiltonian_side} in the sense that it naturally implies the advection equation without having to assume it a priori.

In order to obtain this principle, it suffices to consider the Hamilton-d'Alembert principle in phase space $T^*S \cong T^{*}G \times T^{\ast}V$ (instead of $T^*G$ as in \S\ref{Hamiltonian_side}) for a Hamiltonian $\bar H=\bar H(g,p,u,a _0 ): T^*S \rightarrow \mathbb{R}  $, and with constraint distribution $ \Delta^{ a _0 } _G(g) \times T_uV \subset TS$ (see, \eqref{Def_Delta_TstarS}). Note that this principle is slightly more general than the Hamilton-d'Alembert principle for nonholonomic mechanics developed in \S\ref{Hamiltonian_side}, since the distribution constraint also depends on the fiber in $T^*S$ ($ a _0 $, here) and not only on the base point $(g,u) \in S$.

The principle thus reads
\begin{equation}\label{PSP_TstarS}
\delta \int_{ t _1 }^{ t _2 } \left\{ \left\langle p, \dot g \right\rangle + \left\langle a _0 , \dot u \right\rangle -\bar H(g,p, u, a _0 )\right\} dt=0,\quad (\dot g, \dot{u}) \in \Delta _G^{a_0} \times T_{u}V,
\end{equation} 
for arbitrary variations $ (\delta g(t), \delta p(t), \delta u(t), \delta a _0 (t))$ of the curve $(g(t), p(t), u(t), a _0 (t))$ in $T^{\ast}S$, with $ \delta g(t)$ and $ \delta u(t)$ vanishing at the endpoints and $ (\delta g, \delta{u}) \in \Delta _G^{a_0} \times T_{u}V$.

In our context, the Hamiltonian $\bar H$ is $S$-invariant and, in particular, does not depend on $u$ and is related to the given Hamiltonian function $H$ via the simple relation $\bar H(g,p,u, a _0 )=H(g,p,a _0)$, see Remark \ref{Link_RBS}. A similar proof as before, yields the following theorem.

\begin{theorem}\label{HPS_principle_second}  
Let $\bar H:T^*S  \rightarrow \mathbb{R}  $ be a $S$-invariant Hamiltonian and let $ \Delta ^{a _0 }_G \subset TG$ be a family of distribution verifying the $G$-invariance assumption \eqref{invariance_assumption}. Consider a curve $(g(t),p(t), u(t), a_{0}(t)) \in T^*S$, $t \in [t_{1},t_{2}]$, and define the curves $ \mu (t)= g (t) ^{-1} p (t) $ and $a (t) = g(t) ^{-1} a _0 $. Then the following are equivalent.
\begin{itemize}
\item[\bf{(i)}]
The Hamilton-d'Alembert principle in phase space
\[
\delta \int_{ t _1 }^{ t _2 } \left\{ \left\langle p, \dot g \right\rangle+ \left\langle a _0 , \dot u \right\rangle  - \bar{H}(g,p,u, a _0 )\right\} dt=0,\quad 
(\dot g, \dot{u}) \in \Delta ^{ a _0 }_G(g) \times T_{u}V 
\subset TS 
\]
holds, for variations $( \delta g (t), \delta u (t))$ of $(g(t), u(t))$ vanishing at the endpoints and such that $(\delta g , \delta{u}) \in \Delta ^{ a _0 }_G(g) \times T_{u}V$, and for arbitrary variations $ (\delta p (t), \delta a _0 (t))$ of $(p(t), a _0 (t))$.
\item[\bf{(ii)}]
The curve $(g(t),p(t) , u(t), a _0 (t))\in T^*S$, $t \in [t_{1},t_{2}]$ satisfies the implicit Hamiltonian system with constraint:
\[
\dot p+ \frac{\partial \bar H}{\partial g}\in \Delta _G (g)^\circ, \quad \dot g= \frac{\partial \bar H}{\partial p}\in \Delta _G(g), \quad \dot a _0 =0, \quad \dot u= \frac{\partial \bar H}{\partial a _0 }. 
\]
\item[\bf{(iii)}] The reduced Hamilton-d'Alembert principle with advected quantities
\begin{equation}\label{Lie_Poisson_VPSDP} 
\delta \int_{t_1}^{t_2} \left \{ \left\langle \mu , \xi  \right\rangle+ \left\langle a,v \right\rangle  -h(\mu , a) \right\}dt=0,\quad \xi \in \mathfrak{g}  ^ \Delta (a)
\end{equation} 
holds, for arbitrary variations $ \delta \mu (t) $, $ \delta a (t) $ and variations $ \delta \xi (t) $ and $ \delta v(t) $ of the form $\delta \xi  = \partial _t \zeta +[ \xi , \zeta ]$ and $\delta v= \partial _t w+ \xi w- \zeta v $, where $ \zeta \in \mathfrak{g}  ^ \Delta (a)$ vanishes at the endpoints and $ w \in V$ is arbitrary and vanishes at endpoints.
\item[\bf{(iv)}] The implicit Lie-Poisson-Suslov equations with parameters, together with the advection equation
\begin{equation}\label{implicit_LP_constrained}
\xi  =  \frac{\delta h}{\delta \mu }, \quad v= \frac{\delta h}{\delta{a}},\quad \xi \in\mathfrak{g}^{ \Delta}(a) , \quad \dot{\mu} - \operatorname{ad}_{\xi}^{\;\ast} \mu +v \diamond a\in \left( \mathfrak{g}^{ \Delta}(a)\right) ^\circ, \quad \dot a + a \xi =0
\end{equation} 
hold.
\end{itemize}
\end{theorem}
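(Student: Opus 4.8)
The plan is to follow the same three-step pattern used in the proofs of Theorems \ref{theorem_HP_semidirect}, \ref{thm_HP_constrained} and \ref{HPS_principle_first}: establish $\bf{(i)}\Leftrightarrow\bf{(ii)}$ by a direct variational computation on $T^*S$, establish $\bf{(iii)}\Leftrightarrow\bf{(iv)}$ by a direct variational computation on $\mathfrak{s}^\ast$, and establish $\bf{(i)}\Leftrightarrow\bf{(iii)}$ by a reduction argument matching the two integrands and the two sets of admissible variations. The equivalence $\bf{(ii)}\Leftrightarrow\bf{(iv)}$ is then automatic. The only genuinely new ingredient compared with Theorem \ref{HPS_principle_first} is that $a_0$ is now a \emph{dynamical} fiber variable on $T^\ast V$ rather than a fixed parameter, so the reduction is carried out by the full group $S=G\,\circledS\,V$ and the advection equation must be produced by the variational structure itself rather than assumed a priori.

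For $\bf{(i)}\Leftrightarrow\bf{(ii)}$ I would vary the action in \eqref{PSP_TstarS}, writing $T^\ast S\cong T^\ast G\times T^\ast V$ with coordinates $(g,p,u,a_0)$. The coefficient of the arbitrary variation $\delta p$ gives $\dot g=\partial\bar H/\partial p$ (the constraint $\dot g\in\Delta^{a_0}_G(g)$ being imposed by the principle), and the coefficient of the arbitrary variation $\delta a_0$ gives $\dot u=\partial\bar H/\partial a_0$. Integrating by parts the terms in $\delta g$ and $\delta u$, both vanishing at the endpoints, produces $-\langle\dot p+\partial\bar H/\partial g,\delta g\rangle$ and $-\langle\dot a_0,\delta u\rangle$. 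Since $\delta g\in\Delta^{a_0}_G(g)$ is otherwise free this yields $\dot p+\partial\bar H/\partial g\in\Delta^{a_0}_G(g)^\circ$, while $\delta u$ ranges over all of $T_uV\cong V$, forcing $\dot a_0=0$. These are exactly the four equations of $\bf{(ii)}$.

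For $\bf{(iii)}\Leftrightarrow\bf{(iv)}$ I would vary the reduced action \eqref{Lie_Poisson_VPSDP}. The arbitrary variations $\delta\mu$ and $\delta a$ immediately give $\xi=\delta h/\delta\mu$ and $v=\delta h/\delta a$. For the remaining terms I substitute $\delta\xi=\partial_t\zeta+[\xi,\zeta]$ and $\delta v=\partial_t w+\xi w-\zeta v$ and integrate by parts. The terms linear in the free vector $w\in V$ collapse, using $\langle a,\xi w\rangle=-\langle\xi a,w\rangle$, to $-\langle\dot a+\xi a,w\rangle$, so arbitrariness of $w$ forces the advection equation $\dot a+\xi a=0$; this is precisely the step that distinguishes the present theorem from Theorem \ref{HPS_principle_first}. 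The terms linear in $\zeta$ assemble, via $\langle\mu,[\xi,\zeta]\rangle=\langle\operatorname{ad}^\ast_\xi\mu,\zeta\rangle$ and $\langle a,\zeta v\rangle=\langle v\diamond a,\zeta\rangle$, into $\langle-\dot\mu+\operatorname{ad}^\ast_\xi\mu-v\diamond a,\zeta\rangle$, and since $\zeta$ is constrained to $\mathfrak{g}^\Delta(a)$ this gives $\dot\mu-\operatorname{ad}^\ast_\xi\mu+v\diamond a\in(\mathfrak{g}^\Delta(a))^\circ$, completing \eqref{implicit_LP_constrained}.

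Finally, for $\bf{(i)}\Leftrightarrow\bf{(iii)}$ I would use the left trivialization of $T^\ast S$ and of $TS$ for the semidirect product, setting $(\mu,a)=(g,u)^{-1}(p,a_0)=(g^{-1}p,g^{-1}a_0)$ and $(\xi,v)=(g,u)^{-1}(\dot g,\dot u)=(g^{-1}\dot g,g^{-1}\dot u)$. The $S$-invariance of $\bar H$ gives $\bar H(g,p,u,a_0)=h(\mu,a)$, while invariance of the natural pairing gives $\langle p,\dot g\rangle=\langle\mu,\xi\rangle$ and $\langle a_0,\dot u\rangle=\langle a,v\rangle$, so the two integrands coincide. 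It then remains to check that the admissible variations correspond: $\delta g\in\Delta^{a_0}_G(g)$ vanishing at the endpoints yields $\zeta=g^{-1}\delta g\in\mathfrak{g}^\Delta(a)$ with $\delta\xi=\partial_t\zeta+[\xi,\zeta]$; the free variation $\delta u$ yields the free vector $w=g^{-1}\delta u$; the reduced variation of $v=g^{-1}\dot u$ takes the semidirect-product form $\delta v=\partial_t w+\xi w-\zeta v$, exactly as in Theorem \ref{SDP_Schneider}$\bf{(iii)}$; and the arbitrary $\delta p$, $\delta a_0$ produce arbitrary $\delta\mu$, $\delta a$. The main obstacle I anticipate is precisely this last bookkeeping: because $a_0$ is dynamical one reduces by all of $S$, not merely by $G_{a_0}$, and one must verify that the map from the unreduced variations $(\delta g,\delta p,\delta u,\delta a_0)$ onto $(\delta\xi,\delta\mu,\delta v,\delta a)$ surjects onto the stated admissible set — including the derivation of $\delta v=\partial_t w+\xi w-\zeta v$ and the fact that the advection equation is generated by the $V$-part $w$ of the symmetry variation rather than imposed. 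Everything else reduces to routine integration by parts together with the semidirect-product identities for $\operatorname{ad}^\ast$ and $\diamond$ recalled in \S\ref{SPT}.
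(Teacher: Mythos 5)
Your proposal is correct and is essentially the proof the paper intends: the text only says ``a similar proof as before yields the following theorem,'' deferring to the variational computations of Theorems \ref{theorem_HP_semidirect}, \ref{thm_HP_constrained} and \ref{HPS_principle_first}, and your three-step scheme (direct variation on $T^*S$ for $\bf{(i)}\Leftrightarrow\bf{(ii)}$, direct variation on $\mathfrak{s}^\ast$ for $\bf{(iii)}\Leftrightarrow\bf{(iv)}$, left trivialization of $T^*S$ matching integrands and admissible variations for $\bf{(i)}\Leftrightarrow\bf{(iii)}$) fills in exactly those details, including the key new point that the free $V$-variation $w=g^{-1}\delta u$ generates the advection equation $\dot a+\xi a=0$. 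The computations of $\delta v=\partial_t w+\xi w-\zeta v$ and of the $\zeta$- and $w$-coefficients via $\langle a,\xi w\rangle=-\langle\xi a,w\rangle$ and $\langle a,\zeta v\rangle=\langle v\diamond a,\zeta\rangle$ all check out against the conventions of \S\ref{SPT}.
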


\begin{remark} {\rm It is important to compare the four points of the previous theorem with the four points of Theorem \ref{HPS_principle_first}, and also to note that in Theorem \ref{HPS_principle_second}, the advection equation is a consequence of the reduced Hamilton-d'Alembert principle with advected quantities. While Theorem \ref{HPS_principle_first} is associated to  the Dirac reduction approach of \S\ref{Hamiltonian_side} which starts with the Dirac structure $D_{ \Delta ^{ a _0 }_G}$ on $T^*G$, Theorem \ref{HPS_principle_second} is associated with the Dirac reduction approach \S\ref{sec_NH_SDP} which starts with the Dirac structure $D_{\Delta _{T^*S}}$ on $T^*S$.
} \end{remark}

\subsection{Dirac reduction for rolling ball type constraints on semidirect products}\label{sdp_case_Ham_bis}

All the results obtained in this Section apply to the case of the rolling ball type constraints on semidirect products described in \S\ref{subsec_the_sdp_case}.

Recall that in this case, one has to consider a parameter dependent constraint distribution $ \Delta _G^{a _0 } \subset TG$, where $a _0 \in V ^\ast $ and $G=K\,\circledS\, V$.

\paragraph{Dirac reduction.} The reduction of the induced Dirac structure $ D_{ \Delta ^{a _0 }_G} \subset T(T^*G) \oplus T^*(T^*G)$ was mentioned in Remark \ref{rolling_ball_remark}. Here we consider Dirac reduction that includes advection equation, along the approach described in the present section. Thus we need to consider the semidirect product
\[
S:=G \,\circledS\, V= (K \,\circledS\, V) \,\circledS\, V.
\]
Given a distribution $ \Delta _G^{a _0 }$ (e.g. of the form \eqref{case_I}--\eqref{case_III}), the associated distribution $ \Delta _{T^*S} \subset T(T^*S)$, is given by (see Definition \ref{Definition_delta_star})
\[
\Delta _{T^*S}(k,p,x, \pi ,u,a_0 )= \left( \left( T_{(k,p, x, \pi )} \pi_G \right) ^{-1}\left( \Delta _G^{a_0}(k,x)\right) \right)  \times T_{(u, a _0 )}T^*V.
\] 
By reduction, we get the reduced Dirac structure $D^{/ S}_{\Delta _{T^*S}}$ given by
\begin{equation}\label{condition_red_dirac_rollingtype_on_S} 
\begin{array}{c} 
\left( ( \mu ,b, a, \xi,X ,w, \rho , \sigma ,\tilde b),( \mu ,b,a, \beta , \gamma ,c,  \eta, Y,v ) \right) \in D^{/S}_{\Delta_{T^*S}}( \mu , b , a)\\
\Longleftrightarrow\\
\left( -\rho +\operatorname{ad} ^*_ \xi \mu - X \diamond b-w \diamond a- \beta , - \sigma - \xi b- \gamma \right) \in (\mathfrak{g}  ^ \Delta (a))^\circ\\
\tilde b+ \xi a+c =0, \;\; ( \xi , X)= ( \eta , Y), \;\; w=v,\;\; (\xi ,X) \in \mathfrak{g}  ^ \Delta (a),
\end{array} 
\end{equation}
where we used \eqref{condition_red_dirac_S}.
From \eqref{condition_red_dirac_rollingtype_on_S} it follows that the solution curves of the reduced Hamilton-Dirac system 
\[
\left(( \mu ,b, a, \xi, X , w, \dot \mu , \dot b, \dot a), \mathbf{d} ^{/S}h( \mu,b , a) \right) \in D^{/S}_{\Delta _{S \times \mathfrak{s}  ^\ast }}( \mu,b ,a)
\]
verify the equations
\begin{align*} 
&\left( \dot \mu - \operatorname{ad}^*_ \xi \mu +X \diamond b+w \diamond a, \dot b+ \xi b \right)  \in ( \mathfrak{g}  ^ \Delta (a))^\circ, \;\; \dot a + \xi a=0,\\
&(\xi, X) = \left( \frac{\delta h}{\delta \mu }, \frac{\delta h}{\delta b}\right)  , \;\; w= \frac{\delta h}{\delta a},\;\; (\xi,X) \in \mathfrak{g}  ^ \Delta (a).
\end{align*} 
We consistently recover the implicit Lie-Poisson-Suslov equations with advected quantities together with the advection equation, in the case of rolling ball type constraint as described in \S\ref{subsec_the_sdp_case}.

\paragraph{Phase space variational structures.} 
In the particular situation of \S\ref{subsec_the_sdp_case}, the Hamilton-d'Alembert principle in phase space reads
\[
\delta \int_{ t _1 }^{ t _2 } \left\{ \left\langle p, \dot k \right\rangle+ \left\langle \pi , \dot x \right\rangle + \left\langle a _0 , \dot u \right\rangle  -H(k,p,x,\pi , a _0 )\right\} dt=0,\quad (\dot k, \dot x) \in \Delta ^{ a _0 }_G(g,x)
\]
for variations $ \delta k (t) $, $ \delta x (t) $, $ \delta u (t)$ of $k(t)$, $ x (t) $, $u(t)$ vanishing at the endpoints and such that $(\delta k, \delta x)\in \Delta _G^{a_0}$ and $\delta{u} \in T_{u}V$, and for arbitrary variations $ \delta p (t) $, $ \delta \pi (t) $, $ \delta a _0 (t)$ of $p(t)$, $ \pi (t) $, $ a _0 (t)$.

At the reduced level, the Hamilton-d'Alembert principle may be reduced to
\begin{equation}\label{Lie_Poisson_VPSDP_SDP} 
\delta \int_{t_1}^{t_2} \left \{ \left\langle \mu , \xi  \right\rangle+ \left\langle b, X \right\rangle + \left\langle a,v \right\rangle  -h(\mu ,b, a) \right\}dt=0,\quad (\xi,X) \in \mathfrak{g}  ^ \Delta (a),
\end{equation} 
for arbitrary variations $ \delta \mu (t) $, $ \delta b(t) $, $ \delta a (t) $ and variations $ \delta \xi (t) $, $ \delta X(t) $ and $ \delta v(t) $ of the form $\delta \xi  = \partial _t \zeta +[ \xi , \zeta ]$, $ \delta X= \partial _t Z+ \xi Z- \zeta X$, and $\delta v= \partial _t w+ \xi w- \zeta v $, where $ (\zeta ,Z)\in \mathfrak{g}  ^ \Delta (a)$ vanishes at the endpoints and $ w \in V$ is arbitrary and vanishes at endpoints.

These variational structures should be compared with the ones in Remark \ref{sdp_case_Ham}. In Remark \ref{sdp_case_Ham}, the advection equation for $a$ is assumed a priori, whereas here, it is a consequence of the Hamilton-d'Alembert  principle.

\section{Examples}\label{Examples}

\subsection{The heavy top}
As a simple and illustrative example for the unconstrained case, let us consider the heavy top.
Let $ \mathcal{B} \subset  \mathbb{R}^3$ be the reference configuration of the top and $\rho_{0}(\mathbf{X} )$ be its mass density, where $\mathbf{X}\in \mathcal{B}$. The evolution of the heavy top is described by a curve $\Lambda  (t) \in SO(3)$ giving the orientation of the body in space. The Lagrangian is defined on the tangent bundle $TSO(3)$ and is given by kinetic energy minus potential energy,
\[
L( \Lambda , V)= \frac{1}{2} \int_ \mathcal{B} \rho_{ 0}(\mathbf{X} )\|V \mathbf{X} \|^2d^3\mathbf{X}-mgl \Lambda ^{-1}\mathbf{e}_{3} \cdot \boldsymbol{\chi},
\]
where $m$ is the total mass of the top, $g$ is the magnitude of the gravitational acceleration, $\boldsymbol{\chi} $ is the unit vector of the oriented line segment pointing from the fixed point about which the top rotates (the origin of a spatial coordinate system) to the center of mass of the body, and $l$ is its length. Whereas the kinetic energy is $SO(3)$ left-invariant, the potential energy is invariant only under the rotations $S^{1}$ about the $\mathbf{e}_{3}$--axis.

In order to apply Theorem \ref{theorem_HP_semidirect}, we shall view $ \mathbf{e} _3 $ as a parameter, denoted $ \boldsymbol{\Gamma} _0 = \mathbf{e} _3 $ and we shall write the Lagrangian as $L_{ \boldsymbol{\Gamma} _0 }:TSO(3) \rightarrow \mathbb{R}  $ in order to emphasize this dependence, see \cite{HMR1998a}, to which we refer for the Euler-Poincar\'e formulation of the heavy top.

\paragraph{Lagrangian side.} Using this notation, the Hamilton-Pontryagin principle \eqref{HP-ActInt} for the heavy top is given by
\[
\delta \int_{t_{1}}^{t_{2}} \left\{ L_{\boldsymbol{\Gamma}_{0}}(\Lambda ,V) + \left< p, \dot{\Lambda }-V\right> \right\}dt=0,
\]
for all variations of $(\Lambda ,V, p)$ in the Pontryagin bundle $TSO(3) \oplus T^{\ast}SO(3)$ such that $ \delta \Lambda $ vanishes at the endpoints.

Writing  $\boldsymbol{\Gamma}=\Lambda ^{-1} \boldsymbol{\Gamma}_{0}$, we obtain the reduced Lagrangian $\ell: \mathfrak{so}(3) \times \mathbb{R}^{3} \to \mathbb{R}$ given by
$$
\ell(\boldsymbol{\Psi}, \boldsymbol{\Gamma})=L(\Lambda ^{-1}V, \Lambda ^{-1} \boldsymbol{\Gamma}_{0}) =
\frac{1}{2}\left<\boldsymbol{\Psi}, \mathbb{I} \boldsymbol{\Psi} \right>-mgl \boldsymbol{\Gamma} \cdot \boldsymbol{\chi},
$$
where $ \mathbb{I}  $ is the inertia tensor.
Reduction of the Hamilton-Pontryagin principle yields the principle
\[
\delta \int_{t_{1}}^{t_{2}} \left\{ \ell(\boldsymbol{\Psi}, \boldsymbol{\Gamma}) + \left< \boldsymbol{\Pi}, \boldsymbol{\Omega} - \boldsymbol{\Psi}\right> \right\}dt=0,
\]
for arbitrary variations $\delta\boldsymbol{\Psi}, \delta \boldsymbol{\Pi}, \delta\boldsymbol{\Gamma}$, 
and variations of the form 
\begin{equation}\label{var_heavy_top} 
\begin{split}
\delta\boldsymbol{\Omega}=\dot{\boldsymbol{\Sigma}}+\boldsymbol{\Omega} \times \boldsymbol{\Sigma}\quad 
\textrm{and}\quad
\delta\boldsymbol{\Gamma}=\boldsymbol{\Gamma} \times \boldsymbol{\Sigma},
\end{split}
\end{equation}
where $\boldsymbol{\Omega}$ is the curve in $\mathbb{R}^{3}$ given such that $\hat{\boldsymbol{\Omega}}(t)=\Lambda ^{-1}(t)\dot{\Lambda }(t)$, and where $\boldsymbol{\Sigma}(t)$ is the curve in $\mathbb{R}^{3}$ vanishing at endpoints, which is defined such that $\hat{\boldsymbol{\Sigma}}(t)=\Lambda ^{-1}(t)\delta \Lambda (t)$ with $\delta \Lambda (t_{1})=\delta \Lambda (t_{2})=0$.
\medskip

From this principle, we obtain the implicit equations of motion for the heavy top
\begin{equation*}
\begin{split}
\boldsymbol{\Pi}=\frac{\partial \ell}{\partial \boldsymbol{\Psi}}=\mathbb{I}\boldsymbol{\Psi}, \qquad
\dot{\boldsymbol{\Pi}}=\boldsymbol{\Pi} \times \boldsymbol{\Omega} -mg \boldsymbol{\chi} \times \boldsymbol{\Gamma}, \qquad
\boldsymbol{\Omega}=\boldsymbol{\Psi}.
\end{split}
\end{equation*}
The definition $\boldsymbol{\Gamma} = \Lambda ^{-1} \boldsymbol{\Gamma} _0 $ yields the advection equation $\dot{ \boldsymbol{\Gamma}}=\boldsymbol{\Gamma} \times \boldsymbol{\Omega}$.

\medskip

As we mentioned in Remark \ref{Hybrid_HP_advected}, one can also obtain the equations by using the variational principle
\[
\delta \int_{ t _1 }^{ t _2 }\left\{ \ell( \boldsymbol{\Omega} , \boldsymbol{\Gamma} )+  \left\langle\boldsymbol{\Pi}, \Lambda ^{-1} \dot{ \Lambda }- \boldsymbol{\Omega} \right\rangle + \left\langle \mathbf{V} , \Lambda ^{-1} \boldsymbol{\Gamma} _0 - \boldsymbol{\Gamma} \right\rangle \right\} dt=0
\]
for arbitrary variations $ \delta \boldsymbol{\Omega} $, $ \delta \boldsymbol{\Pi} $, $ \delta \mathbf{V} $, $ \delta \boldsymbol{\Gamma} $ and variations $ \delta \Lambda $ vanishing at the endpoints.

\paragraph{Hamiltonian side.} By Legendre transformation, the Hamiltonian $H_{\boldsymbol{\Gamma} _0 }=H_{\boldsymbol{\Gamma} _0 }( \Lambda , p)$ is given by kinetic plus potential energy on $T^* SO(3)$ and we obtain the reduced expression
\[
h( \boldsymbol{\Pi} , \boldsymbol{\Gamma} )= 
\frac{1}{2}\left<\mathbb{I}^{-1} \boldsymbol{\Pi} , \boldsymbol{\Pi}  \right>+mgl \boldsymbol{\Gamma} \cdot \chi,
\]
where $\boldsymbol{\Gamma} = \Lambda ^{-1} \boldsymbol{\Gamma} _0$ and $\boldsymbol{\Pi} = \Lambda ^{-1} p$. By reducing the Hamilton principle in phase space for $H_{ \boldsymbol{\Gamma} _0 }$, it follows from \eqref{Lie_Poisson_VPSDP_first} that one obtains the Lie-Poisson variational principle as
\[
\delta \int_{ t _1 }^{ t _2 } \left\{\left\langle \boldsymbol{\Pi} , \boldsymbol{\Omega} \right\rangle -h( \boldsymbol{\Pi} , \boldsymbol{\Gamma} ) \right\} dt=0
\]
for arbitrary variations $ \delta \boldsymbol{\Pi} $ and variations $ \delta \boldsymbol{\Omega}$, 
$ \delta \boldsymbol{\Gamma} $ of the form \eqref{var_heavy_top}.
It yields the implicit Lie-Poisson equation for the heavy top. As before, the equation $\dot{ \boldsymbol{\Gamma}}=\boldsymbol{\Gamma} \times \boldsymbol{\Omega}$ is a consequence of the definition $ \boldsymbol{\Gamma} = \Lambda ^{-1} \boldsymbol{\Gamma} _0 $. In order to obtain the advection directly from the variational principle, one has to reduce the Hamilton principle in phase space $T^*(SO(3) \,\circledS\, \mathbb{R}  ^3 )$, as explained in \S\ref{PSVP_TstarS}. In this case, the Lie-Poisson variational principle is given by
\[
\delta \int_{ t _1 }^{ t _2 } \left\{\left\langle \boldsymbol{\Pi} , \boldsymbol{\Omega} \right\rangle + \left\langle \boldsymbol{\Gamma} , \mathbf{V} \right\rangle -h( \boldsymbol{\Pi} , \boldsymbol{\Gamma} ) \right\} dt=0,
\]
for arbitrary variations $ \delta \boldsymbol{\Pi} $, $ \delta \boldsymbol{\Gamma} $ and variations $ \delta \boldsymbol{\Omega} $ and $ \delta \mathbf{V} $ of the form
\begin{equation*}\label{var_heavy_top1} 
\begin{split}
\delta\boldsymbol{\Omega}=\dot{\boldsymbol{\Sigma}}+\boldsymbol{\Omega} \times \boldsymbol{\Sigma}\quad 
\textrm{and}\quad
\delta\mathbf{V} =\partial _t \mathbf{W} + \boldsymbol{\Omega} \times \mathbf{W} - \boldsymbol{\Sigma} \times \mathbf{V}.
\end{split}
\end{equation*}
It yields the heavy top equations in the implicit form
\[
\boldsymbol{\Omega} = \mathbb{I}  ^{-1} \boldsymbol{\Pi} , \quad \mathbf{V} = mgl \chi , \quad \dot{\boldsymbol{\Pi} }= \boldsymbol{\Pi} \times \boldsymbol{\Omega} + \boldsymbol{\Gamma} \times \mathbf{V} , \quad \dot { \boldsymbol{\Gamma} }+ \boldsymbol{\Omega} \times \boldsymbol{\Gamma} =0.
\]

\paragraph{Lie-Dirac reduction with advection.} Starting with the canonical Dirac structure $D_{can} \subset T(T^*SO(3)) \oplus T^*(T^*SO(3))$, as in \eqref{condition_red_dirac}, one carries out the Lie-Dirac reduction with respect to the isotropy subgroup $SO(3)_{ \mathbf{e} _3 }= S ^1 $ and then obtains the reduced Dirac structure $D^{/S ^1 }_{can}$ given at $(\boldsymbol{\Pi}, \boldsymbol{\Gamma} ) \in \mathfrak{so}(3) ^\ast \times S ^2 $, where $ S ^2 =\operatorname{Orb}( \mathbf{e} _3 )$, as shown below:
\begin{equation}\label{condition_red_dirac_heavytop} 
\begin{array}{c} 
\left( ( \boldsymbol{\Pi}  , \boldsymbol{\Gamma} , \boldsymbol{\Omega}  , \boldsymbol{\rho}  ),( \boldsymbol{\Pi}  , \boldsymbol{\Gamma} , \boldsymbol{\beta }  , \boldsymbol{\Psi }  ) \right) \in D^{/S ^1 }_{can}\\
\Longleftrightarrow\\
\boldsymbol{\beta} + \boldsymbol{\rho}  + \boldsymbol{\Omega} \times \boldsymbol{\Pi} =\mathbf{0} \quad \text{and} \quad \boldsymbol{\Psi}  = \boldsymbol{\Omega}.
\end{array} 
\end{equation}
One readily checks that the reduced Lagrange-Dirac system
\[
\left( ( \boldsymbol{\Pi} , \boldsymbol{\Gamma} , \boldsymbol{\Omega} , \dot{ \boldsymbol{\Pi} }), \mathbf{d} _D^{/S ^1 }\ell( \boldsymbol{\Psi} ,\boldsymbol{\Gamma})\right)  \in D^{/S ^1 }_{can}( \boldsymbol{\Pi} , \boldsymbol{\Gamma} )
\]
yields the heavy top equations in implicit form. Similar observations hold on the Hamiltonian side, following \S\ref{subsec_preliminary_comments}. In these formulations, one always has to assume the relation $ \boldsymbol{\Gamma} = \Lambda ^{-1} \mathbf{e} _3 $ or, equivalently, $\dot {\boldsymbol{\Gamma}} + \boldsymbol{\Omega} \times \boldsymbol{\Gamma} =0$.

The advection equation can be included in the Dirac formulation, as showed in \S\ref{sec_NH_SDP}, by starting with the canonical Dirac structure $D_{can}$ on $T^*S$, where $S:= SO(3) \,\circledS\, \mathbb{R}  ^3 $. In this case, the Lie-Dirac reduction by $S$ yields the reduced Dirac structure $D^{/S}_{can}\subset \mathfrak{s} ^\ast :=(\mathfrak{so}(s) \,\circledS\, \mathbb{R}  ^3 ) ^\ast $ given at $( \boldsymbol{\Pi} , \boldsymbol{\Gamma} )$ by
\begin{equation}\label{condition_red_dirac_S_heavytop} 
\begin{array}{c} 
\left( ( \boldsymbol{\Pi}  , \boldsymbol{\Gamma} , \boldsymbol{\Omega}  , \mathbf{w} , \boldsymbol{\rho}   , \mathbf{b} ),( \boldsymbol{\Pi}  , \boldsymbol{\Gamma} , \boldsymbol{\beta}   , \mathbf{c} , \boldsymbol{\Psi}   , \mathbf{v} )\right) \in D^{/S}_{can }(\boldsymbol{\Pi} ,\boldsymbol{\Gamma} )\\
\Longleftrightarrow\\
-\boldsymbol{\rho} + \boldsymbol{\Pi} \times \boldsymbol{\Omega} - \mathbf{w}  \times  \boldsymbol{\Gamma}  - \boldsymbol{\beta}  = \mathbf{0},\;\; \mathbf{b}  +  \boldsymbol{\Omega}  \times \boldsymbol{\Gamma}   + \mathbf{c}  =\mathbf{0} , \;\;  \boldsymbol{\Omega}  = \boldsymbol{\Psi}  ,\;\;  \mathbf{w} =\mathbf{v} .
\end{array} 
\end{equation} 
The reduced Hamilton-Dirac system
\[
\left(( \boldsymbol{\Pi}  , \boldsymbol{\Gamma} , \boldsymbol{\Omega}  , \mathbf{w} , \dot{ \boldsymbol{\Pi} }, \dot{ \boldsymbol{\Gamma} }), \mathbf{d} ^{/S}h(\boldsymbol{\Pi}  , \boldsymbol{\Gamma} ) \right) \in D^{/S}_{can}(\boldsymbol{\Pi} ,\boldsymbol{\Gamma} ).
\]
yields the heavy top equations in implicit form
\[
\dot{ \boldsymbol{\Pi}  }+  \boldsymbol{\Omega} \times \boldsymbol{\Pi}  + \mathbf{w}  \times  \boldsymbol{\Gamma}   =\mathbf{0} ,\;\; \dot{ \boldsymbol{\Gamma} }  +  \boldsymbol{\Omega}  \times\boldsymbol{\Gamma} =\mathbf{0} , \;\;  \boldsymbol{\Omega}  = \frac{\delta h}{\delta \boldsymbol{\Pi} }  ,\;\;  \mathbf{w} =\frac{\delta h}{\delta \boldsymbol{\Gamma} },
\]
as desired.

\subsection{Incompressible ideal fluids}
It is widely known that Euler equations for ideal fluids can be obtained from variational principles (see, for instance, \cite{Arnol'd1966} and \cite{Bretherton1970}). An extended variational principle for ideal continuum was developed by  \cite{CendraMarsden1987}, in which one can incorporate Clebsch potentials into Hamilton's variational principle. We follow the mathematical notations used in \cite{HMR1998a}. 

Let $\mathfrak{D}$ be a bounded domain in $\mathbb{R}^{3}$ with smooth boundary $\partial \mathfrak{D}$ and let $G\!=\!\mathrm{Diff}(\mathfrak{D})$ be the group of all smooth diffeomorphisms of $\mathfrak{D}$. Recall that the Lie algebra of $ \operatorname{Diff}( \mathfrak{D}  )$ is given by the space $ \mathfrak{X}  ( \mathfrak{D} )$ of all smooth vector fields on $ \mathfrak{D} $ parallel to the boundary, endowed with the Lie bracket $[ \mathbf{u} , \mathbf{v} ]= \mathbf{v} \cdot \nabla \mathbf{u}- \mathbf{u} \cdot \nabla \mathbf{v} $.

The motion of a fluid in the domain $ \mathfrak{D} $ can be described by a curve $\eta_{t}$ in $\operatorname{Diff}(\mathfrak{D})$ giving an evolutional sequence of diffeomorphisms from the reference configuration to the current configuration in $\mathfrak{D}$, that is, $x(X,t):=\eta_{t}(X) \in \mathfrak{D}$, where $x$ is the current Eulerian spatial location of the particle with label $X$. 

The Lagrangian or material velocity of the fluid is defined by taking the time derivative of the Lagrangian trajectory keeping the particle label $X$ fixed:
$\mathbf{V}(X,t):=\frac{\partial \eta_{t}(X)}{\partial t}
:=\frac{\partial }{\partial t}\big|_{X} \eta_{t}(X)=\frac{\partial x(X,t)}{\partial t}$,
while the Eulerian or spatial velocity of the system along the path $x(X,t):=\eta_{t}(X)$ is defined by taking the time derivative of the path keeping the Eulerian point $x$ fixed as
$\mathbf{v}(x,t):=\mathbf{V}(X,t):=\frac{\partial }{\partial t}\big|_{x} \eta_{t}(X)=\mathbf{V}(\eta_{t}^{-1}(x),t)$.
We thus have the following relationship between the Lagrangian velocity $(\eta,\dot{\eta})$ and the Eulerian velocity $\mathbf{v}$:
\[
\mathbf{v}_t =\dot{\eta}_t  \circ \eta_t ^{-1}, \quad \text{i.e.}, \quad  \mathbf{v}_{t}= \mathbf{V}_{t} \circ \eta_{t}^{-1},
\]
where $\mathbf{V}_{t}(X):=\mathbf{V}(X,t)$ and $\mathbf{v}_{t}(x):=\mathbf{v}(x,t)$. Note that the Eulerian velocity $ \mathbf{v}_t$ belongs to the Lie algebra $ \mathfrak{X}  ( \mathfrak{D} )$ of $\operatorname{Diff}( \mathfrak{D} )$. 

In the incompressible case, the fluid motion is described by a curve in the subgroup $ \operatorname{Diff}_{vol}( \mathfrak{D} )=\{ \eta \in \operatorname{Diff}( \mathfrak{D} )\mid J \eta =1\} $ of volume preserving diffeomorphisms of $ \mathfrak{D} $, where $ J \eta $ denotes the Jacobian of the diffeomorphism $ \eta $. Its Lie algebra is given by the space $ \mathfrak{X}  _{vol}( \mathfrak{D} )$ of divergence free smooth vector fields parallel to the boundary. Note that for simplicity we have considered here a bounded domain  $\mathfrak{D} \subset \mathbb{R}  ^3 $ with smooth boundary, endowed with the Riemannian metric given by the standard inner product on $ \mathbb{R}  ^3 $. The approach extends easily to arbitrary compact Riemannian manifolds with smooth boundary. 

\paragraph{Implicit Euler-Poincar\'e equations for incompressible ideal fluids.}
The (reduced) Lagrangian of the incompressible fluid is $\ell: \mathfrak{X}  _{vol}( \mathfrak{D} ) \rightarrow \mathbb{R}  $,
\[
\ell( \mathbf{v} )= \int_ \mathfrak{D} \frac{1}{2} | \mathbf{v} | ^2 d ^3 x,
\]
where $| \mathbf{v} |$ denotes the norm of the vector field $\mathbf{v} $ relative to the inner product on $ \mathbb{R}  ^3 $. 
In this simple example, there are no advected quantities, so the 
reduced Hamilton-Pontryagin principle in Eulerian coordinates reads
\[
\delta \int_{t_{1}}^{t_{2}} \int_{\mathfrak{D}} \left[\frac{1}{2} | \mathbf{u} | ^2+\left<\Pi , \mathbf{v}-\mathbf{u} \right> \right] d^{3}x \,dt=0,
\]
for arbitrary variations $ \delta \Pi $ and $ \delta \mathbf{u}$ and variations $\delta{\bf{v}}=\dot{\bf{w}}-[\mathbf{v},\mathbf{w}]$, where $ \mathbf{w}= \delta{\eta} \circ \eta^{-1}$ is an arbitrary curve in $ \mathfrak{X}  _{vol}( \mathfrak{D} )$ vanishing at the endpoints. Here we have made the identification $\mathfrak{X}  _{vol}( \mathfrak{D} ) ^\ast = \mathfrak{X}  _{vol}( \mathfrak{D} )$ (regular dual), by using the duality pairing
\[
\left\langle \Pi , \mathbf{v} \right\rangle =\int_ \mathcal{D} \Pi \cdot \mathbf{v}\, d ^3 x, \quad \Pi  , \mathbf{v} \in \mathfrak{X}  _{vol}(\mathfrak{D} ).
\]
One obtains the implicit Euler-Poincar\'e equations
\[
\dot { \Pi }+ \operatorname{ad}^*_ \mathbf{v}\Pi  =0, \quad  \frac{\delta\ell}{\delta \mathbf{u} }= \Pi , \quad \mathbf{u} = \mathbf{v}.
\]
Since $\Pi = \delta \ell/ \delta \mathbf{u} = \mathbf{u} $ and $\operatorname{ad}^*_ \mathbf{v} \Pi = \mathbb{P} ( \mathbf{v} \cdot \nabla \Pi + \nabla \mathbf{v} ^\mathsf{T} \Pi )$, where $ \mathbb{P}$ is the Hodge projector onto divergence free vector fields parallel to the boundary, one recovers the Euler equations $ \partial _t \mathbf{v} + \mathbf{v} \cdot \nabla \mathbf{v} = - \nabla p$.

\medskip

By applying the alternative variational principle of Remark \ref{Hybrid_HP}, we get
\[
\delta \int_{t_{1}}^{t_{2}} \int_{\mathfrak{D}} \left[\frac{1}{2} | \mathbf{v} | ^2+\left<\Pi , \dot \eta \circ \eta ^{-1} -\mathbf{v} \right> \right] d^{3}x \,dt=0,
\]
for arbitrary variations $ \delta \mathbf{v} $, $ \delta \Pi  $, and variations $ \delta \eta $ vanishing at the endpoints. The stationarity conditions are
\[
\mathbf{v} = \dot \eta \circ \eta ^{-1} , \quad \frac{\delta\ell}{\delta \mathbf{u} }= \Pi , \quad \dot { \Pi }+ \operatorname{ad}^*_ \mathbf{v}\Pi  =0.
\]

On the Hamiltonian side, the Lie-Poisson variational principle reads
\[
\delta \int_{t_{1}}^{t_{2}} \int_{\mathfrak{D}} \left[\left<\Pi ,\mathbf{v} \right>-\frac{1}{2} |\Pi| ^2\right] d^{3}x \,dt=0,
\]
for arbitrary variations $ \delta \Pi $ and variations $\delta{\bf{v}}=\dot{\bf{w}}-[\mathbf{v},\mathbf{w}]$, where $ \mathbf{w} $ is an arbitrary curve in $ \mathfrak{X}  _{vol}( \mathfrak{D} )$ vanishing at the endpoints. One can also easily implement the variational principle given in Remark \ref{Triv_HamPrinPhSp} to the present case.

\paragraph{Lie-Dirac reduction for the ideal fluid.} It can be implemented by choosing the canonical Dirac structure $D_{can} \subset T(T^* \operatorname{Diff}_{vol} (\mathfrak{D} ))\oplus T^*(T^* \operatorname{Diff}_{vol}(\mathfrak{D} ))$, where we identify the cotangent space at $ \eta $ with the tangent space, i.e., $T^*_ \eta  \operatorname{Diff}_{vol}( \mathfrak{D} )= T_ \eta  \operatorname{Diff}_{vol}( \mathfrak{D} )= \{\mathbf{V}  _ \eta : \mathfrak{D} \rightarrow T\mathfrak{D} \mid \mathbf{V} _\eta \circ \eta ^{-1} \in \mathfrak{X}  _{vol}( \mathfrak{D} ) \}$. According to the Lie-Dirac reduction process, the reduced Dirac structure on the reduced Pontryagin bundle
\[
\mathfrak{X}_{vol}( \mathfrak{D} ) ^\ast \times (W \oplus W ^\ast ) 
\]
is given as follows:
\[
D_{can}^{/\operatorname{Diff}_{vol} ( \mathfrak{D} )}(\Pi )= \{ ((\mathbf{v}  ,\boldsymbol{\rho} ),(\boldsymbol{\beta} ,\mathbf{u}  )) \in W \oplus W^{\ast} \mid 
\boldsymbol{\beta}   + \boldsymbol{\rho} - \mathrm{ad}_{\mathbf{v} }^{\ast}\Pi =0 \},
\]
where $W= \mathfrak{X}  _{vol}( \mathfrak{D} ) \times \mathfrak{X}  _{vol}( \mathfrak{D} ) ^\ast$.

The reduced Lagrange-Dirac and Hamilton-Dirac systems associated to $D_{can}^{/\operatorname{Diff}_{vol}( \mathfrak{D} ) }$ read
\[
\left( (\Pi , \mathbf{v} ,\dot{ \Pi }),\mathbf{d} _D^{/ \operatorname{Diff}_{vol}( \mathfrak{D} )}\ell( \mathbf{u} )\right)  \in D_{can}^{/\operatorname{Diff}_{vol}( \mathfrak{D} )}( \Pi ) \;\text{and}\; \left( (\Pi , \mathbf{v} ,\dot{ \Pi }),\mathbf{d}^{/ \operatorname{Diff}_{vol}}h( \Pi  )\right)   \in D_{can}^{/\operatorname{Diff}_{vol} }( \Pi ), 
\]
each of which yields the ideal fluid equations in implicit form.

\subsection{Compressible MHD}

As an illustrative example of the Hamilton-Pontryagin principle with advected parameter on diffeomorphism groups, we consider the case of compressible magnetohydrodynamics (MHD), whose Euler-Poincar\'e formulation has been given in \cite{HMR1998a}. In this case, the configuration Lie group is $G= \operatorname{Diff}( \mathfrak{D} )$ and the space of advected quantities is $V^*=  \mathrm{Den}(\mathfrak{D}) \times \Omega ^2_{cl}( \mathfrak{D})\ni ( \rho \otimes d ^3 x,B)$, where $ \rho \otimes d ^3 x$ is the mass density and the closed two-form $B$, where $ \mathbf{d} B=0$, is the magnetic field. Since we assumed that $ \mathfrak{D} \subset \mathbb{R}  ^3 $, we can identify the closed two-form $B=\sum_{i<j}B_{ij} dx ^i \wedge dx ^j $ with the divergence free vector field $ \mathbf{B} $ given by $ B_{ij}= \epsilon _{ijk} \mathbf{B} _k $.

The diffeomorphism group acts on these advected quantities by pull-back as
\[
\rho \otimes d ^3 x \mapsto \eta ^\ast (\rho \otimes d ^3 x) \quad\text{and}\quad B \mapsto \eta ^\ast B,
\]
and we also denote by $ \mathbf{B} \mapsto \mathbf{B}\cdot \eta  $ the representation induced on the magnetic vector field $ \mathbf{B} $.

\paragraph{Lagrangian side.} Given the mass density $ \rho _0 \otimes d ^3 x$ and the magnetic field $ \mathbf{B} _0 $ in the reference configuration, the (reduced) Lagrangian is obtained by reducing the Lagrangian $L_{ \rho _0 , \mathbf{B} _0 }:T \operatorname{Diff}( \mathfrak{D} ) \rightarrow \mathbb{R}  $ in material representation as 
\[
\ell( \mathbf{v} , \rho , \mathbf{B} )= \int_ \mathfrak{D} \left( \frac{1}{2} \rho | \mathbf{v} | ^2 - \rho e( \rho ) - \frac{1}{2} | \mathbf{B} | ^2 \right) d ^3 x,
\]
where $e( \rho )$ is the internal energy of the fluid and
\begin{equation}\label{representation_rho_B}
\rho \otimes d ^3 x= \eta _\ast ( \rho _0 \otimes d ^3 x)\quad\text{and}\quad  \mathbf{B} = \mathbf{B} _0 \cdot \eta ^{-1}.
\end{equation} 
The reduced Hamilton-Pontryagin principle reads
\[
\delta \int_{t_{1}}^{t_{2}} \int_{\mathfrak{D}} \left[ \frac{1}{2} \rho | \mathbf{u} | ^2 - \rho e( \rho ) - \frac{1}{2} | \mathbf{B} | ^2 +\left<\Pi , \mathbf{v}-\mathbf{u} \right> \right] d^{3}x \,dt=0,
\]
for variations of the form
\begin{equation}\label{variations_MHD} 
\begin{aligned} 
\delta{\bf{v}}=\dot{\bf{w}}-[\mathbf{v},\mathbf{w}],\qquad \quad 
 \delta{(\rho  d^{3}x)}&=-\pounds_{\mathbf{w}}(\rho d^{3}x)=-\nabla \cdot (\rho\mathbf{w})\,d^{3}x\\
\delta ( \mathbf{B} \cdot d \mathbf{S} )&= - \pounds _ {\mathbf{w} }( \mathbf{B} \cdot d \mathbf{S} )=  \operatorname{curl}( \mathbf{w} \times \mathbf{B} ) \cdot d \mathbf{S},
\end{aligned}
\end{equation} 
where $ \mathbf{w} \in \mathfrak{X}  ( \mathfrak{D} )$ is an arbitrary curve vanishing at the endpoints. Application of this principle yields the equations
\begin{equation*} 
\begin{array}{ccll}
\vspace{1mm}&\;(i)\;\;  & \displaystyle\Pi=\frac{\delta {\ell}}{\delta \mathbf{u}}=\rho \mathbf{u} \quad &\textrm{:  momentum density}, \\
&\;(ii)\;\; &\mathbf{v}=\mathbf{u} \quad \quad &\textrm{:  second-order vector field}, \\ 
&\;(iii)\;\; &\displaystyle\frac{\partial{\Pi}}{\partial t}+\mathrm{ad}^{\ast}_{\mathbf{v}} \,\Pi =\rho \nabla \frac{\delta \ell}{\delta \rho }+ \mathbf{B} \times \operatorname{curl} \frac{\delta \ell}{\delta \mathbf{B} } \quad &\textrm{:  equations of motion}.
\end{array}
\end{equation*} 
From the definition \eqref{representation_rho_B}, we get the advection equations
\[
\partial _t \rho + \operatorname{div}( \rho \mathbf{v}   )=0 \quad\text{and}\quad \partial _t \mathbf{B} +\operatorname{curl}( \mathbf{B} \times \mathbf{v} )=0.
\] 
Using the expression $ \frac{\delta \ell}{\delta \rho }  = \frac{1}{2} | \mathbf{u} | ^2 - e( \rho )- \rho \frac{\partial e}{\partial \rho }$ and $ \frac{\delta \ell}{\delta \mathbf{B} } = \mathbf{B} $, together with the advection equation for the mass density, one gets from the conditions $(i)-(iii)$ the MHD equations
\[
\partial _t \mathbf{v} + \mathbf{v} \cdot \nabla \mathbf{v} = - \frac{1}{\rho }\left( \nabla p + \mathbf{B} \times \operatorname{curl}\mathbf{B} \right), \quad p=- \frac{\partial e}{\partial \rho ^{-1} } . 
\]

The alternative variational principle of Remark \ref{Hybrid_HP_advected} yields
\begin{align*} 
&\delta \int_{ t _1 }^{ t _2 }\int_{\mathfrak{D}}  \left[  \frac{1}{2} \rho | \mathbf{v} | ^2 - \rho e( \rho ) - \frac{1}{2} | \mathbf{B} | ^2 + \left\langle\Pi  , \dot \eta \circ \eta ^{-1}  - \mathbf{v}  \right\rangle \right .\\
&\qquad \qquad  \qquad \qquad \left .\phantom{ \frac{1}{2} }+ v\left(\eta _\ast ( \rho _0 \otimes d ^3 x) - \rho \otimes d ^3 x \right) + \left\langle \mathbf{V} ,  \mathbf{B}_0 \cdot \eta ^{-1} - \mathbf{B}  \right\rangle \right]  d ^3 x\,dt=0,
\end{align*} 
for arbitrary variations $ \delta \mathbf{v}  $, $ \delta\Pi  $, $ \delta v$, $ \delta \mathbf{V} $, $ \delta \rho $, $ \delta \mathbf{B} $ and variations $\delta \eta$ vanishing at the endpoints. It yields the stationarity conditions
\[
\mathbf{v}  = \dot \eta \circ \eta  ^{-1}, \quad \eta _\ast ( \rho _0 \otimes d ^3 x) = \rho \otimes d ^3 x, \quad \mathbf{B} _0 \cdot \eta ^{-1} = \mathbf{B}  , \quad \Pi  = \frac{\delta \ell}{\delta \mathbf{v} }= \rho \mathbf{v}
\]
\[
v= \frac{\delta \ell}{\delta \rho }=\frac{1}{2} | \mathbf{v} | ^2 - e( \rho )- \rho \frac{\partial e}{\partial \rho } , \quad 
\mathbf{V} = \frac{\delta \ell}{\delta \mathbf{B}  }= - \mathbf{B} , \quad 
\dot{\Pi } =  \mbox{ad}_{\mathbf{v} }^{*} \Pi  + \rho \nabla v+ \mathbf{B} \times \operatorname{curl} \mathbf{V}  . 
\]

\paragraph{Hamiltonian side.} Consider the Hamiltonian $H_{ \rho _0 , \mathbf{B} _0 }:T^* \operatorname{Diff}( \mathfrak{D} )\rightarrow \mathbb{R}  $ obtained from $L_{ \rho _0 ,\mathbf{B} _0 }$ by Legendre transform. The reduced Hamiltonian is
\[
h( \Pi , \rho ,\mathbf{B} )= \int_ \mathfrak{D} \left(  \frac{1}{2\rho }| \Pi | ^2 + \rho e( \rho ) + \frac{1}{2} | \mathbf{B} |^2  \right)  d ^3 x.
\]

By reducing Hamilton's phase space principle for $H_{ \rho _0 , \mathbf{B} _0 }$, we have the Lie-Poisson variational principle (see \eqref{Lie_Poisson_VPSDP_first}) as
\[
\delta \int_{t_1}^{t_2} \int_ \mathfrak{D} \left[  \left\langle \Pi  , \mathbf{v}   \right\rangle -\left( \frac{1}{2\rho }| \Pi | ^2 + \rho e( \rho ) + \frac{1}{2} | \mathbf{B} |^2 \right)  \right] d ^3 x\, dt=0,
\]
for arbitrary variations $ \delta \Pi $ and variations $ \delta \mathbf{v} $ , $ \delta \mathbf{B} $ and $ \delta \rho $ of the form \eqref{variations_MHD}. 

\medskip

As explained in \S\ref{PSVP_TstarS}, one can obtain the advection equations naturally by reducing the phase space principle for the Hamiltonian defined on the phase space of the semidirect product. We obtain the reduced Lie-Poisson variational principle
\[
\delta \int_{t_1}^{t_2} \int_ \mathfrak{D} \left[\left\langle \Pi  , \mathbf{v}   \right\rangle + \left\langle \rho , v \right\rangle + \left\langle \mathbf{B} , \mathbf{V} \right\rangle -\left( \frac{1}{2\rho }| \Pi | ^2 + \rho e( \rho ) + \frac{1}{2} | \mathbf{B} |^2 \right)  \right] d ^3 x\, dt=0,
\]
for arbitrary variations $ \delta \Pi $, $ \delta \rho $, $ \delta \mathbf{B} $ and variations $ \delta \mathbf{v} $, $ \delta v$, $ \delta \mathbf{V} $ of the form
\begin{equation*}\label{variations_MHD_sdp} 
\begin{aligned} 
\delta{\bf{v}}&=\dot{\bf{w}}-[\mathbf{v},\mathbf{w}]\\
 \delta v&=\partial _t w+ \mathbf{v} \cdot \nabla w- \mathbf{w} \cdot \nabla v\\
\delta \mathbf{V} &= \partial _t \mathbf{W} +  \operatorname{curl} \mathbf{W} \times \mathbf{v} - \operatorname{curl}  \mathbf{V} \times \mathbf{w} 
\end{aligned}
\end{equation*}

\paragraph{Lie-Dirac reduction with advection.}  Starting with the canonical Dirac structure $D_{can} \subset T(T^*\operatorname{Diff}( \mathfrak{D})) \oplus T^*(T^*\operatorname{Diff}( \mathfrak{D}))$, one performs the Lie-Dirac reduction with respect to the isotropy group $\operatorname{Diff}( \mathfrak{D}  )_{ \rho _0 , \mathbf{B} _0 }$ and it follows from \eqref{condition_red_dirac} that one obtains the reduced Dirac structure $D^{/\operatorname{Diff}( \mathfrak{D}  )_{ \rho _0, \mathbf{B} _0 }}_{can}$ given at $(\Pi  , \rho , \mathbf{B} ) \in \mathfrak{X}( \mathfrak{D} ) ^\ast \times \operatorname{Orb}(\rho  _0 , \mathbf{B} _0  )$, as follows
\begin{equation*}\label{condition_red_dirac_MHD} 
\begin{array}{c} 
\left( (\Pi   , \rho , \mathbf{B} , \mathbf{v}  , \boldsymbol{\rho}  ),(\Pi  , \rho , \mathbf{B} , \boldsymbol{\beta }  , \mathbf{u}  ) \right) \in D^{/\operatorname{Diff}( \mathfrak{D}  )_{ \rho _0, \mathbf{B} _0 }}_{can}(\Pi   , \rho , \mathbf{B})\\
\Longleftrightarrow\\
\boldsymbol{\beta} + \boldsymbol{\rho}  - \operatorname{ad}^*_ \mathbf{v} \Pi   =0 \quad \text{and} \quad \mathbf{u}  =\mathbf{v}.
\end{array} 
\end{equation*}
One readily checks that the reduced Lagrange-Dirac system
\[
\left( (\Pi , \rho , \mathbf{B} ,\mathbf{v} , \dot{ \Pi }), \mathbf{d} _D^{/\operatorname{Diff}( \mathfrak{D}  )_{ \rho _0, \mathbf{B} _0 } }\ell(\mathbf{u} , \rho , \mathbf{B} )\right)  \in D^{/\operatorname{Diff}( \mathfrak{D}  )_{ \rho _0, \mathbf{B} _0 }  }_{can}( \Pi  , \rho , \mathbf{B}  )
\]
yields the MHD equations in implicit form. Similar observations hold on the Hamiltonian side, following \S\ref{subsec_preliminary_comments}. In these formulations, one always has to assume the advection equations $\partial _t \rho + \operatorname{div}( \rho \mathbf{v}   )=0$ and $\partial _t \mathbf{B} - \operatorname{curl}( \mathbf{v} \times \mathbf{B} )=0$.

As shown in \S\ref{sec_NH_SDP}, these advection equations can be included in the Dirac formulation by starting with the canonical Dirac structure $D_{can}$ on $T^*S$, $S:= \operatorname{Diff}( \mathfrak{D} ) \,\circledS\, V$. In this case, Lie-Dirac reduction by $S$ yields the Dirac structure $D^{/S}_{can}\subset \mathfrak{s} ^\ast \times (W \oplus W ^\ast )$, which is given at $( \Pi  , \rho , \mathbf{B}  )$ by
\begin{equation*}\label{condition_red_dirac_S_MHD} 
\begin{array}{c} 
\left( (\Pi , \rho , \mathbf{B}, \mathbf{v}  , w_1 ,w_2 , \boldsymbol{\rho} , b_1 , b _2 ),( \Pi  ,\rho , \mathbf{B}  , \boldsymbol{\beta}   ,c _1 , c _2  ,\mathbf{u}   ,v _1 , v _2  )\right) \in D^{/S}_{can }(\Pi , \rho , \mathbf{B} )\\
\Longleftrightarrow\\
-\boldsymbol{\rho} +\operatorname{ad}^*_ \mathbf{v} \Pi  -\rho\nabla w _1  -  \mathbf{B}\times \operatorname{curl} w _2    - \boldsymbol{\beta}  = 0,\;\; b _1 + \operatorname{div}(\mathbf{v} \rho )+ c _1 =0,\\
b _2 +\operatorname{curl}( \mathbf{B} \times   \mathbf{v}) + c _2 =0,\;\;
\mathbf{v} = \mathbf{u}   ,\;\;  w _1 = v _1 , \;\; w _2 = v _2.
\end{array} 
\end{equation*} 
The reduced Hamilton-Dirac system
\[
\left((\Pi   , \rho , \mathbf{B} , \mathbf{v} , w _1 , w _2 , \dot{ \Pi }, \dot{ \rho }, \dot{ \mathbf{B} }), \mathbf{d} ^{/S}h(\Pi , \rho , \mathbf{B} )\right) \in D^{/S}_{can}(\Pi , \rho , \mathbf{B} ).
\]
yields the MHD equations in implicit form
\[
\begin{array}{c}
\vspace{0.2cm}\dot{\Pi }- \operatorname{ad}^*_ \mathbf{v} \Pi + \rho\nabla w _1  + \mathbf{B}\times \operatorname{curl} w _2 =0,\;\; \dot{\rho  } + \operatorname{div}( \rho \mathbf{v} )=0 ,\;\;\dot{ \mathbf{B} }+ \operatorname{curl}( \mathbf{B} \times \mathbf{v} ) =0 , \\
\displaystyle\mathbf{v} = \frac{\delta h}{\delta \Pi }  ,\;\;  w _1 =\frac{\delta h}{\delta \rho  },\;\; w _2 = \frac{\delta h}{\delta \mathbf{B} }.
\end{array}
\]

The approach described here extends to all kinds of fluid models with advected quantities that admit an Euler-Poincar\'e variational formulation, as described in \cite{HMR1998a}. Extension to the case of {\it complex fluids} (see \cite{GBRa2009}) will be considered elsewhere.

\subsection{The Chaplygin rolling ball}

We shall now consider an example with nonholonomic constraints. More precisely, this is a situation that needs the theory described in \S\ref{subsec_the_sdp_case}, where the configuration Lie group is itself a semidirect product, $G=K \,\circledS \,V$, and the constraint has the special form described there, namely, of rolling ball type.

Consider a spherical ball of mass $m$, radius $r$ and whose mass distribution is inhomogeneous so that its center of mass lies anywhere in the ball as it rolls without slipping on a horizontal plane in the presence of gravity. 
The motion of the ball is described by a curve in the Euclidean group
\[
( \Lambda (t), \mathbf{x} (t))\in SE(3)= SO(3) \,\circledS\, \mathbb{R}  ^3,
\]
where $ \Lambda (t) \in SO(3)$ is the attitude of the body relative to its reference configuration and $\mathbf{x} (t)\in \mathbb{R}  ^3 $ is the coordinate of the center of mass. Let us denote by $l$ the fixed distance between the geometric center of the ball to its center of mass and by $ \chi $ the unit vector in the body frame that points from the geometric center to the center of mass. The rolling constraint reads
\begin{equation}\label{roll_constraint_ball} 
\dot{\mathbf{x}} (t)=\dot{ \Lambda }(t) \left( \Lambda ^{-1} (t) r \mathbf{e}_3+ l \boldsymbol{\chi} \right) .
\end{equation} 
See e.g. \cite{Ho2008} for an account on the geometric dynamics of Chaplygin's ball. When the center of mass coincides with the geometric center of the ball, (i.e. $l=0$), the system is called Chaplygin's sphere, also known as Routh's ball, see \cite{Bloch2003} and references therein.

Formula \eqref{roll_constraint_ball} defines the nonholonomic constraint distribution $\Delta _{SE(3)}^{\boldsymbol{\Gamma}  _0 }( \Lambda , \mathbf{x} )\subset T_{( \Lambda , \mathbf{x} )} SE(3)$ for Chaplygin's ball, where we used the notation $\boldsymbol{\Gamma}  _0 = \mathbf{e}_3$. Moreover, one observes that the invariance property \eqref{inv_prop_G} is satisfied where $S$ is $SE(3)$, $(k,x)$ is $( \Lambda , \mathbf{x} )\in SE(3)$ and $a_0$ is $\boldsymbol{\Gamma}_0 \in V^*= \mathbb{R}  ^3 $. This is an example of constraint of intermediate case (type II), i.e. given as in \eqref{case_II}. The theory developed in \S\ref{subsec_the_sdp_case} applies here with $S=G\,\circledS\, V= (K \,\circledS\, V) \,\circledS\, V= (SO(3) \,\circledS\, \mathbb{R}  ^3 ) \,\circledS\, \mathbb{R}  ^3 $.

\paragraph{Lagrangian formulation.} The Lagrangian in material representation is given by kinetic energy minus potential energy, that is,
\begin{equation}\label{Lagr_Chaplygin_top} 
L_{\mathbf{e}_3}:TSE(3) \rightarrow \mathbb{R}  , \quad 
L_{\mathbf{e}_3}( \Lambda , \dot \Lambda , \mathbf{x} , \dot{\mathbf{x}})= \frac{1}{2} \int_ \mathcal{B} \rho (\mathbf{X} )\|\dot \Lambda \mathbf{X} \|^2d^3\mathbf{X} + \frac{m}{2}|\dot{ \mathbf{x} }| ^2 -mgl \mathbf{e}_3 \cdot \Lambda \boldsymbol{\chi},
\end{equation} 
where $\mathbf{X} $ is a point in the body volume (a sphere) $ \mathcal{B} \in \mathbb{R}  ^3 $ and $ \rho (\mathbf{X} )$  is the mass density.

Note that $L_{ \mathbf{e}_3}$ is not $SE(3)$-invariant, it is only invariant under the isotropy subgroup of $ \mathbf{e}_3$, that is, $SO(2) \,\circledS\, \mathbb{R}  ^3 $, where $SO(2)$ is the group of rotation around the $\mathbf{z}$-axis. Using the notation $\mathbf{e} _3= \boldsymbol{\Gamma} _0 \in \mathbb{R}  ^3 $ we define
\[
L: TSE(3) \times \mathbb{R}  ^3 \rightarrow \mathbb{R}  , \quad L( \Lambda , \dot{ \Lambda }, \mathbf{x} , \dot{ \mathbf{x} },\boldsymbol{\Gamma}  _0 ):=L_{\boldsymbol{\Gamma} _0 }( \Lambda , \dot \Lambda , \mathbf{x} , \dot{ \mathbf{x} }),
\]
where $ \boldsymbol{\Gamma} _0 $ is interpreted as a new parameter that can be arbitrarily fixed. We now let $SE(3)$ act on $TSE(3) \times \mathbb{R}  ^3 $ by the left action
\[
( \psi , z)(  \Lambda , \dot{ \Lambda }, \mathbf{x} , \dot { \mathbf{x} }, \boldsymbol{\Gamma} _0):= \left(  \psi \Lambda , \psi \dot{ \Lambda }, \mathbf{z} + \psi \mathbf{x} , \psi \dot{ \mathbf{x} }, \psi \boldsymbol{\Gamma}  _0 \right) 
\]
with the result that $L:TSE(3) \times \mathbb{R}  ^3 \rightarrow \mathbb{R}  $ is now $SE(3)$-invariant and therefore satisfies the hypothesis of the Lagrangian reduction theorem with advected quantities.

The reduced Lagrangian is given by
\begin{equation*}\label{red_Lagr_Chaplygin} 
\ell: \mathfrak{se}(3) \times \mathbb{R}  ^3 \rightarrow \mathbb{R}  , \quad \ell( \Omega , \mathbf{X} , \boldsymbol{\Gamma}  )= \frac{1}{2} \boldsymbol{\Omega} \cdot\mathbb{I}  \boldsymbol{\Omega} + \frac{m}{2}|\mathbf{X} |^2- mgl \boldsymbol{\Gamma} \cdot \boldsymbol{\chi} ,
\end{equation*} 
where
\[
\Omega = \Lambda ^{-1} \dot{ \Lambda }, \quad \mathbf{X} = \Lambda ^{-1} \dot{\mathbf{x}},\quad \boldsymbol{\Gamma}  = \Lambda ^{-1} \mathbf{e}_3,
\]
and the vector $ \boldsymbol{\Omega} \in \mathbb{R}  ^3 $ is defined by $\hat{ \boldsymbol{\Omega} }= \Omega$, where $\hat{\,}: \mathbb{R}  ^3   \rightarrow\mathfrak{so} (3) $ is the usual Lie algebra isomorphism. The constraint subspace $ \mathfrak{g} ^\Delta(a)\subset \mathfrak{g} $ reads
\[
\mathfrak{se}(3)^ \Delta ( \boldsymbol{\Gamma} )= \Delta _{SE(3)}(e,0,\boldsymbol{\Gamma}  )=\left\{(\Omega  , \mathbf{X} )\in \mathfrak{se}(3)\mid \mathbf{X} = \Omega (r \boldsymbol{\Gamma}  +l \boldsymbol{\chi} ) \right \}.
\]
Note that we have $\Omega(r\boldsymbol{\Gamma}  +l \boldsymbol{\chi}  )=\boldsymbol{\Omega}  \times (r \boldsymbol{\Gamma}  +l \boldsymbol{\chi} )$.

At the unreduced level, the Lagrange-d'Alembert-Pontryagin principle reads
\begin{equation}\label{unreduced_HP_ball} 
\delta \int_{t _1 }^ { t _2 }\left\{L_{ \mathbf{e}_3}( \Lambda , V, \mathbf{x} , \mathbf{w} )+ \left\langle p, \dot \Lambda -V \right\rangle + \left\langle\boldsymbol{\pi}  , \dot{ \mathbf{x}}- \mathbf{w}  \right\rangle \right\}dt=0
\end{equation} 
for variations $(\delta \Lambda , \delta \mathbf{x} ,\delta{V},\delta \mathbf{w} ,\delta{p}, \delta \boldsymbol{\pi} )$ of  $(\Lambda ,\mathbf{x} ,V,\mathbf{w} ,p, \boldsymbol{\pi}  )$ in $TSE(3)\oplus T^{\ast}SE(3)$ vanishing at the endpoints, with $(\delta \Lambda , \delta \mathbf{x} )\in \Delta _{SE(3)}^{\mathbf{e}_3}(\Lambda ,\mathbf{x} ) $ and with the constraint $(V,\mathbf{w} )\in \Delta _{SE(3)}^{\mathbf{e}_3}(\Lambda ,\mathbf{x} )$, that is,
\[
\delta \mathbf{x} =\delta \Lambda  \left( \Lambda ^{-1}  r \mathbf{e}_3+ l \boldsymbol{\chi}  \right) \quad \text{and} \quad  \mathbf{w} =V \left( \Lambda ^{-1} r \mathbf{e}_3+ l\boldsymbol{\chi}  \right) .
\]
At the reduced level, it follows from \eqref{ConstHP-ActInt_constrained_SDP} that we have the reduced Lagrange-d'Alembert-Pontryagin principle
\begin{equation}\label{red_HP_ball} 
\delta \int_{t_1}^{t_2} \left\{ \ell(\psi , \mathbf{Y} , \boldsymbol{\Gamma} )+ 
\left\langle \Pi , \Omega  - \psi \right\rangle+\left\langle \boldsymbol{\lambda}  , \mathbf{X}  - \mathbf{Y}  \right\rangle \right\} ~dt=0,
\end{equation} 
with respect to variations of the form
\begin{equation}\label{red_HP_ball_variations} 
\delta{\boldsymbol{\Omega} }=\frac{\partial \boldsymbol{\zeta} }{\partial t}+\boldsymbol{\Omega}  \times \boldsymbol{\zeta}  , \quad \delta \mathbf{X} =\frac{\partial \mathbf{Z} }{\partial t}+\boldsymbol{\Omega}  \times \mathbf{Z} - \boldsymbol{\zeta}  \times \mathbf{X} ,\quad \delta{\boldsymbol{\Gamma} }=-\boldsymbol{\zeta}  \times \boldsymbol{\Gamma} ,
\end{equation} 
where $(\zeta,\mathbf{Z} ) \in \mathfrak{se}(3)^{ \Delta}(\boldsymbol{\Gamma})$ vanishes at the endpoints and $ (\psi  ,\mathbf{Y} ) $ verifies the constraint $(\psi  ,\mathbf{Y} )\in \mathfrak{se}(3)^{ \Delta}(\boldsymbol{\Gamma})$, that is,
\[
\mathbf{Z} = \zeta (r \boldsymbol{\Gamma}  +l \boldsymbol{\chi}  ) \quad \text{and} \quad \mathbf{Y} = \psi (r \boldsymbol{\Gamma} + l \boldsymbol{\chi}  ). 
\]
The relations with the variables of the unreduced level are given as follows:
\begin{align*} 
(\psi , \mathbf{Y} )&= ( \Lambda , \mathbf{x} ) ^{-1} ( \Lambda , V,\mathbf{x} ,\mathbf{w} )=( \Lambda ^{-1} v, \Lambda ^{-1} \mathbf{w} )\in \mathfrak{se}(3),\\
(\Pi , \boldsymbol{\lambda} )&=( \Lambda , \mathbf{x} ) ^{-1} ( \Lambda , p, \mathbf{x} , \boldsymbol{\pi}  )= ( \Lambda ^{-1} p, \Lambda ^{-1} \boldsymbol{\pi}  )\in \mathfrak{se}(3) ^\ast, \\
(\Omega , \mathbf{X} )&= ( \Lambda , \mathbf{x} ) ^{-1} ( \Lambda , \dot{ \Lambda }, \mathbf{x} , \dot{ \mathbf{x} })= ( \Lambda ^{-1}\dot \Lambda , \Lambda ^{-1} \dot{ \mathbf{x} })\in \mathfrak{se}(3),\\
\boldsymbol{\Gamma}  &= \Lambda ^{-1} \mathbf{e}_3.
\end{align*} 
The reduced Lagrange-d'Alembert-Pontryagin principle yields the equations of Chaplygin's ball in implicit form as follows (see also  \eqref{implicit_EP_constrained_SDP}):
%
\begin{equation*}
\begin{split}
& \boldsymbol{\Pi} = \frac{\delta \ell}{\delta \boldsymbol{\psi} }=\mathbb{I}   \boldsymbol{\psi} , \quad \boldsymbol{\lambda}  = \frac{\delta \ell}{\delta \mathbf{Y} }= m \mathbf{Y}, \quad ({\Omega}, \mathbf{X} )=( {\psi}, \mathbf{Y} )\in\mathfrak{se}(3)^ \Delta ( \boldsymbol{\Gamma}  ), \quad \mathbf{Y} = \psi (r\boldsymbol{\Gamma}  +l \boldsymbol{\chi}  ), \\
&\left(  \partial _t {\Pi} - {\Pi} \times \boldsymbol{\Omega}  + {X} \times \boldsymbol{\lambda}  - \frac{\delta \ell}{\delta \boldsymbol{\Gamma}  }\times \boldsymbol{\Gamma}   , \dot { \boldsymbol{\lambda} }+ \boldsymbol{\Omega}  \times \boldsymbol{\lambda}   \right) \in\mathfrak{se}(3)^ \Delta ( \boldsymbol{\Gamma}  )^\circ.
\end{split}
\end{equation*}

Since the constraint is of the form \eqref{case_II}, with $\phi(\boldsymbol{\Gamma}  )=(r\boldsymbol{\Gamma}  +l \boldsymbol{\chi}  )$, we can rewrite this equation using \eqref{case_II_equations} and we find
\[
\left( \partial _t + \boldsymbol{\Omega} \times \right) \left(\boldsymbol{\Pi }  + \phi(\boldsymbol{\Gamma} ) \times \boldsymbol{\lambda} \right)=\frac{\delta\ell}{\delta \boldsymbol{\Gamma} } \times \boldsymbol{\Gamma}  + \partial _t \phi(\boldsymbol{\Gamma} ) \times \boldsymbol{\lambda}.
\]
The equation of motion is thus given by
\[
\left( \partial _t + \boldsymbol{\Omega} \times \right) \left(\mathbb{I}   \boldsymbol{\Omega}   + \phi(\boldsymbol{\Gamma} ) \times m \mathbf{X}  \right)=-mgl \boldsymbol{\chi} \times \boldsymbol{\Gamma}  + \partial _t \phi(\boldsymbol{\Gamma} ) \times m \mathbf{X} , 
\]
where $\mathbf{X} = \boldsymbol{\Omega} \times  \phi (\boldsymbol{\Gamma} )$ and $ \partial _t \phi( \boldsymbol{\Gamma} )=-r \boldsymbol{\Omega} \times \boldsymbol{\Gamma} $, thanks to the advection equation $ \partial _t \boldsymbol{\Gamma} =- \boldsymbol{\Omega} \times \boldsymbol{\Gamma} $.
This recovers the equations of motion of Chaplygin's ball (see, for instance, (12.2.9) in \cite{Ho2008}), where it is shown how these equations can be equivalently rewritten in a more compact form.

Note that the alternative variational structure described in Remark \ref{Hybrid_HP_advected_constraints_sdp} yields
\[
\delta \int_{ t _1 }^{ t _2 } \{ \ell(\boldsymbol{\Omega} ,\mathbf{X} ,\boldsymbol{\Gamma}  )+ \left\langle \Pi  , \Lambda  ^{-1} \dot \Lambda  - \boldsymbol{\Omega}  \right\rangle+ \left\langle \boldsymbol{\lambda} , \Lambda  ^{-1} \dot { \mathbf{x} }-\mathbf{X}  \right\rangle + \left\langle v, \Lambda ^{-1} \mathbf{e} _3  - \boldsymbol{\Gamma} \right\rangle  \} \,dt=0,
\]
with $(\dot \Lambda , \dot { \mathbf{x} }) \in \Delta _{SE(3)}^{ \mathbf{e} _3 }( \Lambda , \mathbf{x} )$, i.e. $\dot{\mathbf{x}} =\dot\Lambda  \left( \Lambda ^{-1}  r \mathbf{e}_3+ l \boldsymbol{\chi}  \right) $,
for arbitrary variations $ \delta \boldsymbol{\Omega} (t) $, $ \delta \mathbf{X} (t)$, $ \delta\Pi  (t) $, $ \delta \boldsymbol{\lambda} (t)$, $ \delta v(t)$, $ \delta \boldsymbol{\Gamma}  (t)$ of $ \boldsymbol{\Omega}  (t) $, $\mathbf{X} (t)$, $\Pi  (t) $, $\boldsymbol{\lambda} (t)$, $v(t)$, $\boldsymbol{\Gamma} (t) $ and variations $\delta \Lambda (t)$, $ \delta \mathbf{x} (t)$ of $\Lambda (t)$, $\mathbf{x} (t)$ vanishing at the endpoints and verifying
\[
(\delta  \Lambda , \delta  { \mathbf{x} }) \in \Delta _{SE(3)}^{ \mathbf{e} _3 }( \Lambda , \mathbf{x} )\quad \text{i.e.} \quad  \delta {\mathbf{x}} =\delta \Lambda  \left( \Lambda ^{-1}  r \mathbf{e}_3+ l \boldsymbol{\chi}  \right).
\]
This reduced Lagrange-d'Alembert-Pontryagin principle recovers the one developed in \cite[\S12.2]{Ho2008}.

\paragraph{Hamiltonian formulation.} The variational structures on the Hamiltonian side were shown in \S\ref{Hamiltonian_side} and \S\ref{PSVP_TstarS}. The particular case of a semidirect product was given in Remark \ref{sdp_case_Ham} and Remark \ref{sdp_case_Ham_bis}.

Let us denote by $H_{ \mathbf{e} _3 }:T^{ \ast}SE(3) \rightarrow \mathbb{R}  $, the Hamiltonian of Chaplygin's ball obtained from the Lagrangian \eqref{Lagr_Chaplygin_top} by Legendre transform. The Hamilton-d'Alembert principle in phase space reads
\[
\delta \int_{ t _1 }^{ t _2 } \left\{ \left\langle p, \dot \Lambda  \right\rangle+ \left\langle \boldsymbol{\pi}  , \dot{ \mathbf{x} } \right\rangle  -H_{ \mathbf{e} _3 }(\Lambda ,p, \mathbf{x}  , \boldsymbol{\pi}  )\right\} dt=0,\quad (\dot \Lambda  , \dot{ \mathbf{x} }) \in \Delta ^{ \mathbf{e} _3 }_{SE(3)}(\Lambda ,\mathbf{x} ),
\]
for variations $ \delta \Lambda  (t) , \delta \mathbf{x}  (t) $ of $\Lambda (t), \mathbf{x} (t) $ vanishing at the endpoints and such that $(\delta \Lambda , \delta \mathbf{x} )\in \Delta _{SE(3)}^{\mathbf{e} _3}$, and for arbitrary variations $ \delta p (t) , \delta \boldsymbol{\pi}  (t) $ of $p(t), \boldsymbol{\pi}  (t) $. By reduction, we have the reduced Hamilton-d'Alembert principle for Chaplygin's ball
\begin{equation*}\label{Lie_Poisson_VPSDP_first_SDP} 
\delta \int_{t_1}^{t_2} \left \{ \left\langle\Pi  , \boldsymbol{\Omega}   \right\rangle + \left\langle \boldsymbol{\lambda} , \mathbf{X}  \right\rangle -h(\Pi  ,\boldsymbol{\lambda} , \Gamma ) \right\}dt=0,\quad (\boldsymbol{\Omega} ,\mathbf{X} ) \in \mathfrak{se}(3)  ^ \Delta (\boldsymbol{\Gamma} ),
\end{equation*} 
for arbitrary variations $ \delta\Pi  (t) , \delta \boldsymbol{\lambda} (t) $ and variations $ \delta \boldsymbol{\Omega}  (t) $, $ \delta \mathbf{X} $ and $ \delta\boldsymbol{\Gamma} (t) $ of the form $\delta{\boldsymbol{\Omega} }=\partial _t\boldsymbol{\zeta} +\boldsymbol{\Omega} \times \boldsymbol{\zeta}$, $\delta \mathbf{X} =\partial _t \mathbf{Z} + \boldsymbol{\Omega} \times \mathbf{Z} - \boldsymbol{\zeta} \times \mathbf{X}$ and $\delta \boldsymbol{\Gamma} =- \boldsymbol{\zeta} \times \boldsymbol{\Gamma}$, where $( \boldsymbol{\zeta} ,\mathbf{Z} ) \in \mathfrak{se}(3)  ^ \Delta (\boldsymbol{\Gamma} )$ and vanishes at the endpoints.

We shall now apply another approach developed in \S\ref{PSVP_TstarS}, more precisely in Remark \ref{sdp_case_Ham_bis}, which implies the advection equation without having to assume it a priori. The Hamilton-d'Alembert principle in phase space reads
\[
\delta \int_{ t _1 }^{ t _2 } \left\{ \left\langle p, \dot \Lambda  \right\rangle+ \left\langle \boldsymbol{\pi}  , \dot { \mathbf{x} } \right\rangle + \left\langle \boldsymbol{\Gamma} _0 , \dot { \mathbf{u} } \right\rangle  -H(\Lambda ,p,\mathbf{x} ,\boldsymbol{\pi}  , \boldsymbol{\Gamma}  _0 )\right\} dt=0,\quad (\dot \Lambda , \dot { \mathbf{x} }) \in \Delta ^{ \boldsymbol{\Gamma}  _0 }_{SE(3)}(\Lambda ,\mathbf{x} )
\]
for variations $ \delta \Lambda (t) $, $ \delta \mathbf{x} (t) $, $ \delta \mathbf{u}  (t)$ of $\Lambda (t)$, $ \mathbf{x}  (t) $, $\mathbf{u} (t)$ vanishing at the endpoints and such that $(\delta \Lambda , \delta \mathbf{x} )\in \Delta _{SE(3)}^{\boldsymbol{\Gamma} _0}$, and for arbitrary variations $ \delta p (t) $, $ \delta \boldsymbol{\pi}  (t) $, $ \delta \boldsymbol{\Gamma}  _0 (t)$ of $p(t)$, $ \boldsymbol{\pi}  (t) $, $ \boldsymbol{\Gamma}  _0 (t)$.

Then, the reduced Hamilton-d'Alembert principle in \eqref{Lie_Poisson_VPSDP_SDP} reads
\begin{equation*} 
\delta \int_{t_1}^{t_2} \left \{ \left\langle \Pi  , \boldsymbol{\Omega}   \right\rangle+ \left\langle \boldsymbol{\lambda} , \mathbf{X}  \right\rangle + \left\langle \boldsymbol{\Gamma} ,\mathbf{v}  \right\rangle  -h(\Pi  ,\boldsymbol{\lambda} , \boldsymbol{\Gamma} ) \right\}dt=0,\quad (\boldsymbol{\Omega} ,\mathbf{X} ) \in \mathfrak{se}(3)  ^ \Delta (\boldsymbol{\Gamma} ),
\end{equation*} 
for arbitrary variations $ \delta \Pi  (t) $, $ \delta \boldsymbol{\lambda} (t) $, $ \delta \boldsymbol{\Gamma}  (t) $ and variations $ \delta \boldsymbol{\Omega}  (t) $, $ \delta \mathbf{X} (t) $ and $ \delta v(t) $ of the form $\delta \boldsymbol{\Omega}   = \partial _t \boldsymbol{\zeta}  +\boldsymbol{\Omega} \times\boldsymbol{\zeta} $, $ \delta \mathbf{X} = \partial _t \mathbf{Z} + \boldsymbol{\Omega} \times  \mathbf{Z} - \boldsymbol{\zeta} \times \mathbf{X}$, and $\delta \mathbf{v} = \partial _t \mathbf{w} + \boldsymbol{\Omega} \times  \mathbf{w} - \boldsymbol{\zeta} \times  \mathbf{v}  $, where $ (\boldsymbol{\zeta}  ,\mathbf{Z} )\in \mathfrak{se}(3)^ \Delta (\boldsymbol{\Gamma} )$ vanishes at the endpoints and $\mathbf{w}  \in \mathbb{R}  ^3 $ is arbitrary and vanishes at endpoints.

\paragraph{Dirac formulation of Chaplygin's rolling ball.} Given the rolling constraint $\Delta _{SE(3)}^{\mathbf{e} _3 }\subset SE(3) $ of Chaplygin's ball as in \eqref{roll_constraint_ball}, we consider the induced Dirac structure $D_{ \Delta ^{\mathbf{e} _3 }_{SE(3)}} $ on $T^*SE(3)$. Reduction by the isotropy group $SE(3)_ { \mathbf{e} _3 }$ yields the Dirac structure $D_{\text{Chaplygin}}^{ \mathbf{e} _3 }:=D^{/SE(3)_ { \mathbf{e} _3 }}_{\Delta _{SE(3)}^{\mathbf{e} _3 }}$ given by
\begin{equation*}\label{condition_red_dirac_rollingtype_Chaplygin} 
\begin{array}{c} 
\left( ( \boldsymbol{\Pi}  ,\mathbf{b} , \boldsymbol{\Gamma} , \boldsymbol{\Omega} ,\mathbf{X}  , \boldsymbol{\rho}  , \boldsymbol{\sigma}  ),( \boldsymbol{\Pi}  ,\mathbf{b} ,\boldsymbol{\Gamma} , \boldsymbol{\beta}  , \boldsymbol{\gamma}  , \boldsymbol{\Psi} , \mathbf{Y}  ) \right) \in D_{\text{Chaplygin}}^{ \mathbf{e} _3 }\\
\Longleftrightarrow\\
\left( \boldsymbol{\beta}  + \boldsymbol{\rho}  + \boldsymbol{\Omega} \times  \boldsymbol{\Pi} + \mathbf{X}  \times  \mathbf{b} , \boldsymbol{\gamma}  + \boldsymbol{\sigma}  - \boldsymbol{\Omega} \times \mathbf{b} \right)  \in (\mathfrak{g}  ^\Delta (\boldsymbol{\Gamma} ))^\circ \quad \text{and} \quad (\boldsymbol{\Psi}  , \mathbf{Y} ) = (\boldsymbol{\Omega} , \mathbf{X} )  \in \mathfrak{g}  ^\Delta (\boldsymbol{\Gamma} ),
\end{array} 
\end{equation*}
where we recall that $\mathfrak{g}  ^\Delta (\boldsymbol{\Gamma} )= \mathfrak{se}(3)^ \Delta ( \boldsymbol{\Gamma} )=\left\{(\Omega  , \mathbf{X} )\in \mathfrak{se}(3)\mid \mathbf{X} = \Omega (r \boldsymbol{\Gamma}  +l \boldsymbol{\chi} ) \right \}$.

It follows that the equations for the Chaplygin rolling ball can be written as the Lagrange-Dirac system
\[
\left( ( \boldsymbol{\Pi}  , \mathbf{b} , \boldsymbol{\Gamma} ,\boldsymbol{\Omega}  , \mathbf{X} , \dot {\boldsymbol{\Pi} },\dot {\mathbf{b}} ),\mathbf{d} ^{/ SE(3)_{ \mathbf{e} _3  }}_D\ell (\boldsymbol{\Psi}  , \mathbf{Y} , \boldsymbol{\Gamma} )\right) \in D_{\text{Chaplygin}}^{ \mathbf{e} _3 },
\]
or, as a Hamilton-Dirac system
\[
\left( ( \boldsymbol{\Pi}  , \mathbf{b} , \boldsymbol{\Gamma} ,\boldsymbol{\Omega}  , \mathbf{X} , \dot {\boldsymbol{\Pi} },\dot {\mathbf{b}} ),\mathbf{d} ^{/ SE(3)_{ \mathbf{e} _3  }}h (\boldsymbol{\Pi}  , \mathbf{b} , \boldsymbol{\Gamma} )\right) \in D_{\text{Chaplygin}}^{ \mathbf{e} _3 }.
\]

The advection equation can be included in the Dirac formulation, by considering the induced Dirac structure $D_{\Delta _{T^*S}}$ on $T^*S=T^*(SE(3) \,\circledS\, \mathbb{R}  ^3 )$, following the approach developed in \S\ref{sec_NH_SDP}. The Hamilton-Dirac formulation of Chaplygin's ball is
\[
\left(( \boldsymbol{\Pi}  ,\mathbf{b} , \boldsymbol{\Gamma} ,  \boldsymbol{\Omega} , \mathbf{X}  , \mathbf{w} , \dot { \boldsymbol{\Pi} }, \dot { \mathbf{b} }, \dot{\boldsymbol{\Gamma} }), \mathbf{d} ^{/S}h( \boldsymbol{\Pi} ,\mathbf{b}  , \boldsymbol{\Gamma} ) \right) \in D_{ \text{Chaplygin}},
\]
where $D_{ \text{Chaplygin}}:= D^{/S}_{ \Delta _{T^*S}}$.

\subsection{The Euler disk}

Consider a flat circular disk with homogeneous mass distribution which rolls without slipping on a horizontal plane, and whose orientation is allowed to tilt away from the vertical plane. Denote its mass and radius by $m$ and $r$, respectively. Because its mass distribution is homogeneous, the center of mass coincides with the geometric center of the disk.

Let $(\mathbf{E} _1, \mathbf{E} _2 , \mathbf{E} _3 )$ denote an orthonormal basis in the reference configuration whose origin is attached to the disk at its center of symmetry, so that the disk lies in the $(\mathbf{E} _1 , \mathbf{E} _2 )$ plane in body coordinates. Let $(\mathbf{e} _1,\mathbf{e} _2 , \mathbf{e} _3 )$ be an orthonormal basis of $\mathbb{R}  ^3$ such that $(\mathbf{e} _1, \mathbf{e} _2)$ lies in the horizontal plane and $\mathbf{e} _3$ is a vertical unit vector.

The motion of the circular disk is given by a curve $(\Lambda (t), \mathbf{x} (t)) \in  SE(3)=SO(3) \,\circledS\, \mathbb{R}  ^3 $, where $ \Lambda (t) \in SO(3)$ is the attitude of the disk relative to its reference configuration and $\mathbf{x} (t)\in \mathbb{R}  ^3 $ is the coordinate of the center of mass.

We shall now describe the nonholonomic constraint. Since the vector $ \Lambda (t) \mathbf{E}  _3 $ is normal to the disk and the vector $\mathbf{e} _3 \times \Lambda (t) \mathbf{E} _3 $ is tangent to the edge of the disk at the point of contact on the plane, the vector $ \mathbf{u} (t) := \Lambda (t) \mathbf{E} _3 \times  (\mathbf{e} _3 \times \Lambda (t) \mathbf{E} _3)$ points from the contact point in the direction of the geometric center. Therefore, the vector $ \boldsymbol{\sigma} (t) $ relating the contact point to the geometric center of the disk in the spatial frame is given by
\[
\boldsymbol{\sigma} (t)= r \frac{\mathbf{u} (t) }{| \mathbf{u} (t) |}= r \frac{\Lambda (t) \mathbf{E} _3 \times  (\mathbf{e} _3 \times \Lambda (t) \mathbf{E} _3)}{|\Lambda (t) \mathbf{E} _3 \times  (\mathbf{e} _3 \times \Lambda (t) \mathbf{E} _3)|}. 
\]
In the body frame, this vector is $\mathbf{s} (t):=  \Lambda (t) ^{-1} \boldsymbol{\sigma} (t) $, so the rolling constraint at the contact point is therefore
\begin{equation}\label{roll_constraint_disk} 
\dot{ \mathbf{x} }(t) =\dot{\Lambda} (t) \mathbf{s} (t) = \dot{\Lambda} (t)\Lambda (t)^{-1}\boldsymbol{\sigma} (t) .
\end{equation} 
We have followed the notations of \cite{Ho2008} to which we refer for more information.

Formula \eqref{roll_constraint_disk} defines the nonholonomic constraint distribution $\Delta _{SE(3)}^{\boldsymbol{\Gamma}  _0 }( \Lambda , \mathbf{x} )\subset T_{( \Lambda , \mathbf{x} )} SE(3)$ for the rolling disk, where we used the notation $\boldsymbol{\Gamma}  _0 = \mathbf{e}_3$. Moreover, one observes that the invariance property \eqref{inv_prop_G} is satisfied where $G$ is $SE(3)$, $(k,x)$ is $( \Lambda , \mathbf{x} )\in SE(3)$ and $a_0$ is $\boldsymbol{\Gamma}_0 \in V^*= \mathbb{R}  ^3 $.

The Lagrangian in material representation is given by kinetic energy minus potential energy, that is,
\[
L_{\mathbf{e}_3}:TSE(3) \rightarrow \mathbb{R}  , \quad 
L_{\mathbf{e}_3}( \Lambda , \dot \Lambda , \mathbf{x} , \dot{\mathbf{x}})= \frac{1}{2} \int_ \mathcal{B} \rho (\mathbf{X} )\|\dot \Lambda \mathbf{X} \|^2d^3\mathbf{X} + \frac{m}{2}|\dot{ \mathbf{x} }| ^2 -mg\mathbf{e}_3 \cdot \boldsymbol{\sigma}( \Lambda , \mathbf{e} _3  ) ,
\]
where $\mathbf{X} $ is a point in the body volume (a sphere) $ \mathcal{B} \in \mathbb{R}  ^3 $, $ \rho (\mathbf{X} )$  is the mass density, and
\[
\boldsymbol{\sigma} ( \Lambda, \mathbf{e} _3  )= r \frac{\Lambda \mathbf{E} _3 \times  (\mathbf{e} _3 \times \Lambda  \mathbf{E} _3)}{|\Lambda \mathbf{E} _3 \times  (\mathbf{e} _3 \times \Lambda \mathbf{E} _3)|}.
\]

The Lagrangian $L_{ \mathbf{e}_3}$ is only invariant under the isotropy subgroup of $ \mathbf{e}_3$, that is, $SO(2) \,\circledS\, \mathbb{R}  ^3 $. As before, we use the notation $\mathbf{e} _3= \boldsymbol{\Gamma} _0 \in \mathbb{R}  ^3 $ and $L_{\boldsymbol{\Gamma} _0 }$, where $ \boldsymbol{\Gamma} _0 $ is interpreted as a new parameter that can be arbitrarily fixed. We obtain the reduced Lagrangian
\[
\ell: \mathfrak{se}(3) \times \mathbb{R}  ^3 \rightarrow \mathbb{R}  , \quad \ell( \Omega , \mathbf{X} , \boldsymbol{\Gamma}  )= \frac{1}{2} \boldsymbol{\Omega} \cdot\mathbb{I}  \boldsymbol{\Omega} + \frac{m}{2}|\mathbf{X} |^2- mg \boldsymbol{\Gamma} \cdot \mathbf{s} ( \Gamma ) ,
\]
where
\[
\Omega = \Lambda ^{-1} \dot{ \Lambda }, \quad \mathbf{X} = \Lambda ^{-1} \dot{\mathbf{x}},\quad \boldsymbol{\Gamma}  = \Lambda ^{-1} \mathbf{e}_3,\quad \mathbf{s} ( \boldsymbol{\Gamma} )= \Lambda ^{-1} \boldsymbol{\sigma} ( \Lambda , \boldsymbol{\Gamma}  _0 )= r \frac{ \mathbf{E} _3 \times  (\boldsymbol{\Gamma} \times  \mathbf{E} _3)}{|\mathbf{E} _3 \times  (\boldsymbol{\Gamma} \times  \mathbf{E} _3)|}
\]
and the vector $ \boldsymbol{\Omega} \in \mathbb{R}  ^3 $ is defined by $\hat{ \boldsymbol{\Omega} }= \Omega$, where $\hat{\,}: \mathbb{R}  ^3   \rightarrow\mathfrak{so} (3) $ is the usual Lie algebra isomorphism. The subspace $ \mathfrak{g}^{\Delta}(a)\subset \mathfrak{g} $ reads
\[
\mathfrak{se}(3)^{\Delta}(\boldsymbol{\Gamma}  )= \Delta _{SE(3)}(e,0,\boldsymbol{\Gamma}  )=\left\{(\Omega  , \mathbf{X} )\in \mathfrak{se}(3)\mid \mathbf{X} = \Omega\,\mathbf{s} ( \boldsymbol{\Gamma} ) \right \}.
\]

At the unreduced level, the Lagrange-d'Alembert-Pontryagin principle is given as in \eqref{unreduced_HP_ball} 
where the variations verify
\[
\delta \mathbf{x} =\delta \Lambda \Lambda ^{-1} \boldsymbol{\sigma} ( \Lambda , \mathbf{e} _3 ) \quad \text{and} \quad  \mathbf{w} =V  \Lambda ^{-1} \boldsymbol{\sigma}( \Lambda , \mathbf{e} _3 ).
\]
At the reduced level, it is given as in \eqref{red_HP_ball}-\eqref{red_HP_ball_variations} where now $(\zeta,\mathbf{Z} )$ and $ (\psi  ,\mathbf{Y} ) $ verify the constraints:
\[
\mathbf{Z} = \zeta \,\mathbf{s}( \boldsymbol{\Gamma} ) \quad \text{and} \quad \mathbf{Y} = \psi\, \mathbf{s} ( \boldsymbol{\Gamma} ) . 
\]

As shown in \eqref{implicit_EP_constrained_SDP}, the reduced Lagrange-d'Alembert-Pontryagin principle yields the implicit Euler-Poincar\'e-Suslov equations with advected parameters for  the Euler disk:
\begin{align*} 
& \;\; \boldsymbol{\Pi} = \frac{\delta \ell}{\delta \boldsymbol{\psi} }=\mathbb{I}   \boldsymbol{\psi} , \quad \boldsymbol{\lambda}  = \frac{\delta \ell}{\delta \mathbf{Y} }= m \mathbf{Y} ,  \\
& \;\; \mathbf{Y} = \psi\, \mathbf{s} ( \boldsymbol{\Gamma} )= r\psi  \frac{ \mathbf{E} _3 \times  (\boldsymbol{\Gamma} \times  \mathbf{E} _3)}{|\mathbf{E} _3 \times  (\boldsymbol{\Gamma} \times  \mathbf{E} _3)|}, \quad ( \Omega , \mathbf{X} )=( \psi , \mathbf{Y} ) \in \mathfrak{se}(3)^ \Delta ( \boldsymbol{\Gamma}  ), \\
& \left(  \partial _t \Pi - \Pi \times \boldsymbol{\Omega}  +X \times \boldsymbol{\lambda}  - \frac{\delta \ell}{\delta \boldsymbol{\Gamma}  }\times \boldsymbol{\Gamma}   , \dot { \boldsymbol{\lambda} }+ \boldsymbol{\Omega}  \times \boldsymbol{\lambda}   \right) \in(\mathfrak{se}(3)^ \Delta ( \boldsymbol{\Gamma}  ))^\circ.
\end{align*} 
Since the constraints are of the form \eqref{case_II}, with $\phi(\boldsymbol{\Gamma}  )=\mathbf{s} ( \boldsymbol{\Gamma} )$, we can rewrite the equations of motion by using \eqref{case_II_equations} and we find
\[
\left( \partial _t + \boldsymbol{\Omega} \times \right) \left(\mathbb{I}   \boldsymbol{\Omega}   + \mathbf{s} (\boldsymbol{\Gamma} ) \times m \mathbf{X}  \right)=-mgl \boldsymbol{\chi} \times \boldsymbol{\Gamma}  + \partial _t \mathbf{s} (\boldsymbol{\Gamma} ) \times m \mathbf{X} , 
\]
where $\mathbf{X} = \boldsymbol{\Omega} \times  \mathbf{s}  (\boldsymbol{\Gamma} )$.
This recovers the equations of motion of Euler's disk (see, for instance, (12.2.38) in \cite{Ho2008}).
\medskip

The variational and Dirac formulations mentioned in the example of Chaplygin's rolling ball can be carried out in a similar way for Euler's disk. We shall not repeat them here.
\medskip

Another point of view on the equations of motion for the Euler disk was shown in \cite{CeDi2007} using the Lagrange-Poincar\'e-d'Alembert equations.

\subsection{Second-order Rivlin-Ericksen fluids}

In this section, we show that equations of motion for second-order Rivlin-Ericksen fluids can be formulated as an infinite dimensional nonholonomic systems with rolling ball type constraints in the framework developed in \S\ref{subsec_the_sdp_case}. The role of the group $G=SO(3) \,\circledS\, \mathbb{R}  ^3 $ is played by the semidirect product group $G= \operatorname{Diff}_{vol}(\mathfrak{D} )\,\circledS\, S _2 ( \mathfrak{D} )$, where $V= S_2 ( \mathfrak{D} )$ is the space of $2$-covariant symmetric tensor fields.

A well-known constitutive expression for the stress in an incompressible non-Newtonian fluid is provided by the representation of the extra stress as a function of the Rivlin-Ericksen tensors $\mathbf{A}  _1, \mathbf{A} _2 , ...$ (see \cite{RiEr1955}).

For second-order Rivlin-Ericksen fluids, the Cauchy-stress tensor is given (in matrix notation) by
\[
\tau = - p \mathbf{I} + \mu \mathbf{A} _1 + \alpha _1 \mathbf{A} _2 + \alpha _2 \mathbf{A} _1 ^2 ,
\]
where $p$ is the pressure, $ \mu $ is the coefficient of viscosity, $ \alpha _1 $, $ \alpha _2 $ are material moduli, and the Rivlin-Ericksen tensors $ \mathbf{A} _1 , \mathbf{A} _2 $ are given in terms of $ \nabla \mathbf{v} $ by
\begin{equation}\label{def_A_1_2} 
\mathbf{A} _1 := \nabla \mathbf{v} + \nabla \mathbf{v} ^\mathsf{T}  \quad\text{and}\quad \mathbf{A} _2 = \partial _t \mathbf{A} _1 + \mathbf{v}\cdot  \nabla \mathbf{A} _1 + \mathbf{A} _1 \nabla \mathbf{v} + \nabla \mathbf{v} ^\mathsf{T} \mathbf{A} _1 .
\end{equation} 
For simplicity, we will denote $ \mathbf{A} _1 $ by $ \mathbf{A} $. We refer to  e.g. \cite{Jo1990} and \cite{TrRa2008} for more information about second-order fluids.

In \cite{DuFo1974} (see also \cite{FoRa1979}), it was found that if we impose the second-order Rivlin-Ericksen fluids to be compatible with thermodynamics, in the sense that all motions of the fluid meet the Clausius-Duhem inequality and the assumption that the specific Helmholtz free energy of the fluid takes its minimum value in equilibrium, then the material moduli must satisfy
$ \mu \geq 0$, $ \alpha _1 \geq 0$, and $ \alpha _1 + \alpha _2 =0$. If these conditions are satisfied, the second-order Rivlin-Ericksen equations coincide with the averaged Euler equations when $ \mu =0$ (see \cite{HMR1998b}).
If $\alpha_1 = \alpha _2  = 0$ then one recovers the constitutive law for the Navier-Stokes equations.
The appropriate restriction to be assumed on the moduli coefficients is at the origin of an extensive discussion in the literature.
For example, the belief of many mechanicians and rheologists is that for fluids that exhibit stress relaxation one has $ \alpha _1 <0$. 
We refer to \cite{DuRa1995} for a review of discussions regarding these coefficients.
Below, we shall only deal with the variational structures underlying these equations and we shall not assume any conditions on these coefficients.

\medskip

\paragraph{Second-order fluids on Riemannian manifolds.} In order to formulate the problem in an intrinsic way on a Riemannian manifold $( \mathfrak{D} , g)$, we shall consider the tensor $ \mathbf{A} $ as a $2$-covariant symmetric tensor field on $ \mathfrak{D}$, i.e. we write
\[
\mathbf{A} = (\nabla \mathbf{v} + \nabla \mathbf{v} ^\mathsf{T})^ \flat,
\]
where $ \nabla $ is the Levi-Civita covariant derivative associated with a Riemannian metric $g$ and the flat operator indicates that we are lowering the contravariant index down by using $g$. For simplicity we assume that $ \mathfrak{D} $ has no boundary.

We then recognize that the right hand side of the second equality in \eqref{def_A_1_2} can be intrinsically written as the Lie derivative of the $2$-covariant tensor $ \mathbf{A}$, denoted $ \pounds _ \mathbf{v} \mathbf{A}$. Using this notations, the equations for a second-order Rivlin-Ericksen fluid associated to the constitutive relation \eqref{def_A_1_2} on a Riemannian manifold $( \mathfrak{D} , g)$ read
\[
\partial _t \mathbf{v} + \mathbf{v} \cdot \nabla \mathbf{v} = - \nabla p + \mu \Delta \mathbf{v} + \alpha _1 \operatorname{div}( \partial _t \mathbf{A} + \pounds _ \mathbf{v} \mathbf{A}  )^\sharp + \alpha _2 \operatorname{div} (\mathbf{A}  ^2 )^\sharp,\quad \operatorname{div} \mathbf{v} =0, 
\]  
where $ \operatorname{div}$ denotes the (metric) divergence of a $2$-covariant symmetric tensor field, i.e., it is the one-form $ \operatorname{div} \mathbf{A} \in \Omega ^1 ( \mathfrak{D} )$ defined as
\[
( \operatorname{div}\mathbf{A} )_i:= \nabla _k \mathbf{A} _i ^k, \quad \text{where}
\quad \mathbf{A} ^k _i = g ^{kj} \mathbf{A} _{ij}.
\]
The tensor $ \mathbf{A}^2 $ is defined by contracting one covariant index by using the Riemannian metric $g$, that is, $( \mathbf{A} ^2 )_{ij}= \mathbf{A} _{ik} g^{kl} \mathbf{A} _{lj}$. The gradient $ \nabla p$ is computed with respect to $g$ and the Laplacian is $ \Delta = (\operatorname{div}(\nabla \mathbf{v} ) ^\flat )^\sharp$.

\paragraph{Nonholonomic formulation.}

Let us denote by $S_2( \mathfrak{D} )$ the space of $2$-covariant symmetric tensor fields on $ \mathfrak{D} $. We consider the right representation of $ K=\operatorname{Diff}_{vol}( \mathfrak{D} )$ on $V=S_2( \mathfrak{D} )$ given by pull-back, i.e., $ \mathbf{A} \mapsto \eta ^\ast \mathbf{A} $, and denote by $G= \operatorname{Diff}_{vol}( \mathfrak{D} ) \,\circledS\, S_2( \mathfrak{D} )$ the associated semidirect product. The Lie algebra is $ \mathfrak{g} = \mathfrak{X}_{vol}( \mathfrak{D} ) \,\circledS\, S_2( \mathfrak{D} )$ 
and we make the identification $ \mathfrak{g} ^\ast = \mathfrak{g} $ by using the $ L ^2$ Riemannian pairing
\[
\left\langle ( \mathbf{w} , \mathbf{B} ), ( \mathbf{v} ,\mathbf{A} ) \right\rangle =\int_ \mathfrak{D} g( \mathbf{w} , \mathbf{v} ) \mu +\int_\mathfrak{D} \bar g( \mathbf{B} , \mathbf{A} ) \mu ,\quad \mathbf{v} , \mathbf{w} \in \mathfrak{X}  _{vol}( \mathfrak{D} ), \quad \mathbf{A} , \mathbf{B} \in S _2 ( \mathfrak{D} ),
\]
where $ \mu $ is the Riemannian volume form and $\bar g$ is the vector bundle metric induced by $g$ on $2$-covariant symmetric tensor fields.

Let us consider the nonholonomic constraint
\begin{equation}\label{constraint2ndorder}
\mathfrak{g} ^ \Delta = \left\{  ( \mathbf{v} , \mathbf{A} )\in \mathfrak{g} \,\left|\, \mathbf{A} = 2(\operatorname{Def} \mathbf{v})^\flat  = (\nabla \mathbf{v} + \nabla \mathbf{v} ^\mathsf{T}) ^\flat \right .\right\} . 
\end{equation} 
In addition to this constraint, in order to obtain the equations for the second-order fluid, we shall also need to include external forces in the formulation. This can be easily done both at the level of the variational structures and at the level of the Dirac structures, therefore we shall not comment on the abstract formulation with forces and directly include them in this example.

Given an arbitrary Lagrangian $\ell=\ell( \mathbf{v} , \mathbf{A} ): \mathfrak{g} \rightarrow \mathbb{R}  $ and a force field $ f: \mathfrak{g} \rightarrow \mathfrak{g} ^\ast $, $f( \mathbf{v} , \mathbf{A})=\left(  f ^1 ( \mathbf{v} , \mathbf{A} ), f ^2 ( \mathbf{v} , \mathbf{A} )\right) $, the associated {\it implicit Euler-Poincar\'e-Suslov equations with external forces} read,
\begin{equation}\label{nh_general_sd_order} 
\left( \partial _t \frac{\delta \ell}{\delta \mathbf{v}  }  + \operatorname{ad}_{\mathbf{v} }^{\;\ast} \frac{\delta \ell}{\delta \mathbf{v} } +\mathbf{A}  \diamond \frac{\delta \ell}{\delta \mathbf{A} }- f ^1 ( \mathbf{v} , \mathbf{A} ) , \partial _t \frac{\delta \ell}{\delta \mathbf{A} } + \pounds _ \mathbf{v}  \frac{\delta \ell}{\delta \mathbf{A} }-f ^2 ( \mathbf{v} , \mathbf{A} )\right) \in \left( \mathfrak{g}^{ \Delta}\right) ^\circ,
\end{equation}
together with $(\mathbf{v} ,\mathbf{A} )\in\mathfrak{g}^{ \Delta}$ ( see Theorem \ref{SDP_Schneider}).
Note that in this example there is no advected quantity. We shall now compute concretely these equations.
Using the formula
\begin{equation}\label{stokes} 
\int_ \mathfrak{D}  \bar g ( \mathbf{B} ,\nabla \mathbf{v}^\flat  )\mu =-\int_ \mathfrak{D}  g((\operatorname{div}\mathbf{B}  ) ^\sharp, \mathbf{v}) \mu +\int_{ \partial \mathfrak{D}  }\mathbf{B}  ( \mathbf{n} , \mathbf{v} ) \mu _{ \partial },
\end{equation} 
on a manifold with smooth boundary, and observing that $\bar g ( \mathbf{B} ,\nabla \mathbf{v}^\flat  )=\bar g ( \mathbf{B} ,(\nabla \mathbf{v}^\mathsf{T})^\flat  )$, we deduce the formula
\[
\int_ \mathfrak{D}  \bar g ( \mathbf{B} ,(\operatorname{Def}\mathbf{v}) ^\flat  )\mu =-\int_ \mathfrak{D}  g((\operatorname{div}\mathbf{B}  ) ^\sharp, \mathbf{v}) \mu +\int_{ \partial \mathfrak{D} }\mathbf{B}  ( \mathbf{n} , \mathbf{v} ) \mu _{ \partial }.
\]
Since we assumed that $ \mathfrak{D} $ has no boundary, we obtain the following expression for the annihilator
\[
(\mathfrak{g} ^ \Delta) ^\circ  = \{ ( \mathbf{w} , \mathbf{B} )\mid \mathbf{w} =2 \mathbb{P}  ( \operatorname{div} \mathbf{B} ) ^\sharp \} \subset \mathfrak{g} ^\ast , 
\]
where $ \mathbb{P}  $ is the Hodge projector onto divergence free vector fields.

It remains to compute the diamond operator. Given $ \mathbf{A} , \mathbf{B} \in S _2 ( \mathfrak{D} )$, we have
\begin{align*} 
\int_ \mathfrak{D}\bar g( \mathbf{B} , \pounds _ \mathbf{v} \mathbf{A} ) \mu &= \int_ \mathfrak{D}\bar g( \mathbf{B} , \nabla _ \mathbf{v} \mathbf{A} +2 \mathbf{A} ( \nabla _{\_\,}\mathbf{v} , \_\,) ) \mu=  \int_ \mathfrak{D}\bar g( \mathbf{B} , \nabla _ \mathbf{v} \mathbf{A} )+ 2\bar g ( \mathbf{B} \cdot \mathbf{A} ,\nabla \mathbf{v}^\flat  ) \mu\\
&=\int_ \mathfrak{D}\bar g( \mathbf{B} , \nabla _ \mathbf{v} \mathbf{A} )\mu -2\int_ \mathfrak{D}  g((\operatorname{div}(\mathbf{B} \cdot \mathbf{A}) ) ^\sharp, \mathbf{v}) \mu +\int_{ \partial \mathfrak{D}  }(\mathbf{B} \cdot \mathbf{A} ) ( \mathbf{n} , \mathbf{v} ) \mu _{ \partial },
\end{align*}
where we defined the contraction $(\mathbf{B} \cdot \mathbf{A} )_{ij}=\mathbf{B} _{ik}g^{kl} \mathbf{A} _{lj}$ and we used the formula \eqref{stokes}, which is also valid when $ \mathbf{B} $ is not symmetric and with the convention $( \nabla \mathbf{v} ) ^\flat _{ij}= (\nabla _ i \mathbf{v} )^k g_{kj}$.

We can thus write \eqref{nh_general_sd_order} as
\begin{equation}\label{intermediate_step} 
\begin{aligned} 
&\partial _t \frac{\delta \ell}{\delta \mathbf{v} }+ \mathbb{P}  \left( \mathbf{v} \cdot \nabla  \frac{\delta \ell}{\delta \mathbf{v} }+ \nabla \mathbf{v} ^\mathsf{T} \frac{\delta \ell}{\delta \mathbf{v} }+ \bar g\left( \frac{\delta \ell}{\delta \mathbf{A} }  , \nabla _{\_\,}\mathbf{A} \right)^\sharp - 2 \operatorname{div}\left(\frac{\delta \ell}{\delta \mathbf{A} } \cdot \mathbf{A}\right)  ^\sharp-f ^1 ( \mathbf{v} , \mathbf{A} )\right) \\
& \qquad \qquad \qquad \qquad = 2 \mathbb{P} \operatorname{div} \left( \partial _t \frac{\delta \ell}{\delta \mathbf{A} } + \pounds _ \mathbf{v}  \frac{\delta \ell}{\delta \mathbf{A} }-f ^2 ( \mathbf{v} , \mathbf{A} )\right) ^\sharp.
\end{aligned}
\end{equation}  
In order to recover the second-order fluid equations, we shall consider the Lagrangian and the force fields as:
\[
\ell( \mathbf{v} , \mathbf{A} )=\int_ \mathfrak{D}\left(  \frac{1}{2} | \mathbf{v} | ^2 +\frac{1}{4}\alpha_1  | \mathbf{A} | ^2\right)  \mu, \quad f ^1 ( \mathbf{v} , \mathbf{A} )=0, \quad -2f ^2 ( \mathbf{v} , \mathbf{A} )=( \alpha _2 - \alpha _1 ) \mathbf{A} ^2 +\mu  \mathbf{A} .
\]
Since $ \delta \ell/ \delta \mathbf{v} =\mathbf{v} $ and $ \delta \ell / \delta \mathbf{A} = \alpha_1  \mathbf{A} /2$, the third and fourth term in \eqref{intermediate_step} contribute to the pressure and we obtain that \eqref{intermediate_step} reduces to the second-order Rivlin-Ericksen fluid equation
\[
\partial _t \mathbf{v} + \mathbf{v} \cdot \nabla \mathbf{v} = - \nabla p + \mu \Delta \mathbf{v} + \alpha _1 \operatorname{div}( \partial _t \mathbf{A} + \pounds _ \mathbf{v} \mathbf{A}  )^\sharp + \alpha _2 \operatorname{div} (\mathbf{A}  ^2 )^\sharp,\quad \operatorname{div} \mathbf{v} =0,
\]
as desired. Note the above choice of force field is not unique. For example, the choice $f ^1 ( \mathbf{v} , \mathbf{A} )=\mu \Delta \mathbf{v} + ( \alpha _2 -\alpha _1 )\operatorname{div}(  \mathbf{A}  ^2) ^\sharp$ and $f ^2 ( \mathbf{v} , \mathbf{A} )= 0$ also yields the desired equations.

\begin{remark}{\rm Note that the second-order Rivlin-Ericksen fluid equation can also be obtained by a holonomic variational principle. Indeed, in the special case $ \alpha _1 + \alpha _2 =0= \mu $, since they coincide with the averaged Euler equations, they arise by the usual Euler-Poincar\'e variational principle for the Lagrangian $\ell( \mathbf{v} )=\int_ \mathfrak{D}\left(  \frac{1}{2} | \mathbf{v} | ^2 +\alpha _1  | \operatorname{Def} \mathbf{v} | ^2 \right)\mu $. In the general case, the equations can still be obtained by a holonomic variational principle, by simply adding the missing terms (arising from $ \alpha _2 \neq - \alpha _1 $) as an external force in the Euler-Poincar\'e equations. However, this process is rather artificial and breaks the second Rivlin-Ericksen tensor $ \mathbf{A} _2 =\partial _t \mathbf{A} + \pounds _ \mathbf{v} \mathbf{A} $ in an unphysical way. It appears that our nonholonomic approach is more appealing in this sense, since it involves the representation of the group $\operatorname{Diff}_{vol}( \mathfrak{D} )$ on $S_2( \mathfrak{D} )$ dictated by the expression of the second Rivlin-Ericksen tensor $ \mathbf{A} _2 =\partial _t \mathbf{A} + \pounds _ \mathbf{v} \mathbf{A} $ as a Lie derivative, and it explains the occurrence of the force field $ \operatorname{div} (\partial _t \mathbf{A} + \pounds _ \mathbf{v} \mathbf{A})$ as a consequence of the nonholonomic constraint $ \mathbf{A} = 2(\operatorname{Def} \mathbf{v} ) ^\flat $.}
\end{remark}

\paragraph{Variational structures.} For second-order fluids, the variational structures are similar with those of the Chaplygin rolling ball, except that there is no advected quantity. There are however external forces in the second-order fluids.

For example, the Lagrange-d'Alembert-Pontryagin principle reads
\[
\delta \int_{ t _1 }^{ t _2}\ell( \mathbf{u} , \mathbf{Y} )+ \left\langle \Pi , \mathbf{v} - \mathbf{u} \right\rangle + \left\langle \mathbf{B} , \mathbf{A} - \mathbf{Y} \right\rangle dt+\int_{ t _1 }^{ t _2 } \left\langle f ^2 (\mathbf{u}, \mathbf{Y} ),\delta \mathbf{W}\right\rangle  dt =0,
\]
where $( \mathbf{u} , \mathbf{Y} )$ satisfy the constraint, i.e., $ \mathbf{Y} = ( \nabla \mathbf{u} + \nabla \mathbf{u} ^\mathsf{T}) ^\flat $ and for variations $\delta{\bf{v}}=\partial _t \bf{w}-[\mathbf{v},\mathbf{w}]$ and $ \delta \mathbf{A} = \partial _t \mathbf{W} + \pounds _ \mathbf{v} \mathbf{W} - \pounds _ \mathbf{w} \mathbf{A} $, where $ \mathbf{w}$ and $\mathbf{W} $ vanish at the endpoints and satisfy the constraint $ \mathbf{W} = ( \nabla \mathbf{w} + \nabla \mathbf{w} ^\mathsf{T}) ^\flat $.

The alternative variational structure in Remark \ref{Hybrid_HP_advected_constraints_sdp} reads
\[
\delta \int_{ t _1 }^{ t _2}\left( \ell( \mathbf{v} , \mathbf{A} ) + \left\langle \Pi , \dot \eta \circ \eta ^{-1} - \mathbf{v} \right\rangle + \left\langle \mathbf{B} , \eta _\ast \dot a - \mathbf{A} \right\rangle \right) dt+ \int_{ t _1 }^{ t _2} \left\langle f ^2 (\mathbf{v} , \mathbf{A} ) , \eta_\ast \delta a \right\rangle dt,
\]
where $( \dot \eta , \dot a ) \in \Delta _G( \eta , a)$, for arbitrary variations $ \delta \mathbf{v} $, $ \delta \mathbf{A} $, $ \delta \Pi $, $ \delta \mathbf{B} $, and variations $ \delta \eta , \delta a $ vanishing at the endpoints as well as satisfying the constraint $( \delta \eta , \delta a) \in \Delta _G( \eta , a)$. Note that $ \Delta _G\subset T( \operatorname{Diff}_{vol}( \mathfrak{D} ) \,\circledS\, S _2 ( \mathfrak{D} ))$ is the right-invariant distribution induced by $ \mathfrak{g} ^\Delta \subset \mathfrak{g}$.

The variational structures on the Hamiltonian side can be written in the same way as in the other examples.

\paragraph{Dirac formulation of second-order Rivlin-Ericksen fluids.}  Given the constraint $\Delta _{G}\subset TG$ of second-order Rivlin-Ericksen fluid as in \eqref{constraint2ndorder}, we consider the induced Dirac structure $D_{ \Delta _G } $ on $T^*G$, where $G= \operatorname{Diff}_{vol}( \mathfrak{D} ) \,\circledS\, S_2( \mathfrak{D} )$. Reduction by $G$ yields the Dirac structure $D^{/G}_{\Delta _{G}}$ given by
\begin{equation*}\label{condition_red_dirac_rollingtype_Chaplygin} 
\begin{array}{c} 
\left( ( \Pi   ,\mathbf{B} , \mathbf{v} , \mathbf{A} , \boldsymbol{\rho}  , \boldsymbol{\Sigma}  ),(\Pi  ,\mathbf{B}  , \boldsymbol{\beta}  , \boldsymbol{\Gamma}  , \mathbf{u}  , \mathbf{Y}  ) \right) \in D^{/G}_{\Delta _{G}}\\
\Longleftrightarrow\\
\left( \boldsymbol{\beta}  + \boldsymbol{\rho}  +\operatorname{ad}^*_ \mathbf{v} \Pi   + \mathbf{A} \diamond \mathbf{B} , \boldsymbol{\Gamma}  + \boldsymbol{\Sigma}  + \pounds _ \mathbf{v} \mathbf{B}  \right)  \in (\mathfrak{g}  ^\Delta )^\circ \quad \text{and} \quad (\mathbf{u}  , \mathbf{Y} ) = (\mathbf{v}  , \mathbf{A} )  \in \mathfrak{g}  ^\Delta\\
\Longleftrightarrow\\
\boldsymbol{\beta}  + \boldsymbol{\rho}  +\operatorname{ad}^*_ \mathbf{v} \Pi   + \mathbf{A} \diamond \mathbf{B}= 2 \mathbb{P}  ( \operatorname{div}(\boldsymbol{\Gamma}  + \boldsymbol{\Sigma}  + \pounds _ \mathbf{v} \mathbf{B} ))^\sharp, \quad  \mathbf{Y} = 2( \operatorname{Def} \mathbf{u} ) ^\flat  , \quad \mathbf{u} = \mathbf{v} , \quad \mathbf{Y} = \mathbf{A},
\end{array} 
\end{equation*}
where we recall that $\mathfrak{g}  ^\Delta (\boldsymbol{\Gamma} )= \mathfrak{se}(3)^ \Delta ( \boldsymbol{\Gamma} )=\left\{(\Omega  , \mathbf{X} )\in \mathfrak{se}(3)\mid \mathbf{X} = \Omega (r \boldsymbol{\Gamma}  +l \boldsymbol{\chi} ) \right \}$.
\medskip

It follows that the second-order Rivlin-Ericksen fluid equations can be written as the Lagrange-Dirac system
\[
\left( ( \Pi   ,\mathbf{B} , \mathbf{v} , \mathbf{A} ,\dot{ \Pi } , \dot{ \mathbf{B} } ),\mathbf{d} ^{/ G}_D\ell (\mathbf{u}  , \mathbf{Y}  )\right) \in D^{/G}_{\Delta _{G}},
\]
or, as a Hamilton-Dirac system
\[
\left( ( \Pi   ,\mathbf{B} , \mathbf{v} , \mathbf{A} ,\dot{ \Pi } , \dot{ \mathbf{B} } ),\mathbf{d} ^{/ G}h ( \Pi   ,\mathbf{B} )\right) \in D^{/G}_{\Delta _{G}}.
\]


\paragraph{Acknowledgements.} FGB is partially supported by a 'Projet Incitatif de Recherche' from ENS-Paris and by the ANR project GEOMFLUID; HY is partially supported by JSPS (26400408), JST-CREST, Waseda University (2014B-162), and the IRSES project ``Geomech" (246981) within the 7th European Community Framework Programme.

\end{document}